\DeclareMathAlphabet{\mathdutchcal}{U}{dutchcal}{m}{n}
\SetMathAlphabet{\mathdutchcal}{bold}{U}{dutchcal}{b}{n}
\DeclareMathAlphabet{\mathdutchbcal}{U}{dutchcal}{b}{n}
\DeclareSymbolFont{myletters}{OML}{ztmcm}{m}{it}
\DeclareMathSymbol{\nicelambda}{\mathord}{myletters}{"15}
\newcounter{example}
\newenvironment{example}[1]{\refstepcounter{example}\par\medskip
	\noindent \textsc{\small Example~\theexample. #1} \rmfamily\hspace{-2pt}}{\medskip}
\definecolor{ultrablue}{rgb}{0.0,0.0, 1}
\def\l@subsection{\@tocline{2}{0pt}{3pc}{6pc}{}}
\def\l@subsection{\@tocline{2}{0pt}{3pc}{6pc}{}}
\theoremstyle{plain}
\newtheorem*{theorem*}{Theorem}
\newtheorem{lemma}{Lemma}[section]
\newtheorem{theorem}[lemma]{Theorem}
\newtheorem{proposition}[lemma]{Proposition}
\newtheorem{corollary}[lemma]{Corollary}
\theoremstyle{definition}
\newtheorem{definition}[lemma]{Definition}
\newtheorem{remark}[lemma]{Remark}
\theoremstyle{remark}
\numberwithin{equation}{section}
\newcommand{\R}{\mathbb R}
\newcommand{\Z}{\mathbb Z}
\newcommand{\Deg}{\mathrm{Deg}}
\DeclareMathOperator{\Ric}{Ric}
\DeclareMathOperator{\scal}{scal}
\DeclareMathOperator{\Tan}{Tan}
\DeclareMathOperator{\Dir}{Dir}
\newcommand{\K}{\mathcal K}
\DeclareMathOperator{\diam}{diam}		
\DeclareMathOperator{\Lip}{Lip}
\DeclareMathOperator{\lip}{lip}
\newcommand{\Was}{\mathcal{W}}
\newcommand{\m}{\mathdutchcal{m}}
\newcommand{\G}{\mathcal{G}}
\newcommand{\B}{\mathcal{B}}
\newcommand{\p}{\mathdutchcal{p}}
\newcommand{\h}{\mathdutchcal{h}}
\newcommand{\Prob}{\mathscr{P}}
\newcommand{\LL}{\mathrm{L}}
\newcommand{\olric}{{^{\mathcal O}\!\Ric}}
\newcommand{\oscal}{{^{\mathcal O}\!\scal}}
\newcommand{\dist}{\mathrm{d}}
\newcommand{\cart}{\hspace{1pt}{\text{\scalebox{0.6}{$\square$}}}\hspace{1pt}}
\newdimen\styledim 
\def\varstyle#1{\mathchoice{\stylenum=0 #1}{\stylenum=1 #1}{\stylenum=2 #1}{\stylenum=3 #1}}
\def\mathaxis{\fontdimen22\ifcase\stylenum 
	\textfont\or\textfont\or\scriptfont\or\scriptscriptfont\fi2 }
\def\setstyledim{\styledim=\ifcase\stylenum .1em\or.1em\or.07em\or.05em\fi\relax}
\def\sqdot{\mathbin{\varstyle{\raise\mathaxis\hbox{\setstyledim
				\kern\styledim 
				\vrule width1.2\styledim height.6\styledim depth.6\styledim
				\kern\styledim}}}}
\newcommand{\restr}{\raisebox{-.1908ex}{$\big|$}}
\begin{document}
\title[\tiny Discrete Ollivier-Ricci curvature]{ \small  Discrete Ollivier-Ricci curvature}  
\address{  Department of Mathematics and Computer Science, Amirkabir University of Technology, 424 hafez Ave., Tehran, Iran}
\email{\href{mailto:z.fathi@aut.ac.ir}{z.fathi@aut.ac.ir}}
\address{ Department of Mathematical Sciences\\ Isfahan University of  Technology (IUT) \\ Isfahan 8415683111, Iran}
\email{\href{mailto:slakzian@iut.ac.ir}{slakzian@iut.ac.ir}}
\address{School of Mathematics\\
	Institute for Research in Fundamental Sciences (IPM) \\
	P.O. Box 19395-5746\\
	Iran
}
\email{\href{mailto:slakzian@ipm.ir}{slakzian@ipm.ir}}
\subjclass[2020]{Primary: 52xx, 47Dxx; Secondary: 05Cxx, 51Fxx}
\keywords{weighted graphs, random walks, Ollivier-Ricci curvature, Ricci flow, linear programming, \phantom{\hspace{20pt}} polytope, upper bound theorem, infinitesimal generator, Markov semigroups}
\thanks{*\textit{the corresponding author}}
\maketitle
\begin{center}
\begin{minipage}[t]{0.5\textwidth}
	\centering
\bf	{ \small Zohreh Fathi}\\
\textit{\footnotesize Amirkabir University of Technology}\\
\textit{\footnotesize Tehran, Iran}\\
		\vspace{10mm}	
\end{minipage}
\begin{minipage}[t]{0.5\textwidth}
	\centering
	\bf	{ \small Sajjad Lakzian*}\\
	\textit{\footnotesize Isfahan University of Technology}\\
	\textit{\footnotesize Isfahan, Iran}\\
\end{minipage}
\end{center}
\vspace{5mm}
%
\begin{abstract}
	We analyze both continuous and discrete-time Ollivier-Ricci curvatures of locally-finite weighted graphs $\G$ equipped with a given distance ``$\dist$'' (w.r.t. which $\G$ is metrically complete) and for general random walks. We show the continuous-time Ollivier-Ricci curvature is well-defined for a large class of Markovian and non-Markovian random walks and provide a criterion for existence of continuous-time Ollivier-Ricci curvature; the said results generalize the previous rather limited constructions in the literature. 
\par  In addition, important properties of both discrete-time and continuous-time Ollivier-Ricci curvatures are obtained including -- to name a few -- Lipschitz continuity, concavity properties, piece-wise regularity (piece-wise linearity in the case of linear walks) for the discrete-time Ollivier-Ricci as well as Lipschitz continuity and limit-free formulation for the continuous-time Ollivier-Ricci.  these properties were previously known only for very specific distances and very specific random walks. As an application of Lipschitz continuity, we obtain existence and uniqueness of generalized continuous-time Ollivier-Ricci curvature flows.
\par Along the way, we obtain -- by optimizing  McMullen's upper bounds -- a sharp upper bound estimate on the number of vertices of a convex polytope in terms of number of its facets and the ambient dimension, which might be of independent interest in convex geometry. The said upper bound allows us to bound the number of polynomial pieces of the discrete-time Ollivier-Ricci curvature as a function of time in the time-polynomial  random walk.  The limit-free formulation we establish allows us to define an operator theoretic Ollivier-Ricci curvature which is a non-linear concave functional on suitable operator spaces.
\end{abstract}
\date{\today}
\tableofcontents
\section{Introduction}
\par Along with the vivid rise in using data and network analysis, the scientific community has witnessed emergence of many important discrete models for example in life sciences and finance; study of these models -- beyond the classical ways -- would require new novel techniques. 
 \par Also as the research in discrete structures furthers, it further reveals the innate power that lies in  the seeming ``reduction'' that takes place combined with the ``adequacy'' that is retained when approximating continuous structures by discrete ones; meaning, the theories and computations become programmable (reduction) and yet the recent developments in discrete geometry indicate that one can still successfully apply classical ideas and tools to fruition (adequacy). 
 \par Because of the said revelations, and inspired by the already established power of geometric-analytic tools in continuous phase spaces, the interest in applying such methods to discrete structures has surged among both Mathematicians and scientists from other fields alike. Needless to say, a nontrivial geometry entails the introduction of curvature; and the quest of studying  various types of  curvature of weighted graphs -- as epitome of discrete structures -- has proven very fruitful; many similarities to the continuous setting has been unearthed and many useful generalizations made. 
\par Unlike what we see in the Riemannian manifolds, when it comes to weighted graphs, there are numerous ways of defining a notion of ``Ricci curvature''. Some main stream ones include Ricci curvature bounds using optimal transport theory (the so called Lott-Sturm-Villani curvature bounds)~\cite{BS}, discrete Bakry-\'Emery bounds~\cite{LY}, Ollivier-Ricci curvature~\cite{Ol, LLY} and Forman-Ricci curvature~\cite{For}, each suited for a set of different purposes. From the geometrical point of view, if we look at a network as the 1-skeleton of a CW-complex, the Bakry-\'Emery curvature-dimension bounds are construed as vertex type curvatures or curvatures on the 0-skeleton.  In these notes, we wish to look at the  Ollivier's definition of coarse Ricci curvature on the 1-skeleton. 
\par The aforementioned Ricci curvatures appear in the study of many real life discrete models. These applications -- discovered not so long a go -- include the use of Ollivier-Ricci and Bakry-\'Emery Ricci curvature as indicators of robustness and as tools by which to measure the difference of two networks; the applications have thus far been in the fields of social, biological or financial networks and at a growing rate due to successes achieved by using these methods. For more details, we refer to~\cite{FL} and the reference therein.  The other important application of Ollivier-Ricci curvature and flow is in community detection i.e. finding clusters with high connectivity in weighted networks \cite{NLLG, SJB, JL}. 
\par We are considering the original version of Ollivier-Ricci curvature instead of limited modified versions, an endeavor that is long overdue. The already existing constructions and modified versions of Ollivier-Ricci curvature are very useful yet they are not exploring the full potential of  Ollivier's definition of coarse Ricci curvature. It is worth mentioning that almost all of the already established properties for the modified versions of Ollivier-Ricci curvature would follow as spacial cases of our results if we restrict ourselves to $\upbeta$-walks (lazy walks); see~\textsection\thinspace\ref{sec:theta-walks}. 
\par  This article is inspired by~\cite{LLY, MW} and also -- to a large extent -- generalizes many constructions in the said works.
 \subsection*{A brief setup and notations}
Consider a quadruple $\left( \G, \m, \omega, \dist \right)$ where $\G$ is a locally-finite graph, $\m$ is a vertex  measure, $\omega = \left\{  \omega_i  \right\}_{i \in I}$  is a finite set of edge weight functions i.e. we can consider $\omega: E \to \R^I$ as a vector-valued edge weight, and $\dist$ is a distance on $\G$ with the condition that $\left( \G, \dist \right)$ is complete, thus, a Polish metric space. For defining the Laplacian, we will just use  the notation $\omega$ which can be considered as the zeroth element ($\omega_0$)  in the collection of the said weight functions. Sometimes, we will also allow the distance $\dist$ to be induced by a secondary edge weight $\eta$. 
\par A random walk on $\G$ is a collection of probability measures $\upmu_z, z\in \G$; $\upmu_z^\varepsilon$ determines -- once at $z$ -- where and with what probability, a walker can go next. In these notes, we will work with a  $1$-parameter family of random walks $\upmu_z^\varepsilon$ for $\varepsilon \in [0,1]$. We will be concerned with $1$-parameter walks that are continuous in $\varepsilon$ and with $\upmu_z^0 = \delta_z$, so this will be an standing assumption throughout. 
\par For a given fixed $\varepsilon$,  the discrete-time Ollivier-Ricci curvature is given (a la Ollivier) by 
  \[
  \olric_\varepsilon (x,y) := 1 - \nicefrac{\Was_1\left( \upmu_x^\varepsilon ,  \upmu_y^\varepsilon  \right)}{\dist(x,y)}, \quad 0\le \varepsilon \le 1.
  \]
where $\Was_1$ is the $\LL^1$-Wasserstein distance also known as the Kantorovich-Rubinstein metric.   
 \par Both the discrete-time and continuous-time Ollivier-Ricci curvatures are formulated using this $1$-parameter family of random walks. Of course in the discrete-time version the parameter does not play a role in the definition however it becomes important when one considers the discrete-time Ollivier-Ricci as a function of the parameter. 
 \begin{definition}[finite-step walks]\label{defn:fin-step}
   	A $1$-parameter family $\upmu_z, z\in \G$; $\upmu_z^\varepsilon$ of random walks is said to be a \emph{finite-step} walk when for each $z$ and each $\varepsilon$,  it has bounded support w.r.t. the combinatorial distance which in conjunction with local finiteness means the support is a finite set. 
 \end{definition} 
  \begin{definition}[time-affine, time-polynomial or time-analytic walks] A continuous-time random walk $\upmu_z^\varepsilon$ is said to be time-affine, time-polynomial or time-analytic whenever for every fixed $x,y$, $\p_{xy}(\varepsilon):= \upmu^\varepsilon_x(y)$ is affine, polynomial or analytic (resp.) in $\varepsilon$ and \emph{admits an analytic continuation} over  $\left( -\updelta_{xy}, 1+\updelta_{xy} \right)$  for some small $\updelta_{xy}>0$.  
  	\par Notice the  analytic continuation condition is automatically satisfied by time-affine and time-polynomial walks. Also notice we are not assuming the walk has a continuation, so the continuation might not be a nonnegative measure or a probability measure. 
  \end{definition}
We should remark that when working with the continuous-time Ollivier-Ricci curvature, we only need the walk to admit a continuation beyond $\varepsilon=0$. 
\begin{definition}\label{defn:loc-exc}
  	A continuous-time random walk $\upmu_z^\varepsilon$ is said to be a local walk whenever for each $z$, there exists a finite subgraph ${\mathcal K}_{z}$  such that $\Upomega^\varepsilon_{z} =  \mathrm{supp}\left( \upmu_z^\varepsilon \right) $ is included in ${\mathcal K}_{z}$ for all $\varepsilon$. 
  	This means for fixed $x,y$, we have $\Upomega^\varepsilon_{xy} := \mathrm{supp}\left( \upmu_x^\varepsilon \right) \cup \mathrm{supp}\left( \upmu_y^\varepsilon \right)$ is included in a finite set $\K_{xy}$ with cardinality ${\mathcal N}_{xy}:=|\mathcal{K}_{xy}|$. In particular, a local walk is a finite-step walk but the converse does not hold. We set ${\mathcal N}_{xy}:=|\mathcal{K}_{xy}|$.
  \end{definition}
 \par Special cases of walks that our results apply to, include $\uptheta$-walks; in particular, if one uses the restricted cases of $\upbeta$, $\upzeta$ and $\upxi$-walks (these are time-affine and are also called lazy walks in the literature), one retrieves the known constructions in the literature~\cite{LLY,MW,JM, NLLG};  see~\textsection\thinspace\ref{sec:theta-walks} for further details and definitions. 
 \subsection*{Summary of Main  results}
 Here we give a brief mention of the definitions and main results; further details and proofs are to be found in the referenced sections. 
\subsubsection*{\small \bf \textsf{Discrete-time Ollivier-Ricci curvature}}
For more details regarding the following theorem, see~\textsection\thinspace\ref{sec:dt-olric}. Notice that the discrete-time Ollivier-Ricci is well-defined as soon as the walk had finite first moment; for local walks, we get much more.
\begin{theorem}
	Suppose $\upmu_z^\varepsilon$ is a (time-affine or time-polynomial)  time-analytic local random walk. For any fixed pair $x,y$, $\olric_\varepsilon (x,y)$ as a function of $\varepsilon$, satisfies the following;
\begin{enumerate}
	\item (piece-wise regularity) it is a piece-wise (affine, polynomial) analytic function of $\varepsilon$;
	\item (locality) it coincides with the discrete-time curvature computed in the subgraph $\Upomega^\varepsilon_{xy}$;
	\item (concavity for time-affine) it is concave for time-affine walks (also holds without the locality condition);
	\item (Lipschitz regularity) it is Lipschitz continuous in $\varepsilon$; 
	\item (finite pieces) it admits finitely many regular pieces;
	\item (upper bound for time-polynomial walks) for time polynomial walks, the number of distinct polynomial pieces  is bounded above by
	\[
		\left( \max\limits_{z,w \in \mathcal{K}_{xy}} \deg {\p}_{zw}(\varepsilon) \right)  \cdot  \Uplambda\left( \mathcal{N}_{xy}, 2 \, \mathcal{N}_{xy}^{\,^2}-2\, \mathcal{N}_{xy}+1 \right).
	\]
where the function $\Uplambda$ is an upper bound function for bounded convex polytopes given in~Theorem~\ref{thm:ub}. 
\end{enumerate}
\end{theorem}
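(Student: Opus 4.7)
The strategy is to exploit the Kantorovich--Rubinstein duality to recast $\Was_1(\upmu_x^\varepsilon,\upmu_y^\varepsilon)$ as the value of a parametric linear program whose feasibility polytope is independent of $\varepsilon$, while only the objective depends (analytically/polynomially/affinely) on $\varepsilon$. Concretely, writing $\Upomega = \Upomega^\varepsilon_{xy} \subseteq \mathcal{K}_{xy}$ and using McShane's extension of $1$-Lipschitz functions from $\mathcal{K}_{xy}$ to $\G$, locality (2) is immediate: any extremal $1$-Lipschitz test function on $\G$ can be truncated to one on $\mathcal{K}_{xy}$ with the same value of the dual functional, and conversely extended back. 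Once this reduction is done, $\Was_1(\upmu_x^\varepsilon,\upmu_y^\varepsilon)$ equals
\[
\max_{f}\; \sum_{z\in\mathcal{K}_{xy}} f(z)\bigl(\p_{xz}(\varepsilon)-\p_{yz}(\varepsilon)\bigr)
\]
where $f$ ranges over the bounded polytope $P_{xy}$ cut out by $|f(z)-f(w)|\le \dist(z,w)$ (together with a normalization such as $f(x)=0$) inside $\R^{\mathcal{N}_{xy}}$ (or $\R^{\mathcal{N}_{xy}-1}$ after fixing the basepoint).

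For piece-wise regularity (1) and concavity (3), I would invoke the basic fact from parametric LP: the optimum is attained at a vertex of $P_{xy}$, and on the (relatively open) parameter region where a given vertex $v$ remains optimal, $\Was_1$ coincides with the linear functional $\langle v,\upmu_x^\varepsilon-\upmu_y^\varepsilon\rangle$. Since $\p_{zw}(\varepsilon)$ is analytic (resp.\ polynomial, affine), each such functional is analytic (resp.\ polynomial, affine) in $\varepsilon$, hence so is $\Was_1$ on each piece, yielding (1). For (3), note that for time-affine walks the measure-valued map $\varepsilon\mapsto(\upmu_x^\varepsilon,\upmu_y^\varepsilon)$ is affine, and $\Was_1$ is convex in the measure argument (as a supremum of linear functionals), so the composition is convex in $\varepsilon$, making $\olric_\varepsilon$ concave. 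Lipschitz regularity (4) follows from the same representation: each candidate linear functional $\langle v,\cdot\rangle$ is Lipschitz in the measures with constant $\diam(\mathcal{K}_{xy})$, and the probabilities $\p_{zw}(\varepsilon)$ are themselves locally Lipschitz (being analytic on a neighbourhood of $[0,1]$) on a finite index set.

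Claim (5) is a quick consequence of the previous items once one observes that $P_{xy}$ has finitely many vertices $\{v_1,\dots,v_V\}$; hence $\Was_1(\upmu_x^\varepsilon,\upmu_y^\varepsilon)=\max_{i\le V}\langle v_i,\upmu_x^\varepsilon-\upmu_y^\varepsilon\rangle$ is a maximum of finitely many real-analytic functions and therefore admits finitely many regular pieces on the compact interval $[0,1]$. The hardest part, and the quantitative heart of the statement, is the explicit upper bound in (6). Here I would carry out the following count. First, $P_{xy}$ lies in $\R^{\mathcal{N}_{xy}}$ and is defined by the $2\binom{\mathcal{N}_{xy}}{2}=\mathcal{N}_{xy}(\mathcal{N}_{xy}-1)$ Lipschitz inequalities together with one affine equality (the basepoint normalization) which, after elimination, can be absorbed into additional facet inequalities to give at most $2\mathcal{N}_{xy}^{\,2}-2\mathcal{N}_{xy}+1$ facets in the appropriate ambient dimension. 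Applying the sharp upper bound $\Uplambda$ from Theorem~\ref{thm:ub} (the optimized McMullen estimate) yields an upper bound $V\le \Uplambda(\mathcal{N}_{xy},\,2\mathcal{N}_{xy}^{\,2}-2\mathcal{N}_{xy}+1)$ on the number of vertices. For time-polynomial walks, each of the candidate functions $\langle v_i,\upmu_x^\varepsilon-\upmu_y^\varepsilon\rangle$ is a polynomial in $\varepsilon$ of degree at most $d:=\max_{z,w\in\mathcal{K}_{xy}}\deg\p_{zw}(\varepsilon)$. Since any two such polynomials agree or differ by a polynomial of degree $\le d$ with at most $d$ real roots, the upper envelope $\max_i$ switches its active index at most $d$ times per vertex, yielding the claimed bound $d\cdot\Uplambda(\mathcal{N}_{xy},\,2\mathcal{N}_{xy}^{\,2}-2\mathcal{N}_{xy}+1)$.

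The main obstacle will be the combinatorial bookkeeping in (6): pinning down the correct ambient dimension and facet count for $P_{xy}$ so that Theorem~\ref{thm:ub} applies with precisely the parameters $(\mathcal{N}_{xy},\,2\mathcal{N}_{xy}^{\,2}-2\mathcal{N}_{xy}+1)$, and then justifying that the upper-envelope complexity of a family of $V$ degree-$d$ polynomials is $\le d\cdot V$ in our setting (so that parametric degeneracies do not inflate the count). Everything else is a direct assemblage of LP-duality, McShane extension, and elementary convex analysis.
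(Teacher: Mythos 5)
Your proposal follows the same architecture as the paper: use Kantorovich–Rubinstein duality plus locality to reduce $\Was_1(\upmu_x^\varepsilon,\upmu_y^\varepsilon)$ to the value of a parametric linear program whose feasible polytope $\mathfrak{Q}\subset\R^{\mathcal{N}_{xy}}$ is independent of $\varepsilon$ (only the objective $\varepsilon\mapsto{\bf c}(\varepsilon)$ varies), and then invoke parametric LP sensitivity analysis. This is precisely the route taken in the paper via Lemma~\ref{lem:loc-plan}, Theorem~\ref{thm:loc-dual}, Theorem~\ref{thm:piecewise-anal}, and Theorem~\ref{thm:poly-bound}. The minor variations you make are both legitimate: for concavity (3) you argue on the dual side (pointwise supremum of affine-in-$\varepsilon$ functionals is convex, so $\olric_\varepsilon$ is concave), whereas the paper's Theorem~\ref{thm:conc} gives the rigorous argument on the primal side (a convex combination of optimal couplings is a coupling of the mixed measures); for Lipschitz regularity (4) you write the value function directly as a finite max of uniformly Lipschitz functions $\langle v_i,\upmu_x^\varepsilon-\upmu_y^\varepsilon\rangle$, which is cleaner and gives global Lipschitz on $[0,1]$ immediately, where the paper instead leans on Renegar's perturbation theorem (Proposition~\ref{prop:lip-lp}) and upgrades to $[0,1]$ in a remark. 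For (5), note you implicitly use the analytic-continuation clause in the definition of time-analytic walks (otherwise the zeros of $h_i-h_j$ could accumulate at the endpoints of $[0,1]$; this is exactly why the paper proves Theorem~\ref{thm:anal-ext} separately).

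Your instinct that item (6) is the weak point is correct, and the gap you flag is real. The step "upper envelope of $V$ degree-$d$ polynomials has at most $dV$ pieces" holds for $d\le 2$ (envelope of $V$ affine functions has $\le V$ pieces; of $V$ quadratics, $\le 2V-1$), but for $d\ge 3$ the sharp complexity is the Davenport–Schinzel bound $\lambda_d(V)$, which is mildly superlinear (already $\lambda_3(V)=\Theta(V\alpha(V))$). The paper's own proof of Theorem~\ref{thm:poly-bound} has exactly this leap: from "each pair $h_i,h_j$ intersects in at most $d_{\max}$ points" it asserts $|\mathcal{A}|\le d_{\max}\cdot|\mathcal{V}|$, whereas the naive pairwise count gives only $|\mathcal{A}|\le d_{\max}\binom{|\mathcal{V}|}{2}$. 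So if you want a fully justified bound, either replace $d\cdot\Uplambda$ by $d\binom{\Uplambda}{2}+1$, or appeal to the Davenport–Schinzel bound $\lambda_d(\Uplambda)$ directly. Separately, your inequality count $2\binom{\mathcal{N}_{xy}}{2}=\mathcal{N}_{xy}(\mathcal{N}_{xy}-1)$ is actually the correct one, while $2\mathcal{N}_{xy}^2-2\mathcal{N}_{xy}+1$ (used in the theorem statement and by the paper) double-counts; using the larger value only weakens the bound, so there is no harm, but you should not present both without flagging that they differ.
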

\subsubsection*{\small \bf \textsf{Continuous-time Ollivier-Ricci curvature}}
The continuous time Ollivier-Ricci curvature $\olric(x,y)$ is defined (a la Ollivier) as the derivative
\[
\text{-} \, \nicefrac{d}{d\varepsilon}\restr_{\varepsilon=0} \; \nicefrac{\Was_1^{\dist}\left( \upmu_x^\varepsilon, \upmu_y^ \varepsilon \right)}{\dist(x,y)},
\]
whenever it exists \cite{Ol}. The original definition is intended for a continuous-time random walk generated by a Markov kernel, however the same definition can be used for more general $1$-parameter walks. We note that for the definition of continuous-time Ollivier-Ricci curvature, we just need to know the germ of the random walk at $\varepsilon=0$ however for simplicity we always assume that  $\varepsilon$ ranges from $0$ to $1$ but this is indeed of no significance in the theory. 
\par Let us point to an important class of continuous-time random walks that we call pleasant walks; these are walks of the form
	\[
\upmu_z^\varepsilon = \updelta_z + \varepsilon \upmu_z + \mathcal{R}^\varepsilon_z,
\]
whose $1$-jet is a time-analytic walk with some further second order asymptotic conditions on $ \mathcal{R}^\varepsilon_z$; see Definition~\ref{defn:pls-walk}. In particular $ \upmu_z$ and $ \mathcal{R}^\varepsilon_z$  are zero-mass signed measures. 
\par Also some important function spaces that frequently appear are the space of $1$-Lipschitz functions which is denoted by $\Lip(1)$ and the space of finite support functions which we denote by $\mathcal{C}_{\sf fs}(\G)$. 

\begin{theorem}
	The continuous-time Ollivier-Ricci curvature, $\olric(x,y)$, is well-defined for continuous-time  pleasant  walks. Furthermore, for pleasant walks with local $1$-jets, the following hold;
		\begin{enumerate}
	\item (limit-free formulation) $\olric$ is given by the limit-free variational formula
	\[
	\olric(x,y) = \inf_{\substack{f\in \Lip(1) \cap \mathcal{C}_{\sf fs} \\ \nabla_{xy} f=1}} \nabla_{yx} \mathcal{L} f,
	\]
	where the operator $\mathcal{L}$ is the initial velocity of the (not necessarily Feller) process corresponding to the given $1$-parameter random walk;
	\item (Lipschitz continuity 1) $\olric(x,y)$ is locally Lipschitz in $\upmu_z$ and in the distance $\dist$ (as variables);
		\item (Lipschitz continuity 2)  if $\upmu_{z}^\varepsilon$ as in the previous item is locally Lipschitz in $\omega$, then $\olric(x,y)$ is  locally Lipschitz continuous in the arguments $\omega$ and in the distance $\dist$ (as a variable);
		\item (minimizer) the limit-free formulation admits a minimizer;
		\item (locality) the minimizer can be localized to be supported in $\mathcal{K}_{xy}$;
\end{enumerate}
\end{theorem}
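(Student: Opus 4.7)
For well-definedness and the limit-free formula, my starting point is Kantorovich--Rubinstein duality
\[
\Was_1(\upmu_x^\varepsilon, \upmu_y^\varepsilon) \;=\; \sup_{f \in \Lip(1)} \int f \, d(\upmu_x^\varepsilon - \upmu_y^\varepsilon).
\]
Inserting the pleasant walk expansion $\upmu_z^\varepsilon = \updelta_z + \varepsilon \upmu_z + \mathcal{R}_z^\varepsilon$ and using that both $\upmu_z$ and $\mathcal{R}_z^\varepsilon$ are zero-mass signed measures, the integrand rewrites as
\[
(f(x) - f(y)) \;+\; \varepsilon \bigl( \mathcal{L} f (x) - \mathcal{L} f (y) \bigr) \;+\; o(\varepsilon),
\]
where $\mathcal{L} f (z) := \int f \, d\upmu_z$ is the initial velocity of the process. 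The second-order asymptotic conditions built into the pleasant walk definition make the $o(\varepsilon)$ uniform in $f \in \Lip(1)$, and this is already enough to establish well-definedness of $\olric(x,y)$ by a routine envelope/subsequence argument. Under the local $1$-jet assumption, $\mathcal{L}$ only sees values of $f$ on $\mathcal{K}_{xy}$, so the KR dual reduces to a finite-dimensional optimization; as $\varepsilon \to 0^+$ the maximizers concentrate on the face of $\Lip(1)$ that saturates $\nabla_{xy} f = 1$, and the first-order coefficient reads off as the claimed variational formula.

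Existence of a minimizer and the ability to localize it to $\mathcal{K}_{xy}$ then drop out of finite-dimensional convex geometry. Modulo additive constants, $\{ f \in \Lip(1) \cap \mathcal{C}_{\sf fs}(\G) : \nabla_{xy} f = 1 \}$ restricts on $\mathcal{K}_{xy}$ to a closed polyhedral slice of a bounded Lipschitz ball, and $f \mapsto \nabla_{yx} \mathcal{L} f$ is linear, so a minimizer exists by direct compactness. For localization, I take any optimal $f$ and replace its values outside $\mathcal{K}_{xy}$ by the McShane/Kirszbraun extension $\tilde f(w) := \inf_{z \in \mathcal{K}_{xy}} \bigl( f(z) + \dist(z,w) \bigr)$, truncated so as to lie in $\mathcal{C}_{\sf fs}(\G)$ without breaking the $1$-Lipschitz property. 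Locality of the $1$-jet then gives $\nabla_{yx} \mathcal{L} \tilde f = \nabla_{yx} \mathcal{L} f$ and $\tilde f$ stays feasible, so the minimum is in fact attained by a function supported on $\mathcal{K}_{xy}$.

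The two Lipschitz-continuity statements follow from an envelope/min-max stability argument applied to the variational formula. For two pleasant walks with $1$-jets effectively supported in $\mathcal{K}_{xy}$, swapping the optimal $f$ of one side into the other bounds the difference of the two curvatures by a multiple of $\|\mathcal{L} - \mathcal{L}'\|_{\mathrm{op}} \cdot \diam(\mathcal{K}_{xy}) / \dist(x,y)$, which is linear in the coefficients of $\upmu_z$ (hence locally Lipschitz). Dependence on $\dist$ is slightly more delicate because $\dist$ enters both the constraint $\nabla_{xy} f = 1$ and the objective $\nabla_{yx} \mathcal{L} f$; rescaling the optimizer by the ratio of the two distances and then estimating term by term on $\mathcal{K}_{xy}$ yields the Lipschitz estimate. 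Lipschitz dependence in $\omega$ then comes out by composition once the walk itself is assumed locally Lipschitz in $\omega$.

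The main obstacle I anticipate is the rigorous interchange of supremum and derivative in passing from KR duality to the limit-free formula: I need KR maximizers at small $\varepsilon > 0$ to converge (at least along a subsequence) to saturating $1$-Lipschitz functions at $\varepsilon = 0$, and the convergence must be strong enough to read off the first-order coefficient. Without locality this would be genuinely delicate since the maximizer set is not obviously compact and $\Was_1$ can fail to be smooth in $\varepsilon$; with the local-$1$-jet hypothesis the dual becomes effectively finite-dimensional, the $\varepsilon = 0$ maximizer set is a compact polyhedron, and an upper-semicontinuity argument on the $\varepsilon$-maximizer correspondence closes the gap.
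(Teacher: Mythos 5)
Your blueprint diverges from the paper in ways that leave a real gap, and the gap is in the very first clause of the statement: well-definedness of $\olric(x,y)$ for \emph{all} pleasant walks, not just those with local $1$-jets. Your argument reduces to a finite-dimensional linear program only under the local-$1$-jet hypothesis, and you candidly concede at the end that without locality the sup/derivative interchange is ``genuinely delicate'' and you do not resolve it. The paper's route through this is structurally different and you have not reproduced it: for the $1$-jet walk ${^1}\upmu_z^\varepsilon = \updelta_z + \varepsilon\upmu_z$ (time-affine, no locality assumed), $\olric_\varepsilon$ is shown to be \emph{concave} in $\varepsilon$ because the transportation polytopes satisfy the inclusion $(1-t)\mathfrak{P}_{\varepsilon_1} + t\mathfrak{P}_{\varepsilon_2} \subset \mathfrak{P}_{(1-t)\varepsilon_1 + t\varepsilon_2}$; concavity forces one-sided Dini derivatives to exist, and a triangle-inequality estimate using finite first moments bounds $\varepsilon^{-1}\olric_\varepsilon$ from below, so the limit exists. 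Nothing in your sketch supplies a substitute for this, so the general well-definedness claim is unproven by your argument.

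Your handling of the remainder $\mathcal{R}_z^\varepsilon$ is also too thin. You assert that the second-order asymptotics ``make the $o(\varepsilon)$ uniform in $f\in\Lip(1)$,'' but the term $\int f\,d(\mathcal{R}_x^\varepsilon - \mathcal{R}_y^\varepsilon)$ is not controlled merely by $\mathcal{R}$ being zero-mass with the growth conditions in the definition. The paper has to normalize $f(z)=0$ (legitimate because $\mathcal{J}^\varepsilon$ is a rough differential operator), observe that $\tfrac{1}{2}(\dist(z,\cdot)-f)$ is then a $1$-Lipschitz function with a global minimum at $z$, and invoke the rough comparison principle together with the bound on $\mathcal{J}^\varepsilon\dist(z,\cdot)(z)$ to conclude $\Was_1(\upmu_z^\varepsilon, {^1}\upmu_z^\varepsilon) = O(\varepsilon^{1+\min\alpha_i})$. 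That argument is the substance of the pleasant-walk hypotheses; declaring the error uniform skips it. Finally, on Lipschitz continuity in $\dist$: swapping the optimizer of one problem into the other is not directly admissible because changing $\dist$ changes the feasible polytope (the $\Lip(1)$ constraint itself), not just the objective, so the swapped function need not be feasible. The paper sidesteps this by recasting $\olric$ as the optimal value of a parametric LP and invoking Renegar's perturbation theorem, which tracks joint perturbations of constraints and objective; your rescaling heuristic would need to be made precise to cover that case. The minimizer-existence and localization pieces of your argument, by contrast, do align with the paper's extension-based construction and are sound.
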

More details regarding the above theorem are to be found in~\textsection\thinspace\ref{sec:ct-olric}. 
\par We refer the reader to~\textsection\thinspace\ref{sec:oper-theo} for the precise definitions of operator-theoretic notions that we will shortly see. 
\begin{theorem}
	$\olric$ is well-defined for Markovian walks $e^{\varepsilon {\mathcal L}} \updelta_z$  when $\mathcal{L}$ is a good operator and it satisfies
	\begin{enumerate}
		\item $e^{\varepsilon \mathcal{L}}$  and $\text{-} \mathcal{L}$ satisfy rough comparison principles with range $0$;
		\item  $\left| \mathcal{L} d(z,\cdot)(z) \right| \le C(z)$. 
	\end{enumerate}
\end{theorem}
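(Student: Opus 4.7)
The strategy is to reduce to the pleasant walk case already treated in the previous theorem. Concretely, I plan to show that under the hypothesis that $\mathcal{L}$ is a good operator with the two listed properties, the Markovian walk $\upmu_z^\varepsilon := e^{\varepsilon\mathcal{L}}\updelta_z$ admits an expansion
\[
\upmu_z^\varepsilon = \updelta_z + \varepsilon\,\mathcal{L}\updelta_z + \mathcal{R}_z^\varepsilon,
\]
in which $\upmu_z := \mathcal{L}\updelta_z$ is a zero-mass signed measure (because any Markov generator has $\mathcal{L}^*\mathbf{1}=0$) with local support governed by the rough comparison principle with range $0$, and in which $\mathcal{R}_z^\varepsilon$ satisfies the second-order asymptotic bound required in Definition~\ref{defn:pls-walk}. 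The Taylor remainder identity $e^{\varepsilon\mathcal{L}}=\mathrm{Id}+\varepsilon\mathcal{L}+\int_0^\varepsilon(\varepsilon-s)\mathcal{L}^2 e^{s\mathcal{L}}\,ds$, paired with the boundedness of $\mathcal{L}$ and of $e^{s\mathcal{L}}$ on the relevant operator space, furnishes the needed $O(\varepsilon^2)$ control.

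Next, I would invoke Kantorovich--Rubinstein duality to write
\[
\Was_1(\upmu_x^\varepsilon,\upmu_y^\varepsilon) = \sup_{f\in\Lip(1)}\bigl( (e^{\varepsilon\mathcal{L}^*}f)(x) - (e^{\varepsilon\mathcal{L}^*}f)(y) \bigr).
\]
The role of the rough comparison principle with range $0$ (for both $e^{\varepsilon\mathcal{L}}$ and $-\mathcal{L}$) is exactly to let one truncate the test function $f$ to a finitely supported $1$-Lipschitz function without altering the supremum to first order in $\varepsilon$, thereby putting us inside the limit-free variational formulation of the previous theorem. Once this localization is carried out, the infimum in that formula is attained and equals
\[
\olric(x,y)\cdot\dist(x,y) = \inf_{\substack{f\in\Lip(1)\cap\mathcal{C}_{\sf fs}\\ \nabla_{xy}f=1}}\nabla_{yx}\mathcal{L}f,
\]
where now $\mathcal{L}$ is interpreted as the initial velocity of the Markovian process, which matches the operator $\mathcal{L}$ we started with.

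Finally, to establish that the derivative defining $\olric(x,y)$ actually exists (and is finite), I would combine the above duality with the second hypothesis $|\mathcal{L} d(z,\cdot)(z)|\le C(z)$. Plugging $f(\cdot)=d(z,\cdot)$ into the variational expression shows that $|\olric(x,y)|\cdot\dist(x,y)$ is controlled by $C(x)+C(y)$, so the first-order expansion of $\Was_1$ is uniformly bounded, which rules out oscillatory behaviour of the difference quotient near $\varepsilon=0$ and secures existence of the derivative.

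The main obstacle, as I see it, will lie in rigorously justifying the interchange between the $\sup_{f\in\Lip(1)}$ appearing in Kantorovich--Rubinstein and the differentiation at $\varepsilon=0$. Passing from the functional-analytic expansion $e^{\varepsilon\mathcal{L}^*}f = f + \varepsilon\mathcal{L}^* f + O(\varepsilon^2)$ uniformly in $f\in\Lip(1)\cap\mathcal{C}_{\sf fs}$ is where the two standing hypotheses really pull their weight: the comparison principle delivers the lower envelope (by localizing maximizers inside $\mathcal{K}_{xy}$) and the bound on $\mathcal{L} d(z,\cdot)(z)$ delivers the upper envelope (by bounding the first-order remainder uniformly over $1$-Lipschitz test functions). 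Everything else is then a translation of the conclusions of the previous theorem into the Markovian semigroup setting.
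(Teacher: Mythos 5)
Your opening move is the right one and matches the paper: write $e^{\varepsilon\mathcal{L}}\updelta_z = \updelta_z + \varepsilon\,\mathcal{L}\updelta_z + \mathcal{R}_z^\varepsilon$ and verify the conditions of Definition~\ref{defn:pls-walk} so as to invoke Theorem~\ref{thm:pls-walk-0}. But the way you propose to verify those conditions does not go through. Definition~\ref{defn:pls-walk} does not ask for an operator-norm estimate $\|\mathcal{R}_z^\varepsilon\|=O(\varepsilon^2)$; it asks that the operator $\mathcal{J}^\varepsilon$ defined by $\mathcal{R}_z^\varepsilon = \varepsilon^2\mathcal{J}^\varepsilon\updelta_z$ --- here $\mathcal{J}^\varepsilon = e^{\varepsilon\mathcal{L}}-\varepsilon\mathcal{L}-\mathbb{I} = \sum_{k\ge2}\frac{\varepsilon^{k-2}}{k!}\mathcal{L}^k$ --- be a good operator, satisfy a rough comparison principle with range $0$ whose constant is $O(\varepsilon^{-1+\alpha_1})$, and satisfy $\mathcal{J}^\varepsilon\dist(z,\cdot)(z)=O(\varepsilon^{-1+\alpha_2})$. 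Your Taylor-remainder argument $e^{\varepsilon\mathcal{L}}=\mathrm{Id}+\varepsilon\mathcal{L}+\int_0^\varepsilon(\varepsilon-s)\mathcal{L}^2e^{s\mathcal{L}}\,ds$ is pushed through with the phrase ``boundedness of $\mathcal{L}$ \ldots\ on the relevant operator space,'' and this is not a hypothesis: good operators are allowed to be unbounded, and the two comparison-principle assumptions together with $|\mathcal{L}\dist(z,\cdot)(z)|\le C(z)$ are precisely the \emph{substitutes} for that missing boundedness. The verification should be pointwise, not in norm: at a global minimum $x$ of a nonnegative $1$-Lipschitz $f$ with $f(x)=0$, one has $\mathcal{J}^\varepsilon f(x) = e^{\varepsilon\mathcal{L}}f(x) - \varepsilon\mathcal{L}f(x) - f(x) \ge C_1 + \varepsilon C_2$ by the rough comparison principles for $e^{\varepsilon\mathcal{L}}$ and for $-\mathcal{L}$, giving an $O(1)=O(\varepsilon^{-1+1})$ comparison constant; and the hypothesis $|\mathcal{L}\dist(z,\cdot)(z)|\le C(z)$ controls $\mathcal{J}^\varepsilon\dist(z,\cdot)(z)$ through the series. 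Goodness of $\mathcal{J}^\varepsilon$ is inherited because it is a convergent, constant-free power series in $\mathcal{L}$.

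Your second and third paragraphs --- Kantorovich duality, truncation of test functions, the limit-free variational formula, and the interchange of $\sup_{f\in\Lip(1)}$ with $d/d\varepsilon$ at $\varepsilon=0$ --- are not part of this existence proof; they are the content of other results (Theorem~\ref{thm:lff} and its corollaries, which additionally require the $1$-jet to be a local walk, an assumption absent here). More to the point, the interchange you flag as the ``main obstacle'' is exactly what Theorem~\ref{thm:pls-walk-0} already resolves, by bounding $\Was_1(\upmu_z^\varepsilon,{^1}\upmu_z^\varepsilon)=O(\varepsilon^{1+\min\alpha_i})$ and reducing to the time-affine case of Theorem~\ref{eq:wd-affine}. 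That you still see the interchange as an open problem is a signal that the reduction to pleasant walks has not actually been carried out: once the four conditions of Definition~\ref{defn:pls-walk} are verified, the existence of $\olric$ follows with nothing further to prove.
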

\par Good operators are translation invariant essentially self-adjoint operators that are roughly of divergence form; see~Definition~\ref{def:good-op}. We say $\G$ is locally $\dist$-finite if metric balls are finite.
\begin{theorem}
	Let $\G$ be locally $\dist$-finite. Then, $\olric$ is well-defined for Markovian walks $e^{\varepsilon {\mathcal L}} \updelta_z$  when $\mathcal{L}$ is a good operator and it satisfies
	\begin{enumerate}
	\item $e^{\varepsilon \mathcal{L}}$ satisfies rough comparison principle with range $0$;
	\item   $\mathcal{L}$ is a semi-local operator with range $R$;
	\item $\mathcal{L}$ satisfies rough comparison principle on $\Lip_1(\G)$ with range $2R$;
	\item $\left| \mathcal{L} \dist(z,\cdot)(w) \right| \le C(z)$ holds $\forall w \in \B_{2R}(z)$.
\end{enumerate}
\end{theorem}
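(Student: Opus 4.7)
The plan is to follow the template of the preceding theorem, trading the global distance-functional bound for the combination of semi-locality, local $\dist$-finiteness, and the rough comparison principle for $\mathcal{L}$ on $\Lip_1(\G)$ at range $2R$.

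First, by Kantorovich-Rubinstein duality together with the translation invariance and essential self-adjointness built into the definition of a good operator,
\[
\Was_1\!\left( e^{\varepsilon \mathcal{L}} \updelta_x,\, e^{\varepsilon \mathcal{L}} \updelta_y \right) \;=\; \sup_{f \in \Lip(1)} \bigl( e^{\varepsilon \mathcal{L}} f(x) - e^{\varepsilon \mathcal{L}} f(y) \bigr),
\]
and the rough comparison principle of range $0$ for $e^{\varepsilon \mathcal{L}}$ lets one restrict this supremum to $1$-Lipschitz functions $f$ normalized by $\nabla_{xy} f = 1$, so that at $\varepsilon = 0$ the supremum returns $\dist(x,y)$.

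Next, semi-locality of $\mathcal{L}$ with range $R$ implies that $\nabla_{yx} \mathcal{L} f$ depends only on the restriction of $f$ to $\B_R(x) \cup \B_R(y)$, which is a finite set by local $\dist$-finiteness. Using the rough comparison principle for $\mathcal{L}$ on $\Lip_1(\G)$ with range $2R$, an almost-optimizer $f \in \Lip(1)$ can be modified outside $\B_{2R}(x) \cup \B_{2R}(y)$ without altering either $\nabla_{xy} f$ or $\nabla_{yx} \mathcal{L} f$: one produces a representative that is $1$-Lipschitz on $\B_{2R}(x) \cup \B_{2R}(y)$ and harmlessly extended beyond. A Taylor expansion in $\varepsilon$ of $e^{\varepsilon \mathcal{L}} f(x) - e^{\varepsilon \mathcal{L}} f(y)$ to first order yields the leading term $\varepsilon \cdot (\mathcal{L} f(x) - \mathcal{L} f(y))$, while the hypothesis $|\mathcal{L}\dist(z,\cdot)(w)| \le C(z)$ for $w \in \B_{2R}(z)$ gives the uniform remainder bound needed to differentiate under the supremum: one decomposes a $1$-Lipschitz $f$ on the finite ball as a combination of distance functions modulo constants, which is precisely the step where the preceding theorem's global bound was used, now made local.

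Passing to $\varepsilon \downarrow 0$ and using compactness of $1$-Lipschitz functions modulo constants on the finite set $\B_{2R}(x) \cup \B_{2R}(y)$, one extracts a subsequential limit $f^\star$ of almost-optimizers and obtains the limit-free variational identity
\[
\olric(x,y) \;=\; \inf_{\substack{f \in \Lip(1) \\ \nabla_{xy} f = 1}} \nabla_{yx} \mathcal{L} f,
\]
from which well-definedness of $\olric(x,y)$ follows. The main obstacle is the localization step: verifying that the modification of a generic $f \in \Lip(1)$ to a function determined by its values on $\B_{2R}(x) \cup \B_{2R}(y)$ preserves both the normalization $\nabla_{xy} f = 1$ and the value of $\nabla_{yx} \mathcal{L} f$, without introducing a loss in the Lipschitz constant across the boundary of the ball. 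The rough comparison principle on $\Lip_1$ at range $2R$ is the precisely tailored device here; handling the transition region cleanly, so that the uniform remainder estimate survives and the infimum is actually attained, is where the argument must be executed with care.
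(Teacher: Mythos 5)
Your proposal takes a genuinely different route from the paper, but it does not close the argument and you acknowledge as much in your last paragraph. The paper's proof of this statement is a one-line reduction: it invokes Proposition~\ref{prop:st-comp}, which under hypotheses (2)--(4) together with $\mathcal{L}$ being a good operator (hence a rough differential operator, weakly of divergence form) shows that $\text{-}\,\mathcal{L}$ satisfies a rough comparison principle with range $0$. Combined with (1) and the trivial specialization $w=z$ of (4), the hypotheses of Theorem~\ref{thm:pls-walks} are met verbatim, and that theorem finishes the job by identifying $e^{\varepsilon\mathcal{L}}\updelta_z$ as a pleasant walk.

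You instead attempt to re-derive well-definedness directly, essentially trying to establish the limit-free formulation for this class of walks from Kantorovich--Rubinstein duality, semi-locality, and Taylor expansion. The critical step in that route is exactly the one you flag as ``the main obstacle'': you must show that replacing $e^{\varepsilon\mathcal{L}}$ by its $1$-jet $\mathbb{I} + \varepsilon\mathcal{L}$ introduces an error of order $o(\varepsilon)$ in $\Was_1$, \emph{uniformly} over $1$-Lipschitz test functions, and you cannot localize naively because $e^{\varepsilon\mathcal{L}}\updelta_z$ is not compactly supported even when $\mathcal{L}$ is semi-local. This is precisely what the pleasant-walk estimate $\Was_1(\upmu_z^\varepsilon, {}^1\upmu_z^\varepsilon) = \mathcal{O}(\varepsilon^{1+\alpha})$ delivers via a rough comparison principle for the remainder operator $\mathcal{J}^\varepsilon = e^{\varepsilon\mathcal{L}} - \varepsilon\mathcal{L} - \mathbb{I}$, and it is exactly what hypotheses (1)--(4) are engineered to verify through Proposition~\ref{prop:st-comp}. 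Your invocation of ``decomposing a $1$-Lipschitz $f$ as a combination of distance functions'' is not how the paper gets this bound; the paper truncates with the explicit cutoff $\upchi$ and uses weak divergence form to transfer the bound from $\B_{2R}$ back to the minimum point. Without carrying out that argument or the reduction to Theorem~\ref{thm:pls-walks}, the proposal leaves the key uniformity unestablished, and hence the existence of the $\varepsilon \downarrow 0$ limit remains unproven. A secondary inaccuracy: the restriction to test functions with $\nabla_{xy}f = 1$ does not follow directly from the rough comparison principle for $e^{\varepsilon\mathcal{L}}$ at $\varepsilon$ fixed; it emerges only in the limit, by the argument in Theorem~\ref{thm:lff}.
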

We will refer the reader to~\textsection\thinspace\ref{sec:ex-heat} for the above existence  theorems in the spacial setting where the walk is the heat kernel i.e. where $\upmu_z^\varepsilon = e^{\varepsilon\Delta}\delta_z$.
\subsubsection*{\small \bf \textsf{Operator theoretic Ollivier-Ricci curvature}}
\par Let 
$
\mathcal{L}: \R^G \supset \mathsf{Dom}(\mathcal{L}) \to \R^G
$
be an operator with
$ \mathcal{C}_{\sf fs}(\G)  \subset  \mathsf{Dom}(\mathcal{L}) $.
We define operator-theoretic Ollivier-Ricci curvature by
\[
\olric_{\mathcal{L}}(x,y) := \inf_{\substack{ f \in \mathcal{C}_{\sf fs}(\G)\\ f \in \Lip(1)\\ \nabla_{xy} f = 1 }} \nabla_{yx}  \mathcal{L}f.
\]
\begin{theorem}
	Suppose $\mathcal{L}$ satisfies the following properties
	\begin{enumerate}
		\item $\dist(z,\cdot) \in \mathsf{Dom}\left( \mathcal{L} \right)$, $\forall z$;
		\item $\mathcal{L}$ is a rough differential operator;
		\item $\mathcal{L}$ satisfies a two-sided rough comparison principle.
	\end{enumerate}
	Then, $\olric_{\mathcal{L}}(x,y)$  is finite. 
\end{theorem}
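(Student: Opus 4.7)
The plan is to prove both an upper bound and a lower bound on $\olric_{\mathcal{L}}(x,y)$, using assumption (1) to produce an explicit admissible test function (finiteness of the infimum from above), and using assumptions (2) and (3) to bound the objective uniformly from below over the admissible set.

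\textbf{Feasibility and the upper bound.} Let $F(\cdot) := \dist(x,\cdot)$. By assumption (1), $F \in \mathsf{Dom}(\mathcal{L})$. It is $1$-Lipschitz and satisfies $F(y)-F(x) = \dist(x,y)$, hence $\nabla_{xy} F = 1$. The function $F$ need not have finite support, so to land in $\mathcal{C}_{\sf fs}(\G)$ I would invoke the rough differential property (2): there is a graph-range $R$ such that $\mathcal{L}g(z)$ depends only on $g$ on the (finite, by local finiteness of $\G$) graph-ball of radius $R$ around $z$. Let $\mathcal{K} := B^{\text{graph}}_R(x)\cup B^{\text{graph}}_R(y)$; I construct $\tilde F \in \mathcal{C}_{\sf fs}(\G)\cap \Lip(1)$ that agrees with $F$ on $\mathcal{K}$ via McShane-type extension followed by truncation outside a sufficiently large set (capping $F$ at a level that decays to $0$ with Lipschitz slope $1$). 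By the rough-differential property, $\mathcal{L}\tilde F(x) = \mathcal{L}F(x)$ and $\mathcal{L}\tilde F(y) = \mathcal{L}F(y)$, and $\nabla_{xy}\tilde F$ is unchanged. Hence
\[
\olric_{\mathcal{L}}(x,y) \;\le\; \nabla_{yx}\mathcal{L}\tilde F \;=\; \nabla_{yx}\mathcal{L}F \;<\; \infty.
\]

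\textbf{The lower bound.} Let $f \in \mathcal{C}_{\sf fs}(\G)\cap\Lip(1)$ with $\nabla_{xy}f = 1$; after subtracting the constant $f(x)$ (which does not affect $\nabla_{yx}\mathcal{L}f$ once one checks $\mathcal{L}$ annihilates constants on the two-point difference, or is absorbed into the roughness), one has
\[
-\dist(x,\cdot)\;\le\;f\;\le\;\dist(x,\cdot).
\]
The two-sided rough comparison principle (3) then gives, at both $z = x$ and $z = y$,
\[
\bigl|\mathcal{L}f(z)\bigr| \;\le\; \max\!\bigl\{\mathcal{L}\dist(x,\cdot)(z),\;-\mathcal{L}\bigl(-\dist(x,\cdot)\bigr)(z)\bigr\} + C,
\]
where $C$ is the roughness constant. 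By assumption (1) the right-hand side is finite, independently of $f$. Consequently $\nabla_{yx}\mathcal{L}f$ is bounded below uniformly over the admissible set, which yields $\olric_{\mathcal{L}}(x,y) > -\infty$.

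\textbf{Where the difficulty lies.} The one genuinely delicate step is the upper bound: producing a \emph{finite-support} $1$-Lipschitz representative of $\dist(x,\cdot)$ on $\mathcal{K}$ without changing $\mathcal{L}F(x),\mathcal{L}F(y)$. If metric balls are not finite, a brute-force truncation of $F$ may destroy the Lipschitz condition on the boundary of the support; the fix is to combine the graph-locality coming from (2) with a McShane extension of $F\!\restr_{\mathcal{K}}$ and an outward cap of slope $1$. The lower bound, by contrast, is essentially a direct application of the sandwich $\pm\dist(x,\cdot)$ together with (3); the only care required there is to make sure the one-sided comparison is applied at the two base points $x$ and $y$ with a constant independent of $f$, which is exactly the content of the \emph{two-sided} rough comparison principle.
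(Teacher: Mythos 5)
Your lower-bound strategy is recognizably the right idea, but there are two genuine problems, one of which is a misreading of the paper's definitions that collapses the upper-bound half of your argument.

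First, \emph{rough differential operator} does not mean locality. In the paper it means precisely that constants lie in the kernel of $\mathcal{L}$, i.e.\ $\mathcal{L}$ is translation-invariant; the property you describe (``$\mathcal{L}g(z)$ depends only on $g$ on a graph-ball of radius $R$ around $z$'') is what the paper calls \emph{semi-local}, which is a separate hypothesis that this theorem does not assume. (The paper even has a companion theorem for the ``bounded semi-local'' case, which is the one your upper-bound argument would actually prove.) Consequently the truncation $\tilde F$ of $\dist(x,\cdot)$ you propose has no reason to satisfy $\mathcal{L}\tilde F(x)=\mathcal{L}F(x)$, and the entire ``Feasibility and the upper bound'' paragraph does not go through under the stated hypotheses.

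Second, and more importantly, a separate upper bound is unnecessary. The two-sided comparison applied correctly gives a \emph{two-sided} uniform bound on the objective over the whole feasible set, not just a lower bound. The paper's argument: normalize $f(x)=0$, $f(y)=\dist(x,y)$ by translation invariance (assumption 2). Then $\tfrac{1}{2}\bigl(\dist(x,\cdot)-f\bigr)$ is a nonnegative $1$-Lipschitz function with a global minimum at $x$, so the two-sided comparison (assumption 3) yields $\bigl|\mathcal{L}\dist(x,\cdot)(x)-\mathcal{L}f(x)\bigr|\le 2C(x)$; similarly $\tfrac{1}{2}\bigl(\dist(y,\cdot)-f+\dist(x,y)\bigr)$ is nonnegative $1$-Lipschitz with a global minimum at $y$, giving the analogous bound at $y$. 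Together these bound $\nabla_{yx}\mathcal{L}f$ uniformly from above and below, and finiteness follows directly. Your lower-bound paragraph is in this spirit, but the written inequality is garbled: by linearity $-\mathcal{L}(-\dist(x,\cdot))=\mathcal{L}\dist(x,\cdot)$, so the $\max$ is vacuous, and more importantly you cannot get a bound at $z=y$ directly from the sandwich $-\dist(x,\cdot)\le f\le\dist(x,\cdot)$ without identifying a $1$-Lipschitz auxiliary function that attains its global minimum \emph{at $y$} -- either observe that $\tfrac12(\dist(x,\cdot)-f)$ happens to vanish (hence minimize) at $y$ as well because $f(y)=\dist(x,y)$, or use the paper's second auxiliary function. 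Without that step the two-sided comparison principle, as defined, simply does not apply at $y$. Also note the constant at $y$ is $C(y)$, not a single $C$ independent of the basepoint. If you fix the $y$-side auxiliary function and drop the upper-bound paragraph altogether, your argument becomes essentially the paper's.
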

See Definition~\ref{def:two-sided} for two-sided comparison principle. Roughly speaking, semi-local means $\mathcal{L}f(x)$ only depend on the values of $f$ on a metric ball around $x$; rough differential operator means the operator is translation invariant; see~\textsection\thinspace\ref{sec:op-theo-ric}. 
\begin{theorem}
	In locally $\dist$-finite $\G$, $\olric_{\mathcal{L}}(x,y)$ is finite for bounded semi-local rough differential operators $\mathcal{L}$. 
\end{theorem}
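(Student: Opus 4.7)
The plan is to verify directly from the definition of $\olric_{\mathcal{L}}(x,y)$ that both the admissible class in the variational problem is non-empty and the objective $\nabla_{yx}\mathcal{L}f$ is uniformly bounded on this class; together these give finiteness.

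For feasibility, I would exhibit one explicit admissible test function. Writing $R$ for the semi-local range of $\mathcal{L}$, set $K := \dist(x,y) + R + 1$ and take
\[
f(z) := \max\{0,\, K - \dist(y,z)\}.
\]
Local $\dist$-finiteness forces the ball $\B_K(y)$ to be finite, so $f \in \mathcal{C}_{\sf fs}(\G)$; being a maximum of two $1$-Lipschitz functions, $f \in \Lip(1)$; and $f(y)-f(x) = \dist(x,y)$ gives $\nabla_{xy}f = 1$. This alone shows $\olric_{\mathcal{L}}(x,y) < \infty$.

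For the uniform lower bound, I would exploit the three structural hypotheses in sequence. First, since $\mathcal{L}$ is a rough differential operator (translation invariant under addition of constants), one has $\mathcal{L}f = \mathcal{L}\tilde f$ for the normalization $\tilde f := f - f(x)$; now $\tilde f(x)=0$ and $\tilde f\in\Lip(1)$, so $|\tilde f(z)| \le \dist(x,z) \le \dist(x,y) + R$ on $\B_R(x)\cup \B_R(y)$. Second, by semi-locality $\mathcal{L}\tilde f(x)$ depends only on $\tilde f|_{\B_R(x)}$ and $\mathcal{L}\tilde f(y)$ only on $\tilde f|_{\B_R(y)}$; replacing $\tilde f$ with its zero-extension outside $\B_R(x)\cup\B_R(y)$ leaves both values of $\mathcal{L}\tilde f$ at $x$ and $y$ unchanged, and produces a bounded function of sup-norm at most $\dist(x,y)+R$. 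Third, boundedness of $\mathcal{L}$ then yields a constant $C=C(\mathcal{L})$ with $|\mathcal{L}f(w)| \le C(\dist(x,y)+R)$ for $w\in\{x,y\}$, whence $|\nabla_{yx}\mathcal{L}f| \le 2C(\dist(x,y)+R)/\dist(x,y)$ uniformly over admissible $f$.

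The delicate point will be matching the ``global'' notion of operator boundedness (e.g.\ $\ell^\infty \to \ell^\infty$) with the effectively ``local'' pointwise bound $|\mathcal{L}f(z)| \le C\,\|f\|_{\infty,\B_R(z)}$ that the above argument actually needs; this is bridged by the zero-extension step, which is legitimate precisely because of semi-locality. Beyond that, the argument is a clean combination of translation invariance (to normalize the test function to vanish at $x$) and local $\dist$-finiteness (to make $\B_R(x)\cup \B_R(y)$ finite so that the zero-extended $\tilde f$ belongs to whatever ambient space $\mathcal L$ is bounded on), together reducing a potentially infinite-dimensional infimum to a uniformly controlled quantity.
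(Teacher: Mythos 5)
Your argument follows the paper's route essentially verbatim: use translation invariance of a rough differential operator to normalize $f(x)=0$, use semi-locality to localize the test function to a finite neighborhood of $\{x,y\}$, and then invoke sup-norm boundedness of $\mathcal{L}$ to bound $|\nabla_{yx}\mathcal{L}f|$ uniformly; your explicit feasibility witness $f(z)=\max\{0,K-\dist(y,z)\}$ is a sound addition the paper leaves implicit. The one place to tighten is that the zero-extension $g$ you substitute for $\tilde f$ is not $1$-Lipschitz, whereas the paper's definition of semi-locality is stated only on $\mathbb{W}\cap\Lip(1)$; for a linear $\mathcal{L}$ this is harmless (rescale $g-\tilde f$, which has finite Lipschitz constant on the finite set $\B_R(x)\cup\B_R(y)$), but the paper avoids the issue altogether by citing the $\Lip$-preserving cut-off of Theorem~\ref{thm:loc-dual} rather than a raw zero-extension.
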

Here, boundedness of the operator is with respect to the sup-norm. Furthermore, notice in above theorems, for any two such operators $\mathcal{L}_1$ and $\mathcal{L}_2$, the concavity relation
	\[
	\olric_{t\mathcal{L}_1 + (1-t)\mathcal{L}_2} \ge t  \olric_{\mathcal{L}_1} + (1-t) \olric_{\mathcal{L}_2}
	\]
should clearly hold. 
\subsubsection*{\small \bf \textsf{Continuous-time generalized Ollivier-Ricci flows}}
The continuous-time generalized Ollivier-Ricci curvature flow equation ${\sf ORF}_{f,g,c}$ is  the ODE system
\[{\sf ORF}_{f,g,h} :
\begin{cases}\label{eq:ode}
	\dot{\omega} = f(t, \omega, \dist, \m, \olric )\\
	\dot{\dist} =  g\left(t, \omega, \dist, \m, \olric \right)\\
	\dot{\m} = h\left(t, \omega, \dist, \m, \oscal \right) 
\end{cases}
\]
for given functions $f,g$ and $h$.  See~\textsection\thinspace\ref{sec:CTRF} for more details.
\begin{theorem}
	Let $\G$ be a finite graph. Suppose a time-analytic random walk  $\upmu_x^\varepsilon$ depends on $\omega$ and $\m$ in a locally Lipschitz continuous way. Also suppose $f,g,h$ are each locally uniformly Lipschitz in the $t$ variable and locally Lipschitz in the other variables.  
	Then, for any initial data in the interior of te phase space, there exists a unique solution to this flow and the maximal solution exists as long as the primary edge weight $\omega$ stays non-negative and $m$ stay positive and $\dist$ stays a distance. In particular, the Ollivier-Ricci flow
	\[
	\dot{\omega} = -\olric \cdot \omega, \quad \dist= \dist_\omega.
	\]
	admits unique solutions. 
\end{theorem}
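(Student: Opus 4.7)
The plan is to cast ${\sf ORF}_{f,g,h}$ as a standard initial-value problem in the finite-dimensional phase space $P$ obtained by concatenating the entries of $\omega$, $\dist$, and $\m$; since $\G$ is finite, $P$ is an open subset of a Euclidean space, namely the set where $\omega$ is positive on edges, $\m$ is positive on vertices, and $\dist$ strictly satisfies the metric axioms. On $P$, I invoke the Carath\'eodory version of Picard-Lindel\"of (which accommodates the locally uniform Lipschitz-in-$t$ hypothesis) to get local existence and uniqueness.

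The first step is to verify that the right-hand side $F=(f,g,h)$ is locally Lipschitz in $(\omega,\dist,\m)$ and locally uniformly Lipschitz in $t$. By the hypotheses on $f,g,h$, it suffices to show that $\olric$ (and hence $\oscal$, which is a finite combination of $\olric$-values) is locally Lipschitz in $(\omega,\dist,\m)$ on $P$. This is where the earlier Lipschitz continuity results for the continuous-time Ollivier-Ricci curvature enter: since the random walk $\upmu_x^\varepsilon$ is assumed to depend in a locally Lipschitz way on $\omega$ and $\m$, composing this dependence with the local Lipschitz dependence of $\olric$ on $\upmu_z$ and on $\dist$ yields local Lipschitz dependence on all three arguments.

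Once local existence and uniqueness is in hand, the classical extension argument produces a unique maximal solution $(\omega_t,\dist_t,\m_t)$: a solution defined on $[0,T)$ extends past $T$ unless it exits every compact subset of $P$; equivalently, the maximal solution persists exactly until some entry of $\omega$ touches zero, some entry of $\m$ touches zero, or $\dist$ ceases to be a distance, which is the maximality statement. For the specialized flow $\dot\omega=-\olric\cdot\omega$ with $\dist=\dist_\omega$, the distance is no longer an independent dynamical variable but a function of $\omega$; since the length-type distance $\omega\mapsto\dist_\omega$ is locally Lipschitz on $\{\omega>0\}$ (a routine consequence of expressing $\dist_\omega(x,y)$ as a minimum of sums over the finitely many simple paths of the finite graph $\G$), the composition $\omega\mapsto\olric(\omega,\dist_\omega,\m)$ is locally Lipschitz in $\omega$, and the flow reduces to an $\omega$-only ODE to which the general statement applies.

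The main technical point to watch will be the regularity of $\omega\mapsto\dist_\omega$ in the Ollivier-Ricci flow case: although this map is only Lipschitz (not $C^1$) because the shortest path can change with $\omega$, local Lipschitz continuity is exactly what Picard-Lindel\"of requires, so no additional regularity is needed. Apart from this bookkeeping, the proof amounts to assembling the previously established local Lipschitz regularity of $\olric$ together with standard finite-dimensional ODE theory, which is as one would expect for an ``in particular'' corollary of the developed machinery.
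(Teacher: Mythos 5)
Your proposal is correct and follows essentially the same route as the paper: cast ${\sf ORF}_{f,g,h}$ as a finite-dimensional ODE on the (open) phase space where $\omega,\m>0$ and $\dist$ is an honest distance, apply Picard--Lindel\"of using the local Lipschitz regularity of $\olric$ established in Theorem~\ref{thm:lip} and Corollary~\ref{cor:lip-multi-weight}, and extend to a maximal solution via the standard escape-from-compacta criterion. Your explicit remark that $\omega\mapsto\dist_\omega$ is locally Lipschitz (as a minimum of finitely many path-length sums) is a small but useful clarification of the step the paper handles implicitly through Corollary~\ref{cor:lip-multi-weight} for the ``in particular'' flow; invoking the Carath\'eodory formulation of Picard--Lindel\"of is harmless but more than the ``locally uniformly Lipschitz in $t$'' hypothesis actually requires.
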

We note that once a local Lipschitz continuity of $\olric$ is at our disposal, we can show existence and uniqueness a wide range of similar ODE systems not necessarily restricted to the form \eqref{eq:ode}.
\subsubsection*{\small \bf \textsf{Miscellaneous}}
\par In order to get an upper bound on the number of polynomial pieces in discrete-time generalized Ollivier-Ricci curvature for time-polynomial local walks, we  needed to prove an upper bound theorem on the number of vertices  of a bounded convex polytope in $\R^N$ in terms of the number of its facet; the latter was achieved by using and slightly modifying the famous McMulen's upper bound theorem~\cite{McM}; see Theorem~\ref{thm:ub}. This might be also of independent interest to convex  geometers. 
\par Also we needed to discuss the Lipschitz sensitivity  of the optimal value function in linear programming problems, to perturbations of the constraints and objective functions; for that we built upon the perturbation theorem of Renegar~\cite{Ren}; this also might be of independent interest; see Theorem~\ref{thm:piecewise-anal}. 
\addtocontents{toc}{\protect\setcounter{tocdepth}{-1}}
\section*{\small \bf  Organization of the Materials}
\addtocontents{toc}{\protect\setcounter{tocdepth}{1}}
In~\textsection\thinspace\ref{sec:setup}, we present some calculus and differential geometric  tools on weighted graphs;~\textsection\thinspace\ref{sec:upper-bound}
is devoted to obtaining an upper bound on the number of vertices in a convex polytope when the number of facets and the ambient dimension is known;~\textsection\thinspace\ref{sec:sens-anal} discusses sensitivity analysis of the optimal value function in linear programming; in~\textsection\thinspace\ref{sec:oper-theo}, we gather some operator-theoretic notions and tools that we will need in later sections; 
the main \textsection\thinspace\ref{sec:edge-ric} discusses the definition, existence and properties of both discrete-time and continuous-time Ollivier-Ricci curvature are discussed using the results obtained in earlier sections and finally, in~\textsection\thinspace\ref{sec:CTRF}, we apply the results obtained to continuous-time generalized Ollivier-Ricci curvature flows to establish existence and uniqueness.
\addtocontents{toc}{\protect\setcounter{tocdepth}{-1}}
\section*{\small \bf  Acknowledgements}
\addtocontents{toc}{\protect\setcounter{tocdepth}{1}}
\vspace{-160pt}
	\begin{minipage}[t][7cm][b]{0.87\textwidth}
		\small
		\renewcommand{\labelitemi}{\raisebox{2pt}{\scalebox{1.2}{$\centerdot$}}}
		\begin{itemize}
	\item SL acknowledges partial support by the \emph{ Kazemi Ashtiani early career award}; awarded by the \emph{Iran's National Elites Foundation}; \medskip
		\item SL acknowledges partial support from \emph{IPM, Grant No. 1400460424} --  as part of the project: ``Geometric and analytic methods in studying complex systems''.
\end{itemize}
\end{minipage}
\normalsize
\vspace{10pt}
\section{A quick setup of discrete calculus}\label{sec:setup}
\subsection{Weighted graphs}
\par An un-directed locally finite weighted graph $\G$ is a non-negative symmetric primary weight function $\omega:\mathbb{Z}^{2} \to \R$ with vertex set
\[
V := \{x \in \Z  \;\; \text{\textbrokenbar}\;\;   \exists y \in \Z \;\; \text{s.t.} \;\;  \omega(x,y)>0 \;\; \text{or} \;\; \omega(x,y)>0   \},
\]
and edge set
\[
E := \{(x,y) \in \Z^{2} \;\; \text{\textbrokenbar}\;\;  \omega(x,y)>0    \}/\sim ,\quad (x,y)\sim (y,x).
\]
in which for every $x$, $\omega(x,\cdot)$ has finite support i.e. all vertices have finite combinatorial degrees. 
\par Throughout these notes, all graphs are assumed to be locally finite. For simplicity, $x\sim y$ means there is an edge between $x$ and $y$. We set $\omega_{xy} := \omega(x,y)$. The combinatorial degree of a vertex $x$ is denoted by $\deg_x$ while we signify the the weighted degree by
\[
\Deg_x :=  \nicefrac{1}{{\m}(x)} \sum_{y\sim x} \omega_{xy} =: {\substack{\omega\\ \m} \underset{y\sim x}{\sum}} 1;
\]
here, ${\substack{\omega\\ \m} \underset{y\sim x}{\sum}}$ is our adopted notation to represent the doubly (vertex and edge) weighted summation.
\subsubsection*{\small \bf  \textsf{Weighted path distance}}
There is a variety of ways to assign a distance function to a graph. We will only recall the most common one which is the weighted path distance. Let $\eta$ be a symmetric edge-weight function. The distance $\dist_\eta$ is defined by
\begin{align}\label{eq:dist}
\dist_\eta \left( x,z \right)= \inf_{z_0\sim z_1 \sim \dots \sim z_n} \sum_{i=0}^{n-1}\eta_{z_iz_{i+1}}.
\end{align}
For example, for $\eta \equiv 1$, one retrieves the combinatorial distance $\dist$.  Paths that minimize the  $\dist_\eta$ distance are called $\dist_\eta$-geodesics. 
\subsection{Calculus tools}
Recall $\dist$ is a distance on $\G$ and we always assume $\left(  \G, \dist\right)$ is metrically complete. The differential-geometric objects defined in this section are mostly depending on the distance $\dist$.
\subsubsection*{\small \bf \textsf{Tangent space and space of directions}}
\begin{definition}[Direction space at a vertex]
The direction space at a vertex $x\in \G$ is given by
	\[
	\Dir_x(\G) := \left\{ \text{all the $\dist$-geodesics emanating from $x$} \right\}.
	\]
	So, $\mathsf{Geod}(\G) = \bigcup\limits_{x \in \G} \Dir_x(\G) =: \Dir \G$ is the set of all $\dist$-geodesics. We will denote the length of a geodesic $\upgamma$ by $\|\upgamma\|$. 
\end{definition}
Set
$
\mathbb{L}_x := \dist(x,\cdot)^{-1}(\G)
$
which is an ordered subset 
$
\left\{0 = \ell_{x,0} , \ell_{x,1}, \dots, \ell_{x,n_x}  \right\} \subset \R_{\ge 0},
$
providing all the distances realized by geodesics emanated from $x$;
here, $n_x$ is the combinatorial length the longest geodesic emanating from $x$. The set $\mathbb{L}_x$ can be written as the union of $\mathbb{L}_{x\sim y}$ (distances set off in the direction $x\sim y$) where
\[
\mathbb{L}_{x\sim y} := \left\{ \ell \in \mathbb{L}_x   \;\; \text{\textbrokenbar}\;\; \exists \upgamma \in \Dir_x(\G) \;\; \text{with}\;\; y \in \upgamma \;\; \text{and}\;\; \|\upgamma\| = \ell \right\}.
\]
\begin{definition}[Tangent space]
\par We define the tangent space at a vertex $x\in \G$ to be
	\[
	\Tan_x(\G) := \bigcup\limits_{y\sim x} \left\{  (x,y) \right\} \times \mathbb{L}_{x\sim y} / \sim.
	\]
where  $(x,y,0) \sim (x,z,0)$.  So a tangent vector specifies an ``infinitesimal direction'' $x\sim y$ and a length $\ell$.  Set $\Tan \G:= \bigcup\limits_{x \in \G} \Tan_x\G$. The length of tangent vectors are obviously defined by 
$
\| (x,y,\ell) \| = \ell.
$
\end{definition}
\par The tangent sphere, $\mathbb{S}^1_x \G$, is the set of all unit tangent vectors at $x$. The union of all these, is called the unit sphere bundle $\mathbb{S}^1 \G$; notice the latter might very well be an empty set. 
\par The tangent space $\Tan_x(\G)$ is manifestly depending on the distance $\dist$; a fact which is in contrast to the construction of tangent space in the smooth setting. 
\subsubsection*{\small \bf \textsf{Evaluation, exponential and logarithm maps}}
\par For any $\upgamma \in \Dir_x(\G)$, the distances realized by $\upgamma$ are
\[
\mathbb{L}_{x,\upgamma} := \dist(x,\cdot)^{-1}\left(\mathrm{image}(\upgamma)\right) = \left\{ 0 = \ell^0_{x,\upgamma}, \ell^1_{x,\upgamma}, \dots \ell^{n_{x,\upgamma}}_{x,\upgamma} \right\} \subset \mathbb{L}_x,
\]
where $n_{x,\upgamma}$ is the combinatorial length of $\upgamma$. Define the evaluation maps ${\sf e}: \mathrm{Geod}(\G) \times \mathbb{L} \to \G$ by
\[
{\sf e}\left( \upgamma, \ell^m_{\upgamma(0), \upgamma} \right) = \left\{ v \in \mathrm{image}(\upgamma)  \;\; \text{\textbrokenbar}\;\;  \dist(\upgamma_0, v) =\ell^m_{\upgamma(0), \upgamma}   \right\}, \quad m\le n_{\upgamma(0), \upgamma};
\]
So, $\mathsf{e}\left( \upgamma, \ell^m_{\upgamma(0), \upgamma} \right)$ simply gives the $m$-th vertex along the geodesic $\upgamma$. 
\begin{definition}[exponential and logarithm maps]
	The exponential map
	$
	\bar{\exp}_x: 	\Tan_x(\G) \to \G
	$
	is given by 
	\begin{align*}
	&\bar{\exp}_x(x,y,\ell) := \\ & \left\{ z \in \G \;\; \text{\textbrokenbar}\;\; \dist(x,z) =\ell \;\; \text{and there exists a geodesic from $x$ to $z$ passes through $y$}  \right\}.
	\end{align*}
	It is readily evident that $\exp_x$ is injective on $\B_{r_0(x)}(x)$ where $r_0(x) := \min_{y\sim x} \dist(x,y)$; hence, we dub $r_0$, $\mathrm{injrad}(x)$. In general, $\bar{\exp}_x$ and its inverse are multi-valued maps. The multi-valued inverse  which is denoted by $\bar{\log}_x: \G\to \Tan_x\G$ is defined via
	\[
\bar{\log}_x (z) := \left\{ e \in T\G\;\; \text{\textbrokenbar}\;\;  \bar{\exp}_x(e) \ni z   \right\}.
	\]
\end{definition}
\par There is a single-valued map
$
\mathrm{iv}_x: \Dir_x\G\to \Tan_x\G
$
given by 
\[
\mathrm{iv}_x \left(\upgamma \right) := \left( x, \upgamma_1, \mathrm{length}_\eta(\upgamma)\right),
\]
where $\upgamma:\mathbb{L}_x \to G$ is a combinatorial geodesic; ``$\rm{iv}$'' stands for initial velocity; the multi-valued inverse map
$
\mathrm{iv}^{-1}_x: \Tan_x\G\to \Dir_xG
$
which is given by
\[
\mathrm{iv}^{-1}_x \left( (x,y,\ell) \right) := \left\{\text{all geodesics emanating from $x$, passing through $y$ and having length $\ell$}  \right\},
\]
is the set of all points that are reachable with initial velocity $(x,y,\ell)$. Indeed, the map $\bar{\exp}$ factors through the map $\exp$ via the map $\mathrm{iv}^{-1}$; thus, one gets the single-valued map 
$
\exp_x: \Dir_x\G\to \G
$
and its multi-valued inverse $\log_x: \G\to \Dir_x\G$; see the diagram in Figure~\ref{Fig:1}. 
\begin{center}
	\begin{figure}
\begin{tikzpicture}
	\matrix (m) [matrix of math nodes, row sep=3em,
	column sep=3em]{
		 \Tan_x\G &&\Dir_x\G\\
		  \G& &  \\};
	\path[-stealth]
	(m-1-1) edge node[midway, left] {$ \bar{\exp}_x$} (m-2-1);
	\path[-stealth]
	(m-1-3) edge node[midway, below] {$\hspace{15pt} \exp_x$} (m-2-1);
		\path[-stealth]
	(m-1-1) edge node[above] {$\mathrm{iv^{-1}}$} (m-1-3);
\end{tikzpicture}
\caption{Commuting diagram (in the sense of multi-valued maps) of the exponential maps}
	\label{Fig:1}
\end{figure}
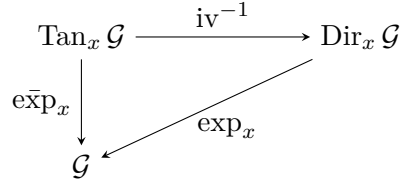
\end{center}
\subsubsection*{\small \bf  \textsf{Differentiation operators}}
\begin{definition}[Differentiation operators]
	The tangent map of a function $f$ is the multi-valued map
$
\nabla_{\centerdot} \; f: \Tan \G\to \R
$
defined by
\[
\bar{\nabla}_e f := \ell^{-1}\left( f(\exp_x(e)) - f(x) \right), \quad (x,y, \ell) = e \in \Tan_x \G,
\]
where, for $n=0$, we set it equal to $0$. The tangent map factors through a single-valued map with domain $\mathsf{Geod}(\G) = \Dir \G$  which we denote by $\nabla$ and call the directional derivative. So with a slight abuse of notations and terminology, for $x$ and $y$ in $\G$, we  write
\[
\nabla_{xy} f := \nabla_{\log_x{y}} f = \dist(x,y)^{-1} \left( f(y) - f(x) \right),
\]
which is the derivative of $f$ in the multi-valued direction $\log_x{y}$. Suppose $\upgamma$ is one such direction, then we sometimes also write $\nabla_{\upgamma}f$ for brevity.   
\end{definition}
\par So with the definition of directional derivative, a function $f$ being $\sf L$-Lipschitz is equivalent to $|\nabla f| \le {\sf L}$; we denote the lowest such ${\sf L}$ by $\lip(f)$. 
\par A special case is when $\dist$ is the combinatorial distance; in this case, we will also denote by $\nabla^1_{\bullet}u: \mathbb{S}^1\G\to \R$, the map that is given by
\[
\nabla^1_{xy} u := \bar{\nabla}_{(x,y,1)}f = f(y) - f(x).
\]
\subsubsection*{\small \bf \textsf{Discrete Laplacian(s)}}
\par The most general form of discrete Laplacian is given by
\[
\Delta f (x):= \m(x)^{-1} \sum_{x \sim y} \left( f(y) - f(x)   \right) \omega_{xy} =: {\substack{\omega\\ \m} \underset{y\sim x}{\sum}}\left( f(y) - f(x)   \right).
\]
\par The special case of the combinatorial graph Laplacian $\mathbf{\Delta}$ is obtained when $\m \equiv 1$ and $\omega \equiv 1$, i.e.
\[
\Delta_{\sf c} f (x):= \sum_{x \sim y} \left( f(y) - f(x)   \right).
\]
\par The normalized Laplacian is also a common one and is obtained if we set ${\m}(x) = \omega (x) :=  \sum_{y\sim x} \omega_{xy}$ i.e. $\Deg \equiv 1$;
\[
\Delta_{\sf n} f (x):= \omega(x)^{-1}\sum_{x \sim y} \left( f(y) - f(x)   \right)  \omega_{xy}.
\]
The normalized Laplacian is particularly important since it is the infinitesimal generator of the weighted random walk; e.g. see~\cite{Barlow}.  
\section{An upper bound result for convex polytopes}\label{sec:upper-bound}
In the next section, we will be dealing with linear programming problems with feasible sets that are bounded convex polytopes in some $\R^N$. One estimate that will be important to us is an upper bound for the number of vertices of a convex polytope in $\R^N$ that has $m$ facets (faces of co-dimension $1$). 
\subsection{\textbf{Upper bound}}
\par We will need the following two elementary lemmas.
\begin{lemma}\label{lem:poly-1}
	Set 
	$
	\h_1(m,r) := \binom{m-r}{r}, 2r \le m;
	$
	the following hold.
	\begin{enumerate}
		\item For fixed $m$,  $\h_1(m,r)$ is an increasing function of $r$ for $0\le r \le  \lfloor  r_{11}(m)\rfloor $ and is  a decreasing  function of $r$  for $  r  \ge \lceil r_{11}(m) \rceil $ where 
		\[
		r_{11}(m) = \nicefrac{\left(5m-3 -\left( 5m^2 + 10m + 9 \right)^{\nicefrac{1}{2}}\right)}{10};
		\]
		\item as a result of (1), one has $\h_1(m,r) \le \h_1\left( m, \lfloor  r_{11}(m) \rfloor \right)  \vee  \h_1\left( m, \lceil r_{11}(m) \rceil  \right)     $.
	\end{enumerate}
\end{lemma}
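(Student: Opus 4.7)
My plan is to analyze the successive ratio
\[
\rho(m,r) := \frac{\h_1(m,r+1)}{\h_1(m,r)} = \frac{\binom{m-r-1}{r+1}}{\binom{m-r}{r}},
\]
since $\h_1(m,r)$ takes positive integer values on the admissible range $0\le r \le m/2$, so increasing/decreasing is equivalent to $\rho(m,r)\ge 1$ or $\rho(m,r)\le 1$. Expanding factorials gives the clean closed form
\[
\rho(m,r) = \frac{(m-2r)(m-2r-1)}{(r+1)(m-r)}, \quad 0\le r \le \tfrac{m}{2}-1,
\]
so the inequality $\rho(m,r) \ge 1$ is equivalent to the polynomial inequality
\[
Q_m(r) := 5r^{2} + (3-5m)r + (m^{2} - 2m) \ge 0,
\]
obtained by cross-multiplying and simplifying $(m-2r)(m-2r-1) - (r+1)(m-r)$.

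Next, I would solve the quadratic $Q_m(r) = 0$. Its discriminant is $(5m-3)^2 - 20(m^2 - 2m) = 5m^{2}+10m+9$, which is positive for all $m\ge 0$, so the two real roots are
\[
r_{\pm}(m) = \frac{5m - 3 \pm \sqrt{5m^{2}+10m+9}}{10},
\]
and by definition $r_{-}(m) = r_{11}(m)$. Because the leading coefficient of $Q_m$ is positive, we have $Q_m(r) \ge 0$ if and only if $r \le r_{-}(m)$ or $r \ge r_{+}(m)$. I would then check the elementary estimate $r_{+}(m) > m/2$ for all $m\ge 1$: this reduces to $\sqrt{5m^2+10m+9} > 3$, which holds trivially for $m\ge 1$. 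Hence in the admissible window $0\le r \le m/2$, the condition $\rho(m,r) \ge 1$ holds \emph{if and only if} $r \le r_{11}(m)$.

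From this dichotomy, item (1) follows directly: for any integer $r$ with $0 \le r \le \lfloor r_{11}(m) \rfloor$ we have $r \le r_{11}(m)$, hence $\h_1(m, r+1) \ge \h_1(m,r)$, giving the claimed monotonic increase; for $r \ge \lceil r_{11}(m) \rceil$ inside the admissible range we have $r > r_{11}(m)$ (strictly if $r_{11}(m) \notin \Z$), so $\h_1(m, r+1) \le \h_1(m,r)$, giving the claimed decrease. Item (2) is then an immediate consequence: since $\h_1(m,\cdot)$ increases up to $\lfloor r_{11}(m)\rfloor$ and decreases from $\lceil r_{11}(m)\rceil$ onward, its maximum over admissible $r$ is attained at one of these two integer neighbors of $r_{11}(m)$, which is exactly the bound
\[
\h_1(m,r) \le \h_1\bigl(m,\lfloor r_{11}(m)\rfloor\bigr) \vee \h_1\bigl(m,\lceil r_{11}(m)\rceil\bigr).
\]

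\textbf{Main obstacle.} There is no real obstacle beyond careful bookkeeping; the only subtlety is handling the edge case where $r_{11}(m)$ happens to be an integer (then $\lfloor r_{11}(m)\rfloor = \lceil r_{11}(m)\rceil$ and $\rho(m,r_{11}(m)) = 1$, so the function is flat at one step), and verifying that the other root $r_{+}(m)$ cannot re-enter the admissible range $[0,m/2]$, which as noted above is an elementary inequality.
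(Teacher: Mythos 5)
Your proof is correct and follows essentially the same route as the paper: both compute the successive ratio $\binom{m-r-1}{r+1}/\binom{m-r}{r}=(m-2r)(m-2r-1)/\big((r+1)(m-r)\big)$, reduce the comparison with $1$ to the sign of the quadratic $5r^2+(3-5m)r+(m^2-2m)$, identify $r_{11}(m)$ as its smaller root, and verify that the larger root exceeds $m/2$ so it never interferes with the admissible range. The only cosmetic difference is that the paper also notes the roots are non-negative when $m\ge 2$, but (as you implicitly rely on) for smaller $m$ the claimed increasing range is empty, so the statement is vacuous there and nothing is lost.
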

\begin{proof}[\footnotesize \textbf{Proof}]
\par Set
	\begin{align*}
	\h_1(m,r+1)\h_1(m,r)^{-1} &= \binom{m-r-1}{r+1}	\binom{m-r}{r}^{-1} \\ & = \nicefrac{(m-2r)(m-2r-1)}{(m-r)(r+1)} := \mathcal{R}_1(m,r).
	\end{align*}
Treating $\mathcal{R}_1$ as a function of real variables, $\mathcal{R}_1(m,r) \ge 1$ is equivalent to 
	\[
	q_1(r) = 5r^2 + \left( -5m+3 \right)r + m^2-2m \ge 0.
	\]
	$q_1(r)$ has real roots $r_1(m) < r_2(m)$ given by
	\[
	r_{11}(m) = \nicefrac{\left( 5m-3 -\left( 5m^2 + 10m + 9\right)^{\nicefrac{1}{2}}\right)}{10}, \quad   r_{12} = \nicefrac{\left( 5m-3 +\left( 5m^2 + 10m + 9\right)^{\nicefrac{1}{2}}\right)}{10}.
	\]
\par It is easy to verify that	for $m\ge 2$, both roots are non-negative. This means for fixed $m\ge 2$, $\mathcal{R}(m,r) \ge 1$  holds for  $0\le r \le r_1(m) $ and $r > r_2(m)$,  and $\mathcal{R}(m,r)  \le 1$ for $r_1 (m) \le r \le r_2(m)$; this verifies (1). 
\par We also have
	\[
	\nicefrac{m}{2} <  \nicefrac{\left( 5m-3 + \left( 5m^2 + 10m  + 9\right)^{\nicefrac{1}{2}}\right)}{10} = r_{12}(m).
	\]
So, as $r$ increases to $\lfloor r_1(m) \rfloor$, $h(m,r)$ increases; and as $r$ increases from $\lceil  r_1(m)  \rceil $ all the way up to $\lfloor  \nicefrac{m}{2} \rfloor$, $\h_1(m,r)$  will decrease. Therefore, its maximum must be attained at either $\lfloor r_1(m) \rfloor$ or at $\lceil  r_1(m)  \rceil $ ;  this proves (2). 
\end{proof}
\begin{lemma}\label{lem:poly-2}
Set 
		$
		\h_2(m,r) := \binom{m-r}{r-1}, 2r \le m + 1;
		$
then, the items in Lemma~\ref{lem:poly-1} hold, using $r_{21}(m)$ instead where
		\[
	r_{21}(m) = \nicefrac{\left( 5m+2 - \left( 5m^2 + 4 \right)^{\nicefrac{1}{2}}\right)}{10}.
	\]
\end{lemma}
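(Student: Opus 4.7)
The plan is to mimic verbatim the argument used in Lemma~\ref{lem:poly-1}, but with the shifted binomial. First I would form the consecutive ratio
\[
\mathcal{R}_2(m,r) := \h_2(m,r+1)\,\h_2(m,r)^{-1} = \binom{m-r-1}{r}\binom{m-r}{r-1}^{-1},
\]
and simplify it via the factorial identities $\binom{m-r-1}{r}=\tfrac{(m-r-1)!}{r!(m-2r-1)!}$ and $\binom{m-r}{r-1}=\tfrac{(m-r)!}{(r-1)!(m-2r+1)!}$, which after cancellation yields the compact form
\[
\mathcal{R}_2(m,r)=\nicefrac{(m-2r+1)(m-2r)}{r\,(m-r)}.
\]

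Next, I would translate the monotonicity question $\mathcal{R}_2(m,r)\ge 1$ into the polynomial inequality in $r$. Clearing denominators (both $r$ and $m-r$ are positive in the range $2r\le m+1$ with $r\ge 1$) and expanding $(m-2r+1)(m-2r)=(m-2r)^2+(m-2r)$ gives the quadratic
\[
q_2(r):=5r^2-(5m+2)r+(m^2+m)\ge 0.
\]
The roots of $q_2$ are
\[
r_{21}(m)=\nicefrac{\left(5m+2-(5m^2+4)^{\nicefrac{1}{2}}\right)}{10},\qquad r_{22}(m)=\nicefrac{\left(5m+2+(5m^2+4)^{\nicefrac{1}{2}}\right)}{10},
\]
which is exactly the desired $r_{21}(m)$. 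I would then check two elementary positional facts, entirely parallel to Lemma~\ref{lem:poly-1}: (a) for $m\ge 2$, both roots are non-negative (the smaller root is non-negative iff $(5m+2)^2\ge 5m^2+4$, i.e.\ $20m^2+20m\ge 0$, which is clear); and (b) the upper root satisfies $r_{22}(m)>(m+1)/2$, which reduces to verifying $(5m^2+4)^{\nicefrac{1}{2}}>1$ (trivial for $m\ge 1$). These together ensure that the admissible range $1\le r\le \lfloor (m+1)/2\rfloor$ is split by $r_{21}(m)$ alone.

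With these facts in place, the conclusion follows exactly as in Lemma~\ref{lem:poly-1}(1)-(2): $\h_2(m,\cdot)$ is non-decreasing on $\{1,\dots,\lfloor r_{21}(m)\rfloor\}$ and non-increasing on $\{\lceil r_{21}(m)\rceil,\dots,\lfloor (m+1)/2\rfloor\}$; consequently
\[
\h_2(m,r)\le \h_2\!\left(m,\lfloor r_{21}(m)\rfloor\right)\vee\h_2\!\left(m,\lceil r_{21}(m)\rceil\right).
\]
The only step that needs any real care -- and the sole potential obstacle -- is the bookkeeping of the factorial cancellation leading to $\mathcal{R}_2$ and the subsequent expansion to $q_2$; after that, the argument is purely a recycling of the template provided by Lemma~\ref{lem:poly-1}. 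No structural new idea is required.
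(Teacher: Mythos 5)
Your proposal follows the paper's proof essentially verbatim: form the consecutive ratio $\mathcal{R}_2(m,r)$, reduce $\mathcal{R}_2(m,r)\ge 1$ to the quadratic $q_2(r)=5r^2-(5m+2)r+(m^2+m)\ge 0$, extract the roots, and check positivity of the roots plus the bound $r_{22}(m)>(m+1)/2$, then recycle the argument of Lemma~\ref{lem:poly-1}. (Your binomial bookkeeping, $\binom{m-r-1}{r}\binom{m-r}{r-1}^{-1}$, is actually the corrected form of what the paper writes, so your ratio is the right one.) One small arithmetic slip to fix: the reduction of $r_{22}(m)>(m+1)/2$ gives $\left(5m^2+4\right)^{\nicefrac{1}{2}}>3$ (equivalently $m>1$), not $>1$; it still holds in the relevant range $m\ge 2$ so the conclusion is unaffected.
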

\begin{proof}[\footnotesize \textbf{Proof}]
Similar to the proof of Lemma~\ref{lem:poly-1}, we proceed as follows.
	\begin{align*}
	\h_2(m,r+1)\h_2(m,r)^{-1} &= \binom{m-r-2}{r}	\binom{m-r-1}{r-1}^{-1} \\ &= \nicefrac{(m-2r)(m-2r+1)}{(m-r)(r)} := \mathcal{R}_2(m,r).
	\end{align*}
	\[
	\mathcal{R}_2(m,r)\ge 1   \iff  	q_2(r) = 5r^2  + (-5m -2)r +  m^2 + m \ge 0.
	\]
\par Roots of $q_2$ are
\[
r_{21} = \nicefrac{\left( 5m+2 - \left( 5m^2 + 4\right)^{\nicefrac{1}{2}}\right)}{10}, \quad r_{22} = \nicefrac{\left( 5m+2 + \left( 5m^2 + 4\right)^{\nicefrac{1}{2}}\right)}{10};
\]	
and both are positive. Furthermore,
 $
\nicefrac{(m+1)}{2} < r_{22}.
$
clearly holds; so we can argue in the same fashion as in Lemma~\ref{lem:poly-1}. 
\end{proof}
\par Let us recall that McMullen's upper bound theorem states that among all $d$-dimensional polytopes with $n$ vertices, the cyclic ones maximize the number of $i$-dimensional faces for $i=1, \cdots, d-1$.  We refer the reader to the exposition~\cite{HRZ} and the original paper \cite{McM} for more details. 
\par The following proposition is a direct consequence of the upper bound theorem for convex polytopes in combination with the concept of dual polytopes; see~\cite{McM,HRZ,Seid}. The number of facets in nontrivial cases satisfy $m\ge 3$ so the above lemmas apply. 
\begin{proposition}
	Suppose $\mathfrak{P}$ is a $d$-polytope with $m$ number of facets and $n$ number of vertices. Then
	$
	n \le {\rm F}_{\sf cyc}(d, m),
	$
	where ${\rm F}_{\sf cyc}$ is given by
	\[
	{\rm F}_{\sf cyc}(d,m) := \binom{m - \lceil \nicefrac{d}{2}  \rceil }{\lfloor  \nicefrac{d}{2} \rfloor} + \binom{m - 1 - \lceil \nicefrac{(d-1)}{2}  \rceil }{\lfloor  \nicefrac{(d-1)}{2} \rfloor},
	\]
	which is the number of facets in a  cyclic $d$-polytope with $m$ vertices.
\end{proposition}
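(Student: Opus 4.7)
The plan is to derive this as a one-line consequence of two classical facts: polar duality for convex polytopes and McMullen's upper bound theorem (UBT). Duality transposes the roles of vertices and facets, so the hypothesis ``$\mathfrak{P}$ has $m$ facets'' converts into ``$\mathfrak{P}^{\ast}$ has $m$ vertices,'' after which UBT applies directly to the number of facets of $\mathfrak{P}^{\ast}$, which equals $n$.

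Concretely, I would first translate $\mathfrak{P}$ (without loss of generality) so that the origin lies in its interior, and pass to the polar dual $\mathfrak{P}^{\ast}=\{y\in\mathbb{R}^{d}:\langle x,y\rangle\le 1\text{ for all }x\in\mathfrak{P}\}$. Standard convex geometry guarantees that $\mathfrak{P}^{\ast}$ is again a $d$-polytope and that its face lattice is the order-reversal of the face lattice of $\mathfrak{P}$; in particular, $\mathfrak{P}^{\ast}$ has exactly $m$ vertices and exactly $n$ facets. Next, I would invoke McMullen's UBT in its usual form: among all $d$-polytopes with $m$ vertices, the cyclic polytope $C(m,d)$ maximizes $f_{i}$ for every $i=0,1,\dots,d-1$. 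Applying this with $i=d-1$ to $\mathfrak{P}^{\ast}$ yields the key inequality
\[
n \;=\; f_{d-1}(\mathfrak{P}^{\ast}) \;\le\; f_{d-1}\bigl(C(m,d)\bigr).
\]

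It then remains to identify $f_{d-1}(C(m,d))$ with the closed form ${\rm F}_{\sf cyc}(d,m)$ in the statement. For this I would cite the classical evaluation of the facet numbers of cyclic polytopes obtained from Gale's evenness criterion: enumerating the subsets of $[m]$ of size $d$ that satisfy Gale's parity condition splits naturally into two binomial counts, one picking ``internal'' consecutive pairs and one accounting for the boundary endpoints, which collapse into the two binomial terms
\[
\binom{m-\lceil d/2\rceil}{\lfloor d/2\rfloor} + \binom{m-1-\lceil (d-1)/2\rceil}{\lfloor (d-1)/2\rfloor}.
\]
The two parities $d=2k$ and $d=2k+1$ are covered uniformly by this pair of binomials (in the even case the second term corresponds to the boundary contribution, while in the odd case both terms contribute nontrivially), reproducing exactly ${\rm F}_{\sf cyc}(d,m)$.

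There is no real obstacle here; the proof is essentially a dictionary application of UBT through duality. The only points requiring care are (i) verifying that the polar dual preserves dimension and that the facet/vertex correspondence is exact (a standard but parity-sensitive lemma), and (ii) reconciling the unified parity-free formula ${\rm F}_{\sf cyc}(d,m)$ with the more commonly cited split formulas $f_{d-1}(C(m,2k))=\tfrac{m}{m-k}\binom{m-k}{k}$ and $f_{d-1}(C(m,2k+1))=2\binom{m-k-1}{k}$, which is a short binomial manipulation. Degenerate regimes ($m\le d$, where $\mathfrak{P}$ is forced to be a simplex) are consistent with the bound and can be handled by inspection.
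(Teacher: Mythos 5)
Your proposal is correct and uses the same key ingredients as the paper's proof, namely polar duality plus McMullen's upper bound theorem. The only difference is presentational: you apply UBT directly to $\mathfrak{P}^{\ast}$ ($n = f_{d-1}(\mathfrak{P}^{\ast}) \le f_{d-1}(C(m,d)) = {\rm F}_{\sf cyc}(d,m)$), whereas the paper phrases the same step as a proof by contradiction (assume $n > {\rm F}_{\sf cyc}(d,m)$, conclude $\mathfrak{P}^{\ast}$ has more than $m$ vertices, hence $\mathfrak{P}$ has more than $m$ facets); your direct formulation is arguably cleaner, and your extra remarks on Gale's evenness criterion and on reconciling the parity-split formulas with ${\rm F}_{\sf cyc}$ are sound but not needed by the paper, which simply cites the cyclic-polytope facet count.
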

\begin{proof}[\footnotesize \textbf{Proof}]
First notice that going to the dual polytope, we deduce $d< m$ so ${\rm F}_{\sf cyc}(d,m)$ is well-defined. Now suppose 
$
	n > {\rm F}_{\sf cyc}(d, m) 
$
	then, the dual polytope $\mathfrak{P}^*$ has more than ${\rm F}_{\sf cyc}(d, m)$ facets; hence, by McMullen's upper bound theorem, the dual polytope $\mathfrak{P}^*$ must have more than $m$ vertices. This means $\mathfrak{P}$ must have more than $m$ facets which is a contradiction. 
\end{proof}
\begin{remark}\label{rem:other-form}
	\phantom{}\hspace{-1pt}\textit{Notice that the function $ {\rm F}_{\sf cyc}(d , m) $ can be written as
	\begin{align*}
	 {\rm F}_{\sf cyc}(d , m)  &= \begin{cases}  \binom{m-r}{r} + \binom{m-r-1}{r-1}& \text{if}\quad  d = 2r   \\  2 \binom{m-r-1}{r} &  \text{if}\quad  d = 2r+1    \end{cases} \\ &=  \begin{cases}  \h_1(m,r) + \h_2(m-1,r) & \text{if}\quad   d = 2r   \\  2 \h_1(m-1,r) & \text{if}\quad   d = 2r+1    \end{cases}.
	\end{align*}
}	
\phantom{}
\end{remark}
\begin{theorem}\label{thm:ub}
	Suppose $\mathfrak{P}$ is a bounded convex polytope in $\R^N$ with $m$ number of facets and $n$ vertices. Then
	\begin{align*}
n\le 2 \h_{111}(m-1) \vee \Big( \h_{111}(m)  + \h_{221}(m-1) \Big) := \Uplambda(N,m),
	\end{align*}
with the convention
\begin{align}\label{eq:h-ijk}
\h_{ijk}(m) := \h_i\left( m, \lfloor  r_{jk}(m)  \rfloor \wedge \lfloor  \nicefrac{N}{2} \rfloor   \right) \vee \h_i\left( m, \lceil  r_{jk}(m) \rceil  \wedge \lfloor  \nicefrac{N}{2} \rfloor    \right).
\end{align}
\end{theorem}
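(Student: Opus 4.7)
The plan is to combine the preceding proposition (which gives a sharp bound in terms of the polytope's affine dimension $d$) with the unimodality estimates of Lemmas~\ref{lem:poly-1} and~\ref{lem:poly-2}, by taking the maximum over all admissible $d \le N$. First, since $\mathfrak{P}\subset\R^N$, its affine dimension satisfies $d \le N$, so the previous proposition yields $n \le {\rm F}_{\sf cyc}(d,m)$ and hence
\[
n \;\le\; \max_{1 \le d \le N} {\rm F}_{\sf cyc}(d,m).
\]
Using the parity-dependent reformulation of Remark~\ref{rem:other-form}, this maximum splits as
\[
\max\Big\{\; \max_{r:\, 2r+1 \le N} 2\,\h_1(m-1,r),\;\; \max_{r:\, 2r \le N} \big(\h_1(m,r) + \h_2(m-1,r)\big)\;\Big\}.
\]

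For the odd branch, I observe that $r \le \lfloor (N-1)/2 \rfloor \le \lfloor N/2 \rfloor$, so the constraint can be safely relaxed to $r \le \lfloor N/2 \rfloor$ (relaxing the constraint only enlarges the feasible set, preserving the upper-bound direction). By Lemma~\ref{lem:poly-1}, the map $r \mapsto \h_1(m-1,r)$ is unimodal with its peak attained at $\lfloor r_{11}(m-1) \rfloor$ or $\lceil r_{11}(m-1) \rceil$; hence when maximized subject to $r \le \lfloor N/2 \rfloor$, the supremum is reached at one of $\lfloor r_{11}(m-1)\rfloor \wedge \lfloor N/2\rfloor$ or $\lceil r_{11}(m-1)\rceil \wedge \lfloor N/2\rfloor$, which by the definition \eqref{eq:h-ijk} is exactly $\h_{111}(m-1)$. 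This delivers the odd-case contribution $2\,\h_{111}(m-1)$.

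For the even branch, I dominate the sum by the sum of individual maxima, using that $\max (a+b) \le \max a + \max b$. Applying Lemma~\ref{lem:poly-1} to $\h_1(m,\cdot)$ subject to $r \le \lfloor N/2\rfloor$ yields $\h_{111}(m)$, while Lemma~\ref{lem:poly-2} applied to $\h_2(m-1,\cdot)$ yields $\h_{221}(m-1)$; the even contribution is thus bounded by $\h_{111}(m) + \h_{221}(m-1)$. Taking the overall maximum of the odd and even bounds recovers $\Uplambda(N,m)$ exactly as in the statement.

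The main ``obstacle'' is really a bookkeeping one: one must check carefully that each relaxation (using $\lfloor N/2\rfloor$ instead of $\lfloor (N-1)/2\rfloor$ in the odd case, and splitting the sum into independent maxima in the even case) is monotone in the correct direction and therefore preserves validity as an upper bound. Once these relaxations are justified, the proof is a direct application of the two unimodality lemmas combined with the previous proposition.
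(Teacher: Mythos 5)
Your proof is correct and takes essentially the same route as the paper's: split by parity via Remark~\ref{rem:other-form}, use the unimodality facts from Lemmas~\ref{lem:poly-1} and~\ref{lem:poly-2} together with the cap $r \le \lfloor \nicefrac{d}{2} \rfloor \le \lfloor \nicefrac{N}{2} \rfloor$, and bound the even-dimensional sum by the sum of separate maxima. You spell out the relaxation of the odd-case constraint and the $\max(a+b)\le \max a + \max b$ step more explicitly than the paper, but the underlying argument is identical.
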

\begin{proof}[\footnotesize \textbf{Proof}]
	Based on Lemmas~\ref{lem:poly-1} and~\ref{lem:poly-2}, we clearly have
	\[
	2 \h_1(m-1,r)  \le 2\h_{111}(m-1),
	\]
	and
	\[
	\h_1(m,r) + \h_2(m-1,r)  \le \h_{111}(m)  + \h_{221}(m-1),
	\]
	which by virtue of Remark~\ref{rem:other-form} and $r\le \lfloor \nicefrac{d}{2} \rfloor \le  \lfloor \nicefrac{N}{2} \rfloor$, gives the desired conclusion. 
\end{proof}
\begin{theorem}[sharpness]
	The upper bound $\Uplambda$ is achieved in some cases. So by definition, it is a sharp bound. 
\end{theorem}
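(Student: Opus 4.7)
The plan is to exhibit explicit convex polytopes in $\R^N$ that saturate the bound $\Uplambda(N,m)$. The natural candidates are polar duals of cyclic polytopes: indeed, McMullen's upper bound theorem asserts that cyclic polytopes extremize facet counts for a given number of vertices, and by polar duality their duals extremize vertex counts for a given number of facets, which is precisely the quantity being measured.

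First, I would select the optimal dimension $d^\star \le N$. By Remark~\ref{rem:other-form}, one has ${\rm F}_{\sf cyc}(d,m) = \h_1(m,r) + \h_2(m-1,r)$ when $d = 2r$ and ${\rm F}_{\sf cyc}(d,m) = 2\,\h_1(m-1,r)$ when $d = 2r+1$, with the admissibility constraint $r \le \lfloor d/2 \rfloor \le \lfloor N/2 \rfloor$. Lemmas~\ref{lem:poly-1} and \ref{lem:poly-2} identify the integer maximizers of the constituent binomials (attained at $\lfloor r_{11}(\cdot) \rfloor$ or $\lceil r_{11}(\cdot) \rceil$, respectively $\lfloor r_{21}(\cdot) \rfloor$ or $\lceil r_{21}(\cdot) \rceil$, truncated at $\lfloor N/2\rfloor$), and the outer $\vee$ in the definition of $\Uplambda(N,m)$ picks whichever of the even- or odd-parity case produces the larger value. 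Call the resulting dimension $d^\star$ and the associated integer $r^\star$.

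I would then construct a cyclic polytope $C(d^\star, m) \subset \R^{d^\star}$ in the standard way as the convex hull of $m$ distinct points on the moment curve $t \mapsto (t, t^2, \ldots, t^{d^\star})$, and form its polar dual $\mathfrak{P}^\star := C(d^\star, m)^*$. By polar duality, $\mathfrak{P}^\star$ is a $d^\star$-polytope having exactly $m$ facets and $f_0(\mathfrak{P}^\star) = f_{d^\star - 1}(C(d^\star,m)) = {\rm F}_{\sf cyc}(d^\star, m)$ vertices. If $d^\star < N$, I embed $\mathfrak{P}^\star$ into $\R^N$ through any affine injection $\R^{d^\star} \hookrightarrow \R^N$; the image is a bounded convex polytope in $\R^N$ in the sense of Theorem~\ref{thm:ub}, with the same intrinsic counts of facets and vertices.

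The main obstacle, which is combinatorial rather than geometric, is verifying that $\Uplambda(N,m)$ really equals ${\rm F}_{\sf cyc}(d^\star, m)$ for the $d^\star$ just selected. This reduces to a careful case analysis according as the optimal integer $r^\star$ is a floor or a ceiling of $r_{11}(\cdot)$ or $r_{21}(\cdot)$, and according as the cap $\lfloor N/2 \rfloor$ is active; in each sub-case a direct substitution into the two formulae of Remark~\ref{rem:other-form} produces the claimed equality. No further geometric construction is needed: once the combinatorial matching is confirmed, sharpness follows entirely from the $f$-vector identities already packaged into McMullen's theorem and the polar duality used above.
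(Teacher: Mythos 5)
Your geometric apparatus (cyclic polytopes, polar duality, affine embedding into $\R^N$) is exactly the right machinery, but the crux of your argument — that $\Uplambda(N,m)$ always equals ${\rm F}_{\sf cyc}(d^\star,m)$ for some admissible $d^\star\le N$ — is false, and the paper itself exhibits a counterexample immediately after this theorem. Taking $N=2$ and $m>4$, one computes $\Uplambda(2,m)=2m-4$, but any polytope in $\R^2$ with $m$ facets has exactly $m$ vertices, so the achievable maximum is $\max_{d\le 2}{\rm F}_{\sf cyc}(d,m)=m<2m-4$. The reason is structural: the first $\vee$-branch of $\Uplambda$, namely $2\h_{111}(m-1)$, represents the odd-dimension formula ${\rm F}_{\sf cyc}(2r+1,m)$ truncated at $r\le\lfloor N/2\rfloor$, but for even $N$ this allows $d=2\lfloor N/2\rfloor+1=N+1>N$, which is inadmissible. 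Even restricting to odd $N$ does not repair your "each sub-case produces the claimed equality" step: when the second branch $\h_{111}(m)+\h_{221}(m-1)$ wins, the two constituents $\h_1(m,\cdot)$ and $\h_2(m-1,\cdot)$ are each optimized independently over $r$, and the maximizing integers $r_{11}(m)$ and $r_{21}(m-1)$ need not round to the same value; the formula ${\rm F}_{\sf cyc}(2r,m)=\h_1(m,r)+\h_2(m-1,r)$ requires a common $r$.

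The theorem claims only that $\Uplambda$ is achieved \emph{in some cases}, so a single witness suffices — and that is all the paper does. Its proof fixes $N=3$ (odd, so the first branch $d=3\le N$ is admissible) and $m$ large enough that the cap $\lfloor N/2\rfloor=1$ is active, which forces $r^\star=1$ in both branches. Then $\Uplambda(3,m)=(2m-4)\vee m=2m-4={\rm F}_{\sf cyc}(3,m)$, and this is realized by the polar dual of a cyclic $3$-polytope with $m$ vertices (equivalently, by Euler's formula for simple $3$-polytopes). Your proposal should be scaled back to identifying one such concrete $(N,m)$ rather than asserting a universal combinatorial matching that the paper shows to fail.
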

\begin{proof}[\footnotesize \textbf{Proof}]
 \par Set $N=3$ and $m$ large, we get the same bound but this time, by Remark~\ref{rem:other-form}, we know $2m-4$ is sharp  for  cyclic 3-polytopes  in $\R^3$ with $m$ facets.  Since the upper bound $\Uplambda$ is achieved in some cases, it is a sharp bound. 
\end{proof}
\begin{example}\phantom\hspace{-3pt}(sharpness and non-sharpness)
	One can naturally find cases in which the upper bound $\Uplambda$ is an overkill.  Set $N=2$ and consider a regular polytope in $\R^2$ with  $m>4$ vertices and facets. for $m$ sufficiently large  Then
	\[
2 \h_{111}(m-1) = 2 \h_{1}(m-1,1) = 2(m-2) = 2m-4,
    \]
and
\[
 \h_{111}(m)  + \h_{221}(m-1) = \h_{1}(m,1) + \h_2(m-1,1) = m,
\]
so
	\[
	\Uplambda(m,2) = 2m-4 \vee m = \begin{cases} m & m\le4 \\ 2m-4 & m> 4   \end{cases},
	\]
which shows the upper bound is not sharp in this cases if $m>4$ and is only sharp for triangles. 
\end{example}
\begin{corollary}
	A crude upper bound is given by
		\begin{align*}
		&\Uplambda(N,m) \le\\
		&\quad	\Big( \max\limits_{ \lfloor \nicefrac{m}{5}  \rfloor -1  \le r\le  \lceil \nicefrac{3m}{10} \rceil  \wedge \lfloor   \nicefrac{N}{2}\rfloor }   2 \h_1\left( m-1, r \right)  \Big) \\ & \qquad \vee   \left( \max\limits_{\lfloor \nicefrac{m}{5}  \rfloor  -1 \le r\le  \lceil \nicefrac{3m}{10} \rceil  \wedge \lfloor   \nicefrac{N}{2}\rfloor} \h_1(m,r) +  \max\limits_{\lfloor \nicefrac{m}{5}  \rfloor  \le r\le  \lceil \nicefrac{3m}{10} \rceil  \wedge \lfloor   \nicefrac{N}{2}\rfloor } \h_2(m-1,r)  \right).
	\end{align*}
\end{corollary}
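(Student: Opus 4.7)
The plan is to reduce the bound from Theorem~\ref{thm:ub} by replacing each of the three terms $\h_{111}(m-1)$, $\h_{111}(m)$, $\h_{221}(m-1)$ with a maximum of $\h_i$ over a fixed interval of indices $r$ that contains the specific values $\lfloor r_{jk}(\cdot)\rfloor$ and $\lceil r_{jk}(\cdot)\rceil$ appearing in definition~\eqref{eq:h-ijk}. Since each $\h_{ijk}$ is defined as the maximum of the same $\h_i$ evaluated at just two consecutive integer arguments, it is always dominated by the maximum of $\h_i$ over any integer range containing those two integers. So the task is purely to pin down where $r_{11}(m)$, $r_{11}(m-1)$, $r_{21}(m-1)$ actually live on the real line.

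First I would show the uniform analytic bounds $r_{11}(m)\in[\tfrac{m}{5},\tfrac{3m}{10}]$ and $r_{21}(m)\in[\tfrac{m}{5},\tfrac{3m}{10}]$ (for $m$ large enough, say $m\ge 8$, with small cases checked by hand if needed). These are routine squaring arguments from the closed forms recorded in Lemmas~\ref{lem:poly-1} and~\ref{lem:poly-2}. For instance, $r_{11}(m)\le\tfrac{3m}{10}$ is equivalent to $\sqrt{5m^{2}+10m+9}\ge 2m-3$, which after squaring reduces to $m^{2}+22m\ge 0$; and $r_{11}(m)\ge\tfrac{m}{5}$ is equivalent to $3m-3\ge\sqrt{5m^{2}+10m+9}$, squaring to $4m(m-7)\ge 0$. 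The analogous computations for $r_{21}$ reduce to $m^{2}-8m\ge 0$ and $4m^{2}+12m\ge 0$, respectively. The asymptotic value $\tfrac{5-\sqrt{5}}{10}\approx 0.276$ sits comfortably between $0.2$ and $0.3$, which is why the interval $[\tfrac{m}{5},\tfrac{3m}{10}]$ is the right choice.

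Second, I would propagate these real-line inclusions through the floor/ceiling and the shift $m\mapsto m-1$. Since $r_{11}(m-1)$ differs from $r_{11}(m)$ by approximately $\tfrac{5-\sqrt{5}}{10}$, a shift of at most one integer unit is all that is lost under the shift; this is precisely the origin of the $-1$ in the lower end $\lfloor\nicefrac{m}{5}\rfloor-1$ of the range for the $\h_1$ maxima, while $r_{21}(m-1)$ stays inside $[\tfrac{m}{5},\tfrac{3m}{10}]$ after flooring without needing the extra slack, giving the sharper $\lfloor\nicefrac{m}{5}\rfloor$ lower bound for the $\h_2$ maximum. The intersection with $\lfloor\nicefrac{N}{2}\rfloor$ on the upper end is automatic because the two arguments in~\eqref{eq:h-ijk} are already capped at $\lfloor\nicefrac{N}{2}\rfloor$, so replacing the cap inside $\h_{ijk}$ by a cap on the range of the maximum is without loss.

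Finally, I would combine these observations with Theorem~\ref{thm:ub}: the term $2\h_{111}(m-1)$ is bounded by twice the max of $\h_1(m-1,\cdot)$ over the prescribed range, and the term $\h_{111}(m)+\h_{221}(m-1)$ splits as a sum of two independent maxima over their respective ranges, yielding exactly the right-hand side of the claimed inequality. The main technical obstacle, and the only place careful bookkeeping is really needed, is the $m\mapsto m-1$ shift together with the flooring, which is why the lower endpoint in the $\h_1$ maxima is $\lfloor\nicefrac{m}{5}\rfloor-1$ rather than $\lfloor\nicefrac{m}{5}\rfloor$; small $m$ edge cases may need to be verified separately by direct inspection.
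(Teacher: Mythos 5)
Your proposal is correct and follows essentially the same route as the paper: estimate each $r_{jk}$ between $\nicefrac{m}{5}$-type and $\nicefrac{3m}{10}$-type quantities using elementary bounds on the square roots, push those real inequalities through the floors and ceilings of~\eqref{eq:h-ijk}, and observe that a maximum over a fixed integer window dominates the $\vee$ of the two specific evaluations. The only substantive difference is the form of the square-root estimates. You prove the cleaner pointwise inclusions $r_{jk}(m)\in[\nicefrac{m}{5},\nicefrac{3m}{10}]$ directly by squaring, but as your own computations show this requires $m\ge 7$ for $r_{11}$ and $m\ge 8$ for the $r_{21}$ upper bound, so you must fall back on checking small $m$ by hand. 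The paper instead sandwiches the roots as $2(m+1)\le\sqrt{5m^2+10m+9}\le 3(m+1)$ and $2m\le\sqrt{5m^2+4}\le 3m$, which hold for all $m\ge 1$ and produce slightly shifted bounds (e.g. $\nicefrac{m}{5}-\nicefrac{3}{5}\le r_{11}(m)\le\nicefrac{3m}{10}-\nicefrac{1}{2}$) that nevertheless land in the same integer window after flooring. One small expository gap in your sketch: the fact that the $\h_2$ window can start at $\lfloor\nicefrac{m}{5}\rfloor$ rather than $\lfloor\nicefrac{m}{5}\rfloor-1$ needs $r_{21}(m-1)\ge\nicefrac{m}{5}$, which is not an immediate substitution of $m-1$ into your inequality $r_{21}(m)\ge\nicefrac{m}{5}$; it is a separate squaring, reducing to $4m(m-2)\ge 0$, and should be spelled out, though it does hold.
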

\begin{proof}[\footnotesize \textbf{Proof}]
	From the inequalities
	\[
	2(m+1)\le 	\left( 5m^2 + 10m + 9 \right)^{\nicefrac{1}{2}} \le 3(m+1),
	\]
	we deduce
	\[
	\nicefrac{m}{5} - \nicefrac{3}{5}  \le   r_{11}(m) \le \nicefrac{3m}{10} - \nicefrac{1}{2},
	\]
and
	\[
	\nicefrac{m}{5} - \nicefrac{4}{5}  \le   r_{11}(m-1) \le \nicefrac{3m}{10} - \nicefrac{4}{5}.
	\]
\par Similarly by
	\[
	2m\le 	\left( 5m^2 + 4\right)^{\nicefrac{1}{2}} \le 3m,
	\]
we get
	\[
	\nicefrac{m}{5} + \nicefrac{1}{5}    \le r_{21}(m) \le \nicefrac{3m}{10} - \nicefrac{1}{5},
	\]
and
\[
\nicefrac{m}{5}   \le r_{21}(m-1) \le \nicefrac{3m}{10} - \nicefrac{1}{2}.
\]
As a result,
\[
\lfloor  	\nicefrac{m}{5}  \rfloor  -1   \le  \lfloor  r_{11}(m)  \rfloor \le \lceil  r_{11}(m)  \rceil \le \lceil  \nicefrac{3m}{10}   \rceil,
\]
as well as
\[
\lfloor  	\nicefrac{m}{5}  \rfloor  -1   \le  \lfloor  r_{11}(m-1)  \rfloor \le \lceil  r_{11}(m-1)  \rceil \le \lceil  \nicefrac{3m}{10}   \rceil.
\]	
\par Similarly we have
\[
\lfloor  	\nicefrac{m}{5}  \rfloor   \le  \lfloor  r_{21}(m)  \rfloor \le \lceil  r_{21}(m)  \rceil \le \lceil  \nicefrac{3m}{10}   \rceil,
\]
and
\[
\lfloor  	\nicefrac{m}{5}  \rfloor    \le  \lfloor  r_{21}(m-1)  \rfloor \le \lceil  r_{21}(m-1)  \rceil \le \lceil  \nicefrac{3m}{10}   \rceil.
\]	
\par Therefore, using Theorem~\ref{thm:ub}, we get the conclusion. 
\end{proof}
\begin{theorem}\label{thm:dependence}
For constant $m$, $\Uplambda(N,m)$ is constant for $N>m$.
\end{theorem}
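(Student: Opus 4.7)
The plan is to isolate the only place where $N$ enters the formula for $\Uplambda(N,m)$ and to show that once $N>m$, this dependence becomes vacuous. Looking at the definition in equation~\eqref{eq:h-ijk}, $N$ appears exclusively inside the $\wedge$-cap $\lfloor N/2 \rfloor$ on the index $r$; the outer quantities $\h_1$, $\h_2$, and the real numbers $r_{11}(m), r_{21}(m), r_{11}(m-1), r_{21}(m-1)$ depend only on $m$. So it suffices to show that for $N>m$ the cap $\lfloor N/2 \rfloor$ is automatically at least as large as $\lceil r_{jk}(\cdot) \rceil$ appearing in each of the four instances of $\h_{ijk}$ that make up $\Uplambda(N,m)$; once this is established, each $\min$ collapses to its non-$N$ argument, so $\h_{ijk}(m)$ and hence $\Uplambda(N,m)$ stabilizes.

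First, I would record the inequalities already derived in the proof of the preceding corollary, namely
\[
r_{11}(m),\; r_{11}(m-1),\; r_{21}(m),\; r_{21}(m-1) \;\le\; \lceil \nicefrac{3m}{10}\rceil,
\]
so that for all four relevant indices one has $\lceil r_{jk}(\cdot)\rceil \le \lceil 3m/10\rceil$. Second, from $N>m$ I would extract $N\ge m+1$, whence $\lfloor N/2 \rfloor \ge \lfloor (m+1)/2 \rfloor = \lceil m/2 \rceil$. An elementary check then gives $\lceil m/2\rceil \ge \lceil 3m/10 \rceil$ for every $m\ge 1$ (trivial for small $m$, and for $m\ge 2$ follows from $m/2 \ge 3m/10$). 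Combining these yields
\[
\lfloor N/2 \rfloor \;\ge\; \lceil 3m/10 \rceil \;\ge\; \lceil r_{jk}(\cdot) \rceil \;\ge\; \lfloor r_{jk}(\cdot)\rfloor,
\]
valid simultaneously for all four pairs $(j,k) \in \{(1,1),(2,1)\}$ and both arguments $m$ and $m-1$.

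The only subtle point is to confirm that the two one-sided minima inside the definition of $\h_{ijk}(m)$ really become trivial, i.e.\ that both $\lfloor r_{jk}(\cdot)\rfloor \wedge \lfloor N/2 \rfloor$ and $\lceil r_{jk}(\cdot)\rceil \wedge \lfloor N/2\rfloor$ reduce to their first argument; but this is exactly what the chain of inequalities in the previous step asserts. After this substitution the defining formula
\[
\Uplambda(N,m) = 2\,\h_{111}(m-1) \;\vee\; \bigl(\h_{111}(m) + \h_{221}(m-1)\bigr)
\]
reads entirely in terms of quantities depending only on $m$, proving that $N \mapsto \Uplambda(N,m)$ is constant on $\{N:\,N>m\}$.

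I do not expect a genuine obstacle here: the whole argument is an arithmetic check that a ceiling of roughly $3m/10$ is dominated by a floor of roughly $(m+1)/2$. The only mildly careful piece is verifying $\lceil m/2\rceil \ge \lceil 3m/10\rceil$ for all small $m$ by direct inspection, to be sure the inequality is not only asymptotic. Once this check is in place, the stabilization of $\Uplambda(N,m)$ is immediate from the form of the definition.
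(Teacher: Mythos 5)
Your proof is correct and takes essentially the same approach as the paper: both arguments show that once $N>m$, the cap $\lfloor N/2\rfloor$ strictly dominates every relevant $r_{jk}(\cdot)$, so the $\wedge$-operations in \eqref{eq:h-ijk} reduce to their first arguments and the $N$-dependence vanishes. Your version is somewhat more carefully spelled out (the paper's one-line proof contains what looks like a typographical slip, writing $\lfloor N/2\rfloor \ge \lfloor N/2\rfloor$ where $\lfloor N/2\rfloor \ge \lceil m/2\rceil$ is clearly intended), but the underlying idea and inequality chain are the same.
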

\begin{proof}
	When $N>m$, it follows
	\[
	\lfloor \nicefrac{N}{2} \rfloor \ge \lfloor \nicefrac{N}{2} \rfloor > r_{i1}(m) > r_{i1}(m-1), \quad i=1,2,
	\]
so the dependence on $N$  in the RHS of  \eqref{eq:h-ijk} disappears. 
\end{proof}
\subsection{Asymptotics}
The Theorem~\ref{thm:ub} immediately gives the following asymptotics.
\begin{theorem}
As $m,N\to \infty$ with $N>m$, we get
\[
\Uplambda(N,m) \approx 2 \binom{\lfloor  0.73m \rfloor }{\lfloor 0.27m \rfloor}.
\]
This asymptotic bound is majorized by $\left(2.70e\right)^{\nicefrac{m}{3.70}}$ and is minorized by $2.70^{\nicefrac{m}{3.70}} $. 
\end{theorem}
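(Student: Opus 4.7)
The plan is to first unpack the definition of $\Uplambda(N,m)$ via Theorem~\ref{thm:dependence}, then identify the asymptotic dominant term, and finally apply elementary Stirling-type bounds to the resulting central binomial coefficient. Since $N>m$, the $\lfloor\nicefrac{N}{2}\rfloor$ cutoff in~\eqref{eq:h-ijk} is never active (being dominated by either $\lfloor r_{jk}(m)\rfloor$ or $\lceil r_{jk}(m)\rceil$), so $\h_{ijk}(m)$ collapses to $\h_i(m,\lfloor r_{jk}(m)\rfloor)\vee \h_i(m,\lceil r_{jk}(m)\rceil)$, and consequently $\Uplambda(N,m)$ becomes the $N$-independent expression of Theorem~\ref{thm:ub}.

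Next I would extract the leading-order behavior of the two critical radii. A direct expansion of the square roots in Lemmas~\ref{lem:poly-1}--\ref{lem:poly-2} yields
\[
\lim_{m\to\infty}\frac{r_{11}(m)}{m}=\lim_{m\to\infty}\frac{r_{21}(m)}{m}=\frac{5-\sqrt{5}}{10}\approx 0.27639.
\]
Hence, for $m$ large, each of $\lfloor r_{11}(m)\rfloor$, $\lceil r_{11}(m-1)\rceil$, $\lfloor r_{21}(m-1)\rfloor$, $\lceil r_{21}(m-1)\rceil$ agrees with $\lfloor 0.27\,m\rfloor$ up to a bounded additive error, and $m-r_{11}(m)\sim 0.73\,m$. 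Substituting into $\h_1(m-1,r)=\binom{m-1-r}{r}$ and $\h_2(m-1,r)=\binom{m-1-r}{r-1}$ shows that $\h_{111}(m-1)$ and $\h_{221}(m-1)$ are each comparable to $\binom{\lfloor 0.73m\rfloor}{\lfloor 0.27m\rfloor}$ at the leading exponential order, while the first branch $2\,\h_{111}(m-1)$ of the $\vee$ captures the asymptotic maximum. This yields $\Uplambda(N,m)\approx 2\binom{\lfloor 0.73m\rfloor}{\lfloor 0.27m\rfloor}$.

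The majorization/minorization then follows from the elementary bounds $(n/k)^k\le \binom{n}{k}\le (ne/k)^k$ applied with $n=\lfloor 0.73m\rfloor$ and $k=\lfloor 0.27m\rfloor$. Since $\nicefrac{0.73}{0.27}=\nicefrac{73}{27}\approx 2.70$ and $\nicefrac{1}{0.27}=\nicefrac{100}{27}\approx 3.70$, these translate into
\[
2.70^{\nicefrac{m}{3.70}}\le \binom{\lfloor 0.73m\rfloor}{\lfloor 0.27m\rfloor}\le (2.70\,e)^{\nicefrac{m}{3.70}},
\]
up to the sub-exponential corrections already absorbed in the "$\approx$".

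The main obstacle is the bookkeeping around the floor/ceiling discrepancies and the rounding $0.27639\rightsquigarrow 0.27$. None of these affects the dominant exponential rate $(\nicefrac{73}{27})^{\nicefrac{m}{3.70}}$, but some care is needed to confirm that both branches of the $\vee$ have the same exponential order and that the prefactor $2$ genuinely survives the asymptotic equivalence. Once that is pinned down, the two Stirling-type bounds yield the stated majorant and minorant essentially by arithmetic.
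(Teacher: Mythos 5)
Your proposal takes essentially the same route as the paper: strip the $N$-dependence via Theorem~\ref{thm:dependence}, compute $r_{jk}(m)/m \to (5-\sqrt{5})/10 \approx 0.276$, identify $\Uplambda$ (up to constants absorbed in ``$\approx$'') with $\binom{\lfloor 0.73m\rfloor}{\lfloor 0.27m\rfloor}$, and close with the elementary bounds $(n/k)^k \le \binom{n}{k} \le (en/k)^k$. You are somewhat more explicit about the floor/ceiling bookkeeping and the rounding of $(5-\sqrt{5})/10$ to $0.27$ than the paper, which simply asserts these steps.

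There is, however, one concrete error in the reasoning, flagged by you as ``needing care'' but resolved in the wrong direction: the claim that the first branch $2\,\h_{111}(m-1)$ of the $\vee$ captures the asymptotic maximum. With $r \approx \alpha m$, $\alpha = (5-\sqrt{5})/10$, Pascal's identity $\binom{m-r}{r} = \binom{m-1-r}{r} + \binom{m-1-r}{r-1}$ rewrites the two branches as roughly
\[
2\binom{m-1-r}{r}
\quad\text{versus}\quad
\binom{m-1-r}{r} + 2\binom{m-1-r}{r-1},
\]
and since $\binom{m-1-r}{r-1}/\binom{m-1-r}{r} \to \alpha/(1-2\alpha) = (\sqrt{5}-1)/2 \approx 0.618 > \tfrac12$, the \emph{second} branch is the asymptotic winner, exceeding the first by a factor tending to $\sqrt{5}/2 \approx 1.12$; the constant in front of the binomial is therefore close to $\sqrt{5}$, not $2$. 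None of this affects the exponential rate, so the majorant $(2.70e)^{m/3.70}$ and minorant $2.70^{m/3.70}$ stand — indeed the paper's own displayed derivation drops the prefactor entirely — but the justification you offer for the factor $2$ (first branch dominating) is backwards and should be corrected or, as the paper does, simply not asserted.
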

\begin{proof}
Keeping $m$ fixed, and for sufficiently large $N > 2m$, from Theorem~\ref{thm:dependence}, we get
\[
\Uplambda(N,m) = 2 \bar{\h}_{111}(m-1) \vee \Big( \bar{\h}_{111}(m)  + \bar{\h}_{221}(m-1) \Big),
\]
where
\[
\bar{\h}_{ijk}(m) := \h_i\left( m, \lfloor  r_{jk}(m)  \rfloor  \right) \vee \h_i\left( m, \lceil  r_{jk}(m) \rceil   \right).
\]
\par So as $m,N \to \infty$ with $N> 2m$, we get
\[
r_{11}(m), r_{21}(m) \approx \nicefrac{1}{2}\left( 1- \nicefrac{1}{5^{\nicefrac{1}{2}}}   \right)m \approx 0.27m;
\]
and as a result,
\[
\Uplambda(N,m) \approx \binom{\lfloor  0.73m \rfloor }{\lfloor 0.27m \rfloor}.
\] 
Straightforward combinatorial calculations show
\begin{align}\label{eq:asymp}
2.70^{\nicefrac{m}{3.70}}  &\approx \left( \nicefrac{\lfloor  0.73m \rfloor }{\lfloor 0.27m \rfloor}  \right)^{\lfloor 0.27m \rfloor } \notag \\  &  \lessapprox  \Uplambda(N,m) \\ & \lessapprox  e^{\lfloor 0.27m \rfloor } \left( \nicefrac{\lfloor  0.73m \rfloor }{\lfloor 0.27m \rfloor}  \right)^{\lfloor 0.27m \rfloor }   \notag \\ &\approx \left(2.70e\right)^{\nicefrac{m}{3.70}};\notag 
\end{align}
in which, the inequalities 
\[
\left(  \nicefrac{n}{k} \right)^k \le \binom{n}{k} \le \left(  \nicefrac{en}{k} \right)^k,
\]
are applied. 
\end{proof}
\begin{remark}\phantom{}\hspace{-1pt}\textit{In this special case,  \eqref{eq:asymp}  provides a better bound than the asymptotic bound $\mathcal{O}(m^{\nicefrac{N}{2}})$ that follows from~\cite{Seid} since
	\[
	m^{\nicefrac{N}{2}} = e^{\nicefrac{(\ln m) N}{2}} \gtrapprox e^{\nicefrac{m\ln m}{2}}.
	\]}
	\phantom{}
\end{remark}
\section{Sensitivity analysis for LP problems}\label{sec:sens-anal}
\par Consider the linear programming problem 
\[
{\sf LP}_{\bf d} :=	\begin{cases} \min {\bf c} \cdot \hat{x} \\ {\bf a}\hat{x} \le {\bf b} \\ \hat{x} \ge 0 \end{cases}, \quad {\bf d}:= ({\bf a}, {\bf b}, {\bf c}),
\]
in $\R^n$ in which  ${\bf c}\in \R^n$, ${\bf b} \in \R^m$ and ${\bf a}_{m\times n}$ is a matrix; to this problem, we assign the vector   ${\bf d}:=({\bf a}, {\bf b} , {\bf c})$ in $\R^{mn+m+n}$.  Recall that the dual problem is given by
\[
{\sf LP}^*_{\bf a,b,c} :=	\begin{cases} \max \hat{y}\cdot {\bf b}\\ y{\bf a} \ge {\bf c} \\ \hat{y} \ge 0 \end{cases}.
\]
\par Let  ${\mathcal{ PF}}$ denote the set of all primary feasible ${\bf d}$ and ${\mathcal{PI}}$, the set of all primary infeasible ${\bf d}$; also let ${\mathcal{DI}}$ be the set of all ${\bf d}$ such that the dual problem is infeasible. 
\par For any such feasible vector ${\bf d } \in \mathcal{ PF}$, denote the optimal value by $\mathrm{val}({\bf d})$.  Due to finite dimensionality in our case,  the following theorem holds with any chosen norm on $\R^{mn+m+n}$.
\begin{proposition}[Lipschitz continuity of the optimal value~\cite{Ren}]\label{prop:lip-lp}
	Consider the linear programming problem ${\sf LP}_{\bf d_0}$ with $\uprho_{\bf d_0}:= \min\{ \dist\left( {\bf d},  \mathcal{PI} \right),   \dist\left( {\bf d},  \mathcal{DI}  \right)  \}>  0$.   There exists $L\left( \uprho_{\bf d_0},  {\bf d_0} \right) >  0$ such that for any ${\bf d} \in \B_{\uprho_{\bf d_0}} \left( {\bf d_0} \right)$, the local Lipschitz property
	\[
	\left| \mathrm{val}({\bf d}) - \mathrm{val}\left( {\bf d_0}\right)   \right| \le  L\left( \uprho_{\bf d_0},  {\bf d_0} \right) \left \|  {\bf d}  -  {\bf d_0}  \right\|, 
	\]
	holds true, with the convention $\infty-\infty = 0$.  i.e. $\mathrm{val}({\bf d})$ is  locally Lipschitz in the interior of  ${\sf PF}$ (with induced topology; in our case, this coincides with the so-called relative interior). We will denote this interior by $\mathrm{relint}({\sf PF})$.
\end{proposition}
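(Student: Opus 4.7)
The plan is to derive the statement from Renegar's distance-to-infeasibility framework~\cite{Ren}. First I would observe that $\uprho_{\bf d_0} > 0$ is exactly the condition that ${\bf d_0}$ lies in the interior of $\mathcal{PF} \cap \mathcal{DI}^c$, since the distance functions $\dist(\cdot, \mathcal{PI})$ and $\dist(\cdot, \mathcal{DI})$ are $1$-Lipschitz. Consequently, for every ${\bf d} \in \B_{\uprho_{\bf d_0}}({\bf d_0})$ both primal and dual stay feasible, and strong LP duality guarantees $\mathrm{val}({\bf d}) \in \R$ with the optimum attained on both sides.

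The first main step is to obtain a uniform a priori bound for the norms of primal and dual optimal vertices throughout the slightly smaller ball $\B_{\uprho_{\bf d_0}/2}({\bf d_0})$. Renegar's bound states that the norm of any optimal solution is controlled by an explicit function $\Phi$ of the condition measure $C({\bf d}) = \|{\bf d}\|/\uprho_{\bf d}$, where I extend the notation $\uprho_{\bf d_0}$ from the statement to any admissible parameter vector. Since both $\|{\bf d}\|$ and $\uprho_{\bf d}^{-1}$ are bounded on $\B_{\uprho_{\bf d_0}/2}({\bf d_0})$, one extracts a uniform constant $M = M({\bf d_0})$ with $\|\hat{x}^*\|, \|\hat{y}^*\| \le M$ for every ${\bf d}$ in this ball and every primal/dual optimizer.

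The second main step is a Hoffman-type correction. Given ${\bf d}, {\bf d'} \in \B_{\uprho_{\bf d_0}/2}({\bf d_0})$ and an optimal primal $\hat{x}^*$ for $\bf d$, I would build $\tilde{x}$ feasible for $\bf d'$ with $\|\tilde{x} - \hat{x}^*\| \le K \|{\bf d} - {\bf d'}\|$, where $K$ depends on the Hoffman constant of the reference instance and is again controlled through the condition measure. Then
\[
\mathrm{val}({\bf d'}) \le {\bf c'} \cdot \tilde{x} \le {\bf c} \cdot \hat{x}^* + \left\|{\bf c'} - {\bf c}\right\| M + \left\|{\bf c'}\right\| K \left\|{\bf d} - {\bf d'}\right\| = \mathrm{val}({\bf d}) + L_1 \left\|{\bf d} - {\bf d'}\right\|,
\]
for an explicit constant $L_1$ depending only on $M$, $K$, and $\|{\bf c_0}\|$. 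Swapping the roles of $\bf d$ and $\bf d'$ produces the matching lower bound and hence the claimed two-sided Lipschitz estimate with $L\left(\uprho_{\bf d_0},{\bf d_0}\right) := L_1$.

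The principal obstacle is the a priori norm bound in the first step; that is precisely where Renegar's distance to infeasibility plays its essential role. Once the vertices of the primal and dual feasible polytopes are uniformly bounded over the ball, both the Hoffman correction and the resulting objective perturbation reduce to standard bookkeeping. The convention $\infty - \infty = 0$ is never actually invoked inside $\B_{\uprho_{\bf d_0}}({\bf d_0})$ since both values are finite there; it would only be needed for a continuous extension of $\mathrm{val}$ across the boundary of $\mathrm{relint}({\sf PF})$.
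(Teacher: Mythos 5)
The paper does not actually prove this proposition; it is stated as a citation to Renegar~\cite{Ren} and used as a black box in the sequel. Your sketch therefore fills in something the authors deliberately left to the reference, and it does so along the right lines: the two pillars you identify --- a uniform a priori norm bound on primal/dual optimizers coming from the condition measure $\|{\bf d}\|/\uprho_{\bf d}$, followed by a feasibility-restoring perturbation argument to compare optimal values --- are indeed the mechanics behind Renegar's perturbation theory. You are also more careful than the statement as printed: the paper asserts a single Lipschitz constant over the full ball $\B_{\uprho_{\bf d_0}}({\bf d_0})$, but since $\uprho_{\bf d} \to 0$ (hence the condition measure blows up) as ${\bf d}$ approaches the boundary of that ball, a uniform constant there is dubious; your retreat to $\B_{\uprho_{\bf d_0}/2}({\bf d_0})$ is what actually makes the norm bounds uniform, and this is all the paper needs in its later finite-dimensional applications.

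Two points worth flagging. First, your step 2 is stated a bit loosely: the ``Hoffman constant'' of the reference instance is not by itself enough, because the constraint matrix $\bf a$ is also being perturbed and Hoffman constants are not continuous in $\bf a$. What actually controls the feasibility correction uniformly is again the distance to infeasibility (Renegar derives the correction directly from $\uprho_{\bf d}$ rather than via Hoffman's lemma), so you should say ``controlled by $\uprho_{\bf d}$'' rather than ``the Hoffman constant of the reference instance.'' With that substitution the argument is essentially Renegar's. Second, a minor reading note: the paper has a typo in the definition of $\uprho_{\bf d_0}$ (the right-hand side uses $\bf d$ instead of ${\bf d_0}$); you interpret it correctly.
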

\subsection{Perturbation of the objective function}\label{sec:perturb-obj}
\par We wish to focus on the perturbation of the objective function. Let us start with the following elementary lemma. 
\begin{lemma}\label{lem:perturb-1}
	Let ${\bf c}_\varepsilon$, $\varepsilon \in [0,1]$ be a continuous one parameter vector. Suppose a set of  $\varepsilon$-independent constraints are given that give rise to a constant feasible set that is a bounded convex polytope $\mathfrak{Q}$; also assume the linear programming problem $\min {\bf c}_\varepsilon \cdot \hat{x}$ with the said constraints is well-posed for all $\varepsilon$. 
	\par Then, for each $\varepsilon$,  the optimal values can only be attained at the vertices; also the vertices that achieve the optimal value are included in a facet of ${\mathfrak Q}$. For any two such vertices $\hat{v}_\varepsilon$ and $\hat{w}_\varepsilon$, we have ${\bf c}_\varepsilon \cdot \left(  \hat{v}_\varepsilon -  \hat{w}_\varepsilon\right) = 0$. 
\end{lemma}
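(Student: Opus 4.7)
The plan is to appeal to the fundamental structure of linear programs over bounded polytopes. Fix $\varepsilon$. Since $\mathfrak{Q}$ is a compact convex polytope and $\hat{x}\mapsto {\bf c}_\varepsilon\cdot \hat{x}$ is a continuous linear functional, the minimum is attained; because a linear functional on a convex polytope achieves its extrema at extreme points (classical Minkowski/vertex attainment argument), the optimum is realised at a vertex of $\mathfrak{Q}$. This disposes of the first assertion.

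For the second assertion, denote the minimal value by $\mathrm{val}_\varepsilon$ and consider the affine hyperplane $H_\varepsilon := \{\hat{x}\in \R^n : {\bf c}_\varepsilon\cdot \hat{x} = \mathrm{val}_\varepsilon\}$. By the very definition of the minimum, $H_\varepsilon$ is a supporting hyperplane of $\mathfrak{Q}$, so the set of minimizers $S_\varepsilon := \mathfrak{Q}\cap H_\varepsilon$ is a nonempty bounded proper face of $\mathfrak{Q}$. The vertices of $\mathfrak{Q}$ attaining the minimum are precisely the vertices of $\mathfrak{Q}$ lying in this face, which yields the stated inclusion. If ``facet'' is being read in the strict codimension-$1$ sense introduced earlier in the paper, one simply enlarges $S_\varepsilon$ to any ambient facet containing it, using the standard fact that every proper face of a polytope is contained in at least one facet.

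The third assertion is then immediate: any two optimal vertices $\hat{v}_\varepsilon, \hat{w}_\varepsilon \in S_\varepsilon$ both evaluate ${\bf c}_\varepsilon\cdot$ to $\mathrm{val}_\varepsilon$, whence ${\bf c}_\varepsilon\cdot(\hat{v}_\varepsilon - \hat{w}_\varepsilon)=0$. I foresee no real obstacle here — the entire argument rests on compactness, linearity, and the definition of a supporting hyperplane. The only subtlety is the mild terminological clash surrounding ``facet'' versus ``face'', which is dispensed with as above, so the lemma should follow with only a few lines of writing.
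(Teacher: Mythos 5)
Your proposal is correct and follows essentially the same route as the paper's proof: vertex attainment by the standard extreme-point argument, identification of the optimal set with a face cut out by the supporting hyperplane $H_\varepsilon$ (hence contained in a facet), and the perpendicularity as a one-line consequence of both vertices lying in $H_\varepsilon$. The paper phrases vertex attainment via a small-perturbation argument ruling out interior minimizers and is more terse about the facet-containment step, but there is no substantive difference in method. Be aware that both your proof and the paper's implicitly exclude the degenerate case ${\bf c}_\varepsilon = 0$ (or $\mathfrak{Q}\subset H_\varepsilon$), where $S_\varepsilon$ is not a proper face; the paper only addresses this later by excluding ``trivial perturbations,'' so this is a shared rather than newly introduced gap.
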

\begin{proof}[\footnotesize \textbf{Proof}]
	The fact that optimal values are attained at vertices is a standard fact of linear programming; indeed, no interior point can achieve the optimal value since otherwise subtracting a small positive multiple of ${\bf c}_{\varepsilon}$ from the interior point provides a point in ${\mathfrak Q}$ which violates the optimality. Suppose two vertices achieve the optimal value, then the optimal value will also be achieved along the line segment that joins these vertices; hence, the line segment must be included in a facet of the convex polytope ${\mathfrak Q}$. The perpendicularity claim should be clear. 
\end{proof}
\subsubsection*{\small \bf \textit{\textsf{Notation}}}
In the setting of Lemma~\ref{lem:perturb-1}, we denote by ${\mathcal{F}^{op}}\left( \varepsilon \right)$ the set of vertices of ${\mathfrak Q}$ that  achieve the optimal value (they are sometimes called \emph{basic feasible solutions}). We will denote the optimal value by $\mathrm{val}(\varepsilon)$. We denote the set of all vertices of ${\mathfrak Q}$ by $\mathcal{V}$. 
\begin{definition}[switching time]\label{defn:switch}
	An $\varepsilon_0$ will be called a switching time if there exists $\varepsilon_i \to \varepsilon$ such that $ \mathcal{F}^{op} \left( \varepsilon_i \right) \cap \mathcal{F}^{op} \left( \varepsilon_0\right) = \diameter$. We let $\mathcal{S}$ denote the set of switching times. 
\end{definition}
\begin{lemma}\label{lem:perturb-2}
	Suppose ${\bf d}_{\varepsilon} = \left( {\bf a}, {\bf b}, {\bf c}_{\varepsilon} \right)  \in \mathrm{relint}  \left(\mathcal{PF}\right)\cap   \mathrm{relint}\left(    \mathcal{PF} \right)$. Then, at a each switching time $\varepsilon_0$, $\left| {\mathcal{F}^{op}}\left( \varepsilon_0 \right) \right|\ge2$ holds. 
\end{lemma}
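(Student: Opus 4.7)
The plan is to prove the contrapositive: if $|\mathcal{F}^{op}(\varepsilon_0)| = 1$, then $\varepsilon_0$ is not a switching time. This turns the claim into a straightforward continuity-plus-finiteness argument, riding on Lemma~\ref{lem:perturb-1} which has already localized the search for optimal values to the finite vertex set $\mathcal{V}$ of $\mathfrak{Q}$.

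Concretely, I would assume $\mathcal{F}^{op}(\varepsilon_0) = \{\hat{u}\}$ for a single vertex $\hat{u} \in \mathcal{V}$. By uniqueness of the minimizer among vertices, every other vertex $\hat{w} \in \mathcal{V} \setminus \{\hat{u}\}$ satisfies the \emph{strict} inequality ${\bf c}_{\varepsilon_0} \cdot \hat{u} < {\bf c}_{\varepsilon_0} \cdot \hat{w}$. Since $\mathcal{V}$ is finite and each map $\varepsilon \mapsto {\bf c}_\varepsilon \cdot (\hat{w} - \hat{u})$ is continuous (continuity of ${\bf c}_\varepsilon$ is built into the setup of \textsection\thinspace\ref{sec:perturb-obj}, and under the relative-interior hypothesis on ${\bf d}_\varepsilon$ we even get the local Lipschitz continuity furnished by Proposition~\ref{prop:lip-lp}), these finitely many strict inequalities persist on a common open neighborhood $I$ of $\varepsilon_0$. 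Consequently $\hat{u} \in \mathcal{F}^{op}(\varepsilon)$ for every $\varepsilon \in I$, so that any sequence $\varepsilon_i \to \varepsilon_0$ has $\hat{u} \in \mathcal{F}^{op}(\varepsilon_i) \cap \mathcal{F}^{op}(\varepsilon_0)$ for all sufficiently large $i$, contradicting the definition of switching time in Definition~\ref{defn:switch}.

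There is no real obstacle here; the only point demanding care is the existence of the neighborhood $I$. Specifically, I must ensure that the relative-interior hypothesis guarantees that $\mathcal{F}^{op}(\varepsilon)$ is nonempty (equivalently, that the LP remains well-posed) throughout a neighborhood of $\varepsilon_0$, so that $\hat{u}$ genuinely realizes the optimum there rather than the problem degenerating. This is immediate, since the feasible region $\mathfrak{Q}$ is independent of $\varepsilon$ and bounded, so by Lemma~\ref{lem:perturb-1} the minimum is always attained at some vertex.
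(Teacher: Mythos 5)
Your argument is correct, but it takes a genuinely different and, in fact, more elementary route than the paper. The paper's proof assumes $\mathcal{F}^{op}(\varepsilon_0)=\{\hat v_0\}$ is a singleton, exhibits a strictly positive gap $r_{\varepsilon_0}$ between $h_0(\varepsilon_0)$ and the non-optimal vertex values, and then derives $|\mathrm{val}(\varepsilon_k)-\mathrm{val}(\varepsilon_0)| > r_{\varepsilon_0}$ along the switching sequence, contradicting the Lipschitz continuity of the optimal value function furnished by Proposition~\ref{prop:lip-lp} (Renegar's perturbation theorem). You instead prove the contrapositive directly: uniqueness at $\varepsilon_0$ gives finitely many strict inequalities ${\bf c}_{\varepsilon_0}\cdot\hat u < {\bf c}_{\varepsilon_0}\cdot\hat w$, continuity of $\varepsilon\mapsto{\bf c}_\varepsilon$ makes them persist on a neighborhood $I$ of $\varepsilon_0$, hence $\hat u\in\mathcal{F}^{op}(\varepsilon)$ on $I$ and $\varepsilon_0$ cannot be a switching time. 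Your version needs only continuity of the data and finiteness of $\mathcal V$, with no appeal to the sensitivity theory at all; the paper's version is heavier but dovetails with the Renegar machinery that the authors use again in Lemma~\ref{lem:finite-switch} and Theorem~\ref{thm:piecewise-anal}. One small observation worth keeping: you correctly note that the relative-interior hypothesis is essentially superfluous for this particular lemma, since $\mathfrak{Q}$ is a fixed bounded polytope and Lemma~\ref{lem:perturb-1} already guarantees vertex-attainment at every $\varepsilon$ — the hypothesis matters more for the later lemmas where Proposition~\ref{prop:lip-lp} is the engine.
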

\begin{proof}[\footnotesize \textbf{Proof}]
	Obviously, since $\mathcal{V}$ is finite, there exists an $r_\varepsilon>0$, such that for $\hat{v} \in \mathcal{V} \smallsetminus {\mathcal{F}^{op}}(\varepsilon)$,
	\[
	\left| {\bf c}_\varepsilon \cdot \left( \hat{v} - \hat{w} \right) \right| > r_\varepsilon, \quad \forall \hat{w} \in {\mathcal{F}^{op}}(\varepsilon).
	\]
\par 	Now suppose contrary to the claim,  $ {\mathcal{F}^{op}}\left( \varepsilon_0 \right) = \{ \hat{v}_0 \}$ is a singleton.  Enumerate the vertices of ${\mathfrak Q}$ as $\mathcal{V} = \left\{ {\hat v}_0, {\hat v}_1, \cdots, {\hat v}_l   \right\} $.  
	Consider the functions 
	\begin{align}\label{eq:h-i}
	h_{i}(\varepsilon) := {\bf c}_{\varepsilon} \cdot {\hat v}_i.
	\end{align}
	Since $\varepsilon_0$ is a switching time, for the sequence $\varepsilon_k \to \varepsilon$ as in Definition~\ref{defn:switch}, it follows
	\begin{align}\label{eq:lip-2}
	\left| h_{i}\left( \varepsilon_k \right) - h_{0}\left( \varepsilon_0 \right) \right| >  r_\varepsilon, \quad \forall i\ge 1. 
	\end{align}
\par For any $k$, let $\mathrm{val}\left( \varepsilon_k \right)  = {\bf c}_{\varepsilon_k}  \cdot \hat{v}_{i_k} = h_{i_k} \left( \varepsilon_k \right)$; then, we must have $1\le i_k \le l$.  From \eqref{eq:lip-2}, we deduce
\[
\left| \mathrm{val}(\varepsilon_k)  -   \mathrm{val}(\varepsilon_0)   \right| >  r_{\varepsilon_0};
\] 
letting $k \to \infty$, this contradicts the Lipschitz continuity established by Proposition~\ref{prop:lip-lp}. 
\end{proof}
\begin{definition}[trivial perturbation]
	We say a perturbation ${\bf c}_{\varepsilon}$ is trivial if for all $\varepsilon$, $\mathcal{F}^{op}(\varepsilon) = \mathcal{V}$ holds i.e.  when for all times, all vertices achieve the optimal value. 
\end{definition}
\par For the rest of this section, we assume the perturbation considered is non-trivial. 
\begin{lemma}\label{lem:finite-switch}
	Suppose ${\bf c}_\varepsilon$, $\varepsilon \in [0,1]$ satisfies the hypotheses of the Lemmas~\ref{lem:perturb-1} and \ref{lem:perturb-2} and in addition, is analytic in $\varepsilon$ on the interval $(0,1)$; then for any $0<\updelta<1$, ${\mathcal S} \cap [\updelta, 1-\updelta]$ is a finite set. 
\end{lemma}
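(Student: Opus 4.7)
The plan is to reduce finiteness of switching times to the isolated zero structure of non-zero analytic functions. I would start by noting that analyticity of ${\bf c}_\varepsilon$ on $(0,1)$ makes each $h_i(\varepsilon) := {\bf c}_\varepsilon \cdot \hat{v}_i$ from \eqref{eq:h-i} and each pairwise difference $g_{ij}(\varepsilon) := h_i(\varepsilon) - h_j(\varepsilon)$ analytic on $(0,1)$. Partition $\mathcal{V} = \{\hat{v}_0, \dots, \hat{v}_l\}$ into equivalence classes $V_1, \dots, V_p$ by declaring $\hat{v}_i \sim \hat{v}_j$ iff $g_{ij} \equiv 0$ on $(0,1)$; within each class $V_\alpha$ all vertices share a common analytic $h$-function, which I denote $h_\alpha$. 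For any $\alpha \neq \beta$ the function $h_\alpha - h_\beta$ is a non-zero analytic function on $(0,1)$, so by the identity theorem its zeros are isolated and its zero set on the compact subinterval $[\updelta, 1-\updelta] \subset (0,1)$ is finite.

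The central step is to show that at every switching time $\varepsilon_0 \in [\updelta, 1-\updelta]$ the argmin set $\mathcal{F}^{op}(\varepsilon_0)$ must contain vertices drawn from at least two distinct equivalence classes. I would first observe that $\mathcal{F}^{op}(\varepsilon)$ is always a union of equivalence classes, since any vertex $\sim$-equivalent to an optimal vertex shares its $h$-value at $\varepsilon$ and is therefore itself optimal. Then arguing by contradiction: if $\mathcal{F}^{op}(\varepsilon_0) = V_\alpha$ for a single $\alpha$, then $h_\beta(\varepsilon_0) > h_\alpha(\varepsilon_0)$ strictly for every other class $V_\beta$, and continuity of the finitely many analytic differences $h_\alpha - h_\beta$ propagates these strict inequalities to an entire neighborhood of $\varepsilon_0$. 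Throughout such a neighborhood one would have $\mathcal{F}^{op}(\varepsilon) = V_\alpha = \mathcal{F}^{op}(\varepsilon_0)$, contradicting the existence of a sequence $\varepsilon_k \to \varepsilon_0$ with $\mathcal{F}^{op}(\varepsilon_k) \cap \mathcal{F}^{op}(\varepsilon_0) = \diameter$ required by Definition~\ref{defn:switch}.

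With this structural fact in hand, the rest is bookkeeping. For any switching time $\varepsilon_0 \in [\updelta, 1-\updelta]$ I can pick $\hat{v}_i \in V_\alpha \cap \mathcal{F}^{op}(\varepsilon_0)$ and $\hat{v}_j \in V_\beta \cap \mathcal{F}^{op}(\varepsilon_0)$ with $\alpha \neq \beta$, yielding $g_{ij}(\varepsilon_0) = 0$ while $g_{ij} \not\equiv 0$. Therefore $\mathcal{S} \cap [\updelta, 1-\updelta]$ is contained in the union, over the finitely many pairs $(i,j)$ with $g_{ij} \not\equiv 0$, of the zero sets of $g_{ij}$ on $[\updelta, 1-\updelta]$, which is a finite union of finite sets and hence finite. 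The main obstacle is the central step of the second paragraph: a naive attempt to pick an optimal pair at $\varepsilon_0$ without first passing to the $\sim$-quotient could land on a pair with $g_{ij} \equiv 0$, which carries no information about isolated zeros; sorting $\mathcal{V}$ by $\sim$ up front is precisely what eliminates this obstruction, which is why I would organize the proof around the equivalence classes from the outset.
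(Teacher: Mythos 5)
Your proof is correct and rests on the same core observation as the paper's: at a switching time two \emph{analytically distinct} value functions $h_i,h_j$ must coincide, so analyticity confines the switching times in $[\updelta,1-\updelta]$ to a finite union of finite zero sets. The organizational difference is genuine and is an improvement. The paper's proof extracts, along the sequence $\varepsilon_k$, a constant vertex $\hat v_{i_{k_0}}\in\mathcal F^{op}(\varepsilon_m)$ and appeals to the Lipschitz continuity of the optimal value (Proposition~\ref{prop:lip-lp}) to pass to the limit and produce a nontrivial coincidence $h_{i_{k_0}}(\varepsilon_0)=h_0(\varepsilon_0)$; it then needs a separate clause invoking non-triviality of the perturbation to rule out the degenerate case $|\mathcal H|=1$. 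Your version instead partitions $\mathcal V$ into $\sim$-classes up front, observes that $\mathcal F^{op}(\varepsilon)$ is always a union of such classes, and shows by ordinary continuity of finitely many analytic differences that if $\mathcal F^{op}(\varepsilon_0)$ were a single class, $\mathcal F^{op}$ would be locally constant at $\varepsilon_0$, contradicting Definition~\ref{defn:switch}. This is self-contained (no reliance on Renegar's perturbation bound beyond what was already used to set the stage), and it handles the degenerate ``one class'' case automatically: with a single class there are simply no switching times, so $\mathcal S\cap[\updelta,1-\updelta]=\diameter$ and the conclusion holds vacuously, without any appeal to the non-triviality hypothesis. The quotient-by-$\sim$ step you emphasize is exactly what sidesteps the pitfall of picking a pair with $g_{ij}\equiv 0$, which is implicitly what the paper's constant-subsequence argument is doing in a less transparent way.
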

\begin{proof}[\footnotesize \textbf{Proof}]
Let $\mathcal{V} = \left\{ \hat{v}_0, \cdots, \hat{v}_l   \right\} $ be an enumeration of vertices of ${\mathfrak Q}$. Consider the analytic functions $h_i$ given by \eqref{eq:h-i}; by the hypotheses, $h_i$ are analytic in $\varepsilon$. 
	\par For every $i \neq j$, by analyticity, it follows either
	\[
	\left\{\varepsilon \;\; \text{\textbrokenbar}\;\;   h_{i}(\varepsilon)  = h_{j}(\varepsilon)  \right\}  \cap [\updelta, 1-\updelta],
	\]
	is a discrete set or $h_i \equiv h_j$ on $(0,1)$. Let $\varepsilon_0$ be a switching time and let $\varepsilon_k$ be a sequence as in Definition~\ref{defn:switch};
	without loss of generality, we assume $\hat{v}_0 \in {\mathcal{F}^{op}}\left( \varepsilon_0 \right)$. Take a sequence $\hat{v}_{i_{k}} \in {\mathcal{F}^{op}}\left( \varepsilon_k \right)$. So there exists $k_0$ and a sequence of natural numbers $m$ such that $\hat{v}_{i_{m}} = \hat{v}_{i_{k_0}} $ for all values of $m$; namely, we have extracted a constant subsequence of vertices. So by (Lipschitz) continuity of optimal values, one gets
	\[
	\mathrm{val}\left( \varepsilon_m \right) = h_{i_{k_0}} \left( \varepsilon_m \right)  \to  h_{i_{k_0}} \left( \varepsilon_0\right)  =  \mathrm{val}\left( \varepsilon_0) \right) = h_0 \left( \varepsilon_0 \right).
	\]
	which means $v_{i_{k_0}} \in {\mathcal{F}^{op}}\left( \varepsilon_0 \right)$ and furthermore, we know $h_{i_{k_0}} \not\equiv h_{0}$. So at every switching time, we find two distinct analytic functions $h_{i_{k_0}}$ and $h_{0}$ that take the same values. 
	\par Let
	$
	\mathcal{H} := \left\{ h_i \right\}
	$
	be the set of all distinct $h_i$; then if $\mathcal{H}$ has only one element, every $\varepsilon$ is a switching time and the optimal value is attained at all vertices at all times which is not possible \emph{since the perturbation is non-trivial}. If $|\mathcal{H}| \neq 1$, by pigeonhole principle, we must have
	\[
 \mathcal{A}:=	\left\{\varepsilon  \;\; \text{\textbrokenbar}\;\;   h_{i}(\varepsilon)  = h_{j}(\varepsilon) \; \text{for $h_i \not\equiv h_j \in \mathcal{H}$} \right\}  \cap [\updelta, 1-\updelta],
	\]
	is also a finite set. Since $\mathcal{S} \subset \mathcal{A}$, the conclusion follows. 
\end{proof}
\begin{theorem}\label{thm:piecewise-anal}
	In the setting of the previous Lemmas, $\mathrm{val}(\varepsilon)$ is a piece-wise analytic and locally Lipschitz function on $(0,1)$. 
\end{theorem}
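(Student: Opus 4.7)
My plan is to recast $\mathrm{val}(\varepsilon)$ as the pointwise minimum of finitely many analytic functions, and then extract piecewise analyticity and local Lipschitz regularity directly from analytic continuation, largely bypassing any need to track the switching set $\mathcal{S}$ explicitly.

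First I would enumerate $\mathcal{V} = \{\hat v_0, \ldots, \hat v_l\}$ and set $h_i(\varepsilon) := {\bf c}_\varepsilon \cdot \hat v_i$; by the standing hypothesis each $h_i$ is analytic on $(0,1)$. Lemma~\ref{lem:perturb-1} gives the representation
\[
\mathrm{val}(\varepsilon) = \min_{0\le i\le l} h_i(\varepsilon).
\]
Local Lipschitz regularity is then immediate: it either follows from Proposition~\ref{prop:lip-lp} applied to the analytic (hence locally Lipschitz) curve $\varepsilon \mapsto {\bf d}_\varepsilon$ which stays in $\mathrm{relint}(\mathcal{PF})$ by hypothesis, or more elementarily from the fact that a finite pointwise minimum of locally Lipschitz functions is locally Lipschitz.

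For piecewise analyticity I would look at the pairs $(i,j)$ with $h_i\not\equiv h_j$: each $h_i - h_j$ is a nonzero analytic function on $(0,1)$, so its zero set is discrete by analytic continuation. Let $Z$ be the union of all these zero sets; since we take a finite union of discrete sets, $Z$ is locally finite in $(0,1)$. On each connected component $J$ of $(0,1)\setminus Z$, no two distinct $h_i, h_j$ cross inside $J$, hence $h_i - h_j$ has constant nonzero sign on $J$. Therefore the ordering among the distinct $h_i$'s is constant on $J$, and the minimum is realized throughout $J$ by one and the same $h_{i^\ast}$. Thus $\mathrm{val}|_J = h_{i^\ast}|_J$ is analytic, and the decomposition induced by $Z$ exhibits $\mathrm{val}$ as piecewise analytic on $(0,1)$ with a locally finite collection of piece-boundaries.

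The only step meriting care is the passage from ``no $h_i - h_j$ changes sign on $J$'' to ``a single $h_{i^\ast}$ realizes the minimum on all of $J$'', but this reduces to the elementary observation that a nonvanishing continuous function on a connected set has constant sign. I therefore do not anticipate a genuine obstacle; as a byproduct, Lemma~\ref{lem:finite-switch} is recovered, since any switching time $\varepsilon_0$ satisfies $|\mathcal{F}^{op}(\varepsilon_0)|\ge 2$ by Lemma~\ref{lem:perturb-2}, forcing two distinct $h_i$'s to coincide at $\varepsilon_0$ and hence $\varepsilon_0 \in Z$.
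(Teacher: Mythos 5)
Your argument is correct, and it reaches the conclusion by a somewhat more direct route than the paper's. The paper first introduces the switching-time set $\mathcal{S}$ (Definition~\ref{defn:switch}), proves $|\mathcal{F}^{op}(\varepsilon_0)|\ge 2$ at each switching time via a pigeonhole/Lipschitz argument (Lemma~\ref{lem:perturb-2}), then shows $\mathcal{S}\cap[\updelta,1-\updelta]$ is finite by embedding $\mathcal{S}$ into the very set you call $Z$ (what the paper names $\mathcal{A}$ inside the proof of Lemma~\ref{lem:finite-switch}), and only then asserts analyticity between consecutive switching times. You collapse this scaffolding by writing $\mathrm{val}(\varepsilon)=\min_i h_i(\varepsilon)$ directly (which is the content of Lemma~\ref{lem:perturb-1}) and working with the locally finite set $Z=\bigcup_{h_i\not\equiv h_j}\{h_i=h_j\}$ from the start; on each component of $(0,1)\smallsetminus Z$ the pointwise ordering of the distinct $h_i$'s cannot change, so a single analytic branch realizes the minimum. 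This is cleaner, and your alternative Lipschitz argument (a finite pointwise minimum of locally Lipschitz functions is locally Lipschitz) also avoids invoking Renegar's perturbation bound, Proposition~\ref{prop:lip-lp}, which the paper uses via Lemma~\ref{lem:perturb-2}. The only small imprecision is in the closing ``byproduct'' remark: two vertices $\hat v_i,\hat v_j\in\mathcal{F}^{op}(\varepsilon_0)$ need not give $h_i\not\equiv h_j$, so $\varepsilon_0\in Z$ does not follow immediately from Lemma~\ref{lem:perturb-2} alone. It is salvageable — if $h_i\equiv h_j$ then $\hat v_i$ and $\hat v_j$ enter and leave $\mathcal{F}^{op}$ together, so on any component of $(0,1)\smallsetminus Z$ the whole set $\mathcal{F}^{op}$ is constant and no switching can occur there — but as written the one-line claim is a bit quick. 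Since this remark is tangential to your main argument, the proof of the theorem itself stands.
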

\begin{proof}[\footnotesize \textbf{Proof}] We only need to argue piece-wise analyticity. By Lemma~\ref{lem:finite-switch}, we know there are finitely many switching times in $[\updelta, 1-\updelta]$ for $\updelta>0$ small. Between any two such switching times, the optimal value coincides with one of the analytic functions $h_i$; hence, the piece-wise analyticity holds within $[\updelta, 1-\updelta]$.  Since $0<\updelta<1$ can be arbitrary small, we deuce the optimal value is also piece-wise analytic on $(0,1)$; however, notice that \emph{there might be an infinitely many number of pieces (accumulating at the endpoints $0$ and/or $1$)}. 
\end{proof}
\subsection{Finite number of pieces}
A sufficient condition for ensuring a finite number of analytic pieces for the optimal value function is given in the following theorem. 
\begin{theorem}\label{thm:anal-ext}
	Assume the hypotheses of the previous theorem. In addition, suppose for some small $\updelta>0$, ${\bf c}_\varepsilon$ admits an analytic continuation over $(0-\updelta, 1+\updelta)$; suppose ${\bf c}_{\varepsilon}$ satisfies the hypotheses of Theorem~\ref{thm:piecewise-anal}, then  $\mathrm{val}(\varepsilon)$ has finitely many pieces in $[0,1]$. 
\end{theorem}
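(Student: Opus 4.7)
The plan is to repeat the argument of Lemma~\ref{lem:finite-switch} but now with the extended interval $(-\updelta, 1+\updelta)$ in place of $(0,1)$; the added room on both sides of $[0,1]$ will allow us to convert discreteness into finiteness via compactness.

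Since ${\bf c}_\varepsilon$ admits an analytic continuation over $(-\updelta, 1+\updelta)$, each of the functions
\[
h_i(\varepsilon) := {\bf c}_\varepsilon \cdot \hat{v}_i, \quad \hat{v}_i \in \mathcal{V},
\]
is analytic on the open interval $(-\updelta, 1+\updelta)$ (the vertex set $\mathcal{V}$ of $\mathfrak{Q}$ is fixed since the constraints are $\varepsilon$-independent). For any pair $i \neq j$ with $h_i \not\equiv h_j$ on $(-\updelta, 1+\updelta)$, the difference $h_i - h_j$ is a non-zero analytic function on a connected open set, hence its zero set is discrete in $(-\updelta, 1+\updelta)$. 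Since $[0,1]$ is a compact subset of $(-\updelta, 1+\updelta)$, this discrete set meets $[0,1]$ in only finitely many points.

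Next, following the reasoning in Lemma~\ref{lem:finite-switch}, any switching time $\varepsilon_0 \in [0,1]$ necessarily lies in
\[
\mathcal{A} := \left\{\varepsilon \in (-\updelta, 1+\updelta) \;\; \text{\textbrokenbar}\;\; h_i(\varepsilon) = h_j(\varepsilon) \; \text{for some } h_i \not\equiv h_j \right\}.
\]
As the perturbation is non-trivial, $|\mathcal{H}| \neq 1$, so $\mathcal{A}$ is a finite union (over the finitely many such pairs) of sets each of which intersects $[0,1]$ in finitely many points by the previous paragraph. Hence $\mathcal{S} \cap [0,1] \subset \mathcal{A} \cap [0,1]$ is finite.

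Finally, between any two consecutive switching times in $[0,1]$ (and on the initial/terminal segments before the first/after the last switching time), Lemma~\ref{lem:perturb-1} and the definition of switching time imply $\mathcal{F}^{op}(\varepsilon)$ remains constant (up to inclusion in a common facet) and $\mathrm{val}(\varepsilon)$ coincides with one fixed $h_i$, which is analytic on $(-\updelta, 1+\updelta)$. Therefore $\mathrm{val}(\varepsilon)$ consists of at most $|\mathcal{S} \cap [0,1]| + 1$ analytic pieces on $[0,1]$, which is finite. The main subtlety — and the one for which the extension to $(-\updelta, 1+\updelta)$ is used — is precisely the prevention of accumulation of switching times at the endpoints $0$ and $1$, an accumulation that was not ruled out in Theorem~\ref{thm:piecewise-anal}.
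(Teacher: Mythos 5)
Your proof is correct and follows essentially the same approach as the paper's: analyticity of the functions $h_i$ on the strictly larger open interval $(-\updelta,1+\updelta)$ prevents the agreement set $\mathcal{A}$ (and hence $\mathcal{S}\subset\mathcal{A}$) from accumulating inside the compact subset $[0,1]$. The paper phrases this as a contradiction (accumulation of $\mathcal{S}$ at $0$ or $1$ would force two distinct $h_i$ to coincide identically), while you argue directly via compactness and a finite union of discrete zero sets, but the underlying idea is identical.
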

\begin{proof}[\footnotesize \textbf{Proof}] 
	We know $\mathrm{val}(\varepsilon)$  has finitely many analytic pieces within any $[\updelta, 1-\updelta]$.  Since ${\bf c}_\varepsilon$ admits an analytic continuation, we deduce $h_i(\varepsilon)$ is defined and is analytic on $(0-\updelta, 1+\updelta)$.
\par Suppose $\mathrm{val}(\varepsilon)$ has infinitely many pieces on $[0, 1]$ then, $\mathcal{S}$ has at least one of the points $0$ or $1$ as its accumulation point. This would again imply that the functions $h_i(\varepsilon)$ -- that are now analytic in $(0-\updelta, 1+\updelta)$ all must coincide which contradicts the non-triviality of the perturbation. 
\end{proof}
\begin{remark}\phantom{}\hspace{-1pt}\textit{Notice that in Theorem~\ref{thm:anal-ext}, we are not assuming the problem is even well-posed for $\varepsilon>1$ or for $\varepsilon<0$.} \phantom{}
\end{remark}
\subsubsection{\small \bf \textsf{ Upper bounds on the number of pieces}}\label{sec:ub}
\par In general, for an analytic perturbation, we can not bound the number of finite analytic pieces; however, if we assume the perturbation is given by  polynomials in $\varepsilon$ then, an upper bound can be estimated. 
\begin{theorem}\label{thm:poly-bound}
	Let ${\bf c}_\varepsilon$ satisfy the hypotheses of Theorem~\ref{thm:piecewise-anal} and furthermore,  
	\[
  {\bf c}_{\varepsilon} = \left(  p_1(\varepsilon), \cdots, p_{n}(\varepsilon)    \right), \quad 0\le \varepsilon\le 1,
	\]
	where $p_i(\varepsilon)$ is a polynomial of degree $d_i$.  Then, $\mathrm{val}(\varepsilon)$ has finitely many pieces and the number of pieces is bounded above by 
	\[
	d_{\max} \cdot \Uplambda(n,m) + 1.
	\]
	in which $m$ is the number of facets of ${\mathfrak Q}$. 
\end{theorem}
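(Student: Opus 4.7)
The plan is to use Lemma~\ref{lem:perturb-1} to realize $\mathrm{val}(\varepsilon)$ as the lower envelope of polynomial functions indexed by vertices of the feasible polytope $\mathfrak{Q}$, to control the number of such polynomials via Theorem~\ref{thm:ub}, and to bound the complexity of the envelope by a Davenport--Schinzel-style root-counting argument.

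First, under the polynomial hypothesis each coordinate $p_i(\varepsilon)$ extends to an entire function on $\R$, so Theorem~\ref{thm:anal-ext} already guarantees only finitely many pieces on $[0,1]$; it remains only to quantify the count. By Lemma~\ref{lem:perturb-1}, on every piece $\mathrm{val}(\varepsilon) = h_v(\varepsilon) := {\bf c}_\varepsilon \cdot \hat v$ for some vertex $\hat v \in \mathcal{V}$ of $\mathfrak{Q}$, and under our assumptions each such $h_v$ is a polynomial in $\varepsilon$ of degree at most $d_{\max}$. Applying Theorem~\ref{thm:ub} to $\mathfrak{Q} \subset \R^n$ with $m$ facets yields $|\mathcal{V}| \leq \Uplambda(n,m)$.

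Thus $\mathrm{val}(\varepsilon) = \min_{v \in \mathcal{V}} h_v(\varepsilon)$ is the lower envelope of at most $\Uplambda(n,m)$ polynomials, each of degree at most $d_{\max}$. By Lemma~\ref{lem:perturb-2}, every switching time $\varepsilon_0 \in \mathcal{S}$ is a common zero of some polynomial difference $h_v - h_w$, $v \neq w$, and each such nonzero difference has at most $d_{\max}$ real roots. To match the claimed bound I plan to argue as follows: list the pieces in order and let $v_1, v_2, \ldots, v_N$ denote the corresponding optimal vertices, so that $v_k \neq v_{k+1}$. Since any pair $(h_v, h_w)$ can cross at most $d_{\max}$ times, each pair $\{v, w\}$ can participate in only boundedly many alternations in this sequence. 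A careful collation of these alternations --- using that at every switching time Lemma~\ref{lem:perturb-2} produces at least two competing polynomials --- should yield $N - 1 \leq d_{\max} \cdot \Uplambda(n,m)$, i.e.\ $N \leq d_{\max} \cdot \Uplambda(n,m) + 1$.

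The main obstacle is precisely this accounting step. The naive upper bound $d_{\max} \binom{\Uplambda(n,m)}{2}$ obtained by directly counting zeros of $\prod_{v \neq w}(h_v - h_w)$ is too coarse; a simple example with three quadratics already shows that a single vertex can be optimal on more than $d_{\max}$ disjoint pieces, so the naive ``per-vertex'' bound $r_v \leq d_{\max}$ is false, and one cannot merely partition pieces by active vertex. Instead, one has to exploit the envelope structure itself --- not every zero of every pairwise difference is a switching time, only those at which the two polynomials both realize the current minimum --- which is where the factor $d_{\max} \cdot \Uplambda(n,m)$, rather than $d_{\max}\binom{\Uplambda(n,m)}{2}$, emerges. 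Working out this Davenport--Schinzel-style reduction is the heart of the proof.
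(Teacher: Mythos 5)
Your proposal stalls at exactly the right place, and you correctly diagnose that neither the crude per-pair count ($d_{\max}\binom{\Uplambda(n,m)}{2}$) nor a per-vertex count follows trivially. But the Davenport--Schinzel argument you sketch to close the gap cannot, in general, yield a bound that is linear in $\Uplambda(n,m)$ with coefficient $d_{\max}$. The lower envelope of $|\mathcal V|$ totally defined real functions, any pair of which agree in at most $s$ points, has at most $\lambda_s(|\mathcal V|)$ breakpoints, where $\lambda_s$ is the maximum length of a Davenport--Schinzel sequence of order $s$. That quantity is linear only for $s\le 2$ ($\lambda_1(n)=n$, $\lambda_2(n)=2n-1$); for $s\ge 3$ it is strictly super-linear, e.g.\ $\lambda_3(n)=\Theta\bigl(n\,\alpha(n)\bigr)$ with $\alpha$ the inverse Ackermann function. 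So for time-polynomial walks with $d_{\max}\ge 3$, the "careful collation of alternations" you plan to carry out cannot produce $N-1\le d_{\max}\cdot\Uplambda(n,m)$; the envelope route gives at best $\lambda_{d_{\max}}\bigl(\Uplambda(n,m)\bigr)+1$.

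For what it is worth, the paper's own proof does not resolve this issue either: right after noting that any two distinct polynomials $h_i,h_j$ intersect in at most $d_{\max}$ points, it asserts $\left|\mathcal A\right|\le d_{\max}\cdot\left|\mathcal V\right|$, where $\mathcal A$ is the set of all intersection points. That count naturally gives $d_{\max}\binom{|\mathcal V|}{2}$, not $d_{\max}|\mathcal V|$, and the paper offers no envelope-type argument to shrink it. So the coefficient $d_{\max}\cdot\Uplambda(n,m)$ in the theorem statement is not supported by the written proof; the bounds one can actually justify are $d_{\max}\binom{\Uplambda(n,m)}{2}+1$ (the paper's method, corrected) or $\lambda_{d_{\max}}\bigl(\Uplambda(n,m)\bigr)+1$ (your envelope method). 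Your suspicion was well founded, and the gap you flag is a genuine one that the paper shares.
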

\begin{proof}[\footnotesize \textbf{Proof}] 
	Since the perturbation is given by polynomials, it -- in particular -- admits analytic continuation to a larger interval; hence, by Theorem~\ref{thm:anal-ext}, we know $\mathrm{val}(\varepsilon)$ has finitely many analytic pieces on $[0,1]$. These  pieces are indeed polynomials since $h_i = {\bf c}_{\varepsilon} \cdot \hat{v_i}$ is a polynomial for all $i$. 
		\par The number of pieces is equal to $|\mathcal{S} \cap (0,1)| + 1$.  On the other hand, from the proof of Lemma~\ref{lem:finite-switch}, one knows 
	$\mathcal{S} \subset \mathcal{A}$. Let $d_{\max} := \max d_i$; then, $h_i$ is a polynomial of degree  at most $d_{\max}$. The graphs of distinct two such polynomials can have no more than $d_{\max}$ points of intersection; hence, we deduce 
	\[
	\left| \mathcal{A} \right| \le d_{\max} \cdot \left| \mathcal{V}  \right|.
	\]
	\par By our Theorem~\ref{thm:ub}, $ \left| \mathcal{V}  \right| \le \Uplambda(n, m)$ holds where $m$ is the number of facets of the constant feasible set ${\mathfrak Q}$. Therefore, we get
\[
|\mathcal{S} \cap (0,1)| + 1 \le  d_{\max} \cdot \Uplambda(n,m) + 1.
\]
\end{proof}
\begin{corollary}
	If ${\bf c}_\varepsilon$ is affine in $\varepsilon$, then the number of affine pieces of the optimal value function is bounded above by $\Uplambda(n,m) + 1$.
\end{corollary}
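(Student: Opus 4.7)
The plan is to derive this corollary as a direct specialization of Theorem~\ref{thm:poly-bound}. Assuming $\mathbf{c}_\varepsilon$ is affine in $\varepsilon$ means each component of $\mathbf{c}_\varepsilon$ is a polynomial in $\varepsilon$ of degree at most $1$, so in the notation of Theorem~\ref{thm:poly-bound} we have $d_i \le 1$ for all $i$, hence $d_{\max} \le 1$. I would also verify that affine perturbations automatically satisfy the standing hypotheses carried from Theorem~\ref{thm:piecewise-anal} (continuity in $\varepsilon$, analyticity on $(0,1)$, and admitting an analytic continuation to a slightly larger interval), which are trivially true for affine maps.

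First I would apply Theorem~\ref{thm:poly-bound} directly with $d_{\max} = 1$ to conclude that the number of pieces of $\mathrm{val}(\varepsilon)$ is at most $1 \cdot \Uplambda(n,m) + 1 = \Uplambda(n,m) + 1$. Second, I would argue that these pieces are actually affine (not merely polynomial of degree $\le 1$, which is the same thing, but worth spelling out): on each piece, $\mathrm{val}(\varepsilon)$ coincides with some $h_i(\varepsilon) = \mathbf{c}_\varepsilon \cdot \hat v_i$ in the notation of \eqref{eq:h-i}; since $\mathbf{c}_\varepsilon$ is affine in $\varepsilon$ and $\hat v_i$ is a constant vertex of the fixed feasible polytope $\mathfrak{Q}$, the function $h_i$ is affine in $\varepsilon$, so each piece is affine.

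There is essentially no obstacle here: the content of the corollary is entirely inherited from Theorem~\ref{thm:poly-bound}, and the only thing to check beyond the numerical substitution $d_{\max} = 1$ is the identification of the pieces as affine, which follows from the bilinearity of the pairing $\mathbf{c}_\varepsilon \cdot \hat v_i$ in $\varepsilon$ and $\hat v_i$. Thus the proof should amount to one or two short sentences invoking Theorem~\ref{thm:poly-bound}.
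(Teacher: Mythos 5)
Your proof is correct and matches the intended reasoning: the corollary is stated without proof precisely because it is the immediate specialization of Theorem~\ref{thm:poly-bound} to $d_{\max}=1$, and your additional check that the pieces $h_i(\varepsilon)=\mathbf{c}_\varepsilon\cdot\hat v_i$ are affine when $\mathbf{c}_\varepsilon$ is affine is a sound (if routine) observation.
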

\section{Some operator theoretic tools}\label{sec:oper-theo}
\subsubsection*{\small \bf \textit{\textsf{Notation}}}
Throughout the rest of these notes, $\mathcal{C}_{\sf fs}(\G)$ denotes the space of compact support (continuous) functions with respect to the combinatorial distance; notice these are functions with finite support. $\LL^2(\G, \m)$ is the space of square $\m$-summable functions and $\ell^2(\G)$ is the space of square summable functions.  In this section, $\dist$ is an arbitrary distance function on $\G$. 
\begin{definition}[self-adjoint property]
	An operator $\mathcal{A}: \LL^2(\G,\m) \to \LL^2(\G,\m)$
	is self adjoint if
	\[
	\left< \mathcal{A}f, g  \right> = \left< f, \mathcal{A}g \right>, \quad \forall f,g \in \LL^2(\G, \m),
	\]
	where the inner product is
	\[
	<f,g> := \int_{\G} fg d\m = \sum_{z\in \G} f(z)g(z)\m(z).
	\]
	\par Operator  $\mathcal{A}:  \LL^2(\G)  \supset \mathsf{Dom}(\mathcal{A}) \to \LL^2(\G)$ is said to be essentially self-adjoint if it has unique self-adjoint extension to $\LL^2(\G,\m)$. 
	\par We say an operator $\mathcal{A}$ has the self-adjoint property if $\mathcal{A}$ is essentially self-adjoint and $\mathcal{C}_{\sf fs}(\G) \subset \mathsf{Dom}(\mathcal{A})$. 
\end{definition}
\begin{definition}[rough comparison principle]
 An operator $\mathcal{A}:  \LL^2(\G, \m) \supset \mathsf{Dom}(\mathcal{A})  \to \LL^2(\G, \m)$ is said to satisfy a rough comparison principle with range $R\ge0$ on a subspace $\mathbb{W} \subset  \LL^2(\G, \m)$, if there exists a constant $C_1$ such that for any $f$ in $\mathbb{W} \cap \Lip(1)$, it holds
	\[
	\text{$f$ has a global minimum at $x$}   \Longrightarrow \mathcal{A} f(z) \ge C_1,\quad  \forall z \in \B_{R}(x).
	\]
\end{definition}
\begin{definition}[two-sided rough comparison]\label{def:two-sided}
	The operator $\mathcal{A}:  \LL^2(\G, \m) \supset \mathsf{Dom}(\mathcal{A})  \to \LL^2(\G, \m)$ is said to satisfy a  two sided rough  rough comparison principle on a subspace $\mathbb{W} \subset  \LL^2(\G, \m)$, if there exists a function $C(x)$ such that  the following hold for $f \in \mathbb{W} \cap \Lip(1)$ 
	\[
	\text{$f$ has a global minimum at $x$}   \Longrightarrow \left| \mathcal{A} f(x)\right| \le C(x).
	\]
\end{definition}
\begin{definition}[rough differential operator]
		Suppose $\m \in \ell^1(\G)$. An operator $\mathcal{A}: \LL^2(\G, \m) \supset \mathsf{Dom}(\mathcal{A})  \to \LL^2(\G, \m)$ is said to be a rough differential operator whenever its kernel contains $1$ ($\iff$ all constants).  Notice  hypothesis  $\m \in \ell^1(\G)$ guarantees $1 \in  \LL^2(\G, \m)$.  So these operators are translation invariant. 
\end{definition}
\begin{definition}[weakly of divergence type]
	An operator $\mathcal{A}:  \LL^2(\G, \m) \supset \mathsf{Dom}(\mathcal{A})  \to \LL^2(\G, \m)$ is said to be weakly of divergence type whenever  $\mathcal{C}_{\sf fs}(\G) \subset \mathsf{Dom}(\mathcal{A})$ and
	\[
	\int_\G\mathcal{L} f d \m= 0, \quad \forall f \in \mathcal{C}_{\sf fs}(\G).
	\]
\end{definition}
\begin{definition}[semi-local operator]
	Let  $\G$ be a locally $\dist$-finite graph. An operator  $\mathcal{A}:  \LL^2(\G, \m) \supset \mathsf{Dom}(\mathcal{A})  \to \LL^2(\G, \m)$  is said to be semi-local with respect to $\dist$ with range $R$ in the subspace $\mathbb{W}$, if there exists $C_2>0$ such that 
	\[
	f\restr_{\B_{R}(x)} = 0 \Longrightarrow {\mathcal A}f(x) = 0, \quad \forall f \in \mathbb{W}\cap \Lip(1).
	\]
	and
	\[
	\left| \B_{2R}(x)  \right| \le C_2, \quad \forall x.
	\]
An operator is said to be semi-local whenever it is semi-local with some range $R>0$. 
\end{definition}
\begin{proposition}[two-sided comparison]\label{prop:st-comp}
Let $\G$ be a locally $\dist$-finite graph. Suppose $\mathcal{A}:  \LL^2(\G, \m) \supset \mathsf{Dom}(\mathcal{A})  \to \LL^2(\G, \m)$ is an operator with the following properties
\begin{enumerate}
	\item $\Lip(1) \subset \mathsf{Dom}(\mathcal{A}) $;
	\item $\mathcal{A}$ is semi-local with range $R>0$;
	\item  $\mathcal{A}$ is a  rough differential operator;
	\item $\mathcal{A}$  satisfies a rough comparison principle on $\Lip_1(\G)$ with range $2R$;
	\item there exists $C_3>0$ such that for all $z$, $\left| \mathcal{A} \dist(z,\cdot)(w) \right| \le C_3$ holds $\forall w \in \B_{2R}(z)$ .
\end{enumerate}
	Then, there exists $C>0$ depending on $C_1,C_2,C_3$, $\sup m\restr_{\B_{2R}\left( x_0\right)}$ such that
	\[
	\text{$f$ has a global minimum at $x$}   \Longrightarrow -\mathcal{A} f\left(x\right) \ge C\left( C_1,C_2,C_3,  \sup \m\restr_{\B_{2R}\left( x\right)} \right).
		\]
\end{proposition}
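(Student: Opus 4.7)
The plan is to produce an auxiliary $1$-Lipschitz function with a global minimum at $x$ by combining $f$ with $\dist(x,\cdot)$, apply the rough comparison hypothesis (4) at the single point $z=x$, and then exploit linearity together with the rough differential property (3) to convert the resulting inequality into the desired upper bound on $\mathcal{A}f(x)$ (the matching lower bound is simply hypothesis (4) applied directly to $f$, yielding $\mathcal{A}f(x) \ge C_1$).

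Concretely, for $f \in \Lip(1)$ with global minimum at $x$, I would set
\[
u(z) := \dist(x,z) - \left( f(z) - f(x) \right).
\]
Both $\dist(x,\cdot)$ and $f$ are $1$-Lipschitz, so $u$ is $2$-Lipschitz and hence $u/2 \in \Lip(1)$. The minimum hypothesis on $f$ together with the $1$-Lipschitz bound $f(z) - f(x) \le \dist(x,z)$ gives $0 = u(x) \le u(z)$ for every $z$, so $u/2$ also attains its global minimum at $x$. Applying the rough comparison principle (4) to $u/2$ at the point $z=x$ then yields $\mathcal{A}(u/2)(x) \ge C_1$, i.e.\ $\mathcal{A}u(x) \ge 2C_1$.

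The rough differential hypothesis (3) kills the constant $f(x)$, so by linearity
\[
\mathcal{A}u(x) = \mathcal{A}\dist(x,\cdot)(x) - \mathcal{A}f(x).
\]
Combined with $\mathcal{A}u(x) \ge 2C_1$ and the pointwise bound $|\mathcal{A}\dist(x,\cdot)(x)| \le C_3$ from (5) taken at $w=x$, this gives
\[
-\mathcal{A}f(x) \;\ge\; 2C_1 - C_3,
\]
which is the required bound, and which together with (4) constitutes the two-sided control.

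The step I expect to be the main obstacle is not this algebraic identity but rather justifying that (4) may legitimately be invoked on $u/2$, since $u/2$ need not lie in $\LL^2(\G,\m)$ globally even when $f$ does. This is where semi-locality with range $R$ (hypothesis 2), local $\dist$-finiteness (via $C_2$), and $\sup\m\restr_{\B_{2R}(x)}$ enter: one truncates $u/2$ outside $\B_R(x)$ so that the truncation lies in $\LL^2(\G,\m)\cap \Lip(1)$ and still attains its global minimum at $x$, and then semi-locality of $\mathcal{A}$ with range $R$ guarantees that $\mathcal{A}$ evaluated at $x$ does not see the modification. The quantitative control of this truncation through $C_2$ and $\sup\m\restr_{\B_{2R}(x)}$ is precisely what causes the final constant $C$ in the conclusion to depend on these two data in addition to $C_1$ and $C_3$.
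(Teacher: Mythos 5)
Your proof is correct and takes a genuinely different, considerably shorter route than the paper's. The paper's proof truncates $f$ to a finitely supported $\bar f$ via a cut-off, invokes the assumption that $\mathcal A$ is ``weakly of divergence form'' (which, incidentally, does not appear among the stated hypotheses of the proposition) to rewrite $-\mathcal A\bar f(x)\,\m(x)$ as $\sum_{z\in\B_{2R}(x)\smallsetminus\{x\}}\mathcal A\bar f(z)\,\m(z)$, and then applies the rough comparison principle at \emph{every} $z\in\B_{2R}(x)$ to the test function $\tfrac18\bigl(\bar f+4\,\dist(x,\cdot)\bigr)$ to bound each summand from below; that is how the constant $C_2$ and $\sup\m\restr_{\B_{2R}(x)}$ enter. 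You instead transform the \emph{function} rather than the \emph{evaluation point}: setting $u=\dist(x,\cdot)-f+f(x)$ converts the sought upper bound on $\mathcal Af(x)$ into a lower bound on $\mathcal A(u/2)(x)$, obtainable from a single application of the comparison principle at $z=x$ (so only the range-$0$ consequence of hypothesis (4) is used); linearity and the rough differential property then give $-\mathcal Af(x)\ge 2C_1-C_3$. Your route avoids the cut-off, the summation over the ball, and the divergence-form hypothesis entirely, and produces a constant independent of $C_2$ and $\sup\m$, hence sharper. One small remark: the truncation you describe at the end is a reasonable safety net, but under hypothesis (1) — which already places $\Lip(1)\subset\mathsf{Dom}(\mathcal A)$, and $u/2\in\Lip(1)$ — it is not actually needed; this is precisely why the $C_2$, $\sup\m$ dependence drops out of your bound, and why you never need to invoke semi-locality or local $\dist$-finiteness.
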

\begin{proof}[\footnotesize \textbf{Proof}]
	Suppose $f \in \Lip(1)$ attains a global minimum at $x$; since $\mathcal{A}$ is a rough differential operator, we can assume $f\ge 0$ and $f(x) = 0$. 
	Take the cut-off function 
	\[
	\upchi(z) := \left[ 2s - \dist(x,z)   \right]_+, \quad s := \sup\limits_{\B_{R}(x)} \dist(x,\cdot);
	\]
	set $\bar{f} := \upchi \wedge f$. It is easy to see that $\nicefrac{1}{4} (\upchi \wedge \bar{f})$ is a $1$-Lipschitz function.  We have 
	\[
	\upchi(z) \ge s \ge f(x,z), \quad  \forall z  \in \B_{R}(x),
	\]
	so 
	\[
	\nicefrac{1}{4}\bar{f} \restr_{ \B_{R}(x)} = \nicefrac{1}{4}f\restr_{ \B_{R}(x)},
	\]
	and notice both $\nicefrac{1}{4}\bar{f}$ and $\nicefrac{1}{4}f$ are $1$-Lipschitz functions. 
	\par By the semi-locality hypothesis, we get
	\[
	\mathcal{A} \bar{f}\left( x_0  \right) = \mathcal{A} f\left( x_0  \right).
	\]
\par For $z \not\in \B_{2R}\left( x_0  \right)$, the semi-locality gives $A\bar{f}(z) = 0$.  Now being weakly of divergence form implies
	\[
	\mathcal{A} \bar{f}\left( x_0  \right)\m\left( x_0  \right) + \sum_{z \in \B_{2R}\left( x_0  \right)} \mathcal{A} \bar{f}(z)\m(z) = 0,
	\]
	hence, we deduce
	\[
	-\mathcal{A} \bar{f}\left( x_0  \right)\m\left( x_0  \right)  =  \sum_{z \in \B_{2R}\left( x_0  \right)} \mathcal{A} \bar{f}(z) \m(z).
	\]
\par Now clearly $\nicefrac{1}{8}\left( \bar{f}(z) + 4\dist(x_0,z)\right) \ge 0$ is $1$-Lipschitz and  has a global minimum at $z=x_0$.  So by the rough comparison principle with range $2R$, one arrives at
		\begin{align*}
		\sum_{z \in \B_{2R}(x)} \mathcal{A} \bar{f}(z)\m(z) &\ge 8C_1-  4\sum_{z \in \B_{2R}(x)} \m(z)  \left| \mathcal{A} \left( \dist(x_0,\cdot) \right)(z)  \right| \\ &\ge  8C_1 - 4C_2C_3\left(\sup \m\restr_{\B_{2R}}\right) := C.
	\end{align*}
So clearly,
\[
- \mathcal{A} f\left( x_0  \right) = -\mathcal{A} \bar{f}\left( x_0  \right) \ge C\left( C_1,C_2,C_3,  \sup \m\restr_{\B_{2R}\left( x_0  \right)} \right),
\]
which is the desired conclusion. 
\end{proof}
\begin{definition}[good operators]\label{def:good-op}
The operator $\mathcal{A}:  \LL^2(\G, \m) \supset \mathsf{Dom}(\mathcal{A})  \to \LL^2(\G, \m)$ is said to be a  good operator if it satisfies the following properties
	\begin{enumerate}
	\item $\mathcal{A}$ satisfies self-adjoint property;
	\item $\mathcal{A}$ is a rough differential operator;
	\item $\mathcal{A}$ is weakly of divergence form.
\end{enumerate}
\end{definition}
\begin{remark}\phantom{}\hspace{-1pt}\textit{Except for the definition of the self-adjoint property and consequently for the definition of good operators, all other definitions also make sense for more general operators  $\mathcal{A}:  \R^\G \supset \mathsf{Dom}(\mathcal{A})  \to \R^\G$ that we will deal with in~\textsection\thinspace\ref{sec:op-theo-ric}.} \phantom{}
\end{remark}
\section{Discrete Ollivier-Ricci curvature}\label{sec:edge-ric}
\subsection{Discrete-time Ollivier-Ricci curvature}\label{subsec:olric}
\par Discrete-time Ollivier coarse Ricci curvature, $\olric(x,y)$, of a random walk $\left\{ \upmu_z\right\}_{z\in X}$ (here, a random walk is a measurable point dependent measure $\upmu_z$ on a polish metric space $X$ with finite first moments) along $xy$ ($x,y$ are two points in the space), is defined as the ``\emph{deviance}'' of the ``\emph{ratio of $L^1$-Wasserstein distance of the corresponding probabilities and the distance of $x$ and $y$}'' from $1$~\cite{Ol}. 
\par In the setting of Riemannian manifolds with $\Ric \ge K$ -- and by using $\varepsilon$-step random walks characterized by uniform measures $\upmu^\varepsilon_z$ on $\B_\varepsilon(z)$ -- the Ollivier coarse curvature possesses nice precise asymptotics depending on distance and Ricci  curvature; indeed, when $d(x,y) \approx \varepsilon$, the Ollivier coarse Ricci curvature satisfies
\begin{align}\label{eq:olric-asymp}
	\olric_{\varepsilon}(x,y) = \nicefrac{\varepsilon^2}{2(N+2)} \, \Ric\left( \nicefrac{\vec{xy}}{\|\vec{xy}\|}\right)+ \mathcal{O}\left(\varepsilon^3\right);
\end{align}
here, $\vec{xy}$ denotes the vector in the tangent space solving $\exp_x{\vec{xy}} = y$~\cite{Ol}. 
\par In the setting of continuous-time Markov processes, one can assume the measures depend on time and represent how the delta measures diffuse in space (according to some differential equation $\nicefrac{\partial}{\partial t} u = \mathcal{L} u$). Then according to~\cite{Ol}, taking the derivative in the Ollivier coarse Ricci curvature, one computes the continuous-time Ollivier coarse Ricci curvature to be
\begin{align*}
{^{\mathcal{CTO}}\Ric}(x,y) = \text{-}\, \nicefrac{d}{d\varepsilon}\restr_{\varepsilon=0} \; \nicefrac{\Was_1\left( \upmu_x^\varepsilon ,  \upmu_y^\varepsilon \right)}{\dist_X(x,y)} =  \nicefrac{d}{d\varepsilon}\restr_{\varepsilon=0} \olric_\varepsilon;
\end{align*}
this indeed computes the first order (rescaled) limit of the discrete-time curvature. In the original paper~\cite{Ol}, this definition is presented contingent upon the the existence of such a limit. That is an important issue which we wish to address -- in the discrete setting -- in these notes. 
\begin{remark}\phantom{}\hspace{-1pt}\textit{If we take the derivative at $\varepsilon = 0$ in \eqref{eq:olric-asymp}, we get zero; meaning infinitesimally speaking, random walking on a manifold or the Euclidean space are the same i.e. Riemannian manifolds are infinitesimally Euclidean to the second order or the infinitesimal walks do not see the curvature. }
\par \textit{So in the continuous spaces, Ollivier coarse $\varepsilon$-step  curvature is more natural to use than the limit version. Or even better, one can use a suitable second derivative version instead e.g.
	\[
	\olric_{\mathsf{Riem}} (x,y) :=  \text{-}\, \nicefrac{d^2}{d\varepsilon^2}\restr_{\varepsilon=0} \; \nicefrac{\Was_1\left( \upmu_x^\varepsilon,  \upmu_y^\varepsilon \right)}{\dist(x,y)}.
	\]
which at least in the Riemannian setting -- and up to a dimensional multiple of -- is comparable to  the Ricci curvature lower bound; see the  asymptotics in~\cite{vRS}. We will shortly see that the second derivative version gives zero when applied in graphs and for  time-affine walks. So on  graphs the second order geometry is trivial for this particular type of random walks.}
\phantom{}
\end{remark}
\par On graphs, both the discrete-time and continuous-time Ollivier-Ricci curvatures can be made sense of, exactly as in the continuous setting. Of course these curvatures depend on the random walks that one chooses and for the continuous-time discrete Ollivier-Ricci curvature, existence of the defining derivative must be proven. 
\subsection{Discrete-time Ollivier-Ricci curvature}\label{sec:dt-olric}
\par Given a random walk $\upmu_z^\varepsilon$ with finite first moment, and using a given distance $\dist$, the discrete-time Ollivier-Ricci curvatures are defined as
\[
\olric_\varepsilon (x,y) := 1 - \nicefrac{\Was_1\left( \upmu_x^\varepsilon ,  \upmu_y^\varepsilon  \right)}{\dist(x,y)},
\]
in which, the $L^1$-Wasserstein (or Kantorovich-Rubinstein) distance is
\begin{align*}
	\Was_1 \left(\upmu_1, \upmu_2 \right) := \inf_{q} \sum_{x,y\in \G} \dist(x,y) q(x,y),
\end{align*}
where the infimum is taken over all couplings $q \in {\Prob} (\G \cart \G)$ of $\upmu_1$ and $\upmu_2$; coupling means $\upmu_1$ and $\upmu_2$ are marginals of $q$.  The above is well-defined since $L^1$-Wasserstein distance between measures with finite first moment is well-defined. In the sequel, we always assume finite first moments. 
\subsubsection{\small \bf \textsf{Localization}}
\begin{lemma}\label{lem:loc-plan}
	For every $x$,  set $\Upomega^\varepsilon_{x}:= \mathrm{supp}\left( \upmu_x^\varepsilon\right)$. Then, there exists an optimal coupling $q$  of $\Upomega^\varepsilon_{x}$ and $\Upomega^\varepsilon_{y}$ supported in $\Upomega^\varepsilon_{xy} \times \Upomega^\varepsilon_{xy}$ where $\Upomega^\varepsilon_{xy} := \Upomega^\varepsilon_x \cup \Upomega^\varepsilon_y$. 
\end{lemma}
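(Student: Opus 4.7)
The plan is essentially to observe that this localization is a tautology about marginals, plus a standard existence argument for the minimizer. There should be no substantive obstacle; I would include it because the lemma is used later to reduce the Wasserstein problem to a finite-dimensional linear program on $\Upomega_{xy}^\varepsilon$.

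First I would show that \emph{every} coupling has the stated localization. Let $q \in \mathscr{P}(\G\cart\G)$ be any probability measure with marginals $\upmu_x^\varepsilon$ and $\upmu_y^\varepsilon$. If $(z,w)\in \mathrm{supp}(q)$, then, viewing $\{z\}\times\G$ as a neighborhood-indexed slice, the first marginal condition gives $\upmu_x^\varepsilon(z) \ge q(\{z\}\times \G) > 0$, hence $z\in \Upomega_x^\varepsilon$. A symmetric argument gives $w\in \Upomega_y^\varepsilon$. Therefore $\mathrm{supp}(q)\subset \Upomega_x^\varepsilon \times \Upomega_y^\varepsilon \subset \Upomega_{xy}^\varepsilon \times \Upomega_{xy}^\varepsilon$. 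In particular, any optimal coupling inherits this support.

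Next I would establish existence of an optimal coupling. Under the standing finite first moment assumption, the set $\Pi(\upmu_x^\varepsilon, \upmu_y^\varepsilon)$ of couplings is nonempty (it contains the product $\upmu_x^\varepsilon \otimes \upmu_y^\varepsilon$) and is weakly compact in $\mathscr{P}(\G\cart\G)$ by Prokhorov's theorem, since the marginals are tight. The Kantorovich cost $q \mapsto \int \dist \, dq$ is a weakly lower semi-continuous linear functional on this space (the distance $\dist$ is lower semi-continuous on $\G\cart\G$), so a minimizer exists. In the finite-step case this is even simpler: $\Pi$ is a bounded convex polytope in a finite-dimensional simplex, the cost is linear, and a minimum is attained at a vertex by elementary LP.

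The only mild care required is that the lemma is phrased as "coupling of $\Upomega_x^\varepsilon$ and $\Upomega_y^\varepsilon$", which I would read as shorthand for couplings of $\upmu_x^\varepsilon$ and $\upmu_y^\varepsilon$; once unpacked, both parts reduce to standard facts. The payoff is purely organizational: it lets us henceforth treat the Wasserstein problem on a finite ambient set $\Upomega_{xy}^\varepsilon$ (or, under the locality hypothesis of Definition~\ref{defn:loc-exc}, on $\mathcal{K}_{xy}$), which is exactly the setup required to invoke the polytope upper bound $\Uplambda(\mathcal{N}_{xy}, \cdot)$ and the sensitivity analysis from Theorem~\ref{thm:piecewise-anal} in the subsequent regularity and piece-counting results.
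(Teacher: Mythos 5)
Your proof is correct and takes the same two-step route as the paper: observe that the marginal constraints force \emph{every} coupling to be supported in $\Upomega_x^\varepsilon\times\Upomega_y^\varepsilon\subset\Upomega_{xy}^\varepsilon\times\Upomega_{xy}^\varepsilon$, then invoke the standard existence theorem for optimal plans (the paper cites Villani, Theorem~4.1, which is what your Prokhorov/lower-semicontinuity argument reproduces). Your reading of ``coupling of $\Upomega_x^\varepsilon$ and $\Upomega_y^\varepsilon$'' as shorthand for a coupling of the measures is the intended one.
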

\begin{proof}[\footnotesize \textbf{Proof}]
This is straightforward from the fact that the first and second marginals of the couplings are supported in $\Upomega^\varepsilon_x$ and $\Upomega^\varepsilon_y$ resp. Hence, the couplings must be supported in $\Upomega^\varepsilon_{xy}$. The standard existence of optimal plans theorem \cite[Theorem 4.1]{Vil}. 
\end{proof}
\begin{theorem}
For finite step walks, $\olric_\varepsilon (x,y)$ coincides with the discrete-time Ollivier-Ricci curvature on the graph $\Upomega^\varepsilon_{xy}$ and is the optimal value in a finite LP problem.
\end{theorem}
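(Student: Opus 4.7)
The plan is to leverage Lemma~\ref{lem:loc-plan} for the localization claim and then to recognize the resulting optimization as a Kantorovich primal in finitely many variables.

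First I would establish the coincidence with the subgraph curvature. Since the walk is finite-step, the set $\Upomega_{xy}^\varepsilon = \Upomega_x^\varepsilon \cup \Upomega_y^\varepsilon$ is a finite subset of $\G$. Lemma~\ref{lem:loc-plan} supplies an optimal coupling $q^*$ of $\upmu_x^\varepsilon$ and $\upmu_y^\varepsilon$ whose support lies in $\Upomega_{xy}^\varepsilon \times \Upomega_{xy}^\varepsilon$, so
\[
\Was_1\left( \upmu_x^\varepsilon, \upmu_y^\varepsilon \right) = \sum_{z,w \in \Upomega_{xy}^\varepsilon} \dist(z,w)\, q^*(z,w),
\]
which depends only on the restriction of $\dist$ to $\Upomega_{xy}^\varepsilon$. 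Conversely, any coupling realized within the subgraph endowed with the inherited distance is admissible as a coupling in the ambient graph. Neither the ambient nor the subgraph problem can beat the other, so the two Wasserstein distances agree; dividing by $\dist(x,y)$ and subtracting from $1$ yields the identity of the discrete-time Ollivier-Ricci curvatures computed in $\G$ and in $\Upomega_{xy}^\varepsilon$ (with $x,y$ retained as distinguished vertices and $\dist$ restricted accordingly).

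Next I would write down the finite LP explicitly. Enumerate $\Upomega_{xy}^\varepsilon = \{z_1, \dots, z_N\}$ and introduce variables $q_{ij}$ for $1 \le i,j \le N$; the Kantorovich primal problem associated with $\Was_1(\upmu_x^\varepsilon, \upmu_y^\varepsilon)$ reads
\[
\min \sum_{i,j} \dist(z_i,z_j)\, q_{ij}, \qquad q_{ij} \ge 0, \qquad \sum_{j} q_{ij} = \upmu_x^\varepsilon(z_i), \qquad \sum_{i} q_{ij} = \upmu_y^\varepsilon(z_j),
\]
a finite LP in $N^2$ nonnegative variables with $2N$ linear equality constraints. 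By the first step, its optimal value equals $\Was_1(\upmu_x^\varepsilon, \upmu_y^\varepsilon)$, and therefore
\[
\olric_\varepsilon(x,y) = 1 - \dist(x,y)^{-1}\cdot \mathrm{val}({\sf LP}),
\]
so $\olric_\varepsilon(x,y)$ is itself the optimal value of a finite LP (after the trivial affine change of objective).

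There is no serious obstacle here once Lemma~\ref{lem:loc-plan} is in hand; the only point needing care is the interpretation of ``discrete-time Ollivier-Ricci curvature on the graph $\Upomega_{xy}^\varepsilon$'', which must be taken to mean the curvature computed with the restriction of $\dist$ and with $\upmu_x^\varepsilon, \upmu_y^\varepsilon$ regarded as probability measures on the finite vertex set $\Upomega_{xy}^\varepsilon$. This reading is essentially forced by the localization lemma and renders the two assertions of the theorem immediate consequences of classical optimal-transport LP duality in finite dimension.
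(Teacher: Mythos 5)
Your proof is correct and takes essentially the same approach as the paper, which simply states that the theorem is a direct consequence of Lemma~\ref{lem:loc-plan}; you have filled in the routine details that the paper leaves implicit (the two-sided comparison of Wasserstein distances and the explicit finite Kantorovich primal).
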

\begin{proof}[\footnotesize \textbf{Proof}]
This is a direct consequence of Lemma~\ref{lem:loc-plan}. 
\end{proof}
\subsubsection{\small \bf \textsf{A particular form of random walks}}\label{sec:theta-walks}
\par A particular yet still rather general class of $1$-parameter random walks are
\[
\uptheta_z^\varepsilon(w) := \begin{cases} \nicelambda_1(\varepsilon) +  \nicelambda_2(\varepsilon)\varphi_1(z) & \text{if} \quad w=z \\ \nicelambda_3(\varepsilon) \varphi_2(z,w)& \text{if} \quad w \neq z \end{cases}, \quad \text{$\nicelambda_i$ is continuous for $i=1,2,3$.}
\]
Of course since these are probability measures, we need the normalization
\[
\nicelambda_1(\varepsilon) +  \nicelambda_2(\varepsilon) \varphi_1(z) + \sum_{w\neq z} \nicelambda_3(\varepsilon) \varphi_2(z,w) = 1,
\]
and the constraints
\[
\nicelambda_1(\varepsilon) +  \nicelambda_2(\varepsilon) \varphi_1(z), \nicelambda_3(\varepsilon) \varphi_2(z,w) \ge 0;
\]
and we need the convergence to $\updelta_x$ as $\varepsilon \downarrow 0$ so,
\[
\lim_{\varepsilon \downarrow 0} \nicelambda_3(\varepsilon) \varphi_2(z,w) = 0,
\]
is also needed. We call these, $\uptheta$-walks. Notice there is no assumption on symmetricity of $\varphi_2$. 
\begin{example}\phantom\hspace{-2pt}The $\uptheta$-walks provides an important class of random walks on graphs. Upon setting $\nicelambda_1(\varepsilon) \equiv 1$, $\nicelambda_2(\varepsilon) \equiv -1$, $\nicelambda_3 = \varepsilon$, $\varphi_1(z) = \Deg(z)$, and 
	\[
	\varphi_2(z,w) = \begin{cases} \nicefrac{\omega_{xy}}{\m(z)}  &   \text{if} \quad w \sim z \\ 0 &   \text{if} \quad  w \not\sim z \end{cases},
	\]
	one retrieves the random walks 
	\[
	\upbeta_z^\varepsilon(w) := \updelta_z(w) + \varepsilon \Delta \updelta_{z}(w) = \begin{cases} 1 - \varepsilon\Deg(z) & \text{if} \quad w=z \\  \nicefrac{\varepsilon \omega_{zw}}{\m(z)} & \text{if} \quad w\sim z \\ 0 &  \text{otherwise} \end{cases},
	\]
	which were considered in~\cite{JM}.  Recall $\Deg(z)$ is computed using the edge-weights $\omega$. Furthermore, the spacial case $\dist = \dist_\omega$ is considered in~\cite{MW}.  We call these $\upbeta$-walks. 
\par Upon setting $\nicelambda_1(\varepsilon) \equiv \varepsilon$, $\nicelambda_2(\varepsilon) \equiv 0$, $\nicelambda_3 = 1- \varepsilon$, and 
	\[
	\varphi_2(z,w) = \begin{cases} \deg(z)  &   \text{if} \quad w \sim z \\ 0 &   \text{if} \quad  w \not\sim z \end{cases},
	\]
	one retrieves the random walks 
	\[
	\upzeta_z^\varepsilon(w) := \updelta_z(w) + \varepsilon \Delta \updelta_{z}(w) = \begin{cases} \varepsilon & \text{if} \quad w=z \\  \nicefrac{(1 - \varepsilon)}{\deg(z)} & \text{if} \quad w\sim z \\ 0 &  \text{otherwise} \end{cases},
	\]
	which were considered in~\cite{LLY}.  We call these $\upzeta$-walks. 
	\par Upon setting $\nicelambda_1(\varepsilon) \equiv \varepsilon$, $\nicelambda_2(\varepsilon) \equiv 0$, $\nicelambda_3 \equiv \nicefrac{(1- \varepsilon)}{C}$, and 
	\[
	\varphi_2(z,w) = \begin{cases} e^{-\dist(z,w)^p} &   \text{if} \quad w \sim z \\ 0 &   \text{if} \quad  w \not\sim z \end{cases},
	\]
	one retrieves the random walks 
	\[
	\upxi_z^\varepsilon(w) := \begin{cases} \varepsilon & \text{if} \quad w=z  \\  \nicefrac{(1-\varepsilon)e^{-\dist(z,w)^p}}{C} & \text{if} \quad w\sim z \\ 0 &  \text{otherwise} \end{cases},
	\]
	which were considered in~\cite{NLLG}. We call these $\upxi$-walks. These $1$-combinatorial step walks ($\upbeta, \upzeta, \upxi$)  are also known as lazy walks.
\end{example}
\begin{remark}[properties of $\uptheta$-walks]\phantom{}\hspace{-1pt}\textit{For $\uptheta$-walks, being time-affine, time-polynomial or time-analytic amounts to $\nicelambda_i$, $i=1,2,3$  being affine, polynomial or analytic (resp.) and admitting (affine, polynomial, analytic) continuations on $(-\updelta, 1+\updelta)$ for some small $\updelta>0$.  Of course the first two are special cases of the third one and they always admit affine or polynomial continuations. }
\par  \textit{Also for $\uptheta$-walks, being a local walk is equivalent to $\varphi_2$ being of bounded support.} \phantom{}
\end{remark}
\subsubsection{\small \bf  \textsf{Curvature concavity for time-affine local walks}}
\begin{theorem}[concavity]\label{thm:conc}
Let $\upmu_z^\varepsilon$ be a time-affine walk, then $\olric_\varepsilon (x,y)$ is a concave function of $\varepsilon$.  
\end{theorem}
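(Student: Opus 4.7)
The plan is to reduce concavity of $\olric_\varepsilon$ in $\varepsilon$ to convexity (in $\varepsilon$) of the numerator $\Was_1(\upmu_x^\varepsilon,\upmu_y^\varepsilon)$, since $\dist(x,y)$ is a positive constant in $\varepsilon$ and $\olric_\varepsilon = 1 - \Was_1/\dist(x,y)$. So the entire content of the theorem is the convexity of $\varepsilon \mapsto \Was_1(\upmu_x^\varepsilon,\upmu_y^\varepsilon)$ for time-affine walks. Notice this does not require the locality hypothesis, matching the parenthetical remark in the theorem statement.

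The key observation is that time-affineness means, for fixed $z$, $\upmu_z^\varepsilon$ is an affine (in $\varepsilon$) $\mathscr{P}(\G)$-valued function; equivalently, for $\varepsilon = t\varepsilon_1 + (1-t)\varepsilon_2$ with $t\in[0,1]$ and $\varepsilon_i \in [0,1]$, we have the identity of probability measures
\[
\upmu_z^{t\varepsilon_1 + (1-t)\varepsilon_2} \;=\; t\,\upmu_z^{\varepsilon_1} + (1-t)\,\upmu_z^{\varepsilon_2}.
\]
This holds because for each fixed $w$, $\p_{zw}(\varepsilon) := \upmu_z^\varepsilon(w)$ is affine in $\varepsilon$ by the definition of time-affine walks in the introductory section.

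Now I would invoke the standard joint convexity of $\Was_1$ in its two arguments, which follows from the coupling picture: let $q_i \in \Prob(\G\cart\G)$ be optimal couplings for $(\upmu_x^{\varepsilon_i},\upmu_y^{\varepsilon_i})$ for $i=1,2$, which exist by Lemma~\ref{lem:loc-plan} (or more generally by the standard existence theorem for optimal transport plans). Then $q := tq_1 + (1-t)q_2$ is a probability measure on $\G\cart\G$ whose marginals are precisely $t\upmu_x^{\varepsilon_1}+(1-t)\upmu_x^{\varepsilon_2} = \upmu_x^{\varepsilon}$ and $t\upmu_y^{\varepsilon_1}+(1-t)\upmu_y^{\varepsilon_2} = \upmu_y^{\varepsilon}$, so $q$ is an admissible (not necessarily optimal) coupling. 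Computing its cost linearly,
\[
\int \dist \, dq \;=\; t\int \dist\, dq_1 + (1-t)\int\dist\, dq_2 \;=\; t\,\Was_1(\upmu_x^{\varepsilon_1},\upmu_y^{\varepsilon_1}) + (1-t)\,\Was_1(\upmu_x^{\varepsilon_2},\upmu_y^{\varepsilon_2}),
\]
and since $\Was_1(\upmu_x^{\varepsilon},\upmu_y^{\varepsilon})$ is the infimum over all admissible couplings, we get
\[
\Was_1\!\left(\upmu_x^{\varepsilon},\upmu_y^{\varepsilon}\right) \;\le\; t\,\Was_1\!\left(\upmu_x^{\varepsilon_1},\upmu_y^{\varepsilon_1}\right) + (1-t)\,\Was_1\!\left(\upmu_x^{\varepsilon_2},\upmu_y^{\varepsilon_2}\right).
\]
Dividing by $\dist(x,y)$ and subtracting from $1$ reverses the inequality and yields concavity of $\olric_\varepsilon(x,y)$ in $\varepsilon$, completing the proof.

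I do not anticipate a genuine obstacle; the only small thing to verify is that the interpolating coupling $q$ is still a probability measure with the correct marginals (immediate from linearity of the marginalization map) and that the time-affine hypothesis indeed gives the linear interpolation identity at the measure level (immediate from pointwise affineness in $\varepsilon$, together with the fact that both sides are probability measures so no normalization drift occurs).
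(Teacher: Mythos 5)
Your proof is correct and takes essentially the same route as the paper: both reduce concavity of $\olric_\varepsilon$ to convexity of $\varepsilon\mapsto\Was_1(\upmu_x^\varepsilon,\upmu_y^\varepsilon)$, and both establish that convexity by taking optimal couplings $q_1,q_2$ at times $\varepsilon_1,\varepsilon_2$ and using the convex combination $tq_1+(1-t)q_2$ as a test (admissible but not necessarily optimal) coupling for the pair at the interpolated time, which works precisely because time-affineness makes $\upmu_z^{t\varepsilon_1+(1-t)\varepsilon_2}=t\upmu_z^{\varepsilon_1}+(1-t)\upmu_z^{\varepsilon_2}$. The paper dresses the same idea in the language of a geometric inclusion of transportation polytopes, but the underlying argument is identical to yours.
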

\begin{proof}[\footnotesize \textbf{Proof}]
The Kantorovich dual formulation of Wasserstein distance \eqref{eq:kant} applied to linear walks, gives the Wasserstein distance as the supremum of a family of lines, so it must be convex since since  supremum of a suitable family of convex functions is convex. Now let us provide a rigorous proof. 
\par It is straightforward to see that for all $\varepsilon$, all couplings form a convex polytope $\mathfrak{P}_\varepsilon$ (not necessarily bounded) in  $\R^{\aleph_0}$ (sometimes called the transportation polytope).  Due to the affinity of $\upmu^\varepsilon_z(w)$ in $\varepsilon$, one immediately deduces the geometric concavity property
	\[
	(1-t)\mathfrak{P}_{\varepsilon_1} + t \mathfrak{P}_{\varepsilon_1} \subset  \mathfrak{P}_{(1-t)\varepsilon_1 + t\varepsilon_2}.
	\]
	
In particular, letting $q^{\varepsilon_1}$, $q^{\varepsilon_2}$ be optimal couplings (for existence of optimal couplings, e.g. see~\cite[Theorem 4.1]{Vil}) for $\upmu_x^{\varepsilon_1}, \upmu_y^{\varepsilon_1}$  and  $\upmu_x^{\varepsilon_2}, \upmu_y^{\varepsilon_2}$ resp.; then
\[
\bar{q} := (1-t) q^{\varepsilon_1} + tq^{\varepsilon_2},
\]	
is a coupling for two measures $\upmu_{x}^{(1-t)\varepsilon_1 + t\varepsilon_2}$ and $\upmu_{y}^{(1-t)\varepsilon_1 + t\varepsilon_2}$. Using ${\bar q}$ as a test coupling, we deduce
\[
\Was_1\left( \upmu_{x}^{(1-t)\varepsilon_1 + t\varepsilon_2}, \upmu_{y}^{(1-t)\varepsilon_1 + t\varepsilon_2} \right)\le (1-t) \Was_1\left( \upmu_x^{\varepsilon_1}, \upmu_y^{\varepsilon_1} \right) + t\Was_1\left( \upmu_x^{\varepsilon_2}, \upmu_y^{\varepsilon_2} \right),
\]
which is the convexity of $\Was_1$ in $\varepsilon$. This -- directly from the definition -- implies concavity of $\olric_\varepsilon (x,y)$ in $\varepsilon$  ; namely, 
\begin{align}\label{eq:ric-conc}
\olric_{(1-t)\varepsilon_1 + t\varepsilon_2} (x,y) \ge (1-t) \olric_{\varepsilon_1} (x,y) + t \olric_{\varepsilon_2} (x,y).
\end{align}
\end{proof}
\begin{remark}\phantom{}\hspace{-1pt}\textit{This concavity does not necessarily hold for non time-affine random walks. However one expects an infinitesimal version of this concavity (as well as affinity) to hold for all walks (think of a concavity for the linear approximation of random walks); indeed -- thanks to a general limit-free formulation -- such infinitesimal version of concavity holds for the Ollivier-Ricci curvature in the form of a concavity relation in terms of the generating operators  $\mathcal{L}$ in the  operator theoretic Ollivier-Ricci curvature $\olric_{\mathcal{L}}$; see~\textsection\thinspace\ref{prop:concav-op}. }\phantom{}
\end{remark}
\begin{corollary}
The concavity in $\varepsilon$ holds for $\upbeta$, $\upzeta$ and $\upxi$-walks as well as for all other time-affine $\uptheta$-walks .
\end{corollary}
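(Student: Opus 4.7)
The plan is to observe that this corollary is an immediate specialization of Theorem~\ref{thm:conc}, so the entire task reduces to verifying that each of $\upbeta$, $\upzeta$, and $\upxi$-walks satisfies the time-affine hypothesis. Recall that a walk $\upmu_z^\varepsilon$ is time-affine when, for every fixed pair $z,w$, the function $\p_{zw}(\varepsilon) := \upmu_z^\varepsilon(w)$ is affine in $\varepsilon$ (the required affine continuation to $(-\updelta,1+\updelta)$ is automatic for affine functions, as already noted in the remark preceding this corollary). Once affinity is established, the concavity conclusion \eqref{eq:ric-conc} follows verbatim from Theorem~\ref{thm:conc}.

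First I would go through the three named walks by direct inspection of the formulas given in the example preceding the corollary. For $\upbeta$-walks, $\upbeta_z^\varepsilon(z) = 1 - \varepsilon \Deg(z)$ is affine in $\varepsilon$, $\upbeta_z^\varepsilon(w) = \varepsilon \omega_{zw}/\m(z)$ for $w \sim z$ is linear in $\varepsilon$, and $\upbeta_z^\varepsilon(w) = 0$ otherwise; in particular $\p_{zw}(\varepsilon)$ is affine for every pair. For $\upzeta$-walks, $\upzeta_z^\varepsilon(z) = \varepsilon$ and $\upzeta_z^\varepsilon(w) = (1-\varepsilon)/\deg(z)$ for $w \sim z$ are both affine. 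For $\upxi$-walks, the same holds since $\upxi_z^\varepsilon(z) = \varepsilon$ and $\upxi_z^\varepsilon(w) = (1-\varepsilon)e^{-\dist(z,w)^p}/C$ have coefficients independent of $\varepsilon$ (the exponential factor and the constant $C$ are $\varepsilon$-independent).

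For the final assertion regarding general time-affine $\uptheta$-walks, I would invoke the remark on properties of $\uptheta$-walks: being time-affine for a $\uptheta$-walk is equivalent to each of $\nicelambda_1$, $\nicelambda_2$, $\nicelambda_3$ being affine in $\varepsilon$. Substituting into the defining formula
\[
\uptheta_z^\varepsilon(w) = \begin{cases} \nicelambda_1(\varepsilon) + \nicelambda_2(\varepsilon)\varphi_1(z) & w=z \\ \nicelambda_3(\varepsilon)\varphi_2(z,w) & w\neq z \end{cases},
\]
and noting that $\varphi_1(z)$ and $\varphi_2(z,w)$ are $\varepsilon$-independent, it is clear that $\p_{zw}(\varepsilon)$ is affine in $\varepsilon$, so the time-affine hypothesis of Theorem~\ref{thm:conc} is satisfied and \eqref{eq:ric-conc} applies.

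There is no real obstacle: the corollary is a verification-style consequence of the preceding theorem, and all the heavy lifting was done in the proof of Theorem~\ref{thm:conc} (the convex geometry of the transportation polytope and Kantorovich duality). The only point worth a brief remark is that nowhere in the argument did we need locality or finite-step conditions — we merely need the underlying family to have finite first moments so that $\Was_1$ is well-defined, which is assumed throughout this subsection.
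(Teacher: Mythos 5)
Your proof is correct and is essentially the intended (and only reasonable) argument: the paper states the corollary without proof, and the content is exactly the verification you carry out — that $\upbeta$, $\upzeta$, $\upxi$-walks and general time-affine $\uptheta$-walks all satisfy the time-affine hypothesis of Theorem~\ref{thm:conc}, after which the concavity \eqref{eq:ric-conc} follows verbatim. Your closing remark that neither locality nor the finite-step condition is needed (only finite first moments for $\Was_1$) is also accurate and consistent with the paper's own summary, which explicitly notes the concavity holds without the locality condition.
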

\subsubsection{\small \bf \textsf{Localization in the dual problem}}
\begin{theorem}\label{thm:loc-dual}
	Let $\upmu_z^\varepsilon$ be a finite-step random walk. then
	\begin{align*}
		\Was_1\left( \upmu_x^\varepsilon, \upmu_y^\varepsilon \right) = \sup_{\substack{f: \Upomega^\varepsilon_{xy} \to \R \\ f \in \Lip(1)}}  \sum f(z) \left( \upmu_x^\varepsilon(z) - \upmu_y^\varepsilon(z)  \right);
	\end{align*}
recall $\Upomega^\varepsilon_{xy} := \Upomega^\varepsilon_x  \cup \Upomega^\varepsilon_y $. 
\end{theorem}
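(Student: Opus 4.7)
The plan is to reduce the assertion to the classical Kantorovich--Rubinstein duality via a truncation-and-extension argument, exploiting the fact that for a finite-step walk the difference measure $\upmu_x^\varepsilon - \upmu_y^\varepsilon$ has finite support $\Upomega^\varepsilon_{xy}$. First I would invoke the standard duality on the Polish metric space $(\G,\dist)$:
\begin{align*}
\Was_1\left(\upmu_x^\varepsilon,\upmu_y^\varepsilon\right) \;=\; \sup_{f \in \Lip(1,\G)} \int_\G f \, d\!\left(\upmu_x^\varepsilon - \upmu_y^\varepsilon\right),
\end{align*}
which is applicable because finite-step walks trivially have finite first moment. The remaining task is to show that this sup over \emph{all} $1$-Lipschitz functions on $\G$ equals the sup over $1$-Lipschitz functions defined only on the finite set $\Upomega^\varepsilon_{xy}$.

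For the ``$\le$'' direction, I would observe that since $\mathrm{supp}(\upmu_x^\varepsilon - \upmu_y^\varepsilon) \subset \Upomega^\varepsilon_{xy}$, the integrand in the classical formula depends solely on the restriction $f|_{\Upomega^\varepsilon_{xy}}$, which is itself $1$-Lipschitz with respect to $\dist$. Hence every test function on the left is majorized by some admissible test function on the right with the same functional value.

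For the ``$\ge$'' direction, I would use the McShane--Whitney extension: given any $1$-Lipschitz $f:\Upomega^\varepsilon_{xy}\to\R$, define
\begin{align*}
\tilde{f}(z) \;:=\; \inf_{w\in \Upomega^\varepsilon_{xy}}\bigl[\, f(w) + \dist(z,w)\,\bigr], \quad z \in \G.
\end{align*}
Two applications of the triangle inequality show that $\tilde{f}\in\Lip(1,\G)$ and $\tilde{f}|_{\Upomega^\varepsilon_{xy}} = f$. Plugging $\tilde{f}$ into the classical duality produces exactly $\sum f(z)(\upmu_x^\varepsilon(z)-\upmu_y^\varepsilon(z))$, giving the reverse inequality.

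There is no real obstacle; the only thing to verify carefully is that the McShane--Whitney formula in fact recovers $f$ on $\Upomega^\varepsilon_{xy}$ (which requires $f$ itself being $1$-Lipschitz, so that $f(w) + \dist(z,w) \ge f(z)$ whenever $z \in \Upomega^\varepsilon_{xy}$) and that the infimum in the definition of $\tilde{f}$ is attained and finite (guaranteed by $\Upomega^\varepsilon_{xy}$ being finite and nonempty, which is exactly the finite-step hypothesis). Once these points are dispatched, the two inequalities combine to yield the claimed limit-free, finitely-supported dual formulation.
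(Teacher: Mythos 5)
Your proof is correct and rests on the same two pillars as the paper's (Kantorovich--Rubinstein duality for ``$\le$'', McShane--Whitney extension for ``$\ge$''), but it is actually more streamlined: the paper inserts an intermediate truncation via a cut-off function $\upchi$ to first localize the test function to a metric ball, a step that is not needed for the statement at hand and which you rightly skip by passing directly between $\Lip(1,\G)$ and $\Lip(1,\Upomega^\varepsilon_{xy})$. Your verification that the McShane--Whitney extension $\tilde f(z)=\inf_{w\in\Upomega^\varepsilon_{xy}}\bigl[f(w)+\dist(z,w)\bigr]$ is $1$-Lipschitz and restricts to $f$ is exactly right, so the two inequalities combine cleanly.
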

\begin{proof}[\footnotesize \textbf{Proof}]
	Consider the numbers
	\[
	s_x := \max_{z \in \Upomega_x} \dist(x,z), \quad 	s_y := \max_{w \in \Upomega_y} \dist(y,z),
	\]
	and set
	\[
	s:= s_x \vee s_y + \dist(x,y).
	\]
	\par By Kantorovich's duality~\cite[Chapter 5]{Vil}, it follows
	\begin{align}\label{eq:kant}
		\Was_1\left( \upmu_x^\varepsilon, \upmu_y^\varepsilon \right) = \sup_{f \in \Lip(1)}  \sum f(z) \left( \upmu_x^\varepsilon(z) - \upmu_y^\varepsilon(z)  \right);
	\end{align}
	so, we will instead look for a localized minimizer $f$ for the dual problem.  
	\par Translating $f$ by a constant would not affect the optimal value in \eqref{eq:kant}, hence, we can assume $f(x) = 0$. Take the cut-off function
	\[
	\upchi(z) := \left[ s \wedge \left( 2s - \dist(x,z)  \right)   \right]_+, 
	\]
\par 	It is easy to check that $\upchi.$, is $1$-Lipschitz w.r.t. $\dist_\eta$ and  equals $s$ on $\Upomega^{\varepsilon}_{xy}$.  Hence, we deduce $\upchi \wedge f$ coincides with $f$ on $\Upomega^{\varepsilon}_{xy}$; indeed, for $z \in \Upomega^{\varepsilon}_y$, one easily gets
	\[
	f(z) \le f(y) +  s_y \le \dist(x,y) + s_y \le s.
	\] 
	Now set
	\[
	\tilde{f} := \left( -\upchi \vee f  \right) \wedge \upchi.
	\]
 Clearly, $\tilde{f}$ coincides with $f$ on $\Upomega^{\varepsilon}_{xy}$ so in particular,  $\tilde{f}$ is $1$-Lipschitz within $\Upomega_{xy}$ (however it is $4$ Lipschitz at best over $\G$); furthermore,  $\tilde{f}$ is supported in $\B^\eta_{2s}(x)$. Since $\upmu_x^{\varepsilon}$ and $\upmu_y^{\varepsilon}$ are  supported in $\Upomega^{\varepsilon}_{xy}$, we deduce
	\[
	\sum f(z) \left( \upmu_x^\varepsilon(z) - \upmu_y^\varepsilon(z)  \right) = \sum \tilde{f}\restr_{\Upomega^{\varepsilon}_{xy}}(z) \left( \upmu_x^\varepsilon(z) - \upmu_y^\varepsilon(z)  \right).
	\]
	Notice $\tilde{f}\restr_{\Upomega^{\varepsilon}_{xy}}$	is a $1$-Lip function on $\Upomega^{\varepsilon}_{xy}$. 
	
\par So the problem becomes a finite linear programming problem hence, admits a minimizer $\tilde{f}$ and by construction this minimizer is supported in   $\B_{2s}(x)$.  This means 
	\begin{align*}
		\Was_1\left( \upmu_x^\varepsilon, \upmu_y^\varepsilon\right) = \sup_{ \substack{f\restr_{\Upomega_{xy}} \in \Lip(1)  \\  \mathrm{supp}\left( f \right) \subset  \B^\eta_{2s}(x) }}  \sum f(z) \left( \upmu_x^\varepsilon(z) - \upmu_y^\varepsilon(z)  \right);
	\end{align*}
To furthermore localize to $\Upomega^{\varepsilon}_{xy}$, suppose $f: \Upomega^{\varepsilon}_{xy} \to \R$ is $1$-Lipschitz, then the standard construction 
	\[
	\hat{f}(z) := \sup_{w \in \B^\eta_{2s}(x)} \left( f(w) - \dist(z, w)  \right),
	\]
	produces a $1$-Lipschitz extension of $f$ over $\B_{2s}(x)$. This implies we can further localize the minimizer to be a $1$-Lipschitz function with domain $\Upomega^{\varepsilon}_{xy}$. 
\end{proof}
\subsubsection{\small \bf \textsf{Curvature regularity for local walks}}
\begin{theorem}[regularity]\label{thm:PL}
	Suppose $\upmu_z^\varepsilon$ is a time-analytic local walk. For any fixed pair $x,y$, $\olric_\varepsilon (x,y)$ is a locally Lipschitz function in $(0,1)$ (in terms of $\varepsilon$) and is piece-wise analytic function of $\varepsilon$ with finitely many distinct analytic pieces in $[0,1]$. Similar statements also hold if we replace time-analytic with  time-affine or time-polynomial random walks and replace piece-wise analytic by piece-wise affine or piece-wise polynomial. 
\end{theorem}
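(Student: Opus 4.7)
The idea is to rewrite $\Was_1(\upmu_x^\varepsilon,\upmu_y^\varepsilon)$ as the optimal value of a finite-dimensional linear programming problem whose feasible polytope is independent of $\varepsilon$ while only the objective function depends on $\varepsilon$, and then invoke the sensitivity machinery of Section~\ref{sec:sens-anal}.

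First I would localize via Theorem~\ref{thm:loc-dual}. Since the walk is \emph{local}, we have $\Upomega^\varepsilon_{xy}\subset \mathcal{K}_{xy}$ for a fixed finite set $\mathcal{K}_{xy}$. Extending any admissible $1$-Lipschitz test function from $\Upomega^\varepsilon_{xy}$ to all of $\mathcal{K}_{xy}$ by the standard McShane construction, and observing that vertices in $\mathcal{K}_{xy}\smallsetminus \Upomega^\varepsilon_{xy}$ contribute zero to the objective (as both measures vanish there), I may rewrite
\[
\Was_1(\upmu_x^\varepsilon,\upmu_y^\varepsilon) \;=\; \sup_{\substack{f:\mathcal{K}_{xy}\to\R \\ f\in\Lip(1),\ f(z_0)=0}}\ \sum_{z\in \mathcal{K}_{xy}} f(z)\bigl(\upmu_x^\varepsilon(z)-\upmu_y^\varepsilon(z)\bigr),
\]
for a fixed base point $z_0\in\mathcal{K}_{xy}$; the normalization $f(z_0)=0$ is harmless because the signed measure $\upmu_x^\varepsilon-\upmu_y^\varepsilon$ has zero total mass. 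The feasible set is then the \emph{fixed} bounded convex polytope
\[
\mathfrak{Q}:=\bigl\{ f\in\R^{\mathcal{K}_{xy}\smallsetminus\{z_0\}}\,:\, |f(z)-f(w)|\le \dist(z,w)\ \forall z,w\in\mathcal{K}_{xy};\ f(z_0)=0\bigr\},
\]
and the $\varepsilon$-dependence is carried entirely by the linear objective, whose coefficient vector $\mathbf{c}_\varepsilon(z)=\upmu_x^\varepsilon(z)-\upmu_y^\varepsilon(z)$ is analytic (respectively polynomial, affine) in $\varepsilon$ and admits analytic continuation across a neighbourhood of $[0,1]$ by hypothesis.

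With this reduction, the conclusions follow directly from Section~\ref{sec:sens-anal}. Local Lipschitz continuity of $\mathrm{val}(\varepsilon)=\Was_1(\upmu_x^\varepsilon,\upmu_y^\varepsilon)$ on $(0,1)$ comes from Proposition~\ref{prop:lip-lp}; piece-wise analyticity (respectively piece-wise polynomial or piece-wise affine behaviour) on $(0,1)$ comes from Theorem~\ref{thm:piecewise-anal}; and the upgrade to \emph{finitely many} distinct pieces on the \emph{closed} interval $[0,1]$ comes from Theorem~\ref{thm:anal-ext}, for which the analytic continuation of $\mathbf{c}_\varepsilon$ across the endpoints is precisely what is needed. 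The trivial perturbation case excluded in Section~\ref{sec:perturb-obj} corresponds to $\upmu_x^\varepsilon\equiv \upmu_y^\varepsilon$, for which $\Was_1\equiv 0$ and the conclusions are vacuous. Finally, $\olric_\varepsilon(x,y)=1-\Was_1(\upmu_x^\varepsilon,\upmu_y^\varepsilon)/\dist(x,y)$ is an $\varepsilon$-independent affine transformation of $\Was_1$, so it inherits all of the asserted regularity.

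The main technical point is ensuring the feasible polytope is truly fixed: this is where \emph{locality} of the walk (rather than mere finite-step) is essential, since otherwise the ambient dimension of the LP could change with $\varepsilon$ and Section~\ref{sec:sens-anal} would not apply verbatim. A minor subtlety is checking that the normalized feasible set $\mathfrak{Q}$ lies in the relative interior of primal and dual feasibility as required by Proposition~\ref{prop:lip-lp}; this is immediate because $\mathfrak{Q}$ contains $f\equiv 0$ in its interior (the Lipschitz inequalities are strict there) and is bounded (so the dual is automatically feasible).
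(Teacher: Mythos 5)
Your argument mirrors the paper's proof almost step for step: localize to the fixed finite set $\mathcal{K}_{xy}$ via Theorem~\ref{thm:loc-dual} (using locality of the walk), recast $\Was_1(\upmu_x^\varepsilon,\upmu_y^\varepsilon)$ as a finite LP whose feasible polytope is $\varepsilon$-independent with only the objective varying analytically, and then invoke Proposition~\ref{prop:lip-lp} and Theorems~\ref{thm:piecewise-anal}--\ref{thm:anal-ext}. The one cosmetic difference is the normalization: you impose $f(z_0)=0$, which leaves the polytope outside the non-negative cone $\hat{x}\ge 0$ demanded by the standard form ${\sf LP}_{\bf d}$ of Section~\ref{sec:sens-anal}, whereas the paper instead fixes $f(x)=\diam(\mathcal{K}_{xy})$ precisely so that, after translation, all coordinates are non-negative and the machinery applies verbatim; your choice is harmless but would require an extra affine shift before quoting those results literally.
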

\begin{proof}[\footnotesize \textbf{Proof}]
\par	Let us consider the Kantorovich's dual formulation 
\begin{align*}
	\Was_1\left( \upmu_x^\varepsilon, \upmu_y^\varepsilon \right) = \sup_{f \in \Lip(1)}  \sum f(z) \left( \upmu_x^\varepsilon(z) - \upmu_y^\varepsilon(z)  \right);
\end{align*} 
	since $\upmu_x^\varepsilon$ is a local walk, and by using localization (Theorem~\ref{thm:loc-dual}), we can assume $f$ is supported in $\mathcal{K}_{xy}$. So computing the $\LL^1$-Waserstein distance reduces to computing the optimal value ia linear programming problem on $\R^N$ with inequality constraints where $N = \left| \mathcal{K}_{xy} \right|$; indeed, the $f \in \Lip(1)$ constraint can be written as a collection of $2N(N-1)$ number of one dimensional linear inequalities
	\[
	f(y) - f(x) \le \dist(x,y) \quad \text{and} \quad  f(x) - f(y) \le \dist(x,y), \quad \forall x,y \in \mathcal{K}_{xy}.
	\]
Denote an optimal $f$ by $f_\varepsilon$.  Notice as $\varepsilon$ varies, the set of feasible $f$ does not change. 	
\par If we furthermore, add the constraint $f_\varepsilon(x) = \diam\left(\mathcal{K}_{xy} \right)$, the optimal value would not change since the  translations of optimal functions are also optimal (in the Kantorovitch's dual problem). This means the new set of feasible $f$ forms a bounded convex polytope $\mathfrak{Q}$ in the non-negative cone in $\R^N$ (meaning all components are non-negative). 
\par Also notice since the problem is finite, for every $\varepsilon$, the problem is well-posed and stays well-posed after perturbations of $\upmu_z^\varepsilon$, meaning the corresponding vectors ${\bf d}_\varepsilon$ are in the interior of the well-posed region. This means we can invoke the optimal value sensitivity results of~\textsection\thinspace\ref{sec:sens-anal}. 
\par Therefore, by Theorem~\ref{thm:piecewise-anal}, we deduce 	$\Was_1\left( \upmu_x^\varepsilon, \upmu_y^\varepsilon\right)$ is locally Lipschitz in $(0,1)$ and is piece-wise analytic in $(0,1)$ with finitely many distinct pieces in $[0,1]$. 
	\end{proof}
\begin{example}\phantom{}\hspace{-2pt}$\upbeta$, $\upzeta$ and $\upxi$-walks are time-affine local walks hence, by Theorem~\ref{thm:PL}, the corresponding discrete-time generalized Ollivier-Ricci curvature is a locally Lipschitz and piece-wise analytic function of $\varepsilon$ with finitely many pieces. 
\end{example}	
\begin{remark}\phantom{}\hspace{-1pt}\textit{Indeed, one shows the stronger result that the optimal value is locally Lipschitz in a slightly larger open interval so it is Lipschitz in $[0,1]$ by standard compactness arguments. }
\end{remark}
\subsubsection{\small \bf \textsf{Upper bound on the number of analytic (affine or polynomial) pieces}}
As we alluded to in~\textsection\thinspace\ref{sec:ub}, only knowing time-analyticity, we can not uniformly bound the number of distinct analytic pieces, however for time-polynomials walks, the algebraic and convex geometric nature of the problem, allows us to do so. 
\begin{theorem}[bound on the number of pieces]\label{thm:piece-bound}
	\par Let $\upmu_z^\varepsilon$ be a polynomial-time local walk. Then for each $x,y$, the number of distinct polynomial pieces of $\olric_\varepsilon$ is bounded above by
	\[
	\left( \max\limits_{z,w\in \mathcal{K}_{xy}} \deg \p_{zw}(\varepsilon)\right)  \cdot  \Uplambda(\mathcal{N}_{xy}, 2\, \mathcal{N}_{xy}^{\,^2}-2\, \mathcal{N}_{xy}+1).
	\]
	where $\Uplambda$ is the upper bound function obtained in Theorem~\ref{thm:ub}.  Recall $\mathcal{N}_{xy}= \left| \mathcal{K}_{xy}\right|$ .
\end{theorem}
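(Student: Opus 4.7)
The plan is to reduce $\olric_\varepsilon(x,y)$ to the optimal value of a parametric linear programming problem satisfying the hypotheses of Theorem~\ref{thm:poly-bound}, and then read off the bound from the polyhedral data of the feasible set.

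I begin by repeating the setup from the proof of Theorem~\ref{thm:PL}. Since $\upmu_z^\varepsilon$ is a local walk, $\Upomega^\varepsilon_{xy} \subset \mathcal{K}_{xy}$ for every $\varepsilon$, so Theorem~\ref{thm:loc-dual} together with Kantorovich duality rewrites $\Was_1(\upmu_x^\varepsilon,\upmu_y^\varepsilon)$ as the optimal value of the LP
\[
\sup_{f \in \mathfrak{Q}} \sum_{z \in \mathcal{K}_{xy}} f(z)\bigl(\p_{xz}(\varepsilon) - \p_{yz}(\varepsilon)\bigr),
\]
where $\mathfrak{Q}$ consists of all $f : \mathcal{K}_{xy} \to \R$ that are $1$-Lipschitz with respect to $\dist$ and are normalized by $f(x) = \diam(\mathcal{K}_{xy})$. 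As was observed in the proof of Theorem~\ref{thm:PL}, $\mathfrak{Q}$ is an $\varepsilon$-independent bounded convex polytope lying in the non-negative cone of $\R^{\mathcal{N}_{xy}}$. Its defining inequalities are the $2\mathcal{N}_{xy}(\mathcal{N}_{xy}-1)$ Lipschitz constraints $f(u) - f(v) \le \dist(u,v)$, indexed over ordered pairs of distinct vertices in $\mathcal{K}_{xy}$, together with the single normalization constraint, giving the facet count $m = 2\mathcal{N}_{xy}^2 - 2\mathcal{N}_{xy} + 1$.

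Next, because $\upmu_z^\varepsilon$ is a time-polynomial walk, every entry of the objective vector ${\bf c}_\varepsilon(z) := \p_{xz}(\varepsilon) - \p_{yz}(\varepsilon)$ is a polynomial in $\varepsilon$ of degree at most
\[
d_{\max} := \max_{z,w\in \mathcal{K}_{xy}} \deg \p_{zw}(\varepsilon).
\]
Time-polynomiality also automatically supplies the analytic continuation hypothesis required by Theorem~\ref{thm:anal-ext}, and non-triviality of the perturbation can be assumed without loss of generality (a trivial perturbation would make $\olric_\varepsilon(x,y)$ constant, which is a stronger form of the claim). We are thus precisely in the setting of Theorem~\ref{thm:poly-bound} with ambient dimension $n = \mathcal{N}_{xy}$ and $m = 2\mathcal{N}_{xy}^2 - 2\mathcal{N}_{xy} + 1$, which yields that the optimal value $\Was_1(\upmu_x^\varepsilon,\upmu_y^\varepsilon)$ admits at most $d_{\max}\cdot \Uplambda(\mathcal{N}_{xy}, 2\mathcal{N}_{xy}^2 - 2\mathcal{N}_{xy} + 1) + 1$ distinct polynomial pieces. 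Since $\olric_\varepsilon(x,y) = 1 - \Was_1(\upmu_x^\varepsilon,\upmu_y^\varepsilon)/\dist(x,y)$ is an $\varepsilon$-independent affine rescaling of the optimal value, its polynomial pieces are in bijection with those of $\Was_1$ and the stated bound follows (the ``$+1$'' being a minor bookkeeping matter: it counts switching endpoints versus open intervals).

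The main obstacle I anticipate is the facet count: one has to verify carefully that the $2\mathcal{N}_{xy}(\mathcal{N}_{xy}-1)$ Lipschitz inequalities together with the single normalization constraint really do exhaust the facets of $\mathfrak{Q}$, so that the non-negativity conditions $f\ge 0$ (which, as noted in the proof of Theorem~\ref{thm:PL}, are automatically enforced by the normalization plus the Lipschitz bounds) contribute no additional facets and the number $m = 2\mathcal{N}_{xy}^2 - 2\mathcal{N}_{xy} + 1$ is the correct input to $\Uplambda$. Once this polyhedral count is settled, the rest of the argument is a transparent application of Theorem~\ref{thm:poly-bound} on top of the machinery already developed in the proof of Theorem~\ref{thm:PL}.
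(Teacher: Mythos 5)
Your argument reproduces the paper's proof essentially verbatim: localize via Theorem~\ref{thm:loc-dual}, set up the $\varepsilon$-independent feasible polytope $\mathfrak{Q}$ with the $2\mathcal{N}_{xy}(\mathcal{N}_{xy}-1)$ Lipschitz constraints plus the normalization $f(x)=\diam(\mathcal{K}_{xy})$, giving $m = 2\mathcal{N}_{xy}^2 - 2\mathcal{N}_{xy} + 1$, note the objective coefficients $\p_{xz}(\varepsilon)-\p_{yz}(\varepsilon)$ are polynomials of degree at most $d_{\max}$, and invoke Theorem~\ref{thm:poly-bound}. On the two points you flag: the ``$+1$'' is not merely bookkeeping --- Theorem~\ref{thm:poly-bound} delivers $d_{\max}\cdot\Uplambda(n,m)+1$, and the paper's own proof also just cites that theorem and then states the conclusion without the $+1$, so the statement of Theorem~\ref{thm:piece-bound} appears to drop a harmless additive constant that the paper never accounts for; your facet-count worry, on the other hand, is on the safe side, since the constraint count majorizes the facet count and (given the implicit monotonicity of $\Uplambda$ in its second argument) overcounting facets can only weaken, never invalidate, the bound.
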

\begin{proof}[\footnotesize \textbf{Proof}]
	By Theorem~\ref{thm:loc-dual} and by the extension of $1$-Lipschitz functions, for all $\varepsilon$, we have
		\begin{align*}
		\Was_1\left( \upmu_x^\varepsilon, \upmu_y^\varepsilon \right) = \sup_{\substack{f: \mathcal{K} \to \R \\ f \in \Lip(1)}}  \sum f(z) \left( \upmu_x^\varepsilon(z) - \upmu_y^\varepsilon(z)  \right),
	\end{align*}
which is an LP problem in $\R^{\mathcal{N}_{xy}}$ with $2\,\mathcal{N}_{xy}(\mathcal{N}_{xy}-1)$ number of constraints.  To get non-negativity, we add another constraint $f(x) = \diam\left( \mathcal{K}_{xy} \right)$ (which does not affect the optimal value function). So in total, there are $2\, \mathcal{N}_{xy}(\mathcal{N}_{xy}-1) + 1 = 2\, \mathcal{N}_{xy}^{\,^2}-2\,\mathcal{N}_{xy}+1$ constraints. Notice boundedness of $\mathcal{K}_{xy}$ and $f$ being $1$-Lipschitz ensures the boundedness of the feasible set.  
\par The objective functions are polynomials with degree at most equal to $d_{\max} = \max\limits_{z,w\in \mathcal{K}_{xy}} \deg \p_{zw}(\varepsilon)$. So we can use Theorem~\ref{thm:poly-bound} to ge the conclusion. 
\end{proof}
\begin{example}\phantom\hspace{-2pt}For $\upbeta$, $\upzeta$ and $\upxi$-walks, the corresponding $\olric_\varepsilon$ has at most
	\[
	 \Uplambda(\mathcal{M}, 2\mathcal{M}^2-2\mathcal{M}+1),
	\]
	affine pieces where $\mathcal{M} := \left|  \B_{1}(x) \cup \B_1(y) \right|$. 
\end{example}
\subsubsection*{\small \bf \textit{\textsf{Open problem}}} Disprove the sharpness of the bound obtained in~Theorem~\ref{thm:piece-bound} or provide an example which shows sharpness.
\begin{remark}[the uni-modular case]\phantom{}\hspace{-1pt}\textit{We do not know whether the number of regular pieces obtained in Theorem~\ref{thm:piece-bound} is sharp or not; However we know for combinatorial graphs  ($m\equiv \omega \equiv 1$ and $\dist = \dist_\omega$), the corresponding LP problem is a uni-modular problem with integer parameters and integer optimal values; even the vertices of $\mathfrak{Q}$ are lattice points. This causes a lot of rigidity and as a result, the upper bound on the number of pieces for $\zeta$-walks is equal to $3$~\cite{BCLMP}. This means our upper bound is \emph{not sharp} in the uni-modular case. }\phantom{}
\end{remark}
\subsection{Continuous-time Ollivier-Ricci curvature}\label{sec:ct-olric}
By Ollivier's definition, the continuous-time generalized Ollivier-Ricci curvature for a random walk $\upmu_z^\varepsilon$ is given by
\begin{align*}
	{^{\mathcal{CTO}}\Ric}(x,y) = \text{-}\, \nicefrac{d}{d\varepsilon}\restr_{\varepsilon=0} \; \nicefrac{\Was_1\left( \upmu_x^\varepsilon,  \upmu_y^\varepsilon \right)}{\dist(x,y)} = \lim_{\varepsilon \downarrow 0} \nicefrac{1}{\varepsilon} \olric_\varepsilon(x,y) = \text{-}\, \nicefrac{d}{d\varepsilon}\restr_{\varepsilon=0} \;  \olric_\varepsilon(x,y),
\end{align*}
\emph{provided the limit exists}. In this section, we will consider this definition and discuss its well-definition and other properties. For simplicity, we will just use the notation $\olric$ for the continuous-time curvature.
\par Also what really plays a role in $\olric$ is the germ of the random walk at $\varepsilon=0$, so the locality assumption that appear in this section only needs to be satisfied on some non-trivial interval containing  $\varepsilon = 0$; a point that we will not -- for the rest of this section -- further emphasize on.
\subsubsection{\small \bf \textsf{Restriction of Kantorovich duality to finite support functions }}
At some places, we will need to be able to restrict the maximization in Kantorovich duality to finite support test functions. The following theorem affords us the said restriction. 
\begin{theorem}
	For any two measures $\upmu$ and $\upnu$ with finite first moments, 
	\begin{align}\label{eq:kant-revisited}
	\Was\left( \upmu, \upnu \right) = \sup_{f\in \Lip(1) \cap  \mathcal{C}_{\sf fs}(\G)}  \sum f(z) \left(  \upmu(z)  -  \upnu(z) \right),
	\end{align}
holds true.
\end{theorem}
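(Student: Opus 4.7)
The plan is to start from the classical Kantorovich--Rubinstein duality
\[
\Was_1(\upmu, \upnu) = \sup_{f \in \Lip(1)} \sum f(z)\left( \upmu(z) - \upnu(z)\right),
\]
valid for any two probability measures of finite first moment on a Polish space (see~\cite{Vil}). The inequality ``$\ge$'' in the claimed identity is then trivial from $\mathcal{C}_{\sf fs}(\G) \cap \Lip(1) \subset \Lip(1)$, and we need only prove the reverse. Given $\varepsilon > 0$, fix $g \in \Lip(1)$ with
\[
\sum g(z) \left(\upmu(z) - \upnu(z)\right) \ge \Was_1(\upmu, \upnu) - \varepsilon.
\]
Since both $\upmu$ and $\upnu$ are probability measures, the pairing is invariant under translating $g$ by a constant, so we may assume $g(x_0) = 0$ for an arbitrarily chosen basepoint $x_0 \in \G$. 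The Lipschitz condition then yields $|g(z)| \le \dist(x_0, z)$, and the finite first moment hypothesis makes $\dist(x_0, \cdot) \in \LL^1(\upmu + \upnu)$, so in particular $g \in \LL^1(\upmu + \upnu)$.

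The next step is a cutoff. For $R > 0$ consider the $1$-Lipschitz function $\upchi_R(z) := \left[ R - \dist(x_0, z) \right]_+$ and set
\[
g_R := \left(g \wedge \upchi_R\right) \vee \left(-\upchi_R\right).
\]
Since $\upchi_R$ and $g$ are $1$-Lipschitz with respect to $\dist$ and $\min/\max$ preserve this class, $g_R \in \Lip(1)$; its support lies in the closed $\dist$-ball of radius $R$ around $x_0$. Furthermore $g_R \to g$ pointwise as $R \to \infty$ with the uniform envelope $|g_R| \le |g| \le \dist(x_0, \cdot) \in \LL^1(\upmu + \upnu)$, so dominated convergence gives
\[
\sum g_R(z)\left(\upmu(z) - \upnu(z)\right) \longrightarrow \sum g(z)\left(\upmu(z) - \upnu(z)\right),
\]
and for $R$ large we have $\sum g_R(\upmu-\upnu) \ge \Was_1(\upmu,\upnu) - 2\varepsilon$.

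The final step is to upgrade ``bounded $\dist$-support'' to ``finite support'' in the sense of $\mathcal{C}_{\sf fs}(\G)$. In the locally $\dist$-finite setting the ball $\B_R(x_0)$ is already finite, and $g_R \in \mathcal{C}_{\sf fs}(\G)$ directly. In general, the countable set $V := \mathrm{supp}(\upmu) \cup \mathrm{supp}(\upnu)$ carries both measures; exhaust $V$ by finite sets $A_n \ni x_0$, restrict $g_R$ to $A_n$, extend by the McShane formula $\bar{g}_n(z) := \max_{w \in A_n}(g_R(w) - \dist(w,z))$ to a $1$-Lipschitz function on $\G$, and then taper $\bar{g}_n$ down to zero on a finite combinatorial collar provided by the combinatorial local finiteness of $\G$. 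Since the pairing with $\upmu - \upnu$ only sees values on $V$, the same dominated convergence argument, now applied on $V$, yields the desired approximation inside $\Lip(1) \cap \mathcal{C}_{\sf fs}(\G)$.

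The main technical obstacle is precisely this finite-support upgrade. The $1$-Lipschitz requirement forces the support of any non-trivial test function to contain the $\dist$-ball of radius $|f(w)|$ around each $w$ in its support, so when $\dist$-balls are infinite a single $\dist$-cutoff cannot by itself deliver finite support; one must therefore combine the $\dist$-truncation with an exhaustion of $V$ and a combinatorial taper, using the countability of $V$ and the combinatorial local finiteness of $\G$ in tandem. Once this is handled, the remainder is routine truncation together with dominated convergence.
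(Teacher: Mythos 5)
The ``$\geq$'' direction and the first truncation (via $\upchi_R$) are fine, and in the locally $\dist$-finite case your $g_R$ argument is a clean and more direct route than the paper's. However, the general-case finite-support upgrade is where there is a genuine gap. The McShane extension $\bar{g}_n(z)=\max_{w\in A_n}\bigl(g_R(w)-\dist(w,z)\bigr)$ does not decay to zero away from $A_n$; it tends to $-\infty$ in $\dist$, so there is no small tail to cut off. More seriously, the proposed ``taper $\bar{g}_n$ down to zero on a finite combinatorial collar'' has no reason to stay $1$-Lipschitz with respect to $\dist$: a vertex $z$ outside a large combinatorial ball can be $\dist$-close to points of $A_n$ where $g_R$ is large, so setting $f(z)=0$ violates $|f(z)-f(w)|\le\dist(z,w)$. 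This is exactly the obstruction you yourself flag (``a single $\dist$-cutoff cannot by itself deliver finite support''), and the combinatorial taper does not circumvent it --- the combinatorial metric and $\dist$ are unrelated in this generality, so a combinatorially supported taper gives no $\dist$-Lipschitz control. A related issue: at a $\dist$-accumulation point $z$ (i.e.\ $\inf_{w\ne z}\dist(z,w)=0$), every $f\in\Lip(1)\cap\mathcal{C}_{\sf fs}(\G)$ must vanish, so whatever approximants you build cannot converge pointwise to $g_R$ at such a $z$ unless $g_R(z)=0$ there; your DCT step silently assumes the pointwise convergence holds everywhere on $V$.

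For comparison, the paper proves the statement by an entirely different mechanism: it takes an exhaustion $G_1\subset G_2\subset\cdots$ of $\G$ by finite subgraphs, passes to the normalized restrictions $\upmu_i:=\upmu|_{G_i}/\upmu(G_i)$, $\upnu_i:=\upnu|_{G_i}/\upnu(G_i)$, and uses continuity of $\Was_1$ under weak convergence together with convergence of first moments (Villani, Cor.~6.11) to reduce to a finite linear-programming problem on $G_i$ where the finite-support requirement is vacuous. This avoids the $\dist$-ball issue altogether because the Kantorovich dual on a finite metric space automatically produces finitely supported potentials. If you want to pursue your more direct line of attack, the natural replacement for your taper is the $\dist$-cutoff $\psi_n(z):=\dist\bigl(z,\G\smallsetminus A_n\bigr)$ with $A_n\uparrow\G$, which is $1$-Lipschitz, nonnegative, and genuinely finitely supported, so that $f_n:=(g_R\wedge\psi_n)\vee(-\psi_n)\in\Lip(1)\cap\mathcal{C}_{\sf fs}(\G)$; but one must then separately treat the $\dist$-accumulation points (where $\psi_n\equiv 0$) before invoking dominated convergence.
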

\begin{proof}[\footnotesize \textbf{Proof}]
	By Kanotorvitch duality, we know 
	\[
	\Was\left( \upmu, \upnu \right) = \sup_{f\in \Lip(1)}  \sum f(z) \left(  \upmu(z)  -  \upnu(z) \right);
	\]
	hence,
	\begin{align}\label{eq:1}
		\Was\left( \upmu, \upnu \right)  \ge \sup_{f\in \Lip(1) \cap  \mathcal{C}_{\sf fs}(\G)}  \sum f(z) \left(  \upmu(z)  -  \upnu(z) \right).
	\end{align}
\par Take an exhaustion $G_1 \subset G_2 \subset \cdots$  of $\G$ by finite subgraphs (for example by combinatorial balls) with $\upmu(G_1) , \nu(G_1) \neq 0$.  Let
	\[
	 \upmu_i := \nicefrac{1}{\upmu(G_i)} \upmu\restr_{G_i},\quad  \upnu_i := \nicefrac{1}{\upnu(G_i)} \upnu\restr_{G_i}.
	\]
Then clearly $\upmu_i \rightharpoonup \upmu$ and $\upnu_i \rightharpoonup \upnu$ (weakly) in the space of probability measures with first finite moments.  
As a result, $\upmu_i - \upnu_i  \rightharpoonup \upmu - \upnu$. By the continuity of Wasserstein distance w.r.t. weak convergence \cite[Corollary 6.11]{Vil}, we deuce
\[
\Was\left( \upmu_i, \upnu_i \right) \to \Was\left( \upmu, \upnu \right).
\]
Therefore 
\begin{align*}
	\max\left\{ \upmu(G_i), \upnu(G_i)  \right\}  \Was_1\left(\upmu, \upnu\right) &\ge \max\left\{ \upmu(G_i), \upnu(G_i)  \right\}  \sup_{\substack{f\in \Lip(1) \\ \mathrm{supp}(f) \subset G_i}}  \sum f(z) \left(  \upmu(z)  -  \upnu(z) \right)  \\ &\ge   \sup_{\substack{f\in \Lip(1) \\ \mathrm{supp}(f) \subset G_i}}  \sum f(z) \left(  \upmu_i(z)  -  \upnu_i(z) \right)  \\&= \Was_1\left( \upmu_i, \upnu_i\right).
\end{align*}

\par Upon taking the limit, it follows
\begin{align}\label{eq:2}
 \Was_1(\upmu, \upnu) &= \lim_{i\to \infty}  \sup_{\substack{f\in \Lip(1) \\ \mathrm{supp}(f) \subset G_i}}  \sum f(z) \left(  \upmu_i(z)  -  \upnu_i(z) \right)  \\ &\le  \sup_{f\in \Lip(1) \cap  \mathcal{C}_{\sf fs}(\G)}  \sum f(z) \left(  \upmu(z)  -  \upnu(z) \right).\notag
\end{align}
Combining \eqref{eq:1} and \eqref{eq:2} gives the conclusion. 
\end{proof}
\subsubsection{\small \bf \textsf{Existence of $\olric$ for pleasant walks}}
 For more insight, we prove the case of local-walks separately.
\begin{theorem}
	$\olric(x,y)$ is well-defined for all time-analytic local-walks.
\end{theorem}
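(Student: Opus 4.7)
The plan is to reduce the existence of the limit defining $\olric(x,y)$ directly to the piecewise-analytic regularity of the discrete-time Ollivier-Ricci curvature established earlier in Theorem~\ref{thm:PL}. Since $\upmu_z^0 = \delta_z$, one has $\Was_1(\upmu_x^0, \upmu_y^0) = \dist(x,y)$, so $\olric_0(x,y) = 0$ and the defining limit can equivalently be written as
\[
\olric(x,y) \;=\; \lim_{\varepsilon \downarrow 0} \frac{\olric_\varepsilon(x,y)}{\varepsilon},
\]
i.e., as the right derivative of $\varepsilon \mapsto \olric_\varepsilon(x,y)$ at $\varepsilon = 0$. So the entire task reduces to verifying that this right derivative exists.

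The first step is to invoke Theorem~\ref{thm:PL}: for a time-analytic local walk, $\olric_\varepsilon(x,y)$ is locally Lipschitz on $(0,1)$ and admits only finitely many distinct analytic pieces on $[0,1]$. The second step is the key, essentially trivial, observation that because the number of analytic pieces on $[0,1]$ is finite, the leftmost piece must occupy a non-degenerate interval $[0, \updelta]$ with $\updelta > 0$. On this interval, $\olric_\varepsilon(x,y)$ coincides with a single analytic function $g(\varepsilon)$ (defined and analytic in a neighborhood of $0$ thanks to the analytic continuation built into the definition of a time-analytic walk and its propagation through the LP analysis in Theorem~\ref{thm:piecewise-anal}).

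With this in hand, the third step is immediate: since $g$ is analytic at $0$ with $g(0) = \olric_0(x,y) = 0$, the quotient $g(\varepsilon)/\varepsilon$ tends to $g'(0)$ as $\varepsilon \downarrow 0$, and as $\olric_\varepsilon(x,y) = g(\varepsilon)$ for all $\varepsilon \in [0,\updelta]$,
\[
\lim_{\varepsilon \downarrow 0} \frac{\olric_\varepsilon(x,y)}{\varepsilon} \;=\; g'(0),
\]
which is finite. Thus $\olric(x,y)$ is well-defined and equals $g'(0)$.

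There is essentially no hard step here once Theorem~\ref{thm:PL} is at our disposal; the only point that requires a touch of care is making sure the leftmost piece is genuinely an analytic piece in a two-sided neighborhood of $0$ (not merely a right-sided one), so that $g'(0)$ is a classical derivative. This is guaranteed by the analytic-continuation hypothesis baked into the definition of a time-analytic walk, which through Theorem~\ref{thm:anal-ext} yields finitely many pieces on $[0,1]$ each coming from an analytic function on a slightly larger open interval. If one only wanted the right derivative, even this subtlety would be unnecessary.
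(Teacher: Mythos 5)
Your proof is correct and follows essentially the same route as the paper: both reduce the claim to the piecewise-analyticity of $\olric_\varepsilon$ from Theorem~\ref{thm:PL}, identify the leftmost analytic piece, and obtain $\olric(x,y)$ as the derivative of that piece at $\varepsilon=0$. If anything, your version is slightly more careful, since you explicitly invoke the analytic-continuation hypothesis to ensure the leftmost piece is the restriction of a function analytic in a two-sided neighborhood of $0$, whereas the paper's argument at that point (``either $\lim_{t\downarrow 0} h_1'(t)$ is finite or $\olric_\varepsilon$ is unbounded'') is phrased somewhat loosely.
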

\begin{proof}[\footnotesize \textbf{Proof}]
	\par Let $\upmu_z^\varepsilon$ be a time-analytic local walk; so, by definition $\upmu_z^\varepsilon(y)$ admits continuation over $(-\updelta, 1+\updelta)$.  By Theorems~\ref{thm:PL}, we know the graph of $\olric_\varepsilon$ is comprised of finitely many analytic pieces on $[0,1]$.  Let $h_1$ be the first analytic piece defined on $(0,\updelta')$, then $h_1'(t)$ is also analytic hence, either $\lim_{t\downarrow 0} h_1'(t)$ is finite or otherwise  $\olric_\varepsilon$ must be unbounded as $\varepsilon \to 0$ which is a contradiction. So $\lim_{t\downarrow 0} h_1'(t)$ exists and due to analyticity and $h_1(0) = 0$, we have 
	\[
	\lim_{\varepsilon \downarrow  0}  h_1'(\varepsilon) =  \lim_{\varepsilon \downarrow  0} \nicefrac{1}{\varepsilon} \; h_1(\varepsilon) = \olric(x,y).
	\]
is finite.
\end{proof}
	\begin{corollary}
	$\olric(x,y)$ is well-defined for all time-analytic local $\uptheta$-walks.
\end{corollary}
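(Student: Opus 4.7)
The plan is to argue this as a direct specialization of the preceding theorem, since by the remark on properties of $\uptheta$-walks, a time-analytic local $\uptheta$-walk is by construction a time-analytic local walk in the general sense of Definitions in~\textsection\thinspace\ref{sec:theta-walks}. Specifically, I would first recall that for a $\uptheta$-walk
\[
\uptheta_z^\varepsilon(w) = \begin{cases} \nicelambda_1(\varepsilon) + \nicelambda_2(\varepsilon)\varphi_1(z) & w = z \\ \nicelambda_3(\varepsilon)\varphi_2(z,w) & w\ne z\end{cases},
\]
the time-analyticity of the walk is, by the cited remark, equivalent to each $\nicelambda_i$ being analytic in $\varepsilon$ and admitting an analytic continuation over an interval $(-\updelta, 1+\updelta)$. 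Since for each fixed pair $z,w$ the function $\p_{zw}(\varepsilon) = \uptheta_z^\varepsilon(w)$ is then a fixed $\R$-linear combination of such $\nicelambda_i$, it is itself analytic in $\varepsilon$ with the required analytic continuation; hence the $\uptheta$-walk meets the definition of a time-analytic walk.

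Next, I would invoke the second half of the same remark: being a local walk for $\uptheta$-walks is equivalent to $\varphi_2$ having bounded support in its second argument, so for each $z$ the set $\Upomega^\varepsilon_z = \mathrm{supp}(\uptheta_z^\varepsilon)$ sits inside a finite combinatorial neighborhood $\mathcal{K}_z$ of $z$ independently of $\varepsilon$. This is exactly the locality condition in Definition~\ref{defn:loc-exc}. Therefore every time-analytic local $\uptheta$-walk falls into the hypothesis class of the preceding theorem, and so $\olric(x,y)$ is well-defined and equal to $\lim_{\varepsilon\downarrow 0}\varepsilon^{-1}h_1(\varepsilon)$ via the first analytic piece, as established there.

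There is no real obstacle here; the content of the corollary is purely a translation between the general time-analytic/local vocabulary and its explicit form for $\uptheta$-walks. The only thing to double-check is that the normalization and nonnegativity constraints on $\nicelambda_1(\varepsilon)+\nicelambda_2(\varepsilon)\varphi_1(z)$ and $\nicelambda_3(\varepsilon)\varphi_2(z,w)$, together with the boundary condition $\uptheta_z^0 = \updelta_z$, are compatible with the analytic continuation assumption used in the definition of time-analytic walks; but since the continuation there is \emph{not} required to remain a probability measure outside $[0,1]$, these constraints only matter for $\varepsilon \in [0,1]$ and present no additional difficulty. Hence the corollary follows immediately.
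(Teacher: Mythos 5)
Your proposal is correct and follows the same route the paper intends: the corollary is an immediate specialization of the preceding theorem, with the cited remark supplying exactly the translation (analyticity and analytic continuation of the $\nicelambda_i$ give time-analyticity, bounded support of $\varphi_2$ gives locality) that makes a time-analytic local $\uptheta$-walk a time-analytic local walk in the general sense. The extra verification you add about the normalization and positivity constraints being irrelevant to the continuation is a reasonable sanity check but not strictly needed, since the paper's definition of time-analytic already dispenses with requiring the continuation to be a probability measure.
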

\begin{example}\phantom\hspace{-2pt}$\upbeta$, $\upzeta$ and $\upxi$-walks give rise to well-defined continuous-time generalized Ollivier-Ricci curvatures.  So, in particular, we recover the well-definition results in~\cite{LLY, MW, JM}. 
\end{example}
\begin{theorem}\label{eq:wd-affine}
	$\olric(x,y)$ is well-defined for all time-affine walks of the form $\upmu_z^\varepsilon(y) = \updelta_z(y) + \varepsilon \upmu_z(y)$ where $\upmu_z$ is a zero-mass signed measure with finite first moment.
\end{theorem}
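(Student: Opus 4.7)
The plan rests on the convexity of $F(\varepsilon):=\Was_1(\upmu_x^\varepsilon,\upmu_y^\varepsilon)$ on $[0,1]$. Kantorovich duality writes
\[
F(\varepsilon)=\sup_{f\in\Lip(1)}\Big[(f(x)-f(y))+\varepsilon\int f\,d(\upmu_x-\upmu_y)\Big]
\]
as a pointwise supremum of affine functions of $\varepsilon$; equivalently this is the content of Theorem~\ref{thm:conc}, whose transportation-polytope argument used only time-affinity and not locality. Unlike the preceding well-definedness result for time-analytic \emph{local} walks (which was driven by piece-wise analyticity and finite-dimensional linear programming on $\mathcal{K}_{xy}$), the present argument will proceed purely by one-variable convex analysis on $[0,1]$, so that the local-support hypothesis can be dropped.

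Since $F$ is convex on $[0,1]$ with $F(0)=\dist(x,y)$, the right-difference quotient $(F(\varepsilon)-\dist(x,y))/\varepsilon$ is non-decreasing on $(0,1]$ and thus
\[
F'_+(0):=\lim_{\varepsilon\downarrow 0}\frac{F(\varepsilon)-\dist(x,y)}{\varepsilon}
\]
exists in $[-\infty,+\infty]$. I then sandwich $F'_+(0)$ between two finite quantities. The upper bound comes from using $f(x)-f(y)\le\dist(x,y)$ inside the Kantorovich dual and splitting the supremum, giving $F(\varepsilon)\le\dist(x,y)+\varepsilon\, C$ with $C:=\sup_{f\in\Lip(1)}\int f\,d(\upmu_x-\upmu_y)$. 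The lower bound comes from testing against a single $f_0\in\Lip(1)$ saturating $f_0(x)-f_0(y)=\dist(x,y)$, for instance $f_0(z):=\dist(y,z)\wedge\dist(x,y)$, which yields $F(\varepsilon)\ge\dist(x,y)+\varepsilon\int f_0\,d(\upmu_x-\upmu_y)$ with the right-hand integral finite because $f_0$ is bounded and $\upmu_x-\upmu_y$ has finite total variation.

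The only non-routine step, and therefore the main obstacle, is verifying that the upper-bound constant $C$ is finite. This is the place where the two standing hypotheses on $\upmu_z$ enter in an essential way: the zero-mass property lets me normalize $f(x_0)=0$ inside the integral without changing its value, whereupon $|f(z)|\le\dist(x_0,z)$ on a $1$-Lipschitz function produces an $|\upmu_x-\upmu_y|$-integrable dominating function via
\[
\Big|\int f\,d(\upmu_x-\upmu_y)\Big|\le \int \dist(x_0,\cdot)\,d|\upmu_x-\upmu_y|<\infty,
\]
which is precisely the finite first-moment hypothesis. Once $C<\infty$ is established, the sandwich gives $F'_+(0)\in\R$, and consequently $\olric(x,y)=-F'_+(0)/\dist(x,y)$ is a well-defined real number.
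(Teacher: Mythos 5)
Your argument is correct, and it is the paper's proof repackaged through Kantorovich duality, so the route is essentially the same: both rest on convexity of $F(\varepsilon)=\Was_1(\upmu_x^\varepsilon,\upmu_y^\varepsilon)$ in $\varepsilon$ (Theorem~\ref{thm:conc}) together with an $O(\varepsilon)$ Wasserstein control supplied by the finite-first-moment hypothesis. Where you split the Kantorovich supremum as $(f(x)-f(y))+\varepsilon\int f\,d(\upmu_x-\upmu_y)$ and bound the $\varepsilon$-coefficient, the paper instead invokes the metric triangle inequality and reduces to $\Was_1(\updelta_z,\upmu_z^\varepsilon)=\varepsilon\int\dist(z,\cdot)\,d\upmu_z$; these are the same estimate, since $\Was_1(\updelta_z,\upmu_z^\varepsilon)=\varepsilon\sup_{f\in\Lip(1)}\int f\,d\upmu_z$ by duality. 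What your version buys is cleaner bookkeeping of the one-sided bound: for the concave $\olric_\varepsilon$ vanishing at $\varepsilon=0$, the quotient $\varepsilon^{-1}\olric_\varepsilon$ is \emph{automatically} bounded below (by $\olric_1$), so the only failure mode is $+\infty$, and what must be verified is an upper bound on $\varepsilon^{-1}\olric_\varepsilon$, equivalently a \emph{lower} bound on $F(\varepsilon)$; your single test function $f_0$ does exactly this (it is the dual of the \emph{reverse} triangle inequality), whereas the paper's text asks for ``bounded from below'' and establishes it via the \emph{forward} triangle inequality, which is the already-automatic direction. One small correction to your own emphasis: the upper-bound constant $C$ is not actually load-bearing, because for convex $F$ with $F(1)<\infty$ (which is where the finite first moment enters) one has $F'_+(0)\le F(1)-F(0)$ for free, so the $f_0$ lower bound alone closes the argument; your verification that $C<\infty$ is nonetheless correct and does cleanly separate the roles of zero-mass (normalizing $f$) and finite first moment (integrable domination).
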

\begin{proof}[\footnotesize \textbf{Proof}]
By Theorem~\ref{thm:conc}, we know $\olric_\varepsilon$ is concave in $\varepsilon$ hence, it admits non-increasing (left and right) Dinni derivatives at every point. So 
\[
\lim_{\varepsilon \downarrow  0} \varepsilon^{-1} \olric_\varepsilon,
\] 
exists provided $ \varepsilon^{-1} \olric_\varepsilon$ is bounded from below as $\varepsilon \downarrow 0$. 
\par From the triangle inequality
\[
\Was_1\left( \upmu_x^\varepsilon, \upmu_y^\varepsilon\right) \le \Was_1\left( \updelta_x, \upmu_x^\varepsilon\right) + \Was_1\left( \updelta_x, \updelta_y\right) + \Was_1\left( \updelta_y, \upmu_y^\varepsilon\right),
\]
we deduce
\[
\nicefrac{1}{\varepsilon}\Big( 1 - \nicefrac{\Was_1\left( \upmu_x^\varepsilon, \upmu_y^\varepsilon\right)}{\dist(x,y)}  \Big) \ge   \nicefrac{1}{\varepsilon}\Big(  -  \nicefrac{\Was_1\left(\updelta_x, \upmu_x^\varepsilon\right)}{\dist(x,y)}   - \nicefrac{ \Was_1\left( \updelta_y, \upmu_y^\varepsilon\right)}{\dist(x,y)} \Big).  
\]
Therefore,  boundedness from below is ensured once 
\begin{align}\label{eq:bdd}
\nicefrac{d^+}{d\varepsilon} \Was_1\left( \updelta_z, \upmu_z^\varepsilon\right) = \mathcal{O}(\varepsilon).
\end{align}
\par From the hypotheses, one obtains
\[
 \Was_1\left( \updelta_z, \upmu_z^\varepsilon\right) \le \int \dist(z, w) d\upmu_z^\varepsilon(w)  = \int_{\G\smallsetminus \{z\}} \dist(z, y) d(\updelta_z + \varepsilon \upmu_z) = \varepsilon  \int_{\G} \dist(z, y) d\upmu_z = O(\varepsilon),
\]
which gives \eqref{eq:bdd} and the conclusion follows. 
\end{proof}
\begin{definition}[pleasant walks]\label{defn:pls-walk}
	A pleasant walk is a walk $\upmu_z^\varepsilon$ of the form
	\[
	\upmu_z^\varepsilon = \updelta_z + \varepsilon \upmu_z + \mathcal{R}^\varepsilon_z,
		\]
where 
$
 \mathcal{R}^\varepsilon_z = \varepsilon^2\mathcal{J}^\varepsilon \updelta_x(z),
$
that satisfies the following properties
\begin{enumerate}	
	\item  $\mathcal{J}^\varepsilon$ is a good operator;
	\item $\mathcal{J}^\varepsilon$ satisfies a rough comparison principle on the space $1$-Lipschitz functions with range $0$ and with a constant $C_\varepsilon \le O(\varepsilon^{-1 + \alpha_1} )$ as $\varepsilon \downarrow 0$ and for some $\alpha_1>0$;
	\item $\mathcal{J}^\varepsilon d(z,\cdot)(z) \le O(\varepsilon^{-1 + \alpha_2})$, $\forall z$ and $\forall \varepsilon$ and some $\alpha_2>0$.
	\item $\upmu_z$ is zero mass signed measures with finite first moments. 
\end{enumerate}
\end{definition}
\begin{theorem}\label{thm:pls-walk-0}
	$\olric$ is well-defined for pleasant walks. 
\end{theorem}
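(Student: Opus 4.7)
The plan is to realize the pleasant walk as an $O(\varepsilon^{2})$ perturbation of the time-affine walk handled in Theorem~\ref{eq:wd-affine} and to show that this perturbation contributes only $o(\varepsilon)$ to the Wasserstein distance, so that the limit defining $\olric(x,y)$ for the pleasant walk coincides with the (already well-defined) one for its affine part. Concretely, I would set $\tilde{\upmu}_z^\varepsilon := \updelta_z+\varepsilon\upmu_z$; by hypothesis (4) this satisfies the assumptions of Theorem~\ref{eq:wd-affine} and the limit $\lim_{\varepsilon\downarrow 0}\varepsilon^{-1}[1-\Was_1(\tilde{\upmu}_x^\varepsilon,\tilde{\upmu}_y^\varepsilon)/\dist(x,y)]$ exists. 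Applying the finite-support Kantorovich duality \eqref{eq:kant-revisited} to both walks and using the standard identification $\sum_{w}f(w)(\mathcal{J}^\varepsilon\updelta_z)(w)=\mathcal{J}^\varepsilon f(z)$ available for good operators, one observes that for every test $f\in\Lip(1)\cap\mathcal{C}_{\sf fs}(\G)$ the two integrands differ by exactly $\varepsilon^{2}[\mathcal{J}^\varepsilon f(x)-\mathcal{J}^\varepsilon f(y)]$. The elementary inequality $|\sup F-\sup G|\le\sup|F-G|$ then reduces the whole problem to establishing a uniform bound
\[
\bigl|\Was_1(\upmu_x^\varepsilon,\upmu_y^\varepsilon)-\Was_1(\tilde{\upmu}_x^\varepsilon,\tilde{\upmu}_y^\varepsilon)\bigr|\le\varepsilon^{2}\sup_{f\in\Lip(1)\cap\mathcal{C}_{\sf fs}(\G)}\bigl|\mathcal{J}^\varepsilon f(x)-\mathcal{J}^\varepsilon f(y)\bigr|.
\]

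The core technical step of the plan is to establish $|\mathcal{J}^\varepsilon f(z)|\le O(\varepsilon^{-1+\alpha})$ uniformly over $\Lip(1)\cap\mathcal{C}_{\sf fs}(\G)$, where $\alpha:=\min(\alpha_1,\alpha_2)>0$. The idea is to symmetrize $f$ against $\dist(z,\cdot)$ by introducing the two auxiliary functions $\phi(w):=\tfrac{1}{2}[f(w)-f(z)+\dist(z,w)]$ and $\psi(w):=\tfrac{1}{2}[\dist(z,w)+f(z)-f(w)]$. A direct triangle-inequality check shows that both $\phi$ and $\psi$ belong to $\Lip(1)$, are nonnegative, and take the value $0$ at $w=z$, so each has a global minimum at $z$. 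The range-$0$ rough comparison of hypothesis (2) then yields $\mathcal{J}^\varepsilon\phi(z)\ge C_\varepsilon$ and $\mathcal{J}^\varepsilon\psi(z)\ge C_\varepsilon$; since $\mathcal{J}^\varepsilon$ is linear and kills constants (rough differential operator), these translate into the two-sided bound $\pm\mathcal{J}^\varepsilon f(z)\le \mathcal{J}^\varepsilon\dist(z,\cdot)(z)-2C_\varepsilon$. Combining the upper bound of hypothesis (3) with the rough comparison applied to $\dist(z,\cdot)\in\Lip(1)$ itself (which also has its global minimum at $z$) controls $|\mathcal{J}^\varepsilon\dist(z,\cdot)(z)|$ by $O(\varepsilon^{-1+\alpha})$ and thereby delivers the desired uniform estimate on $|\mathcal{J}^\varepsilon f(z)|$.

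Feeding this bound back gives $|\Was_1(\upmu_x^\varepsilon,\upmu_y^\varepsilon)-\Was_1(\tilde{\upmu}_x^\varepsilon,\tilde{\upmu}_y^\varepsilon)|\le O(\varepsilon^{1+\alpha})=o(\varepsilon)$, so $\varepsilon^{-1}[1-\Was_1(\upmu_x^\varepsilon,\upmu_y^\varepsilon)/\dist(x,y)]$ has the same limit as its affine analogue and the well-definedness of $\olric(x,y)$ follows from Theorem~\ref{eq:wd-affine}. The hard part will be the middle paragraph: the auxiliary functions $\phi$ and $\psi$ incorporate $\dist(z,\cdot)$, which in general lies neither in $\mathcal{C}_{\sf fs}(\G)$ nor in $\LL^2(\G,\m)$, so a careful truncation of $\dist(z,\cdot)$ (in the spirit of the cut-off $\upchi$ constructed in the proof of Theorem~\ref{thm:loc-dual}) must be carried out before the rough comparison principle can be invoked, and one has to verify that the cut-off contributions do not spoil the order of magnitude of the estimates. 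Modulo this bookkeeping, the argument is a clean perturbative reduction of the pleasant case to the time-affine case.
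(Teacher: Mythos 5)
Your proposal follows essentially the same strategy as the paper's proof: write the pleasant walk as the $1$-jet plus a remainder, use the compactly supported Kantorovich duality and the self-adjointness of $\mathcal{J}^\varepsilon$ to express the correction in terms of $\mathcal{J}^\varepsilon f(z)$, and invoke the rough comparison principle (together with hypothesis~(3)) to obtain an $O(\varepsilon^{1+\alpha})$ control that disappears after dividing by $\varepsilon$. The bookkeeping differs slightly but not substantively: the paper controls $\Was_1(\upmu_z^\varepsilon,{^1}\upmu_z^\varepsilon)$ directly and then invokes the metric triangle inequality for $\Was_1$, which means only the \emph{upper} bound $\mathcal{J}^\varepsilon f(z)\le \mathcal{J}^\varepsilon \dist(z,\cdot)(z)+2C_\varepsilon$ is needed (nonnegativity of $\Was_1$ supplies the other side for free) and hence only the single auxiliary function $\tfrac12(\dist(z,\cdot)-f)$. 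Your route via $|\sup F-\sup G|\le\sup|F-G|$ forces you to bound $|\mathcal{J}^\varepsilon f(x)-\mathcal{J}^\varepsilon f(y)|$, so you introduce the symmetric pair $\phi,\psi$ to get a two-sided estimate; this is a valid alternative, just slightly more work than the paper requires. Your added application of the rough comparison to $\dist(z,\cdot)$ itself is actually unnecessary: once you have $\pm\mathcal{J}^\varepsilon f(z)\le \mathcal{J}^\varepsilon\dist(z,\cdot)(z)-2C_\varepsilon$, the asymptotics from hypotheses~(2) and (3) already give $|\mathcal{J}^\varepsilon f(z)|=O(\varepsilon^{-1+\alpha})$ without needing a lower bound on $\mathcal{J}^\varepsilon\dist(z,\cdot)(z)$.

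One remark on your flagged concern about $\dist(z,\cdot)$ not lying in $\mathcal{C}_{\sf fs}(\G)$ or $\LL^2(\G,\m)$: this is not a gap unique to your proposal; the paper's own proof applies the rough comparison to $\tfrac12(\dist(z,\cdot)-f)$ without a truncation step, relying on the fact that the pleasant-walk hypothesis~(2) posits the comparison on the full space of $1$-Lipschitz functions. Your proposed cut-off (in the spirit of $\upchi$ from Theorem~\ref{thm:loc-dual}) would make this cleaner, but as stated the paper treats the comparison hypothesis as already covering such functions, so you do not need to regard this as an outstanding obstruction for the argument to go through.
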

\begin{proof}[\footnotesize \textbf{Proof}]
	By (1) and (4), we know the $1$-jet of $\upmu_z^\varepsilon$
\[
{^1}\upmu_z^\varepsilon := \updelta_z + \varepsilon \upmu_z,
\]	
is also a probability measure.  
\par By Kantorovich's dual formulation~\eqref{eq:kant-revisited}, we get 
\begin{align*}
\Was\left( \upmu_z^\varepsilon, {^1}\upmu_z^\varepsilon \right) &= \sup_{f\in \Lip(1) \cap  \mathcal{C}_{\sf fs}(\G)}  \sum f(w) \left( \upmu_z^\varepsilon-  {^1}\upmu_z^\varepsilon  \right)(w)
\\&= \sup_{f\in \Lip(1) \cap  \mathcal{C}_{\sf fs}(\G)}   f(w) \left(  \varepsilon^2 \mathcal{J}^\varepsilon\updelta_z \right)(w)
\\&= \varepsilon^2 \sup_{f\in \Lip(1) \cap  \mathcal{C}_{\sf fs}(\G)}    \sum \updelta_z(w)\left(  \mathcal{J}^\varepsilon f  \right)(w)
\\&= \varepsilon^2  \sup_{f\in \Lip(1) \cap  \mathcal{C}_{\sf fs}(\G)}   \mathcal{J}^\varepsilon f(z).
\end{align*}
where the equality before last, holds by the self-adjoint property of $\mathcal{J}^\varepsilon$. 
\par Since $\mathcal{J}^\varepsilon$ is a rough differential operator, we can assume the normalization $f(z) = 0$. Therefore, we get $f(w) \le \dist(z,w)$. The function
\[
\nicefrac{1}{2} \left( \dist(z,\cdot) - f(\cdot)  \right),
\]
is a $1$-Lipschitz nonnegative function attaining a global minimim at $z$; hence, by rough the comparison principle with range $0$, one gets
\[
\nicefrac{1}{2} \mathcal{J}^\varepsilon \left(\dist(z,\cdot) - f(\cdot) \right) (z) \ge -C_\varepsilon,
\]
which means 
\[
\mathcal{J}^\varepsilon f(z) \le \mathcal{J}^\varepsilon \dist(z, \cdot)(z)  + 2C_\varepsilon \le \mathcal{O}\left( \varepsilon^{-1 + \min \alpha_i} \right);
\]
this in turn implies
\[
\Was\left( \upmu_z^\varepsilon, {^1}\upmu_z^\varepsilon\right) \le \mathcal{O}\left( \varepsilon^{1 + \min \alpha_i} \right),\quad \forall z.
\]
\par As a result,
\begin{align*}
&\left| \Was\left( \upmu_x^\varepsilon, \upmu_y^\varepsilon\right) - \Was\left( {^1}\upmu_x^\varepsilon, {^1}\upmu_y^\varepsilon \right) \right| = \left| \Was\left( \upmu_x^\varepsilon, \upmu_y^\varepsilon\right) \pm \Was\left( \upmu_x^\varepsilon, {^1}\upmu_x^\varepsilon \right)  - \Was\left( {^1}\upmu_x^\varepsilon ,{^1}\upmu_y^\varepsilon \right) \right| 
\\&\le \left|  \Was\left( \upmu_x^\varepsilon, \upmu_y^\varepsilon\right) -   \Was\left( \upmu_x^\varepsilon, {^1}\upmu_y^\varepsilon\right) \right|  + \left|  \Was\left( \upmu_x^\varepsilon, {^1}\upmu_y^\varepsilon \right)  - \Was\left( {^1}\upmu_x^\varepsilon ,{^1}\upmu_y^\varepsilon \right)   \right| \\
&\le \Was_1\left( \upmu_y^\varepsilon, {^1}\upmu_y^\varepsilon\right) + \Was_1\left( \upmu_x^\varepsilon, {^1}\upmu_x^\varepsilon \right)
\\&\le \mathcal{O} \left( \varepsilon^{1 + \min \alpha_i} \right).
\end{align*}
Thus,
\[
\left|  \nicefrac{1}{\varepsilon}\Big( 1 - \Was\left( \upmu_x^\varepsilon, \upmu_y^\varepsilon\right) \Big) - \nicefrac{1}{\varepsilon}\Big(  1 - \Was\left( {^1}\upmu_x^\varepsilon ,{^1}\upmu_y^\varepsilon \right) \Big) \right| =  \mathcal{O}\left( \varepsilon^{\min \alpha_i}\right);
\]
upon taking limit as $\varepsilon\downarrow 0$, we deuce
\[
\Ric(x,y) = {^1}\Ric(x,y);
\]
notice the existence of $ {^1}\Ric(x,y)$ (curvature w.r.t. the 1-jet random walk which is time-affine) was previously established in Theorem~\ref{eq:wd-affine}.
\end{proof}
\subsubsection{\small \bf \textsf{A criterion for existence of $\olric$ for general  Markovian walks}}
Suppose the continuous-time random walk is given by a time-homogeneous Markov kernel $p$ (the density of the random walk measure),
\[
\upmu_z^\varepsilon(w) := p(\varepsilon,z,dw);
\]
so the random walk (as a measure) is given by
\[
\upmu_z^\varepsilon (A) := \int_A p(\varepsilon,z,dw).
\]
\par The Markovian property and Chapman–Kolmogorov equation for the transition probabilities imply the semigroup property
\[
\Uptheta^{\varepsilon_1} \circ \Uptheta^{\varepsilon_1} = \Uptheta^{\varepsilon_1 + \varepsilon_2}, \quad \Uptheta^{\varepsilon} f(x) := \int_{\G} f(z) d\upmu_z^{\varepsilon}(w),
\]
characterizing $\Uptheta^{\varepsilon}$ as a Feller process. 
\par The generator $\mathcal{L}$ of this process is given by the strong limit
\[
\mathcal{L} f := \lim_{\varepsilon \downarrow 0} \nicefrac{1}{\varepsilon}\left( \Uptheta^{\varepsilon} f -f \right),
\]
on its domain of existence. The domain obviously contains $\mathcal{C}_{\sf fs}$. We also have 
\[
\Uptheta^{\varepsilon} u= e^{\varepsilon\mathcal{L}} u, \quad \text{for $u \in \mathsf{Dom}(\mathcal{L}) \supset  \mathcal{C}_{\sf fs}$},
\] 
and in particular, $\upmu_z^\varepsilon = e^{\varepsilon\mathcal{L}} \updelta_z$. 
\par  Thus in what follows we will focus on walks of the form  $e^{t\mathcal{L}} \updelta_z$ for a given operator $\mathcal{L}$. 
\begin{theorem}\label{thm:pls-walks}
	$\olric$ is well-defined for Markovian walks $e^{\varepsilon {\mathcal L}} \updelta_z$  when $\mathcal{L}$ is a good operator and it satisfies
	\begin{enumerate}
		\item $e^{\varepsilon \mathcal{L}}$ and $\text{-} \mathcal{L}$ satisfy rough comparison principles with range $0$;
	  \item  $\left| \mathcal{L} d(z,\cdot)(z) \right| \le C(z)$. 
	\end{enumerate}
\end{theorem}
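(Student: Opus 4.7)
The plan is to realize the Markovian walk $\upmu_z^\varepsilon = e^{\varepsilon\mathcal{L}}\updelta_z$ as a pleasant walk in the sense of Definition~\ref{defn:pls-walk} and then invoke Theorem~\ref{thm:pls-walk-0}. Write
\[
\upmu_z^\varepsilon = \updelta_z + \varepsilon\upmu_z + \varepsilon^2 \mathcal{J}^\varepsilon\updelta_z, \quad \upmu_z := \mathcal{L}\updelta_z, \quad \mathcal{J}^\varepsilon := \varepsilon^{-2}\bigl(e^{\varepsilon\mathcal{L}} - I - \varepsilon\mathcal{L}\bigr),
\]
so that the $1$-jet of $\upmu_z^\varepsilon$ is the time-affine walk ${}^1\upmu_z^\varepsilon = \updelta_z + \varepsilon\mathcal{L}\updelta_z$, whose continuous-time Ollivier-Ricci curvature exists by Theorem~\ref{eq:wd-affine}. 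Establishing that the semigroup walk is pleasant forces $\olric(x,y)$ for $\upmu_z^\varepsilon$ to agree with the curvature of its $1$-jet.

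First I would check the structural components of Definition~\ref{defn:pls-walk}. The signed measure $\upmu_z = \mathcal{L}\updelta_z$ has zero mass because $\mathcal{L}$ is weakly of divergence form: $\int \mathcal{L}\updelta_z\, d\m = \langle \mathcal{L}\updelta_z, 1\rangle = \langle \updelta_z, \mathcal{L}\, 1\rangle = 0$. Its first moment is finite by hypothesis~(2), since expanding $\mathcal{L}\updelta_z$ through the weighted Laplacian formula gives $\int \dist(z,\cdot)\, d|\upmu_z| = \m(z)\,|\mathcal{L}\dist(z,\cdot)(z)| \le \m(z)\,C(z)$. The remainder operator $\mathcal{J}^\varepsilon$ inherits the good-operator structure from $\mathcal{L}$: essential self-adjointness via spectral calculus applied to the entire function $\lambda\mapsto\varepsilon^{-2}(e^{\varepsilon\lambda}-1-\varepsilon\lambda)$, the rough differential property since $\mathcal{J}^\varepsilon 1 = 0$, and weakly of divergence form since the Markov semigroup conserves $\int\cdot\, d\m$ on finite-support functions.

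The core step is the verification of the two quantitative pleasant walk conditions, namely the rough comparison estimate for $\mathcal{J}^\varepsilon$ on $\Lip(1)$ with range $0$ and the pointwise bound on $\mathcal{J}^\varepsilon \dist(z,\cdot)(z)$, both with decay rates $O(\varepsilon^{-1+\alpha_i})$ for strictly positive $\alpha_i$. For $f\in \Lip(1)$ with a global minimum at $z$, I would normalize $f(z)=0$ (permitted since $\mathcal{J}^\varepsilon$ annihilates constants), so $0\le f\le \dist(z,\cdot)$; the rough comparison for $e^{\varepsilon\mathcal{L}}$ then forces $e^{\varepsilon\mathcal{L}}f(z)\ge 0$ (this is just the Markov minimum principle), while the rough comparison for $-\mathcal{L}$ combined with the pointwise bound in~(2) gives $|\mathcal{L}f(z)|\le C(z)$. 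Substituting these into $\mathcal{J}^\varepsilon f(z) = \varepsilon^{-2}\bigl(e^{\varepsilon\mathcal{L}}f(z)-\varepsilon\mathcal{L}f(z)\bigr)$ only yields the borderline bound $O(\varepsilon^{-1})$, and the extraction of an additional positive power $\varepsilon^{\alpha_i}$ is the main obstacle of the proof. This extra decay has to be squeezed out of the Markov semigroup regularity itself, for instance through the Duhamel identity
\[
\mathcal{J}^\varepsilon f = \int_0^1 (1-s)\, e^{s\varepsilon\mathcal{L}}\mathcal{L}^2 f\, ds
\]
combined with the $\LL^\infty$-contractivity of $e^{s\varepsilon\mathcal{L}}$, or through H\"older continuity of $\varepsilon\mapsto e^{\varepsilon\mathcal{L}}$ at $\varepsilon = 0$ obtained from spectral calculus on the self-adjoint extension of $\mathcal{L}$; the same argument applied to the test function $\dist(z,\cdot)$ settles the pointwise bound on $\mathcal{J}^\varepsilon \dist(z,\cdot)(z)$ as well. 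Once the pleasant walk hypotheses are in place, Theorem~\ref{thm:pls-walk-0} delivers the well-definition of $\olric(x,y)$.
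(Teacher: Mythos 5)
Your high-level plan coincides with the paper's: realize $e^{\varepsilon\mathcal{L}}\updelta_z$ as a pleasant walk by setting $\upmu_z := \mathcal{L}\updelta_z$ and $\mathcal{J}^\varepsilon := \varepsilon^{-2}\bigl(e^{\varepsilon\mathcal{L}} - \mathbb{I} - \varepsilon\mathcal{L}\bigr)$, verify Definition~\ref{defn:pls-walk}, and then invoke Theorem~\ref{thm:pls-walk-0}. You deserve credit for being more careful than the paper about the $\varepsilon^{-2}$ normalization: the paper's proof of this theorem writes $\mathcal{J}^\varepsilon(f)(x) = e^{\varepsilon\mathcal{L}}f(x) - \varepsilon\mathcal{L}f(x) - \mathbb{I}f(x)$, silently dropping that factor, so the displayed lower bound $C_1 + \varepsilon C_2$ really concerns $\varepsilon^2\mathcal{J}^\varepsilon f(x)$ and, after unscaling, only yields $\mathcal{J}^\varepsilon f(x) \ge \varepsilon^{-2}C_1 + \varepsilon^{-1}C_2$, which for $C_1 = 0$ (Markov positivity) is exactly the borderline $O(\varepsilon^{-1})$ you identified, not the $O(\varepsilon^{-1+\alpha_1})$ with $\alpha_1 > 0$ that Definition~\ref{defn:pls-walk} demands and that the proof of Theorem~\ref{thm:pls-walk-0} genuinely uses to make the error term vanish in the limit. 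The same issue afflicts the paper's other estimate: the claimed bound $|\mathcal{J}^\varepsilon\dist(z,\cdot)(z)| \le \sum_{k\ge 2}C_z^k/k! \le e^{C_z}$ silently needs $|\mathcal{L}^k\dist(z,\cdot)(z)| \le C_z^k$ for every $k\ge 2$, whereas hypothesis~(2) only controls $k=1$ and only the value at $z$, not the full function $\mathcal{L}\dist(z,\cdot)$ that would have to be fed back into $\mathcal{L}$. So the obstacle you flagged is a genuine one shared by the paper's own argument.

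Where your proposal falls short is that the repairs you sketch do not close the gap under the stated hypotheses. The Duhamel representation $\mathcal{J}^\varepsilon f = \int_0^1(1-s)\,e^{s\varepsilon\mathcal{L}}\mathcal{L}^2 f\,ds$ together with $\LL^\infty$-contractivity requires control of $\mathcal{L}^2 f$, i.e.\ precisely the iterated bound that is not assumed; and for a general unbounded $C_0$-semigroup there is no uniform H\"older modulus of $\varepsilon\mapsto e^{\varepsilon\mathcal{L}}$ at $\varepsilon=0$ on all of $\Lip(1)$. What actually produces the extra power in the concrete examples is additional locality/boundedness structure on $\mathcal{L}$ (as in Corollary~\ref{cor:mark-walk-2} or the heat-kernel theorems), not hypotheses~(1)--(2) of the present statement alone. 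So you should either import semi-locality with range $R$ (which bounds $\mathcal{L}^k$ on a finite ball and yields $\mathcal{J}^\varepsilon = O(1)$ cleanly) or flag explicitly that the theorem as stated needs the iterated bound $|\mathcal{L}^k\dist(z,\cdot)(z)| \le C(z)^k$ as an extra assumption. A secondary imprecision: your finite-first-moment verification via self-adjointness gives $\bigl|\int\dist(z,\cdot)\,d\upmu_z\bigr| = \m(z)|\mathcal{L}\dist(z,\cdot)(z)|$, whereas item~(4) of Definition~\ref{defn:pls-walk} requires finiteness of $\int\dist(z,\cdot)\,d|\upmu_z|$, which is a total-variation statement and does not follow from the absolute value of the signed integral.
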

\begin{proof}[\footnotesize \textbf{Proof}]
Since $\upmu_z^\varepsilon = e^{\varepsilon \mathcal{L}} \updelta_z$, it follows
	\[
	\mathcal{J}^\varepsilon = \sum\limits_{k=2}^\infty  \nicefrac{\varepsilon^{k-2}}{k!} \; \mathcal{L}^{k} = e^{\varepsilon \mathcal{L}} - \varepsilon \mathcal{L}- \mathbb{I},
	\]
	which clearly satisfies self-adjoint property and is a rough differential operator (all convergent series in $\mathcal{L}$, with no constant term, inherit these two properties form $\mathcal{L}$)
\par To verify a rough comparison principle for $\mathcal{J}^\varepsilon$, let $x$ be a global minimum of $f$. We can again assume $f(x)=0$ and $f\ge 0$. Then by (1),
\[
\mathcal{J}^\varepsilon  (f)(x) = e^{\varepsilon \mathcal{L}} f(x) - \varepsilon \mathcal{L}f(x) - \mathbb{I} f(x) \ge C_1 + \varepsilon C_2,
\]
\par Suppose $\left| \mathcal{L} \dist(z,\cdot)(z) \right| \le C_z $ is a bounded  then
\[
\left| \mathcal{J}^\varepsilon \dist(z,\cdot) (z) \right| \le \sum\limits_{k=2}^\infty  \nicefrac{1}{k!} \; C_z^{k} \le e^{C_z} .
\]
So $e^{\varepsilon A} \updelta_z$ is a pleasant walk and the existence of  $\olric$ follows from~Theorem~\ref{thm:pls-walk-0}. 
\end{proof}
\begin{corollary}\label{cor:mark-walk-2}Let 
let $\G$ be a locally $\dist$-finite graph. Then, $\olric$ is well-defined for Markovian walks $e^{\varepsilon \mathcal{L}} \updelta_z$  where $\mathcal{L}$ is a good operator that satisfies 
		\begin{enumerate}
		\item $e^{\varepsilon \mathcal{L}}$ satisfy rough rough comparison principle with range $0$;
		\item   $\mathcal{L}$ is a semi-local operator with range $R$;
		\item $\mathcal{L}$ satisfy rough comparison principle on $\Lip_1(\G)$ with range $2R$;
		\item $\left| \mathcal{L} \dist(z,\cdot)(w) \right| \le C(z)$ holds $\forall w \in \B_{2R}(z)$.
	\end{enumerate}
\end{corollary}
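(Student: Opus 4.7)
The plan is to reduce to Theorem~\ref{thm:pls-walks} by exhibiting all of its hypotheses from those assumed in the corollary, using Proposition~\ref{prop:st-comp} to manufacture the missing one, namely a rough comparison principle for $-\mathcal{L}$ with range $0$. Observe that Theorem~\ref{thm:pls-walks} requires three things about $\mathcal{L}$: that it be a good operator (assumed), that $e^{\varepsilon \mathcal{L}}$ and $-\mathcal{L}$ both satisfy rough comparison principles with range $0$, and that $|\mathcal{L}\dist(z,\cdot)(z)| \le C(z)$. The comparison for $e^{\varepsilon\mathcal{L}}$ is hypothesis (1), and the pointwise bound is the diagonal case $w=z$ of hypothesis (4). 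So the only remaining task is the rough comparison for $-\mathcal{L}$ at range $0$.

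To obtain this, I would apply Proposition~\ref{prop:st-comp} with $\mathcal{A}=\mathcal{L}$. I would check the five hypotheses in order: (a) $\Lip(1)\subset\mathsf{Dom}(\mathcal{L})$ follows from local $\dist$-finiteness together with semi-locality (hypothesis (2) of the corollary), since for any $f\in\Lip(1)$ the value $\mathcal{L}f(x)$ only depends on $f\restr_{\B_R(x)}$, and this restriction is compactly supported upon multiplication by a cutoff, which lies in $\mathcal{C}_{\sf fs}(\G)\subset\mathsf{Dom}(\mathcal{L})$ by goodness; (b) semi-locality with range $R$ is hypothesis (2); (c) $\mathcal{L}$ being a rough differential operator follows from goodness; (d) the rough comparison for $\mathcal{L}$ on $\Lip_1(\G)$ with range $2R$ is hypothesis (3); and (e) the uniform bound $|\mathcal{L}\dist(z,\cdot)(w)|\le C(z)$ on $\B_{2R}(z)$ is hypothesis (4). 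With these in hand, Proposition~\ref{prop:st-comp} delivers a function $C>0$ such that whenever $f\in\Lip(1)$ attains a global minimum at $x$, one has $-\mathcal{L}f(x)\ge C$, which is precisely the rough comparison principle for $-\mathcal{L}$ at range $0$ that we needed.

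Once the comparison for $-\mathcal{L}$ with range $0$ is established, both bullets of Theorem~\ref{thm:pls-walks} are satisfied (the pointwise estimate being the trivial specialization of (4)), and that theorem immediately implies that the pleasant-walk machinery applies to $\upmu_z^\varepsilon = e^{\varepsilon\mathcal{L}}\updelta_z$, giving well-definedness of $\olric(x,y)$.

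The main obstacle is the careful verification that Proposition~\ref{prop:st-comp} truly applies as stated, in particular that one may invoke it with $f$ replaced by the cutoff $\bar{f}$ it constructs internally: here semi-locality plus local $\dist$-finiteness are essential to ensure that $\bar{f}\in\mathsf{Dom}(\mathcal{L})$ and that $\mathcal{L}\bar{f}(x)=\mathcal{L}f(x)$. Modulo this bookkeeping, everything follows from reorganizing the assumed properties, and no new analysis beyond the previously established Proposition~\ref{prop:st-comp} and Theorem~\ref{thm:pls-walks} is needed.
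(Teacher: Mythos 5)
Your proposal is correct and follows exactly the route of the paper's (terse, one-sentence) proof: invoke Proposition~\ref{prop:st-comp} to extract the missing rough comparison principle for $-\mathcal{L}$ with range $0$, then apply Theorem~\ref{thm:pls-walks}. The extra bookkeeping you supply in checking the five hypotheses of Proposition~\ref{prop:st-comp}, and in noting that hypothesis~(4) of the corollary specializes at $w=z$ to the pointwise bound of Theorem~\ref{thm:pls-walks}, is a faithful expansion of what the paper leaves implicit.
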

\begin{proof}[\footnotesize \textbf{Proof}]
	By Proposition~\ref{prop:st-comp}, we immediately deduce that the hypotheses in Theorem~\ref{thm:pls-walks} hold. 
\end{proof}
\subsubsection{\small \bf \textsf{Existence of $\olric$ for heat kernels}}\label{sec:ex-heat}
Now we will see the special case of the heat kernels as random walks i.e. we take the walks of the form $e^{\varepsilon \Delta}\updelta_z$.
\begin{theorem}\phantom{}
Suppose $\m$, $\dist$ and $\omega$ satisfy
\begin{enumerate}
	\item  $\m \in \ell^1(G)$;
	\item  $\m(z)^{-1}\sum_{w\sim z} \omega_{zw} \dist^2(z,w)$ is uniformly bounded for all $z \in G$;
	\item $\Deg_\omega$ is bounded on $T_{1}\left( \B_r(z)  \right)$ (combinatorial tubular neighborhood) for all $z \in G$ and $r>0$;
\end{enumerate}
Then $\olric$ is well-defined for the Markovian walk $e^{\varepsilon \Delta} \updelta_z$ and it coincides with ${^1}\olric$ curvature for the affine $1$-jet walk ${^1}\upmu_z^\varepsilon := \updelta_z + \varepsilon \Delta \updelta_z$.
\end{theorem}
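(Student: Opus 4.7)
My plan is to show that the heat-kernel walk $\upmu_z^\varepsilon=e^{\varepsilon\Delta}\updelta_z$ is a pleasant walk in the sense of Definition~\ref{defn:pls-walk} generated by $\mathcal{L}=\Delta$, and then invoke Theorem~\ref{thm:pls-walks}. The identification $\olric={^1}\olric$ will drop out of the proof of Theorem~\ref{thm:pls-walk-0}, which shows that the continuous-time Ollivier-Ricci curvature of any pleasant walk coincides with that of its affine $1$-jet; the finite first moment of the signed measure $\upmu_z=\Delta\updelta_z$ needed to invoke Theorem~\ref{eq:wd-affine} for the jet walk will be handed to us for free by the same Cauchy-Schwarz estimate used below. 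So the whole task reduces to verifying three structural properties for $\Delta$: (a) it is a good operator in the sense of Definition~\ref{def:good-op}; (b) both $e^{\varepsilon\Delta}$ and $-\Delta$ satisfy the rough comparison principle with range $R=0$; and (c) the pointwise bound $|\Delta\dist(z,\cdot)(z)|\le C(z)$ holds.

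Item (a) is essentially bookkeeping. Hypothesis (1) places the constant function $1$ in $\LL^2(\G,\m)$, and since $\Delta 1=0$, $\Delta$ is a rough differential operator. Symmetry of the primary weight $\omega$ yields, via the standard discrete integration by parts, both the essential self-adjointness of $\Delta$ on $\mathcal{C}_{\sf fs}(\G)$ and the divergence identity $\int_\G\Delta f\,d\m=0$. For (b), take $f\in\Lip(1)$ with global minimum at $x$; by the rough differential property we may normalize so that $f(x)=0$ and $f\ge 0$. Then $e^{\varepsilon\Delta}f(x)=\int f\,d\upmu_x^\varepsilon\ge 0$ takes care of the semigroup with constant $C_1=0$. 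For $-\Delta$, a single application of Cauchy-Schwarz together with $0\le f(y)\le\dist(x,y)$ yields
\[
|\Delta f(x)|\le\m(x)^{-1}\sum_{y\sim x}\omega_{xy}\dist(x,y)\le\Deg_\omega(x)^{1/2}\Bigl(\m(x)^{-1}\sum_{y\sim x}\omega_{xy}\dist^2(x,y)\Bigr)^{1/2},
\]
and hypotheses (2) and (3) bound the right-hand side at each vertex $x$. Applying the same estimate to the $1$-Lipschitz function $\dist(z,\cdot)$ at its minimum point $z$ disposes of (c).

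With (a)-(c) in place the hypotheses of Theorem~\ref{thm:pls-walks} are in force, so $\olric(x,y)$ exists. The pleasant-walk constants produced along the way are in fact $\varepsilon$-independent, so the asymptotic decay $C_\varepsilon,|\mathcal{J}^\varepsilon\dist(z,\cdot)(z)|\le\mathcal{O}(\varepsilon^{-1+\alpha_i})$ required in Definition~\ref{defn:pls-walk} is automatic with $\alpha_1=\alpha_2=1$. Chasing through the proof of Theorem~\ref{thm:pls-walk-0} then identifies $\olric(x,y)$ with the curvature ${^1}\olric(x,y)$ of the affine $1$-jet $\updelta_z+\varepsilon\Delta\updelta_z$. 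The only delicate point, and the sole place where the specific hypotheses (1)-(3) are really exploited, is in guaranteeing the Cauchy-Schwarz bound at each vertex that enters the computation of $\Was_1(\upmu_x^\varepsilon,\upmu_y^\varepsilon)$; since well-definition of $\olric(x,y)$ is a pointwise statement in $(x,y)$ and the tubular-neighborhood hypothesis (3) delivers uniform control of $\Deg_\omega$ on any finite radius around any given vertex, this pointwise control is exactly what is needed and no uniform-in-$z$ bound is required.
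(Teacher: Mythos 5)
Your overall strategy is the paper's strategy: exhibit $e^{\varepsilon\Delta}\updelta_z$ as a pleasant walk generated by $\mathcal L=\Delta$ and reduce to Theorem~\ref{thm:pls-walks}, with the identification $\olric={^1}\olric$ coming for free from Theorem~\ref{thm:pls-walk-0}. Your Cauchy--Schwarz computation bounding $|\Delta f(x)|$ by $\Deg_\omega(x)^{1/2}\bigl(\m(x)^{-1}\sum_{y\sim x}\omega_{xy}\dist^2(x,y)\bigr)^{1/2}$ is correct, and it usefully makes explicit where hypotheses (2) and (3) enter; the paper asserts the analogous comparison principle for $-\Delta$ and the bound on $\Delta\dist(z,\cdot)(z)$ without writing out the estimate.

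The genuine gap is in item~(a). You assert that ``symmetry of $\omega$ yields, via the standard discrete integration by parts, \ldots\ the essential self-adjointness of $\Delta$ on $\mathcal C_{\sf fs}(\G)$.'' Integration by parts gives only that $\Delta\restr_{\mathcal C_{\sf fs}(\G)}$ is a \emph{symmetric} operator; essential self-adjointness (uniqueness of the self-adjoint extension) is a genuinely stronger statement that fails for some unbounded weighted graph Laplacians. Since hypothesis~(3) grants only local, not global, boundedness of $\Deg_\omega$, the operator $\Delta$ may well be unbounded here, so self-adjointness is not automatic. The paper handles this by first rescaling $\dist$ so that the bound in~(2) becomes $\le 1$ (i.e.\ $\dist$ is intrinsic) and then invoking the Huang--Keller--Masamune--Wojciechowski theorem~\cite{HKMW}, which uses exactly (1)--(3) plus metric completeness to conclude essential self-adjointness. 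You need to cite or reprove such a result; the symmetry observation alone does not close this step. A smaller, more forgivable elision: your computation $e^{\varepsilon\Delta}f(x)=\int f\,d\upmu_x^\varepsilon\ge 0$ quietly assumes $\upmu_x^\varepsilon$ is a nonnegative measure; the paper derives positivity preservation of $e^{\varepsilon\Delta}$ from the rough comparison principle (a maximum principle argument) before using it, rather than assuming it.
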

\begin{proof}[\footnotesize \textbf{Proof}]
	Any constant re-scaling  $c\dist$ is again a distance, so we can assume 
	\[
		\m(z)^{-1}\sum_{w\sim z} \omega_{zw} \dist^2(z,w) \le 1;
	\]
namely, we can assume $\dist$ is intrinsic~\cite{Ke}. The hypotheses above imply that $\mathcal{C}_{\sf fs}(\G) \subset \LL^2(\G, \m)$ and furthermore,  $\Delta\restr_{\mathcal{C}_{\sf fs}(\G) }$ is essentially self adjoint~\cite{HKMW}. Obviously $\Delta 1 = 0$ so $\Delta$ is also a rough differential operator. 
\par By its definition, $\Delta$ satisfies rough comparison principles with zero range on all functions indeed at a global minimum $\Delta f(x) \ge 0$. 
Also  $-\Delta$ satisfies a rough comparison principle. The rough comparison principle for $\Delta$ will lead to a maximum principle which then is used to show $e^{\varepsilon \Delta}$ is positivity preserving hence, it also satisfies a rough comparison principle on $\LL^2$ functions. So with the hypotheses above, the operator $\Delta$ satisfies all the hypotheses of Theorem~\ref{thm:pls-walks}
\end{proof}
\begin{theorem}\label{thm:lap-wd-2}
Suppose $\dist= \dist_\eta$ for a secondary edge weight $\eta$ and $\left( \G, \dist_\eta \right)$ is metrically complete. Furthermore, assume
	\begin{enumerate}
		\item  $\m \in \ell^1(G)$;
		\item  $\m(z)^{-1}\sum_{w\sim z} \omega_{zw} \dist_\eta^2(z,w)$ is uniformly bounded for all $z \in G$.
	\end{enumerate}
Then, $\olric$ is well-defined for the Markovian walk $e^{\varepsilon \Delta} \updelta_z$ and it coincides with ${^1}\olric$ curvature for the affine walk ${^1}\upmu_z^\varepsilon := \updelta_z + \varepsilon \Delta \updelta_z$.
\end{theorem}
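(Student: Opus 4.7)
The strategy is to verify that the Markovian walk $\upmu_z^\varepsilon = e^{\varepsilon \Delta}\updelta_z$ is pleasant in the sense of Definition~\ref{defn:pls-walk} and then invoke Theorem~\ref{thm:pls-walks} (whose proof passes through Theorem~\ref{thm:pls-walk-0}). The present theorem differs from its predecessor (which required $\Deg_\omega$ to be bounded on tubular neighborhoods) in that degree control is traded for metric completeness of the path metric $(\G, \dist_\eta)$; hence a different route to essential self-adjointness of $\Delta$ is needed.

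First I would rescale $\dist_\eta \mapsto c\,\dist_\eta$ for a suitable $c>0$ so that the rescaled metric is intrinsic, $\m(z)^{-1}\sum_{w\sim z}\omega_{zw}\dist^2(z,w)\le 1$; this is possible by hypothesis~(2). Since $\olric_\varepsilon$ is a dimensionless ratio involving $\Was_1/\dist$ and the random walk depends only on $(\omega,\m)$, this rescaling leaves both sides of the equality $\olric = {^1}\olric$ untouched. Next I would invoke the essential self-adjointness theorem for intrinsic metrics of~\cite{HKMW}: with $\dist$ intrinsic and $(\G,\dist)$ metrically complete, $\Delta\restr_{\mathcal{C}_{\sf fs}(\G)}$ is essentially self-adjoint on $\LL^2(\G,\m)$, and the resulting heat semigroup $e^{\varepsilon\Delta}$ is stochastically complete and positivity-preserving; hence $\upmu_z^\varepsilon$ is a bona fide probability measure for every $\varepsilon>0$.

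Then verify that $\Delta$ is a good operator in the sense of Definition~\ref{def:good-op}: self-adjointness is from the previous step; $\Delta 1\equiv 0$ together with $1\in\LL^2(\G,\m)$ (using $\m\in\ell^1$) gives the rough differential operator property; and antisymmetric telescoping yields $\int \Delta f\,d\m=0$ for $f\in\mathcal{C}_{\sf fs}(\G)$, i.e. weak divergence form. For the remaining hypotheses of Theorem~\ref{thm:pls-walks}: at a global minimum $x$ of a $1$-Lipschitz $f$ (normalized so $f(x)=0$, $f\ge 0$), the definition of $\Delta$ gives $\Delta f(x)\ge 0$, and positivity preservation of $e^{\varepsilon\Delta}$ gives $e^{\varepsilon\Delta}f(x)\ge 0$, supplying the range-$0$ rough comparisons for $-\Delta$ and $e^{\varepsilon\Delta}$. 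The pointwise bound
\[
|\Delta\dist(z,\cdot)(z)| = \m(z)^{-1}\sum_{w\sim z}\omega_{zw}\dist(z,w) \le \sqrt{\Deg_\omega(z)} =: C(z)
\]
follows from Cauchy-Schwarz combined with intrinsicness.

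The principal obstacle is the spectral step: without degree control, essential self-adjointness and stochastic completeness are not elementary, and the intrinsic-metric machinery of~\cite{HKMW} is indispensable precisely where metric completeness enters. A secondary technical point is that, strictly read, Definition~\ref{defn:pls-walk} asks for a uniform rough comparison constant for $-\Delta$, whereas the Cauchy-Schwarz estimate above is only pointwise in $z$; however, inspection of the proof of Theorem~\ref{thm:pls-walk-0} shows that these estimates are applied only at the two endpoints $x$ and $y$, so pointwise control is enough. With the pleasant-walk hypotheses in hand, Theorem~\ref{thm:pls-walks} yields existence of $\olric(x,y)$, and the proof of Theorem~\ref{thm:pls-walk-0} further identifies it with the curvature of the time-affine $1$-jet ${^1}\upmu_z^\varepsilon = \updelta_z + \varepsilon\Delta\updelta_z$, which is itself well-defined by Theorem~\ref{eq:wd-affine} since $\Delta\updelta_z$ is a zero-mass signed measure of finite first moment (by the same Cauchy-Schwarz bound).
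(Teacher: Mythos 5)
Your proof is correct and follows essentially the same route as the paper, which simply cites \cite[Theorem 2]{HKMW} for essential self-adjointness under metric completeness of the intrinsic path metric and then applies Theorem~\ref{thm:pls-walks}; you have usefully unfolded the paper's one-line ``follows immediately'' by carrying out the rescaling to an intrinsic metric, verifying the good-operator and rough-comparison hypotheses via Cauchy--Schwarz, and flagging (correctly) that the rough-comparison constant is only pointwise in $z$ here but that this suffices because the proof of Theorem~\ref{thm:pls-walk-0} only invokes those estimates at the two fixed endpoints $x$ and $y$.
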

\begin{proof}[\footnotesize \textbf{Proof}]
	This follows immediately from \cite[Theorem 2]{HKMW} and our Theorem~\ref{thm:pls-walks}. 
\end{proof}
\begin{corollary}
Suppose $\dist = \dist_\eta$ is complete and 
	\begin{enumerate}
		\item  $\m \in \ell^1(G)$;
		\item  $\m(z)^{-1}\sum_{w\sim z} \omega_{zw} \eta_{zw}^2$ is uniformly bounded. 
	\end{enumerate}
then, the conclusion of the last two theorems hold. 
\end{corollary}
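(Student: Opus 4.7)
The plan is to reduce this corollary to Theorem~\ref{thm:lap-wd-2} by showing that the hypotheses stated here imply the corresponding hypotheses there. Hypothesis (1) is literally the same in both statements, so nothing needs to be done. For hypothesis (2), the key observation is the basic comparison
\[
\dist_\eta(z,w) \le \eta_{zw}, \quad \text{whenever } z\sim w,
\]
which follows immediately from the definition \eqref{eq:dist} of $\dist_\eta$, since the single edge $(z,w)$ itself provides an admissible path of length $\eta_{zw}$.

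Squaring this inequality and summing against the edge weights $\omega_{zw}$ (which are nonnegative), I get
\[
\m(z)^{-1}\sum_{w\sim z} \omega_{zw} \dist_\eta^2(z,w) \le \m(z)^{-1}\sum_{w\sim z} \omega_{zw} \eta_{zw}^2,
\]
for each vertex $z$. Hence the uniform boundedness postulated in hypothesis (2) of the corollary transfers directly to uniform boundedness of $\m(z)^{-1}\sum_{w\sim z} \omega_{zw} \dist_\eta^2(z,w)$, which is exactly hypothesis (2) of Theorem~\ref{thm:lap-wd-2}.

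With both hypotheses of Theorem~\ref{thm:lap-wd-2} verified, its conclusion applies verbatim: $\olric$ is well-defined for the Markovian walk $e^{\varepsilon\Delta}\updelta_z$ and coincides with ${}^{1}\olric$ for the affine $1$-jet walk. I do not expect any genuine obstacle here; the whole argument is essentially a one-line monotonicity observation, so the proof is a short reduction rather than a new construction. The only minor point to keep in mind is that completeness of $(\G,\dist_\eta)$ is already assumed in the corollary's hypothesis, matching the standing assumption of Theorem~\ref{thm:lap-wd-2}.
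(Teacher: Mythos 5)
Your proof is correct and uses exactly the same key observation as the paper, namely the one-line monotonicity $\dist_\eta(z,w)\le \eta_{zw}$ for adjacent $z\sim w$, which transfers uniform boundedness in $\eta_{zw}^2$ to uniform boundedness in $\dist_\eta^2(z,w)$ and thus verifies hypothesis (2) of Theorem~\ref{thm:lap-wd-2}. One small remark: the corollary claims that the conclusion of the \emph{last two} theorems hold, and the paper's proof explicitly also disposes of the third hypothesis of the earlier (three-hypothesis) theorem by noting that completeness of $\dist_\eta$ forces all metric balls to be finite. Your reduction goes only through Theorem~\ref{thm:lap-wd-2}, but since both theorems have literally the same conclusion this is logically sufficient; it would still be cleaner to say so in a sentence so the reader sees why nothing more is needed for the first theorem.
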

\begin{proof}[\footnotesize \textbf{Proof}]
	Item (2) here implies the item (2) in Theorem~\ref{thm:lap-wd-2}.  Also since $\dist_\eta$ is assumed to give rise to a metrically complete space, all metric balls are finite hence, (3) in  Theorem~\ref{thm:lap-wd-2} is automatically satisfied. 
\end{proof}
\begin{remark}\phantom{}\hspace{-1pt}\textit{The results in this section basically indicate that when the higher (than 1) order terms in a walk comprise a well-behaved zero mass signed measure, then they will not affect the $\olric$ that is given by a first derivative; namely, if the non locality is of second order in  $\varepsilon$, then the Ollivier-Ricci curvature exits. 
	 This seems very intuitive yet as we saw, this fact is very far from being trivial.}  \phantom{}
\end{remark}
\subsubsection{\small \bf  \textsf{Limit-free formulation}}\label{sec:lff}
\par A very useful limit-free formulation of discrete Ollivier-Ricci curvature and for $\upbeta$-walks was first observed in~\cite{MW}. Here, we first show a limit-free formulation in a more general framework of time-analytic local walks. This will lead to a limit-free formulation for pleasant and Markovian walks whose $1$-jets are local walks. 
\par We start with a continuous-time walk which is not necessarily Markovian.  Let $\upmu_z^\varepsilon$ be a local walk that is $\mathcal{C}^1$ in $\varepsilon$.
Define the (non-log-linear in $\varepsilon$) operators $\Uptheta^\varepsilon: \R^\mathcal{\G} \to \R^{\mathcal \G}$ via
\begin{align*}
	\Uptheta^\varepsilon(f)(z) &:= \int_{\Upomega_{z}}  f(w)  d\upmu_z^\varepsilon(w)  
	\\&= f(z) + \int_{\Upomega_{z}} \Big(f(w) - f(z)\Big)  d\upmu_z^\varepsilon(w)  
	\\&= f(z) + \sum_{\Upomega_{z}\smallsetminus\{z\}}  \left( f(w) - f(z) \right)  \upmu_z^\varepsilon(w) 
	\\&= \left( \mathbb{I} + \Psi^\varepsilon \right) f(z),
\end{align*}
where
\[
\Psi^\varepsilon f (z) := \sum_{\Upomega_{z}\smallsetminus\{z\}}  \Big( f(w) - f(z) \Big)  \upmu_z^\varepsilon(w);
\]
the above operators are well-defined by the finite-ness of $\Upomega_z$. $\Uptheta^\varepsilon$ is called the (non-linear) reverse generator for the random walk $\upmu_z^\varepsilon$. Notice if $\upmu_z^\varepsilon(w)$ is symmetric in $z$ and $w$, then $\Uptheta^\varepsilon$ is the (non-linear) generator of the random walk. 
\par As $\varepsilon$ varies $\Uptheta^\varepsilon$ produces a non-log-linear process on $\R^\G\to \R^\G$. The \emph{initial velocity} of this process is given by
\begin{align*}
\mathcal{L}f(z) &:= \lim_{\varepsilon \downarrow 0} \nicefrac{1}{\varepsilon} \left(  \Uptheta^\varepsilon -  \mathbb{I} \right)  f(z)
\\&= \lim_{\varepsilon \downarrow 0} \nicefrac{1}{\varepsilon} \sum_{\Upomega_z\smallsetminus\{z\}}  \left( f(w) - f(z) \right)  \upmu_z^\varepsilon(w) 
\\&= \nicefrac{d}{d\varepsilon}\restr_{\varepsilon=0} \Psi^\varepsilon f(x)
\\&= \sum_{\Upomega_z\smallsetminus\{z\}}  \left( f(w) - f(z) \right)  \nicefrac{d}{d\varepsilon} \restr_{\varepsilon=0} \upmu_z^\varepsilon(w) ;
\end{align*}
\emph{notice we did not call this the infinitesimal generator as $\Uptheta^\varepsilon$ is not necessarily a semigroup (hence, the term non-linear). } Due to the form it takes, one can think of $\mathcal{L}$ as a generalized Laplacian. 
\begin{theorem}[limit-free formulation]\label{thm:lff}
	Suppose $\upmu_z^\varepsilon$ is a time-analytic local walk. Then,
\begin{align}\label{eq:il-key-0}
\olric(x,y) =  \inf_{\substack{f: \mathcal{K}_{xy} \to \R \\ f \in \Lip(1)\\ \nabla_{xy} f = 1 }} \nabla_{yx}  \mathcal{L}(f);
\end{align}
where the finite set $\mathcal{K}_{xy}$ as in Definition~\ref{defn:loc-exc}. 
\end{theorem}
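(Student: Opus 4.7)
The strategy is to combine the localized Kantorovich dual formulation of Theorem~\ref{thm:loc-dual} with the piecewise-analytic structure of $\Was_1(\upmu_x^\varepsilon,\upmu_y^\varepsilon)$ established in Theorem~\ref{thm:PL}. Using symmetry of the Wasserstein distance under swapping the two marginals, we first rewrite the localized dual as
\[
\Was_1\bigl(\upmu_x^\varepsilon,\upmu_y^\varepsilon\bigr)=\sup_{\substack{f\colon\mathcal{K}_{xy}\to\R\\ f\in\Lip(1)}}\sum_{z} f(z)\bigl(\upmu_y^\varepsilon(z)-\upmu_x^\varepsilon(z)\bigr),
\]
which is a finite LP whose feasible set (after the usual translation normalization) is a bounded convex polytope independent of $\varepsilon$. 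At $\varepsilon=0$ the value is $\dist(x,y)$ and the argmax is exactly the set of $1$-Lipschitz functions on $\mathcal{K}_{xy}$ with $\nabla_{xy}f=1$.

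The next step is a first-order expansion in $\varepsilon$. By time-analyticity, $\upmu_z^\varepsilon(w)=\updelta_z(w)+\varepsilon\,\p'_{zw}(0)+O(\varepsilon^2)$; the mass-preservation identity $\sum_w\p'_{zw}(0)=0$ lets us rewrite the reverse generator as $\mathcal{L}f(z)=\sum_w f(w)\,\p'_{zw}(0)$, so a direct computation yields
\[
\sum_{z} f(z)\bigl(\upmu_y^\varepsilon(z)-\upmu_x^\varepsilon(z)\bigr)=\bigl(f(y)-f(x)\bigr)+\varepsilon\bigl(\mathcal{L}f(y)-\mathcal{L}f(x)\bigr)+O(\varepsilon^2),
\]
with a remainder that is uniform in $f$ over the bounded feasible set.

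For the easy direction $\olric(x,y)\le \inf\nabla_{yx}\mathcal{L}f$, plug any admissible $f$ into the dual; the first-order expansion gives
\[
\Was_1\bigl(\upmu_x^\varepsilon,\upmu_y^\varepsilon\bigr)\ge\dist(x,y)\bigl(1-\varepsilon\,\nabla_{yx}\mathcal{L}f\bigr)+O(\varepsilon^2),
\]
so dividing $\olric_\varepsilon/\varepsilon$ and letting $\varepsilon\downarrow 0$ produces $\olric(x,y)\le\nabla_{yx}\mathcal{L}f$. Taking the infimum over all admissible $f$ closes this half.

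The matching lower bound is the delicate step and amounts to exhibiting a single admissible $f^*$ achieving equality. Here I would invoke the LP sensitivity machinery of Section~\ref{sec:sens-anal} underpinning Theorem~\ref{thm:PL}: by Lemma~\ref{lem:finite-switch}, the switching-time set is discrete, so on some half-open interval $(0,\updelta)$ the optimal value of the LP coincides with a \emph{single} analytic function $h(\varepsilon)=\sum f^*(z)\bigl(\upmu_y^\varepsilon(z)-\upmu_x^\varepsilon(z)\bigr)$ for a fixed vertex $f^*$ of the transportation polytope. Continuity of the optimal value at $\varepsilon=0$ forces $f^*(y)-f^*(x)=\dist(x,y)$, i.e.\ $f^*$ is admissible. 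Differentiating $h$ at $0$ via the expansion above and inserting into the definition of continuous-time $\olric$ yields $\olric(x,y)=\nabla_{yx}\mathcal{L}f^*$, which is at least the infimum and closes the proof. I expect the main technical hurdle to be precisely this envelope-type argument --- namely the constancy of the optimal vertex on a right-neighborhood of $0$, together with the care needed to select $f^*$ and identify it as admissible in the formula on the right-hand side.
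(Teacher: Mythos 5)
Your proof is correct and, for the easy half, follows the same route as the paper: localize the Kantorovich dual to $\mathcal{K}_{xy}$, expand the objective to first order in $\varepsilon$, plug in an arbitrary admissible $f$, and pass to the limit to obtain $\olric(x,y)\le\inf\nabla_{yx}\mathcal{L}f$. The interesting divergence is in the hard half. The paper's argument there is a compactness/subsequence extraction: take near-optimizers $f_\varepsilon$ for the Kantorovich dual, normalize $f_\varepsilon(x)=0$, extract a convergent subsequence $f_{\varepsilon_k}\to f_0$, argue that $\nabla_{xy}f_{\varepsilon_k}\to1$ (else $\olric_\varepsilon/\varepsilon$ would blow up), conclude $f_0$ is admissible, and then use $f_0$ to close the chain of inequalities. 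You instead invoke the LP sensitivity machinery already developed (Lemma~\ref{lem:finite-switch} plus the analytic-continuation extension of Theorem~\ref{thm:anal-ext}, which is exactly where time-analyticity beyond $\varepsilon=0$ is used): since the switching-time set is finite in $[0,1]$, a single vertex $f^*$ of the dual feasible polytope is optimal on some $(0,\updelta)$, and continuity of $\Was_1$ at $\varepsilon=0$ forces $\nabla_{xy}f^*=1$, so differentiating the corresponding affine-in-measure function $h(\varepsilon)$ gives $\olric=\nabla_{yx}\mathcal{L}f^*$ directly, with the minimizer exhibited explicitly. This is a cleaner and more structural argument than the paper's, and it also makes attainment of the infimum (which the paper states separately) immediate. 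Two small remarks: the polytope you are taking a vertex of is the feasible set of the \emph{dual} LP (the space of normalized $1$-Lipschitz potentials on $\mathcal{K}_{xy}$), not the transportation polytope of couplings, so the name ``transportation polytope'' is a slip; and the claim that the first-order remainder is uniform in $f$ over the feasible set, while true and harmless, is not actually needed for the easy direction since one fixes $f$ before taking $\varepsilon\downarrow0$.
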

\begin{proof}[\footnotesize \textbf{Proof}]
	We essentially follow and adapt the proof of limit-free formulation in~\cite{MW}. 
\par By Kantorovich duality, one has
\begin{align*}
	\Was_1(\upmu_x^\varepsilon, \upmu_y^\varepsilon) = \sup_{\substack{f: \mathcal{K}_{xy} \to \R \\ f \in \Lip(1)}}  \sum f(z) \left( \upmu_x^\varepsilon(z) - \upmu_y^\varepsilon(z)  \right);
\end{align*}
hence,
\begin{align*}
	\Was_1(\upmu_x^\varepsilon, \upmu_y^\varepsilon) &= \sup_{\substack{f: \mathcal{K}_{xy} \to \R \\ f \in \Lip(1)}}  \left( f(x)  - f(y) \right)  + \Big(  \Psi^\varepsilon(f)(x) - \Psi^\varepsilon(f)(y) \Big) 
	\\&= \dist(x,y)  \sup_{\substack{f: \mathcal{K}_{xy} \to \R \\ f \in \Lip(1)}}  \nabla_{yx} f + \nabla_{yx} \Psi^\varepsilon(f);\notag
\end{align*}
and as a result,
\begin{align*}
	\olric_\varepsilon = 	1 - \nicefrac{\Was_1(\upmu_x^\varepsilon, \upmu_y^\varepsilon) }{\dist(x,y)} = \inf_{\substack{f: \mathcal{K}_{xy} \to \R \\ f \in \Lip(1)}}   (1 - \nabla_{xy} f ) + \nabla_{yx}  \Psi^\varepsilon(f).
\end{align*}
\par Adding more constraints, we deduce
\begin{align}\label{eq:il-key-1}
\lim_{\varepsilon \downarrow 0}	\nicefrac{1}{\varepsilon}	\olric_\varepsilon &=
	\lim_{\varepsilon \downarrow 0} \nicefrac{1}{\varepsilon}	\Big( 1 - \nicefrac{\Was_1(\upmu_x^\varepsilon, \upmu_y^\varepsilon) }{\dist(x,y)}  \Big)
\notag	\\&=
	\lim_{\varepsilon \downarrow 0}  \inf_{\substack{f: \mathcal{K}_{xy} \to \R \\ f \in \Lip(1)}}  \nicefrac{1}{\varepsilon} (1 - \nabla_{xy} f ) +  \nicefrac{1}{\varepsilon} \nabla_{yx}  \Psi^\varepsilon(f) \notag \\
	&\le \liminf_{\varepsilon \downarrow 0} \inf_{\substack{f: \mathcal{K}_{xy} \to \R \\ f \in \Lip(1)\\ \nabla_{xy} f = 1 }}  \nicefrac{1}{\varepsilon} \nabla_{yx}  \Psi^\varepsilon(f)
	\\& \le  \inf_{\substack{f: \mathcal{K}_{xy} \to \R \\ f \in \Lip(1)\\ \nabla_{xy} f = 1 }} \limsup_{\varepsilon \downarrow 0}  \nicefrac{1}{\varepsilon} \nabla_{yx}  \Psi^\varepsilon(f) \notag
	\\&=  \inf_{\substack{f: \mathcal{K}_{xy} \to \R \\ f \in \Lip(1)\\ \nabla_{xy} f = 1 }}  \nabla_{yx} \mathcal{L}(f) \notag.
\end{align}
notice all the $\lim_{\varepsilon \downarrow 0}$ exists. 
\par Suppose $f_\varepsilon$ is a minimizing sequence for  \eqref{eq:il-key-0}. The problem is invariant under translation of $f$ so we assume $f_\varepsilon(x) = 0$. So there exists a subsequence $\varepsilon_k$ such that $f_{\varepsilon_k}\upmu^{\varepsilon_k}$ converges to $f_0$ and we must have 
\[
\lim_{k \to \infty}(1 - \nabla_{xy} f_{\varepsilon_k} ) = 0,
\]
otherwise the \eqref{eq:il-key-1} would blow-up in the limit. $f_0$ is $1$-Lipschitz and $\nabla_{yx} f_0 = 1$. So using $f_0$ as a test function, we deduce
\begin{align*}
	\nicefrac{1}{\varepsilon}	\olric_\varepsilon &\ge  \inf_{\substack{f: \mathcal{K}_{xy} \to \R \\ f \in \Lip(1)\\ \nabla_{xy} f = 1 }}  \nicefrac{1}{\varepsilon} \nabla_{yx}  \Psi^\varepsilon\left(f \right).
\end{align*}
Therefore,
\begin{align}\label{eq:lff-2}
\lim_{\varepsilon \downarrow 0}		\nicefrac{1}{\varepsilon}	\olric_\varepsilon &\ge \lim_{\varepsilon \downarrow 0}  \inf_{\substack{f: \mathcal{K}_{xy} \to \R \\ f \in \Lip(1)\\ \nabla_{xy} f = 1 }}  \nicefrac{1}{\varepsilon} \nabla_{yx}  \Psi^\varepsilon(f) \notag
	\\&\ge  \inf_{\substack{f: \mathcal{K}_{xy} \to \R \\ f \in \Lip(1)\\ \nabla_{xy} f = 1 }} \liminf_{\varepsilon \downarrow 0} \nicefrac{1}{\varepsilon} \nabla_{yx}  \Psi^\varepsilon(f)
	\\&=  \inf_{\substack{f: \mathcal{K}_{xy} \to \R \\ f \in \Lip(1)\\ \nabla_{xy} f = 1 }} \nabla_{yx}  \mathcal{L}(f).\notag
\end{align}
The conclusion follows from \eqref{eq:il-key-1} and \eqref{eq:lff-2}. 
\end{proof}
\begin{remark}\phantom{}\hspace{-1pt}\textit{The proof of Theorem~\ref{thm:lff} only uses time-analyticity to ensure the existence of $\olric$ while the rest of the proof can be carried out verbatim for a local walk that is $\mathcal{C}^1$ in $\varepsilon$. }\phantom{}
\end{remark}
\begin{corollary}
	Suppose $\upmu_z^\varepsilon$ is a local walk with the properties
	\begin{enumerate}
		\item  $\upmu_z^\varepsilon$ is $\mathcal{C}^1$ in $\varepsilon$;
		\item $\olric(x,y)$ exists;
	\end{enumerate}
	then, the limit-free formulation \eqref{eq:il-key-0} holds true. 
\end{corollary}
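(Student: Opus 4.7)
The plan is to follow the proof of Theorem~\ref{thm:lff} essentially verbatim, carefully identifying the one place where time-analyticity actually intervenes and replacing it by the hypotheses (1)--(2). Indeed, in the proof of Theorem~\ref{thm:lff}, time-analyticity is only used (via Theorem~\ref{thm:PL}) to ensure that $\olric(x,y)$ is well-defined as a genuine limit; once this existence is granted, the rest of the argument proceeds algebraically using Kantorovich duality and the operator $\mathcal{L}$. Hypothesis (2) supplies the missing existence for free, and hypothesis (1) is exactly what is needed to make sense of $\mathcal{L} f$ and the asymptotic expansion of $\Psi^\varepsilon$.

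Concretely, I would first observe that under $\mathcal{C}^1$-regularity of $\varepsilon\mapsto \upmu_z^\varepsilon$, the generalized Laplacian $\mathcal{L} f(z)$ is pointwise well-defined on any $f:\mathcal{K}_{xy}\to \R$ (a finite sum of $\mathcal{C}^1$ terms), and
\[
\Psi^\varepsilon(f)(z) \;=\; \varepsilon\,\mathcal{L} f(z) + o(\varepsilon) \qquad (\varepsilon\downarrow 0)
\]
uniformly for $z\in\mathcal{K}_{xy}$. Then I would repeat the Kantorovich-duality manipulation from the proof of Theorem~\ref{thm:lff} to obtain, for every $\varepsilon>0$,
\[
\nicefrac{1}{\varepsilon}\,\olric_\varepsilon(x,y) \;=\; \inf_{\substack{f:\mathcal{K}_{xy}\to \R \\ f\in\Lip(1)}} \nicefrac{1}{\varepsilon}(1-\nabla_{xy}f) + \nicefrac{1}{\varepsilon}\nabla_{yx}\Psi^\varepsilon(f).
\]
Hypothesis (2) guarantees the left-hand side converges to $\olric(x,y)$ as $\varepsilon \downarrow 0$, so both upper and lower limiting arguments of Theorem~\ref{thm:lff} produce genuine limits rather than $\limsup$/$\liminf$ bounds.

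For the upper bound I would restrict the infimum to the subclass $\{\nabla_{xy}f = 1\}$ (which only enlarges the infimum), thereby killing the first term, and interchange $\liminf$ with $\inf$; combined with the expansion of $\Psi^\varepsilon$ above, this gives $\olric(x,y)\le \inf_{\nabla_{xy}f=1} \nabla_{yx}\mathcal{L}f$. For the matching lower bound I would take a minimizing sequence $f_\varepsilon$ normalized by $f_\varepsilon(x)=0$; since $\mathcal{K}_{xy}$ is \emph{finite} and each $f_\varepsilon$ is $1$-Lipschitz, Arzela--Ascoli (equivalently, Bolzano--Weierstrass in $\R^{\mathcal{N}_{xy}}$) yields a subsequence $f_{\varepsilon_k}\to f_0$ with $f_0\in \Lip(1)$. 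Because $\olric(x,y)$ is finite by (2), the term $\nicefrac{1}{\varepsilon}(1-\nabla_{xy}f_{\varepsilon_k})$ cannot blow up, which forces $\nabla_{xy}f_0=1$. Using $f_0$ as a test function and invoking the expansion $\Psi^\varepsilon(f_0) = \varepsilon\,\mathcal{L}f_0+o(\varepsilon)$ produces the reverse inequality $\olric(x,y) \ge \inf_{\nabla_{xy}f=1}\nabla_{yx}\mathcal{L}f$. The only potential subtlety --- hardly an obstacle --- is to verify that $\mathcal{C}^1$ (rather than analytic) regularity suffices to pass from $\nicefrac{1}{\varepsilon}\nabla_{yx}\Psi^\varepsilon(f_0)$ to $\nabla_{yx}\mathcal{L}(f_0)$, and this is immediate from the finite-sum definition of $\Psi^\varepsilon$ together with the very definition of $\mathcal{L}$.
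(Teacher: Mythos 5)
Your proposal is correct and follows essentially the same route as the paper. The paper's preceding remark explicitly states that time-analyticity in Theorem~\ref{thm:lff} is used only to guarantee existence of $\olric$, and that the rest of the argument goes through verbatim for a $\mathcal{C}^1$ local walk; you correctly identified this, supplied the existence from hypothesis (2), and observed that $\mathcal{C}^1$-regularity is exactly what is needed for the expansion $\Psi^\varepsilon(f)=\varepsilon\,\mathcal{L}f+o(\varepsilon)$ and for the compactness/minimizing-sequence argument on the finite set $\mathcal{K}_{xy}$.
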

\begin{example}\phantom\hspace{-2pt}For local $\uptheta$-walks, we deuce
\begin{align*}
  \olric(x,y) =  \nicelambda_3'(0)  \inf_{\substack{f: \mathcal{K}_{xy} \to \R \\ f \in \Lip(1)\\ \nabla_{xy} f = 1 }} \nabla_{yx}  \mathscr{L}(f),
\end{align*}
where
\[
\mathscr{L} f(z) := \sum_{w \in \mathcal{K}_{z} \smallsetminus \{z\}} \left(  f(w) - f(z)  \right) \varphi_2(z,w).
\]
\par As a result for $\upbeta$-walks, we get
\begin{align*}
	\olric(x,y) =  \inf_{\substack{f: \mathcal{K}_{xy} \to \R \\ f \in \Lip(1)\\ \nabla_{xy} f = 1 }} \nabla_{yx}  \Delta(f),
\end{align*}
which is the limit free formulation of~\cite{MW} and \cite{JM}. 
\par For $\upzeta$-walks, we get
\begin{align*}
	\olric(x,y) =  \inf_{\substack{f: \mathcal{K}_{xy} \to \R \\ f \in \Lip(1)\\ \nabla_{xy} f = 1 }} \nabla_{yx}  \Delta_{\sf n}(f);
\end{align*}
and for $\upxi$-walks we get
\begin{align*}
	\olric(x,y) =  \text{-} \nicefrac{1}{C}\inf_{\substack{f: \mathcal{K}_{xy} \to \R \\ f \in \Lip(1)\\ \nabla_{xy} f = 1 }} \nabla_{yx}  \Delta_{\upxi}(f),
\end{align*}
where $\Delta_{\upxi}$ is the Laplacian determined by $m=1$ and $\omega_{xy} = e^{-\dist(x,y)^p}$.
\end{example}
\begin{corollary}\label{cor:lff-pls-walk}
Suppose
\[
\upmu_z^\varepsilon = \updelta_z + \varepsilon \upmu_z + t^2\mathcal{R}_z,
\]
is a pleasant walk and $\upmu_z$ is of bounded support. then by Theorem~\ref{thm:lff}, we deduce
\begin{align*}
	\olric(x,y) =  \inf_{\substack{f: \mathcal{K}_{xy} \to \R \\ f \in \Lip(1)\\ \nabla_{xy} f = 1 }} \nabla_{yx}  \mathcal{L}(f),
\end{align*}
where $ \K_{xy} := \Upomega_x \cup \Upomega_y$ and
\begin{align*}
	\mathcal{L}f(z) := \sum_{z \in \Upomega_{z}\smallsetminus\{z\}}  \left( f(w) - f(z) \right)  \upmu_z(w).
\end{align*}
\end{corollary}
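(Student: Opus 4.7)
The plan is to reduce the statement to the already-established limit-free formulation of Theorem~\ref{thm:lff} applied not to the pleasant walk itself but to its $1$-jet. Concretely, I would first invoke the identity
\[
\olric(x,y) = {^1}\olric(x,y),
\]
proved in the course of Theorem~\ref{thm:pls-walk-0}, where ${^1}\upmu_z^\varepsilon := \updelta_z + \varepsilon\upmu_z$ is the $1$-jet of the pleasant walk $\upmu_z^\varepsilon$. This collapses the problem to one involving only the time-affine walk ${^1}\upmu_z^\varepsilon$, and so disposes of the higher-order remainder $\varepsilon^2 \mathcal{R}_z$ once and for all.

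Next, I would verify the hypotheses of Theorem~\ref{thm:lff} for ${^1}\upmu_z^\varepsilon$. Time-affine implies time-analytic, so only the \emph{local-walk} property requires attention; this is where the additional hypothesis that $\upmu_z$ has bounded support is used. Specifically, for each $z$ the set $\Upomega_z := \{z\}\cup \mathrm{supp}(\upmu_z)$ is finite and independent of $\varepsilon$, and plainly contains $\mathrm{supp}\left( {^1}\upmu_z^\varepsilon\right)$; hence ${^1}\upmu_z^\varepsilon$ is a local walk with $\mathcal{K}_{xy} = \Upomega_x \cup \Upomega_y$ as in the statement. Applying Theorem~\ref{thm:lff} to ${^1}\upmu_z^\varepsilon$ then yields
\[
{^1}\olric(x,y) =  \inf_{\substack{f: \mathcal{K}_{xy} \to \R \\ f \in \Lip(1)\\ \nabla_{xy} f = 1 }} \nabla_{yx}\, \tilde{\mathcal{L}}(f),
\]
where $\tilde{\mathcal{L}}$ is the initial velocity of the reverse generator of ${^1}\upmu_z^\varepsilon$.

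The third and final step is to identify $\tilde{\mathcal{L}}$ with the operator $\mathcal{L}$ stated in the corollary. By direct differentiation, for $w\ne z$ one has $\nicefrac{d}{d\varepsilon}\restr_{\varepsilon=0}\, {^1}\upmu_z^\varepsilon(w) = \upmu_z(w)$, so the general formula
\[
\tilde{\mathcal{L}}f(z) = \sum_{w \in \Upomega_z\smallsetminus\{z\}} \bigl( f(w)-f(z)\bigr)\, \nicefrac{d}{d\varepsilon}\restr_{\varepsilon=0}\, {^1}\upmu_z^\varepsilon(w),
\]
derived just before Theorem~\ref{thm:lff} collapses exactly to the $\mathcal{L}f(z)$ displayed in the statement. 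Combining these three steps completes the proof.

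The only non-routine point I would flag is ensuring that the $1$-jet ${^1}\upmu_z^\varepsilon$ actually qualifies as a random walk on a non-degenerate interval of $\varepsilon$'s containing $0$; this is essentially automatic from item (1) of Definition~\ref{defn:pls-walk} together with boundedness of the support of $\upmu_z$, but it is the hypothesis that makes Theorem~\ref{thm:lff} legitimately applicable. Given the remark following Theorem~\ref{thm:lff} that only the germ at $\varepsilon=0$ is needed, no further effort is required.
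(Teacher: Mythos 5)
Your argument matches the paper's proof exactly: both reduce to the $1$-jet walk ${^1}\upmu_z^\varepsilon = \updelta_z + \varepsilon\upmu_z$ via the identity $\olric = {^1}\olric$ established in the proof of Theorem~\ref{thm:pls-walk-0}, and then apply Theorem~\ref{thm:lff} to that time-affine local walk. You simply fill in the details that the paper's two-line proof leaves implicit, namely the verification of the local-walk hypothesis from the bounded-support assumption on $\upmu_z$ and the explicit identification of the initial-velocity operator with the stated $\mathcal{L}$.
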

\begin{proof}[\footnotesize \textbf{Proof}]
Based on the proof of Theorem~\ref{thm:pls-walk-0}, we only need to work with the $1$-jet walk
\[
{^1}\upmu_z^\varepsilon = \updelta_z + \varepsilon \upmu_z.
\]
By Theorem~\ref{thm:lff}, the conclusion follows. 
\end{proof}
\begin{corollary}\label{cor:lff-mark-walk-1}
	Let $e^{\varepsilon \mathcal{L}} \updelta_z$ where $\mathcal{L}$ be a Markovian walk where $\mathcal{L}$ is a good operator and either
	\begin{enumerate}
		\item   $\mathcal{L}$ is a semi-local operator with range $R$;
	\item $e^{\varepsilon\mathcal{L}}$ and $\text{-} \mathcal{L}$ satisfy rough comparison principles with range $0$;
	\item  $\left| \mathcal{L} \dist(z,\cdot)(z) \right| \le C_3(z)$. 
\end{enumerate}
	then the limit-free formulation holds i.e.
	\begin{align*}
		\olric(x,y) =  \inf_{\substack{f: \mathcal{K}_{xy} \to \R \\ f \in \Lip(1)\\ \nabla_{xy} f = 1 }} \nabla_{yx}  \mathcal{L}(f),
	\end{align*}
in which $\K_{xy} := \B_{R+1}(x) \cup \B_{R+1}(y)$. 
\end{corollary}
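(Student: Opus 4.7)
The plan is to realize the Markovian walk $e^{\varepsilon\mathcal{L}}\delta_z$ as a pleasant walk whose $1$-jet is a \emph{local} walk, and then invoke Corollary \ref{cor:lff-pls-walk} essentially verbatim. First, under either set of hypotheses (case (2)--(3) giving Theorem~\ref{thm:pls-walks}, or case (1) together with the supplementary properties giving Corollary~\ref{cor:mark-walk-2}), the Taylor expansion
\[
e^{\varepsilon\mathcal{L}}\delta_z \;=\; \delta_z \;+\; \varepsilon\,\mathcal{L}\delta_z \;+\; \varepsilon^2\,\mathcal{J}^{\varepsilon}\delta_z,\qquad \mathcal{J}^{\varepsilon}\,=\,\sum_{k\ge 2}\nicefrac{\varepsilon^{k-2}}{k!}\,\mathcal{L}^{k},
\]
was already checked to satisfy Definition~\ref{defn:pls-walk} in the proof of Theorem~\ref{thm:pls-walks}. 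Hence the walk is pleasant, with $1$-jet ${^1}\upmu_z^{\varepsilon}=\delta_z+\varepsilon\,\mathcal{L}\delta_z$ and zero-mass signed measure $\upmu_z:=\mathcal{L}\delta_z$ (zero-mass by the weak divergence form of $\mathcal{L}$ that is baked into ``good operator'').

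Next I would verify that the $1$-jet is a local walk, which is exactly where semi-locality (item (1)) enters. By semi-locality with range $R$, $\mathcal{L}\delta_z(w)=0$ whenever $\delta_z$ vanishes on $\B_R(w)$, i.e.\ whenever $w\notin \B_R(z)$; equivalently,
\[
\mathrm{supp}\!\left(\mathcal{L}\delta_z\right)\,\subset\,\B_R(z)\,\subset\,\B_{R+1}(z),
\]
and this is a finite set because semi-locality is only defined on locally $\dist$-finite graphs. Therefore $\Upomega_z\subset\B_{R+1}(z)$ for every $z$, so $\K_{xy}=\Upomega_x\cup\Upomega_y\subset\B_{R+1}(x)\cup\B_{R+1}(y)$ as stated, and the hypotheses of Corollary~\ref{cor:lff-pls-walk} are met.

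Finally, applying that corollary yields
\[
\olric(x,y)\;=\;\inf_{\substack{f:\K_{xy}\to\R\\ f\in\Lip(1)\\ \nabla_{xy}f=1}}\nabla_{yx}\,\widetilde{\mathcal{L}}f, \qquad \widetilde{\mathcal{L}}f(z)\,:=\,\sum_{w\in\Upomega_z\smallsetminus\{z\}}(f(w)-f(z))\,\mathcal{L}\delta_z(w),
\]
and the remaining point --- and the step I expect to be the main obstacle --- is to identify $\widetilde{\mathcal{L}}$ with the original operator $\mathcal{L}$ on functions supported in $\K_{xy}$. The idea is to expand $f=\sum_{w}f(w)\delta_w$ and use linearity to write $\mathcal{L}f(z)=\sum_w f(w)\mathcal{L}\delta_w(z)$, then exploit the self-adjoint property of $\mathcal{L}$ (which symmetrizes $\mathcal{L}\delta_z(w)$ and $\mathcal{L}\delta_w(z)$, up to the $\m$-weight that has to be tracked carefully) together with zero total mass of $\upmu_z$ (which absorbs the constant $f(z)$ term). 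The delicate part is bookkeeping the vertex measure $\m$ so that the reweighting cancels correctly; once that is done one obtains $\widetilde{\mathcal{L}}f(z)=\mathcal{L}f(z)$ on $\K_{xy}$, which completes the identification and hence the corollary.
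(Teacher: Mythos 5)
Your overall strategy matches the paper's: Theorem~\ref{thm:pls-walks} shows $e^{\varepsilon\mathcal{L}}\updelta_z$ is a pleasant walk, semi-locality with range $R$ makes the $1$-jet ${^1}\upmu_z^\varepsilon = \updelta_z + \varepsilon\mathcal{L}\updelta_z$ a local walk supported in $\B_{R+1}(z)$, and Corollary~\ref{cor:lff-pls-walk} then gives the limit-free formula. The paper's own proof is exactly this two-sentence reduction. You have correctly spotted that the paper is actually silent about a remaining identification: Corollary~\ref{cor:lff-pls-walk} states the formula with $\mathcal{L}f(z):=\sum_{w\in\Upomega_z\smallsetminus\{z\}}(f(w)-f(z))\upmu_z(w)$ defined from the $1$-jet, and one has to see that this agrees with the generator $\mathcal{L}$ of the semigroup. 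That is a legitimate thing to check.

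However, the route you propose — self-adjointness of $\mathcal{L}$ plus zero total mass — does not close the gap. If you read $\upmu_z(w)=\mathcal{L}\updelta_z(w)$ literally from $\upmu_z^\varepsilon = e^{\varepsilon\mathcal{L}}\updelta_z$ and then symmetrize with $\m$-self-adjointness, you get $\mathcal{L}\updelta_z(w)=\mathcal{L}\updelta_w(z)\,\nicefrac{\m(z)}{\m(w)}$, so
\[
\sum_{w\neq z}\bigl(f(w)-f(z)\bigr)\mathcal{L}\updelta_z(w) = \m(z)\,\mathcal{L}\!\left(\nicefrac{f}{\m}\right)(z),
\]
which is not $\mathcal{L}f(z)$ unless $\m$ is constant; the $\m$-weights do not cancel, they pile up. The correct reading is the probabilistic one: a Markovian walk has $\Uptheta^\varepsilon f(z)=\sum_w f(w)\upmu_z^\varepsilon(w)=e^{\varepsilon\mathcal{L}}f(z)$, which forces $\upmu_z^\varepsilon(w)=e^{\varepsilon\mathcal{L}}\updelta_w(z)$ (the paper's own explicit $\upbeta$-walk formula $\nicefrac{\varepsilon\omega_{zw}}{\m(z)}$ is consistent with this, not with $\Delta\updelta_z(w)=\nicefrac{\omega_{zw}}{\m(w)}$). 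Then $\upmu_z(w)=\mathcal{L}\updelta_w(z)$ and
\[
\sum_{w\neq z}\bigl(f(w)-f(z)\bigr)\mathcal{L}\updelta_w(z)
= \sum_{w}f(w)\mathcal{L}\updelta_w(z) - f(z)\sum_{w}\mathcal{L}\updelta_w(z)
= \mathcal{L}f(z) - f(z)\,\mathcal{L}1(z) = \mathcal{L}f(z),
\]
using only linearity, mass conservation of $\upmu_z^\varepsilon$, and the rough differential property $\mathcal{L}1=0$ — no self-adjointness needed. Equivalently, and more bluntly: the operator $\mathcal{L}$ in Theorem~\ref{thm:lff} is \emph{defined} as the initial velocity $\lim_{\varepsilon\downarrow 0}\nicefrac{1}{\varepsilon}(\Uptheta^\varepsilon-\mathbb{I})$, which for a Markovian walk $\Uptheta^\varepsilon=e^{\varepsilon\mathcal{L}}$ is the generator itself, so the identification is tautological once the convention is fixed.
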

\begin{proof}[\footnotesize \textbf{Proof}]
	Notice since $\mathcal{L}$ is semi-local with range $R$ then the support of the measure
	$
	\upmu_z := \mathcal{L}\updelta_z
	$
	is included in $\B_{R+1}(z)$. This means the $1$-jet walk is a local walk. The conclusion then follows from Corollary~\ref{cor:lff-pls-walk} and Theorem~\ref{thm:pls-walks}.
\end{proof}
\begin{corollary}\label{cor:lff-mark-walk-2}
		Let $e^{\varepsilon {\mathcal L}} \updelta_z$ where $\mathcal{L}$ be a Markovian walk where $\mathcal{L}$ is a good operator and either
			\begin{enumerate}
			\item $e^\mathcal{\varepsilon L}$ satisfy rough rough comparison principle with range $0$;
			\item   $\mathcal{L}$ is a semi-local operator with range $R$;
			\item $\mathcal{L}$ satisfy rough comparison principle on $\Lip_1(\G)$ with range $2R$;
			\item $\left| \mathcal{L} \dist(z,\cdot)(w) \right| \le C(z)$ holds $\forall w \in \B_{2R}(z)$.
		\end{enumerate}
then the limit-free formulation holds. 
\end{corollary}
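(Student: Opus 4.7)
The plan is to reduce Corollary~\ref{cor:lff-mark-walk-2} to the combination of Corollary~\ref{cor:mark-walk-2} (for existence of $\olric$) and Corollary~\ref{cor:lff-pls-walk} (for the limit-free formulation), mirroring the strategy used in the proof of the neighboring Corollary~\ref{cor:lff-mark-walk-1} but this time leveraging Proposition~\ref{prop:st-comp} to convert the semi-locality package (hypotheses (1)--(4)) into the single two-sided rough comparison needed to run the existence machinery.

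First, I would observe that hypotheses (2)--(4) together with $\mathcal{L}$ being a good operator are precisely the hypotheses of Proposition~\ref{prop:st-comp} (note that the semi-locality assumption built into the hypotheses forces uniform boundedness $|\B_{2R}(x)|\le C_2$, which is where the locally $\dist$-finite requirement of Corollary~\ref{cor:mark-walk-2} is verified). Proposition~\ref{prop:st-comp} then delivers a pointwise bound $|\mathcal{L}f(x)|\le C(C_1,C_2,C_3,\sup \m\restr_{\B_{2R}(x)})$ whenever $f\in\Lip(1)$ attains its global minimum at $x$. Combined with hypothesis (1), this matches exactly the hypotheses of Corollary~\ref{cor:mark-walk-2}, so $\olric(x,y)$ is well-defined for the Markovian walk $e^{\varepsilon\mathcal{L}}\updelta_z$, and moreover the walk is a pleasant walk (in the sense of Definition~\ref{defn:pls-walk}) with $1$-jet ${^1}\upmu_z^\varepsilon=\updelta_z+\varepsilon\mathcal{L}\updelta_z$.

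Next, I would verify that the $1$-jet walk is a \emph{local} walk: since $\mathcal{L}$ is semi-local with range $R$, applying the defining implication of semi-locality to the function $\updelta_z$ (which vanishes off $\{z\}\subset\B_{R}(w)$ whenever $w\notin\B_R(z)$) shows that $\mathrm{supp}(\mathcal{L}\updelta_z)\subset\B_{R+1}(z)$, hence $\mathrm{supp}({^1}\upmu_z^\varepsilon)\subset\B_{R+1}(z)$, so $\K_{xy}:=\B_{R+1}(x)\cup\B_{R+1}(y)$ is a finite set containing all the relevant supports (finiteness follows again from local $\dist$-finiteness). With the $1$-jet being a local walk and the pleasantness established, Corollary~\ref{cor:lff-pls-walk} applies verbatim to yield
\[
\olric(x,y)=\inf_{\substack{f\colon\K_{xy}\to\R\\ f\in\Lip(1)\\ \nabla_{xy}f=1}}\nabla_{yx}\mathcal{L}(f),
\]
which is the limit-free formulation as asserted.

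I do not expect any serious obstacle here: the genuine work was already done in Proposition~\ref{prop:st-comp} (upgrading a one-sided comparison to a two-sided pointwise bound via a cut-off and the divergence-form assumption), in Theorem~\ref{thm:pls-walk-0} (reducing a pleasant walk to its $1$-jet), and in Theorem~\ref{thm:lff} (the Kantorovich-duality-based limit-free argument). The only step that requires a moment of care is the identification $\mathrm{supp}(\mathcal{L}\updelta_z)\subset\B_{R+1}(z)$ with the correct range parameter, which should be checked against the precise definition of semi-locality used in \textsection\ref{sec:oper-theo} to ensure the index $R+1$ (as opposed to $R$ or $2R$) is used consistently with the convention adopted in Corollary~\ref{cor:lff-mark-walk-1}.
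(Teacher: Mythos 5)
Your proof is correct and matches the paper's route almost exactly: the paper cites Corollary~\ref{cor:lff-mark-walk-1} together with Corollary~\ref{cor:mark-walk-2}, and you simply unpack the one-line proof of Corollary~\ref{cor:lff-mark-walk-1} (semi-locality forces the $1$-jet to be a local walk supported in $\B_{R+1}(z)$, then invoke Corollary~\ref{cor:lff-pls-walk}) while using Proposition~\ref{prop:st-comp}, as the paper does inside Corollary~\ref{cor:mark-walk-2}, to obtain the missing comparison for $-\mathcal{L}$. The only minor imprecision is that Proposition~\ref{prop:st-comp} by itself yields only the one-sided bound $-\mathcal{L}f(x)\ge C$ (the other side comes directly from hypothesis~(3)), and what is actually needed for Theorem~\ref{thm:pls-walks} is the separate rough comparison for $e^{\varepsilon\mathcal{L}}$ (hypothesis~(1)) together with this one-sided estimate for $-\mathcal{L}$, not a two-sided pointwise bound on $|\mathcal{L}f(x)|$.
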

\begin{proof}[\footnotesize \textbf{Proof}]
This directly follows form Corollary~\ref{cor:lff-mark-walk-1} and Corollary~\ref{cor:mark-walk-2}. 
\end{proof}
\subsubsection{\small \bf \textsf{The special case of multiply weighted Ollivier-Ricci}}
\par An special case if of course when the distance $\dist = \dist_\eta$ (see \eqref{eq:dist})  for some secondary edge weight $\eta$; see Definition~\ref{eq:dist}. As long as the $\left( \G, \dist_\eta \right)$ is a complete metric space, the theory developed here works without any changes. 
\par By the Hopf-Rinow type theorem in \cite{HKMW}, completeness of $\dist_\eta$ is equivalent to the fact that all metric balls are finite and to the fact that every bounded closed set is compact. This means $\G$ is locally $\dist_\eta$-finite. So, we indeed have shown the following.
\begin{theorem}
	Suppose $\eta$ is a secondary edge weight and $\G$ is locally $\dist_\eta$-finite. Then, all the results of this section -- thus far -- holds for $\dist = \dist_\eta$.
\end{theorem}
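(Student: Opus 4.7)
The plan is to reduce the claim to a single verification: that $\dist_\eta$ meets the standing hypotheses (and the auxiliary local-finiteness hypotheses) used throughout the section. Once this is established, every theorem, corollary, and example in the section applies \emph{verbatim} with $\dist = \dist_\eta$, because the proofs only ever invoke properties of a generic distance.

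First I would appeal to the Hopf--Rinow type theorem in \cite{HKMW}, which under the local $\dist_\eta$-finiteness assumption (finiteness of all $\dist_\eta$-balls) yields metric completeness of $(\G, \dist_\eta)$ and moreover ensures that every closed bounded set is compact, hence finite. This is the single analytic input needed: it places $\dist_\eta$ into the standing framework $(\G, \m, \omega, \dist)$ with $(\G, \dist)$ a complete Polish metric space that was assumed at the start of the setup.

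Next I would scan the section's results and classify their dependence on $\dist$. The discrete-time results (Theorem on localization, the concavity result Theorem~\ref{thm:conc}, the dual-localization Theorem~\ref{thm:loc-dual}, the piecewise-analyticity Theorem~\ref{thm:PL}, and the upper bound Theorem~\ref{thm:piece-bound}) require only that $\dist$ be a distance on $\G$ and that the supports $\Upomega_{xy}^\varepsilon$ be finite; the latter holds by the finite-step or local-walk hypotheses independently of which distance is used. The cut-off $\upchi(z) = [s \wedge (2s - \dist(x,z))]_+$ used in Theorem~\ref{thm:loc-dual} is $1$-Lipschitz for \emph{any} distance, so substituting $\dist_\eta$ is immediate. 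The continuous-time results (Theorems~\ref{thm:pls-walk-0},~\ref{thm:pls-walks},~\ref{thm:lap-wd-2}, and the limit-free formulation Theorem~\ref{thm:lff} with its corollaries) rely on Kantorovich duality and on the cut-off/rough-comparison machinery of Section~\ref{sec:oper-theo}; the latter is formulated precisely for locally $\dist$-finite graphs, a hypothesis guaranteed here by assumption together with the Hopf--Rinow step.

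Finally, for completeness I would observe that the specialized existence results in~\textsection\thinspace\ref{sec:ex-heat} were already stated for $\dist_\eta$, so no adaptation is needed there; and the limit-free formulation corollaries (Corollaries~\ref{cor:lff-pls-walk},~\ref{cor:lff-mark-walk-1},~\ref{cor:lff-mark-walk-2}) carry over because the sets $\mathcal{K}_{xy} = \B_{R+1}(x) \cup \B_{R+1}(y)$ are finite by local $\dist_\eta$-finiteness. There is no genuine obstacle; the only conceptual point is that metric completeness is not assumed outright but inferred via Hopf--Rinow, and this is exactly where the local $\dist_\eta$-finiteness hypothesis is consumed. The ``hard part,'' if any, is simply the bookkeeping of checking each theorem's hypotheses, which is why I would frame the proof as the single observation that the setup is satisfied plus the remark that all proofs are formally identical.
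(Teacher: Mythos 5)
Your proposal matches the paper's argument: the paper likewise invokes the Hopf--Rinow type theorem of \cite{HKMW} to identify local $\dist_\eta$-finiteness with completeness of $(\G, \dist_\eta)$ (and with compactness/finiteness of closed bounded sets), after which the section's results apply verbatim since they were developed for a generic complete distance $\dist$. Your additional bookkeeping of which results use which properties is consistent with, though more explicit than, the paper's brief treatment.
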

\begin{example}\phantom\label{ex:beta-lff}\hspace{-2pt}Setting $\eta = \omega$, then locally finiteness of $\G$ immediately implies the finiteness of all $\dist_\omega$ balls, hence, $\G$ is locally $\dist_\omega$-finite. Hence, all the results thus far apply. In particular, using $\upbeta$-walks, we retrieve the well-definition and limit-free formulation in \cite{MW}.  
\end{example}
\begin{remark}\phantom{}\hspace{-1pt}\textit{In practice, when dealing with complex multi-dimensional networks, there are a set of edge weights each representing a certain communication in the network. The general framework we presented in this article allows the random walks to depend on a primary edge weight $\omega_0$ and another family of edge weights $\omega_i$, $i\ge 1$ or even more general functions. Our constructions also allows for the distance function to be induced by a secondary edge weight $\upmu$ as we saw in above. This provides a lot of versatility for applying Ollivier-Ricci base methods to multi-dimensional networks. }\phantom{}
\end{remark}
\subsubsection{\small \bf \textsf{Operator theoretic Ollivier-Ricci curvature}}\label{sec:op-theo-ric}
The limit-free formulation enables us to generalize the Ollivier-Ricci curvature to be defined for operators instead of random walks.
\begin{definition}
Let $\mathcal{L}$ be an arbitrary operator
\[
\mathcal{L}: \R^G \supset \mathsf{Dom}(\mathcal{L}) \to \R^G.
\]
The corresponding operator-theoretic Ollivier-Ricci curvature is defined by
\[
\olric_{\mathcal{L}}(x,y) := \inf_{\substack{ f \in \mathcal{C}_{\sf fs}(\G)\\ f \in \Lip(1)\\ \nabla_{xy} f = 1 }} \nabla_{yx}  \mathcal{L}f,
\]
provided that $ \mathcal{C}_{\sf fs}(\G)  \subset  \mathsf{Dom}(\mathcal{L}) $, that is a very weak condition.  Notice we might get 
$
\olric_{\mathcal{L}}(x,y) = -\infty
$.
\end{definition}
\begin{remark}\phantom{}\hspace{-1pt}\textit{Suppose $\mathcal{L}$ is weakly of divergence form i.e.
	\[
	\int_{\G}\mathcal{L} f = \int_{\G}f, \quad \forall f\in \mathcal{C}_{\sf fs}(\G).
	\]
Then,  $\upmu_z^\varepsilon := e^{\varepsilon\mathcal{L}}\updelta_z$ is a continuous-time Markovian random walk which is in many cases not a local walk. This means the theory developed in these notes, does not ensure that Ollivier-Ricci curvature is well-defined for $\upmu_z^\varepsilon$; however, the operator-theoretic Ollivier-Ricci curvature  is well-defined even though it could be $-\infty$.}\phantom{}
\end{remark}
Recall $\mathcal{L}: \R^G \supset \mathsf{Dom}(\mathcal{L}) \to \R^G$ is said to be a bounded operator w.r.t. the sup-norm if there exits  $B>0$ such that
\[
\|\mathcal{L}f\|_{\sf sup} \le B \|f\|_{\sup},\quad  \forall f \in \mathcal{C}_{\sf fs}(\G).
\]
\begin{theorem}
Suppose $\mathcal{L}$ satisfies the following properties
\begin{enumerate}
	\item $\mathcal{L}$ is a rough differential operator;
	\item $\mathcal{L}$ satisfies a two-sided rough comparison principle (see Definition~\ref{def:two-sided});
\end{enumerate}
	Then, $\olric_{\mathcal{L}}(x,y)$  is finite. 
\end{theorem}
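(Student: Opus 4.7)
The plan is to use translation invariance from the rough differential operator property to normalize the test function $f$, then apply the two-sided rough comparison principle to two carefully constructed $1$-Lipschitz auxiliary functions that have global minima at $x$ and at $y$ respectively. The crucial feature is that the resulting estimates will be $f$-independent, yielding a uniform lower bound on $\nabla_{yx}\mathcal{L}f$ over all admissible competitors.

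First, since $\mathcal{L}$ is a rough differential operator, $\mathcal{L}\mathbf{1}=0$ and $\mathcal{L}(f+c)=\mathcal{L}f$ for every constant $c$. Replacing $f$ by $\bar f:=f-f(x)$ thus preserves $\mathcal{L}f$, keeps $\bar f$ in $\Lip(1)$, and turns the normalization $\nabla_{xy}f=1$ into $\bar f(x)=0$ and $\bar f(y)=\dist(x,y)$. Next, define
\[
h_x(z):=\tfrac12\bigl(\bar f(z)+\dist(x,z)\bigr),\qquad h_y(z):=\tfrac12\bigl(\dist(y,z)-\bar f(z)\bigr).
\]
Both functions are $1$-Lipschitz as averages of two $1$-Lipschitz functions. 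Because $\bar f$ is $1$-Lipschitz with $\bar f(x)=0$, one has $\bar f(z)\ge -\dist(x,z)$, hence $h_x(z)\ge 0=h_x(x)$ everywhere, so $h_x$ attains a global minimum at $x$. Symmetrically, $\bar f(z)\le\bar f(y)+\dist(y,z)$ yields $h_y(z)\ge -\bar f(y)/2=h_y(y)$, so $h_y$ attains a global minimum at $y$.

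Applying the two-sided rough comparison principle to $h_x$ and $h_y$ gives $|\mathcal{L}h_x(x)|\le C(x)$ and $|\mathcal{L}h_y(y)|\le C(y)$. Expanding by linearity and using $\mathcal{L}\bar f=\mathcal{L}f$, I obtain
\[
\mathcal{L}f(x)\ge -2C(x)-\mathcal{L}\dist(x,\cdot)(x),\qquad \mathcal{L}f(y)\le 2C(y)+\mathcal{L}\dist(y,\cdot)(y).
\]
Subtracting and dividing by $\dist(x,y)$ produces a lower bound for $\nabla_{yx}\mathcal{L}f$ that depends only on $x$ and $y$; taking the infimum over admissible $f$ shows $\olric_{\mathcal{L}}(x,y)>-\infty$.

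The main obstacle is the implicit domain condition: for the above expansions to be legitimate one needs $\bar f$, the distance functions $\dist(x,\cdot)$ and $\dist(y,\cdot)$, and therefore $h_x$ and $h_y$, to all lie in the subspace $\mathbb{W}$ on which $\mathcal{L}$ acts and on which the two-sided comparison principle is stated. This is precisely the content of the (implicit) membership $\dist(z,\cdot)\in \mathsf{Dom}(\mathcal{L})$ flagged in the introductory version of the theorem; once $\mathbb{W}$ is taken to contain $\mathcal{C}_{\sf fs}(\G)$, the constants, and the distance functions $\dist(z,\cdot)$ for every $z\in\G$, the argument above is self-contained and delivers the claimed finiteness.
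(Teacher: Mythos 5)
Your proof is correct and follows essentially the same strategy as the paper's: normalize $f$ using translation invariance, build two $1$-Lipschitz auxiliary functions (averages of $\pm f$ with a distance function) that attain global minima at $x$ and at $y$, and invoke the two-sided rough comparison principle at each point. The only cosmetic difference is the sign in your first auxiliary function ($\tfrac12(\bar f+\dist(x,\cdot))$ versus the paper's $\tfrac12(\dist(x,\cdot)-f)$), which changes nothing; your closing remark about needing $\dist(z,\cdot)\in\mathsf{Dom}(\mathcal{L})$ correctly identifies a hypothesis that the introductory version of this theorem lists explicitly but the in-text statement omits.
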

\begin{proof}[\footnotesize \textbf{Proof}]
		Since $\mathcal{L}$ is a rough differential operator, we can - with no loss of generality -- assume a given test function satisfies $f(x)=0$, $f(y)=\dist(x,y)$. 
\par The function
	\[
	\nicefrac{1}{2}\left( \dist(x,z) - f(z)   \right),
	\]
	is a nonnegative $1$-Lipschitz function attaining a global minimum at $x$ so by the rough maximum principle with range zero, one deuces
	\[
\mathcal{L}\left(  \dist(x,\cdot)\right)(x) - C_1(x) \le	\mathcal{L}f(x) \le \mathcal{L}\left(  \dist(x,\cdot)\right)(x) + C_1(x);
	\]
similarly
	\[
	\nicefrac{1}{2}\left( \dist(y,z) - f(z) + \dist(x,y)  \right),
	\]
	is nonnegative and has a global minimum at $y$ hence, from the hypotheses, we also get
	\[
\mathcal{L}\left(  \dist(x,\cdot)\right)(x) - C_1(y) \le	\mathcal{L}f(y) \le \mathcal{L}\left(   \dist(y,\cdot)\right)(y) + C_1(y).
	\]
therefore $
\nabla_{yx}  \mathcal{L}f 
$
is bounded and the infimum is finite. 
\end{proof}
\begin{theorem}
	$\olric_{\mathcal{L}}(x,y)$ is finite for all bounded semi-local rough differential operators $\mathcal{L}$.
\end{theorem}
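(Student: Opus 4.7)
The plan is to bound the expression $\nabla_{yx}\mathcal{L}f = \dist(x,y)^{-1}\bigl(\mathcal{L}f(y) - \mathcal{L}f(x)\bigr)$ uniformly from below over all admissible test functions $f \in \mathcal{C}_{\sf fs}(\G) \cap \Lip(1)$ with $\nabla_{xy}f = 1$, by localizing $f$ to finitely supported bumps near $x$ and near $y$ and then invoking sup-norm boundedness on each bump separately.

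First, since $\mathcal{L}$ is a rough differential operator, $\mathcal{L}(f+c) = \mathcal{L}f$ for every constant $c$; I would use this translation invariance to normalize $f(x) = 0$, whence $f(y) = \dist(x,y)$. To control $\mathcal{L}f(x)$ I would then introduce the truncation $\hat{f}_x$ defined by $\hat{f}_x(z) = f(z)$ for $z \in \B_R(x)$ and $\hat{f}_x(z)=0$ otherwise; this function lies in $\mathcal{C}_{\sf fs}(\G)$ by local $\dist$-finiteness of $\G$, and the difference $f - \hat{f}_x$ vanishes on $\B_R(x)$, so semi-locality (combined with the linearity of $\mathcal{L}$ implicit in the word ``kernel'' appearing in the definition of rough differential operator) forces $\mathcal{L}f(x) = \mathcal{L}\hat{f}_x(x)$. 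The $1$-Lipschitz property of $f$ together with $f(x)=0$ yields $\|\hat{f}_x\|_{\sup} \le R$, so sup-norm boundedness gives $|\mathcal{L}f(x)| \le \|\mathcal{L}\hat{f}_x\|_{\sup} \le BR$, where $B$ is the operator bound.

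A symmetric argument at $y$ would use $\tilde{f} := f - \dist(x,y)$, which satisfies $\mathcal{L}\tilde{f} = \mathcal{L}f$ by rough differentiability and vanishes at $y$; truncating $\tilde{f}$ to $\B_R(y)$ and repeating the previous step gives $|\mathcal{L}f(y)| = |\mathcal{L}\tilde{f}(y)| \le BR$. Combining the two bounds, $|\nabla_{yx}\mathcal{L}f| \le 2BR/\dist(x,y)$ uniformly in $f$, so the infimum defining $\olric_{\mathcal{L}}(x,y)$ is bounded below by $-2BR/\dist(x,y) > -\infty$; finiteness from above follows by plugging in any particular admissible test function, which exists since $\B_R(x) \cup \B_R(y)$ is finite and supports $1$-Lipschitz interpolants with $f(y)-f(x)=\dist(x,y)$.

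The main conceptual obstacle is the passage from the one-sided semi-locality hypothesis ($g|_{\B_R(x)}=0 \Rightarrow \mathcal{L}g(x)=0$) to the two-sided statement that $\mathcal{L}f(x)$ depends only on $f|_{\B_R(x)}$; this genuinely requires linearity of $\mathcal{L}$, which is implicit in the paper's conventions. Once that subtlety is accepted, the remainder is bookkeeping with $1$-Lipschitz functions on finite balls, and the role of bounded\textemdash rather than merely rough comparison\textemdash becomes exactly to turn a sup-norm estimate on the truncation into a pointwise estimate on $\mathcal{L}f$.
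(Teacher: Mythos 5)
Your proposal follows the same overall route as the paper: normalize by translation invariance so $f(x)=0$, $f(y)=\dist(x,y)$; localize the evaluation of $\mathcal{L}f$ at $x$ and at $y$ via semi-locality; then invoke sup-norm boundedness of $\mathcal{L}$ to bound $\nabla_{yx}\mathcal{L}f$ uniformly. The one genuine difference is that you localize with a \emph{hard} truncation $\hat{f}_x = f\cdot\mathbf{1}_{\B_R(x)}$, whereas the paper localizes through the $1$-Lipschitz cutoff and Lipschitz-extension construction of Theorem~\ref{thm:loc-dual}, producing a function supported in $\B_{2s}(x)$ with $s=R+\dist(x,y)$. Your choice buys a cleaner and slightly sharper sup-norm bound ($\|\hat{f}_x\|_{\sup}\le R$ rather than $\le 2s$), while the paper's cutoff has the merit of producing a function that is itself a valid ($1$-Lipschitz, finitely supported) test function, so the infimum can literally be restricted to the localized class.

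There is, however, one step you gloss over that deserves a sentence. Semi-locality as defined in \textsection\ref{sec:oper-theo} is \emph{not} the bare implication ``$g|_{\B_R(x)}=0 \Rightarrow \mathcal{L}g(x)=0$''; it is stated only for $g\in\mathbb{W}\cap\Lip(1)$. The difference $g:=f-\hat{f}_x$ vanishes on $\B_R(x)$ but is generically \emph{not} $1$-Lipschitz: across $\partial\B_R(x)$ it can jump by roughly $R$ over a short distance, so its Lipschitz constant depends on $\min_{z\in\supp f,\ w\neq z}\dist(z,w)$, not just on $R$. To legitimately conclude $\mathcal{L}g(x)=0$ you must first observe that $g$ is finitely supported (both $f$ and $\hat{f}_x$ are), hence Lipschitz with \emph{some} finite constant $L$ by local $\dist$-finiteness; then $\tfrac{1}{L}g\in\Lip(1)\cap\mathcal{C}_{\sf fs}(\G)$ vanishes on $\B_R(x)$, so semi-locality gives $\mathcal{L}(\tfrac{1}{L}g)(x)=0$, and linearity/homogeneity of $\mathcal{L}$ then yields $\mathcal{L}g(x)=0$, i.e., $\mathcal{L}f(x)=\mathcal{L}\hat{f}_x(x)$. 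This rescaling step is small but not optional: without local $\dist$-finiteness (which is an implicit standing hypothesis for semi-local operators here) the hard truncation would produce a non-Lipschitz difference to which semi-locality simply does not speak. The same caveat silently underlies the paper's own argument (its $f-\tilde{f}$ is only $2$-Lipschitz), so this is worth keeping explicit whenever semi-locality is applied to a difference of $1$-Lipschitz functions.
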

\begin{proof}[\footnotesize \textbf{Proof}]
	Since $\mathcal{L}$ is a rough differential operator, it is translation invariant on $\mathcal{C}_{\sf fs}(\G)$ so we can assume our test functions satisfy $f(x) = 0$ and therefore they must satisfy $f(y) = \dist(x,y)$.  By semi-locality and by Lipschitz extension construction in the  proof of Theorem~\ref{thm:loc-dual} (see the bottom of page 19), we only need to find the infimum over test functions that are supported in 
$\B_{2s}(x)$ for $s= R + \dist(x,y)$ and $\|f\|_{\sf sup} \le 2s$. 
\par So for these test functions, by boundedness of $\mathcal{L}$, we deuce $\nabla_{yx}  \mathcal{L}f$ is bounded hence, its infimum is finite.   
\end{proof}
\begin{proposition}\label{prop:concav-op}
	For two operators $\mathcal{L}_1$ and $\mathcal{L}_2$, as in the above two theorems,
	\begin{align}\label{eq:conv-op}
	\olric_{t\mathcal{L}_1 + (1-t)\mathcal{L}_2} \ge t  \olric_{\mathcal{L}_1} + (1-t) \olric_{\mathcal{L}_2}.
	\end{align}
\end{proposition}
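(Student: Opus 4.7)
The plan is to deduce the concavity inequality directly from the limit-free variational definition of $\olric_\mathcal{L}$, using only linearity of the directional derivative $\nabla_{yx}$ together with superadditivity of the infimum operation.

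First, I would verify that the convex combination $\mathcal{L}_t := t\mathcal{L}_1 + (1-t)\mathcal{L}_2$ for $t \in [0,1]$ inherits the hypotheses that guarantee finiteness of the operator-theoretic Ollivier-Ricci curvature in the two preceding theorems. Namely: both properties (being a rough differential operator and satisfying a two-sided rough comparison principle, or alternatively being bounded and semi-local) are preserved under convex combinations of operators, since the kernel containing the constants is preserved by linear combinations, the bounds combine convexly, and $\mathcal{C}_{\sf fs}(\G)$ remains in the domain. This ensures $\olric_{\mathcal{L}_t}(x,y)$ is a well-defined finite quantity and the variational formula applies.

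Next, for any admissible test function $f \in \mathcal{C}_{\sf fs}(\G) \cap \Lip(1)$ with $\nabla_{xy} f = 1$, the linearity of $\nabla_{yx}$ together with linearity of the operator assignment yields
\begin{equation*}
\nabla_{yx}\bigl( \mathcal{L}_t f \bigr) = t\, \nabla_{yx} \mathcal{L}_1 f + (1-t)\, \nabla_{yx} \mathcal{L}_2 f.
\end{equation*}
Taking the infimum over the common admissible set of test functions and using the elementary inequality $\inf_f (a(f) + b(f)) \ge \inf_f a(f) + \inf_f b(f)$, I obtain
\begin{equation*}
\olric_{\mathcal{L}_t}(x,y) \ge t \inf_f \nabla_{yx} \mathcal{L}_1 f + (1-t) \inf_f \nabla_{yx} \mathcal{L}_2 f = t\, \olric_{\mathcal{L}_1}(x,y) + (1-t)\, \olric_{\mathcal{L}_2}(x,y),
\end{equation*}
which is precisely \eqref{eq:conv-op}.

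The proof is essentially a one-line argument, and I do not anticipate any genuine obstacle; the only subtle point to flag is that the admissible set of test functions $\{ f \in \mathcal{C}_{\sf fs}(\G) \cap \Lip(1) : \nabla_{xy} f = 1 \}$ is independent of the operator, which is what allows the superadditivity of the infimum to go through cleanly. Equality in \eqref{eq:conv-op} fails in general because the minimizers for $\mathcal{L}_1$ and $\mathcal{L}_2$ need not coincide; this is the standard reason concavity (as opposed to affinity) is the correct conclusion.
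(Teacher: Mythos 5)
Your proof is correct and takes essentially the same approach as the paper, which simply invokes ``the set-theoretic concavity of infimum''; you have merely spelled out the superadditivity argument in detail. The extra observation that the admissible set of test functions is operator-independent and that $\mathcal{L}_t$ inherits the finiteness hypotheses is a useful and accurate amplification of the paper's one-line proof.
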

\begin{proof}[\footnotesize \textbf{Proof}]
	This is straightforward from the set-theoretic concavity of infimum. 
\end{proof}
\begin{remark}\phantom{}\hspace{-1pt}\textit{The concavity \eqref{eq:conv-op} is the infinitesimal (first order) version of the concavity~\eqref{eq:ric-conc} and holds for a wider class of operators than the generators of pleasant  Markovian walks; e.g. $\mathcal{L}$ need not be weakly of divergence type.}\phantom{} 
\end{remark}
\subsubsection{\small \bf \textsf{Lipschitz regularity}}
One important consequence of having a limit-free formulation at our disposal is that we can compute the continuous-time Ollivier-Ricci curvature as the optimal value of an LP problem; hence, we can for example show Lipschitz continuity of $\olric$ in terms of the distance and the $1$-jet of the random walk  for pleasant Markovian walks appearing in Corollary~\ref{cor:lff-mark-walk-1}. 
\par Based on the proof of Theorem~\ref{thm:pls-walk-0},  the Ollivier-Ricci curvature $\olric(x,y)$ for pleasant local walks
\[
\upmu_z^\varepsilon = \updelta_z + \varepsilon \upmu_z + \varepsilon^2\mathcal{R}_z,
\]
is a function of $\upmu_z$ and of the distance $\dist$ hence, we will write $\olric(\dist, \upmu)$ and we wish to establish Lipschitz regularity in the $\dist$ and $\upmu$ arguments. Let $\mathcal{M}_{\sf s}(\G)$ be the space of zero-mass signed measures on $\G$ equipped with the sup-norm; notice $\mathcal{M}_{\sf s}(\G)$ is a subspace of $\R^{\G \times \G}$. Also let $ \mathsf{D}_{\sf c}(\G)$ be the space of distances $\dist$ for which $\left( \G, \dist \right)$ is metrically complete also equipped with the sup-norm; again a subspace of  $\R^{\G \times \G}$. 
\par Set
\[
\mathcal{M}^{\K}_{\sf s}(\G):= \mathcal{M}_{\sf s}(\G) \cap \left\{  \upmu  \;\; \text{\textbrokenbar}\;\;  \mathrm{supp} (\upmu) \subset \K\right\}.
\]
\begin{theorem}[Lipschitz continuity]\label{thm:lip}
	Restricted to pleasant walks 
	\[
	\upmu_z^\varepsilon = \updelta_z + \varepsilon \upmu_z + \varepsilon^2\mathcal{R}_z,
	\]
	with $\upmu_x, \upmu_y \in \mathcal{M}^{\K}_{\sf s}(\G)$  for fixed $x$ and $y$ and some finite $\K$, the quantity $\olric(\dist, \upmu)$ is Lipschitz continuous in the $\dist$ and $\upmu_z$ arguments. 
\end{theorem}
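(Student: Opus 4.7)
The plan is to express $\olric(\dist,\upmu)$ as the optimal value of a finite-dimensional linear programming problem via the limit-free formulation, and then apply the Lipschitz sensitivity result of Proposition~\ref{prop:lip-lp} (Renegar) to transfer perturbation of the parameters into perturbation of the optimal value.

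First, since $\upmu_x,\upmu_y \in \mathcal{M}^{\K}_{\sf s}(\G)$, the supports $\Upomega_x,\Upomega_y$, and hence $\K_{xy} = \Upomega_x \cup \Upomega_y$, all sit inside the fixed finite set $\K$. By Corollary~\ref{cor:lff-pls-walk}, writing $N := |\K|$ and identifying $f \in \R^{\K}$ with its values, we have
\[
\olric(\dist,\upmu) \;=\; \inf\big\{\, \nabla_{yx}\mathcal{L}(f) \;:\; f \in \R^{N},\; f \in \Lip_\dist(1),\; f(y)-f(x) = \dist(x,y)\,\big\},
\]
where $\mathcal{L}f(z) = \sum_{w\in \K\setminus\{z\}}(f(w)-f(z))\upmu_z(w)$. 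Rewriting the equality constraint as two inequalities and the $1$-Lipschitz condition as the $2N(N-1)$ linear inequalities $f(u)-f(v) \le \dist(u,v)$ for $u,v\in \K$, this is an LP problem in $\R^N$ whose full data vector ${\bf d} = ({\bf a},{\bf b},{\bf c})(\dist,\upmu)$ splits naturally: the constraint matrix ${\bf a}$ is a fixed $\{-1,0,+1\}$-matrix independent of the parameters, the right-hand side ${\bf b}$ is the list of relevant values $\dist(u,v)$, and the cost vector ${\bf c}$ is linear in $\upmu_x,\upmu_y$ with coefficients of the form $\dist(x,y)^{-1}$ multiplied by constants.

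Second, I verify well-posedness uniformly on a neighborhood. The problem is always feasible via $f_0(z) := \dist(x,z)$; after translating to pin $f(x)=0$ (which alters neither the constraints nor the cost, since $\mathcal{L}$ kills constants and the constraints are translation invariant), $1$-Lipschitzness forces $|f(z)| \le \diam_\dist(\K)$, so the feasible polytope $\mathfrak{Q}$ is bounded in $\R^N$. Hence the optimal value is finite for every nearby $(\dist,\upmu)$, i.e.\ ${\bf d}(\dist,\upmu) \in \mathrm{relint}(\mathcal{PF})$. The map $(\dist,\upmu) \mapsto {\bf d}(\dist,\upmu)$ is locally Lipschitz in the sup-norms on $\mathsf{D}_{\sf c}(\G)$ and $\mathcal{M}^{\K}_{\sf s}(\G)$: the dependence through ${\bf b}$ is manifestly $1$-Lipschitz, while the cost ${\bf c}$ is bilinear in $\upmu$ and in $\dist(x,y)^{-1}$, the latter being Lipschitz as long as $\dist(x,y)$ stays bounded away from zero—which it does locally since $x \neq y$.

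Applying Proposition~\ref{prop:lip-lp} to the locally Lipschitz path ${\bf d}(\dist,\upmu) \in \mathrm{relint}(\mathcal{PF})$ gives local Lipschitz continuity of $\mathrm{val}({\bf d})=\olric(\dist,\upmu)$ in $({\bf d})$, and composition yields local Lipschitz continuity in $(\dist,\upmu)$ directly. The main subtlety is simply keeping track that both the constraints and the objective vary with the parameters—an issue already fully absorbed by Proposition~\ref{prop:lip-lp}; there is no hard analytic obstacle beyond the (trivial) need to stay in a neighborhood where $\dist(x,y)$ is bounded below and the perturbed $\dist$ remains a distance, both of which hold automatically for small enough sup-norm perturbations inside $\mathsf{D}_{\sf c}(\G)$.
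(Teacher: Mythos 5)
Your proof follows essentially the same route as the paper: use the limit-free formulation (Corollary~\ref{cor:lff-pls-walk}) to express $\olric(\dist,\upmu)$ as the optimal value of a finite-dimensional LP whose data $({\bf a},{\bf b},{\bf c})$ depend Lipschitzly on $(\dist,\upmu)$, then invoke Renegar's sensitivity bound (Proposition~\ref{prop:lip-lp}). The only cosmetic difference is that you normalize $f(x)=0$ while the paper pins $f(x)=\diam(\K)$ to enforce the $\hat{x}\ge 0$ constraint of the LP format, but the argument is otherwise identical (and you supply a slightly more explicit verification of feasibility and of the $\dist(x,y)^{-1}$ dependence).
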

\begin{proof}[\footnotesize \textbf{Proof}]
	By the limit-free formulation established in Corollary~\ref{cor:lff-pls-walk}, the identity
	\begin{align}\label{eq:lff}
		\olric(x,y) =  \inf_{\substack{f: \mathcal{K} \to \R \\ f \in \Lip(1)\\ \nabla_{xy} f = 1 }} \nabla_{yx}  \mathcal{L}(f),
	\end{align}
holds with
\begin{align*}
	\mathcal{L}f(z) = \sum_{w \in \K \smallsetminus\{z\}}  \left( f(w) - f(z) \right)  \upmu_z(w).
\end{align*}
\par Suppose $\left| \K \right| = N$, \eqref{eq:lff} is the optimal value in a linear programming problem in $\R^{N}$.   There are $2N(N-1)$  constraints.   Due to the form of $\mathcal{L}$, this problem is again translation invariant so adding another constraint $f(x) = \diam \K$, would ensure non-negativity of the feasible set. 
\par Based on the hypothesis, we only need to study the perturbation of $\upmu$ and $\dist$ on the set $\K$; so the problem at hand is a problem of the form
	\[
	\olric(\dist, \upmu): {\sf LP}_{\bf d} :=	\begin{cases} \min {\bf c}(\dist, \upmu) \cdot \hat{x} \\ {\bf a}\hat{x} \le {\bf b}(\dist)\\ \hat{x} \ge 0 \end{cases}, \quad {\bf d}(\dist,\upmu):= ({\bf a}, {\bf b}(\dist), {\bf c}(\dist,\upmu)),
	\]
	in which the arguments $\dist$ and $\upmu$ are functions with finite domains so we in particular have $\dist$ is bounded away from zero. 
\par First notice that both ${\bf c}(\dist,\upmu)$ and ${\bf b}(\dist)$ are Lipschitz functions of their arguments. This means to establish Lipschitz continuity  of  $\olric(\dist, \upmu)$ in $\dist$ and $\upmu$, we need to just establish the Lipschitz continuity of the optimal value function of ${\sf LP}_{\bf d} $ in terms of its vector variable ${\bf d}$; namely, we only need to show the optimal value function is Lipschitz in ${\bf a}$, ${\bf b}$ and $\bf{c}$. 
	\par Now, in order to establish the latter Lipschitz continuity by invoking Proposition~\ref{prop:lip-lp}, we need to ensure that the problem is feasible and is away from the boundary of the ill-posed region of parameters. Indeed, since we have a finite linear programming problem, this would automatically imply feasibility and distance to the ill-posed region for the dual problem as well. For the problem at hand these claims clearly hold since as long as $\dist$ stays within the space $\mathsf{D}_{\sf c}(\G)$, and as long as $\upmu$ is in $\mathcal{M}^{\K}_{\sf s}(\G)$, Theorem~\ref{thm:pls-walk-0} ensures the  existence of the optimal value. This means $\mathbf{d}$ is way from the boundary of ill-posed region for $\dist$ and $\upmu$ parameters in both primary and the dual problem. So, by Proposition~\ref{prop:lip-lp}, the Lipschitz continuity follows. 
\end{proof}
\begin{corollary}[Lipschitz continuity in the multi-weight case]\label{cor:lip-multi-weight}
	Suppose a pleasant local walk is a locally Lipschitz function of a primary edge weight $\omega_0$ and a set of other parameters $\omega_i$ ($i\ge 1$) as well as of the vertex measure $\m$. Also suppose the distance is induced by the secondary edge weight $\eta$ and all the metric balls are finite. Then for fixed $x$ and $y$, $\olric$ is locally Lipschitz as a function  of $\omega_0$, $\omega_i, i\ge 1$, $\m$ and $\eta$.
\end{corollary}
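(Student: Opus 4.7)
The plan is to obtain the corollary by chaining together local Lipschitz maps and invoking Theorem~\ref{thm:lip}. Recall that Theorem~\ref{thm:lip} already tells us that for a pleasant local walk
\[
\upmu_z^\varepsilon = \updelta_z + \varepsilon \upmu_z + \varepsilon^2 \mathcal{R}_z,
\]
the quantity $\olric(x,y)$ is locally Lipschitz continuous when viewed as a function of the pair $(\dist, \upmu)$ (with $\upmu_x,\upmu_y$ varying in $\mathcal{M}^{\K}_{\sf s}(\G)$ for some fixed finite $\K$, and $\dist$ varying in $\mathsf{D}_{\sf c}(\G)$). Therefore it suffices to show two things: (i) the map $(\omega_0,\{\omega_i\},\m)\mapsto (\upmu_x,\upmu_y)$ is locally Lipschitz (with values supported in a fixed finite set $\K$), and (ii) the map $\eta \mapsto \dist_\eta$, when restricted to the relevant finite set of vertices, is locally Lipschitz. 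Then the corollary follows by composition of locally Lipschitz maps.

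For (i), the hypothesis explicitly gives local Lipschitz dependence of $\upmu_z$ on $(\omega_0,\{\omega_i\},\m)$, and since the walk is assumed local, the supports $\Upomega_x$ and $\Upomega_y$ stay inside a fixed finite $\K$ under sufficiently small perturbations (locality only depends on the combinatorial support structure of $\upmu_z$, which is preserved under small Lipschitz perturbations of the parameters). Thus $(\omega_0,\{\omega_i\},\m) \mapsto (\upmu_x,\upmu_y) \in \mathcal{M}^{\K}_{\sf s}(\G) \times \mathcal{M}^{\K}_{\sf s}(\G)$ is locally Lipschitz in the sup-norm.

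For (ii), the Hopf--Rinow type theorem of~\cite{HKMW} together with finiteness of all $\dist_\eta$-metric balls ensures that for each fixed pair of vertices $z,w \in \K$, the infimum in \eqref{eq:dist} defining $\dist_\eta(z,w)$ is attained and only involves finitely many paths lying inside a bounded metric region. Hence locally, $\dist_\eta(z,w)$ is the minimum of finitely many linear functions of $\eta$, which is locally Lipschitz in $\eta$. Since $\K$ is finite, the restriction of $\dist_\eta$ to $\K \times \K$ — viewed as an element of $\mathsf{D}_{\sf c}(\G)$ restricted to $\K$ — depends locally Lipschitz continuously on $\eta$ (in sup-norm).

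The potentially subtle point is maintaining the uniformity of the finite set $\K$ and the feasibility region under joint perturbations of $\omega_0,\{\omega_i\},\m,\eta$; this is what one expects to be the main technical observation to verify, but it follows from the locality of the walk (which controls $\mathrm{supp}(\upmu_x)\cup\mathrm{supp}(\upmu_y)$) together with local finiteness of $\dist_\eta$-balls (which controls which paths realize the shortest-path distance). Once these are in place, composing the locally Lipschitz maps
\[
(\omega_0,\{\omega_i\},\m,\eta) \;\longmapsto\; (\dist_\eta,\upmu_x,\upmu_y) \;\stackrel{\olric}{\longmapsto}\; \olric(x,y),
\]
with the first arrow locally Lipschitz by (i) and (ii), and the second locally Lipschitz by Theorem~\ref{thm:lip}, yields the desired local Lipschitz continuity of $\olric$ in $(\omega_0,\{\omega_i\},\m,\eta)$.
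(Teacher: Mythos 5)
Your proof is correct and follows essentially the same route as the paper; the paper's proof is simply the one-line remark ``This directly follows from Theorem~\ref{thm:lip}'', and what you do is spell out why the composition $(\omega_0,\{\omega_i\},\m,\eta) \mapsto (\dist_\eta,\upmu_x,\upmu_y) \mapsto \olric(x,y)$ is locally Lipschitz, in particular supplying the (correct, and worth recording) observation that local $\dist_\eta$-finiteness makes $\dist_\eta(z,w)$ a local minimum of finitely many linear functionals of $\eta$.
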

\begin{proof}[\footnotesize \textbf{Proof}]
	This directly follows from Theorem~\ref{thm:lip}. 
\end{proof}
\subsubsection{\small \bf \textsf{Scaling properties}}
The smooth Ricci bounds scale by $c^{-2}$ if we scale $g$ by $c^2$ or more precisely, say for a fixed vector $v$, $\Ric_{c^2g}(v,v) = \Ric_{g}(v,v)$. In the discrete setting, the scaling properties are interesting, to say the least and one needs to talk about both vertex weights and edge weights. 
\begin{proposition}
$\olric_\varepsilon$ and $\olric$ are invariant under the transformation $\dist \mapsto c \dist$  for $c>0$ provided $\olric$ is defined. 
\end{proposition}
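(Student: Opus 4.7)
The plan is to reduce the claim to a direct computation using how the $\LL^1$-Wasserstein distance transforms under rescaling of the ground metric, while keeping the random walk $\upmu_z^\varepsilon$ fixed.

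First I would observe that the set of couplings $q \in \Prob(\G \cart \G)$ between $\upmu_x^\varepsilon$ and $\upmu_y^\varepsilon$ is a purely measure-theoretic object, independent of the distance. Consequently, from the definition
\[
\Was_1^{\,c\dist}\left( \upmu_x^\varepsilon, \upmu_y^\varepsilon \right) = \inf_q \sum_{z,w \in \G}  c\,\dist(z,w)\, q(z,w) = c \cdot \Was_1^{\,\dist}\left( \upmu_x^\varepsilon, \upmu_y^\varepsilon \right),
\]
so $\Was_1$ scales linearly with the ground distance.

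Next I would plug this into the definition of the discrete-time Ollivier-Ricci curvature to obtain
\[
\olric_\varepsilon^{\,c\dist}(x,y) = 1 - \frac{\Was_1^{\,c\dist}\left( \upmu_x^\varepsilon, \upmu_y^\varepsilon \right)}{c\,\dist(x,y)} = 1 - \frac{c\cdot \Was_1^{\,\dist}\left( \upmu_x^\varepsilon, \upmu_y^\varepsilon \right)}{c\,\dist(x,y)} = \olric_\varepsilon^{\,\dist}(x,y),
\]
which gives the discrete-time invariance. For the continuous-time version, since $\olric$ -- whenever defined -- is obtained via
\[
\olric(x,y) = -\nicefrac{d}{d\varepsilon}\restr_{\varepsilon=0} \nicefrac{\Was_1\left( \upmu_x^\varepsilon, \upmu_y^\varepsilon \right)}{\dist(x,y)} = \lim_{\varepsilon \downarrow 0} \nicefrac{1}{\varepsilon}\olric_\varepsilon(x,y),
\]
the invariance of each $\olric_\varepsilon$ under $\dist \mapsto c\dist$ passes to the limit and yields the invariance of $\olric$.

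There is no substantial obstacle here; the only point worth stressing is that the proposition treats $\dist$ as the sole varying object while the random walk $\upmu_z^\varepsilon$ is held fixed (so in particular, if $\dist = \dist_\eta$ and the walk depends on $\eta$, one must rescale $\eta$ only through its effect on $\dist$, not through any implicit dependence of $\upmu$). With this understanding, the two displayed identities above constitute the entire argument.
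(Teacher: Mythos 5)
Your proof is correct and takes essentially the same approach as the paper: observe that $\Was_1$ scales linearly under $\dist \mapsto c\dist$, conclude that $\olric_\varepsilon$ is scale-invariant, and pass to the limit defining $\olric$. The extra remark about holding $\upmu_z^\varepsilon$ fixed is a reasonable clarification of the statement's intended reading, but the core argument is identical.
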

\begin{proof}[\footnotesize \textbf{Proof}]
 The Wasserstein distance $\Was_1$ scales like $\dist$ so $\olric_\varepsilon$ is scale invariant. This means when the derivative at zero exists, it would also be scale invariant. 
\end{proof}
\begin{proposition}\label{prop:op-ric-scale}
	For the operator-theoretic Olivier-Ricci curvature, we also have
	\[
	\olric_{c\mathcal{L}} = c \; \olric_{\mathcal{L}}.
	\]
\end{proposition}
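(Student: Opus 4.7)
The plan is essentially to unfold the definition of the operator-theoretic Ollivier-Ricci curvature and exploit the $\R$-linearity of the map $\mathcal{L} \mapsto \nabla_{yx}\mathcal{L}f$ together with the fact that the feasible set of test functions does not depend on $\mathcal{L}$.

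More precisely, I would first fix any admissible test function $f \in \mathcal{C}_{\sf fs}(\G)$ with $f \in \Lip(1)$ and $\nabla_{xy}f = 1$. Since the directional derivative $\nabla_{yx}$ is $\R$-linear in the function to which it is applied, and since $(c\mathcal{L})f = c(\mathcal{L}f)$ by the very definition of scalar multiplication of operators, one obtains
\[
\nabla_{yx}\bigl((c\mathcal{L})f\bigr) = \dist(y,x)^{-1}\Bigl((c\mathcal{L}f)(x) - (c\mathcal{L}f)(y)\Bigr) = c\,\nabla_{yx}\mathcal{L}f.
\]
Next, I would observe that the constraint set $\{ f \in \mathcal{C}_{\sf fs}(\G) \cap \Lip(1) : \nabla_{xy}f = 1 \}$ appearing in the definition of $\olric_{\mathcal{L}}$ is independent of the operator; in particular it is the same whether one uses $\mathcal{L}$ or $c\mathcal{L}$. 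Hence, assuming $c>0$, the positive homogeneity of infimum under multiplication by a nonnegative scalar yields
\[
\olric_{c\mathcal{L}}(x,y) = \inf_{\substack{ f \in \mathcal{C}_{\sf fs}(\G)\\ f \in \Lip(1)\\ \nabla_{xy} f = 1 }} c\,\nabla_{yx}\mathcal{L}f = c \inf_{\substack{ f \in \mathcal{C}_{\sf fs}(\G)\\ f \in \Lip(1)\\ \nabla_{xy} f = 1 }} \nabla_{yx}\mathcal{L}f = c\,\olric_{\mathcal{L}}(x,y).
\]

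The only subtlety worth flagging is the sign of $c$: for $c > 0$ the argument above is direct, while for $c = 0$ the identity holds trivially whenever $\olric_{\mathcal{L}}(x,y)$ is finite (both sides vanishing). The case $c < 0$ does \emph{not} preserve the identity as stated, since pulling a negative scalar out of an infimum turns it into a supremum; this parallels the restriction $c>0$ in the companion proposition on scaling of the distance. There is no real obstacle in the proof — the statement is a structural consequence of the positive homogeneity of the variational formula — and no new estimates are required beyond what is already encoded in the definition.
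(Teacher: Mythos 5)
Your argument is correct and is exactly the unpacking of what the paper dismisses as ``straightforward from the definition'': the constraint set is $\mathcal{L}$-independent, $\nabla_{yx}$ is linear, and the infimum is positively homogeneous. Your remark that the identity fails for $c<0$ (where the infimum would become a supremum) is a sensible caveat that the paper leaves implicit.
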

\begin{proof}[\footnotesize \textbf{Proof}]
This is straightforward from the definition. 
\end{proof}
\begin{corollary}
	For $\upbeta$-walks and under the transformation
	\[
	\omega \mapsto a\omega, \quad m \mapsto bm,
	\]
one has
	\[
	\olric (x,y) \mapsto \nicefrac{a}{b}\;  \olric (x,y).
	\]
	\end{corollary}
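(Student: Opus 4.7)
The plan is to reduce the claim to the limit-free formulation for $\upbeta$-walks together with the scaling law Proposition~\ref{prop:op-ric-scale} for operator-theoretic Ollivier-Ricci curvature.

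First, I would observe that the distance $\dist$ is a parameter independent of the primary edge weight $\omega$ and the vertex measure $\m$ in the paper's framework, so under the transformation $\omega \mapsto a\omega$, $\m \mapsto b\m$, the distance stays fixed. Consequently, the constraint set in the limit-free formulation, namely
\[
\bigl\{ f : \mathcal{K}_{xy} \to \R \;\;\text{\textbrokenbar}\;\; f \in \Lip(1),\; \nabla_{xy} f = 1 \bigr\},
\]
does not change, since both Lipschitz constants and directional derivatives are computed with respect to $\dist$.

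Next, I would track how the generator $\Delta$ of the $\upbeta$-walk transforms. From the definition
\[
\Delta f(x) = \m(x)^{-1} \sum_{y \sim x} \bigl( f(y) - f(x) \bigr)\, \omega_{xy},
\]
it is immediate that $\omega \mapsto a\omega$ and $\m \mapsto b\m$ send $\Delta$ to $(a/b)\Delta$. The $\upbeta$-walk itself transforms to $\updelta_z + \varepsilon (a/b)\Delta \updelta_z + \mathcal{O}(\varepsilon^2)$-type object, but for the conclusion we only need the scaling of its $1$-jet/generator since the limit-free formulation depends only on this.

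Finally, by the limit-free formulation (the $\upbeta$-walk instance already recorded after Theorem~\ref{thm:lff}),
\[
\olric(x,y) = \inf_{\substack{f: \mathcal{K}_{xy} \to \R \\ f \in \Lip(1) \\ \nabla_{xy} f = 1}} \nabla_{yx} \Delta(f) = \olric_{\Delta}(x,y),
\]
and then Proposition~\ref{prop:op-ric-scale} yields $\olric_{(a/b)\Delta} = (a/b)\, \olric_\Delta$, which is the desired conclusion. There is no real obstacle here; the only subtle point is making explicit that the distance $\dist$ (and hence the constraint set) is not affected by the transformation of $\omega$ and $\m$, so that the entire scaling is carried by the generator.
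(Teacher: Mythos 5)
Your proof is correct and follows essentially the same route as the paper: express $\olric$ via the limit-free formulation as $\olric_{\Delta}(x,y)$, note that the Laplacian transforms as $\Delta \mapsto \nicefrac{a}{b}\,\Delta$, and invoke Proposition~\ref{prop:op-ric-scale}. The one nuance worth flagging is that the paper's proof references Example~\ref{ex:beta-lff}, which takes $\dist = \dist_{\omega}$; in that case $\dist$ \emph{does} change (to $a\dist_{\omega}$), so your observation that the constraint set is untouched does not apply verbatim. However the conclusion still holds there, because the operator-theoretic curvature $\olric_{\mathcal{L}}$ is invariant under $\dist \mapsto c\dist$ (by the same normalization $f \mapsto f/c$ that underlies the paper's unnamed scaling proposition), so the entire scaling is again carried by $\Delta \mapsto \nicefrac{a}{b}\,\Delta$. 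Adding one sentence covering the $\dist = \dist_{\omega}$ case would make your argument match the full scope the paper intends.
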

\begin{proof}[\footnotesize \textbf{Proof}]
	Directly follows from Proposition~\ref{prop:op-ric-scale} and the limit-free formulation; see Example~\ref{ex:beta-lff}; indeed, in this case $\mathcal{L} = \Delta$ with the transformation
	\[
	\Delta \mapsto \nicefrac{a}{b}\, \Delta.
	\]
\end{proof}
\begin{proposition}
	For time-affine walks, and under the transformation $\varepsilon \to c\varepsilon$, the transformation
	\[
	\olric_{c\varepsilon} (x,y) = \olric_{\varepsilon_i} + s_i \left(  c\varepsilon - \varepsilon_i \right), \quad \text{for}, \quad  \nicefrac{\varepsilon_i}{c} \le \varepsilon \le  \nicefrac{ \varepsilon_{i+1}}{c},
	\]
holds true,	where $\varepsilon_i$ are switching times and 
	\[
	s_i :=  \nicefrac{d^+}{d\varepsilon}\restr_{\varepsilon = \varepsilon_i}  \olric_{\varepsilon},
	\]
	are right derivatives at the switching times. 
\end{proposition}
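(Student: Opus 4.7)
The plan is to reduce the statement to the piecewise-affine structure already established for time-affine walks and then perform a routine change of variables. First, I would invoke Theorem~\ref{thm:PL} (and the corollary to Theorem~\ref{thm:poly-bound}) to record that for a time-affine walk $\upmu_z^\varepsilon$, the function $\varepsilon \mapsto \olric_\varepsilon(x,y)$ is piecewise affine with finitely many pieces on $[0,1]$, and that the breakpoints are precisely the switching times $\varepsilon_i$ of the associated LP problem in the sense of Definition~\ref{defn:switch}. This follows since the objective function in the Kantorovich dual \eqref{eq:kant} is affine in $\varepsilon$ on each interval between consecutive switching times, with the optimal vertex of the transportation polytope $\mathfrak{Q}$ staying fixed on that interval.

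Next, on each such interval $[\varepsilon_i, \varepsilon_{i+1}]$, the function $\olric_\varepsilon(x,y)$ coincides with a single affine function, so it admits the representation
\[
\olric_\varepsilon(x,y) = \olric_{\varepsilon_i}(x,y) + s_i(\varepsilon - \varepsilon_i),
\]
where $s_i$ is the common value of the left and right derivatives on the open interval $(\varepsilon_i, \varepsilon_{i+1})$; by piecewise affinity this equals the right derivative $\nicefrac{d^+}{d\varepsilon}\restr_{\varepsilon=\varepsilon_i} \olric_\varepsilon$, which is exactly the $s_i$ in the statement. Concavity (Theorem~\ref{thm:conc}) ensures the one-sided derivative is well-defined and monotone non-increasing in $i$, though this stronger property is not needed for the identity itself.

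Finally, I would implement the change of variable $\varepsilon \mapsto c\varepsilon$. The interval $\varepsilon_i \le c\varepsilon \le \varepsilon_{i+1}$ corresponds to $\nicefrac{\varepsilon_i}{c} \le \varepsilon \le \nicefrac{\varepsilon_{i+1}}{c}$, and on this interval substitution into the affine expression above yields
\[
\olric_{c\varepsilon}(x,y) = \olric_{\varepsilon_i}(x,y) + s_i(c\varepsilon - \varepsilon_i),
\]
which is the claimed formula. There is no real obstacle here; the only subtle point is to verify that the ``switching times'' referenced in the statement indeed correspond to the breakpoints of the piecewise affine structure (as opposed to, say, switching times under the reparameterized walk), but this is transparent from the observation that the reparameterization $\varepsilon \mapsto c\varepsilon$ is a bijective affine rescaling that carries switching times of the original walk to switching times (at $\varepsilon_i/c$) of the reparameterized walk while preserving the identity of the optimal vertex of $\mathfrak{Q}$ on each piece.
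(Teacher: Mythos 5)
Your argument is essentially identical to the paper's: both first invoke Theorem~\ref{thm:PL} to establish that $\olric_\varepsilon(x,y)$ is piecewise affine with breakpoints at the switching times, write the affine piece on $[\varepsilon_i,\varepsilon_{i+1}]$ as $\olric_{\varepsilon_i} + s_i(\varepsilon-\varepsilon_i)$, and then substitute $\varepsilon = c\bar\varepsilon$. The extra remarks about concavity and about the reparameterization preserving the identity of switching times are correct but not needed, and do not constitute a genuinely different route.
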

\begin{proof}[\footnotesize \textbf{Proof}]
	\par We have shown in Theorem~\ref{thm:PL} that $\olric_{c\varepsilon} (x,y)$  is a piece-wise linear function of $\varepsilon$. So suppose the switching times $0 < \varepsilon_1 < \dots < \varepsilon_k$ are given; then
	\[
	\olric_{\varepsilon}(x,y) = \olric_{\varepsilon_i} + s_i \left(  \varepsilon - \varepsilon_i \right), \quad \text{for} \quad \varepsilon_i \le \varepsilon \le \varepsilon_{i+1},
	\]
So setting $\varepsilon = c \bar{\varepsilon}$, we get
	\[
\olric_{c\bar{\varepsilon}}(x,y) = \olric_{\varepsilon_i} + s_i \left(  c \bar{\varepsilon}- \varepsilon_i \right), \quad \text{for} \quad \varepsilon_i \le  c\bar{\varepsilon} \le \varepsilon_{i+1},
\]
which is the claimed relation. 
\end{proof}
\subsubsection{\small \bf \textsf{Switching times and bifurcation of optimal trajectories}}
The switching times in the function $\olric_\varepsilon$ for time-analytic local random walks indicate singularities in the discrete-time curvature function (as well as in the $\Was_1$). At these singularities, the maximizer in the Kantorovich dual formulation changes (from one vertex of the feasible convex polytope to another in the same facet).
\par Based on the Kantorvich-Rubinstein duality (see~\cite[Theorem 5.10, item 5.4 and item 5.16]{Vil}), we know the optimal plans are supported on the $\dist$-sub-differential of the maximizer $f$. So as the maximizer suddenly changes, the trajectories along which mass is transported also change. If we compare these trajectories to gradient flow of a potential function $f$ in the smooth setting, then the change in the trajectories compares to  a change in the qualitative behavior of solutions of an ode. So in a sense, at the switching times, the optimal trajectories bifurcate yet in a sudden discontinuous manner. It is worth mentioning that this is a very heuristic discussion since especially for the $\LL^1$ cost, the geometry of optimal transport trajectories become very irregular compared to the higher $\LL^p$ costs. However, in principle, what was said still makes sense.  
\par For time-analytic local walks, one can capture the infinitesimal effect of the said bifurcation on the discrete-time Ollivier-Ricci curvature  at the switching times by defining the set-valued bifurcating continuous-time Ollivier-Ricci curvature 
\[
\olric^\varepsilon_{\sf b}(x,y):= \left\{ \nicefrac{d^-}{d\varepsilon} \olric_\varepsilon(x,y) , \nicefrac{d^+}{d\varepsilon} \olric_\varepsilon(x,y) \right\},
\]
which is defined for all values of $\varepsilon$. Obviously $\olric(x,y) = \olric^0_{\sf b}(x,y)$. $\olric^\varepsilon_{\sf b}(x,y)$ is single-valued except possibly at the switching times. Notice for time-analytic local walks, $\olric^\varepsilon_{\sf b}(x,y)$ takes finite values.
\par Another useful quantity to investigate is 
\[
\delta \olric^\varepsilon_{\sf b}(x,y) := \left( \nicefrac{d^+}{d\varepsilon} - \nicefrac{d^-}{d\varepsilon} \right) \olric_\varepsilon(x,y).
\]
For example, for time-affine random walks, the concavity in $\olric_\varepsilon$ yields that  $\delta \olric^\varepsilon_{\sf b}(x,y)$ is everywhere zero except at the switching times where it is negative. 

\par Derivative of $\olric_\varepsilon$ measures the rate at which the walks converge in Wasserstein distance and a sudden change in this quantity indicates how -- on average -- strong the bifurcation of optimal trajectories is. For time-affine walks interestingly enough, at each bifurcation the said rate increases.  
\subsubsection{\small \bf  \textsf{Ollivier scalar curvature bounds}}
A natural definition of discrete-time scalar curvature for finite step walks is
\[
\oscal_{\varepsilon} (x) := \sum_{y \in \Upomega_x} \olric_{\varepsilon}(x,y),
\]
and the continuous-time scalar curvature is defied by
\[
\oscal (x) := \sum_{y \sim x} \olric(x,y).
\]
\section{Continuous-time discrete Ollivier-Ricci curvature flows}\label{sec:CTRF}
The continuous time Ollivier-Ricci flow was proposed in~\cite{Ol-survey} as a natural generalization of Ricci flow to the discrete setting using the Ollivier curvature; a $1$-parameter family of distances on a space $X$ (perhaps a graph) is said to be an Ollivier-Ricci flow whenever the continuous time equation
\begin{align*}
\nicefrac{d}{dt}\, \dist(x,y) = - \olric(x,y) \dist(x,y),
\end{align*}
is satisfied where $\olric$ at time $t$ is defined using a suitably chosen random walk on graphs and using distance $\dist$ at time $t$. For example, if the distance is the weighted distance (since the combinatorial one does not evolve), this flow becomes 
\begin{align*}
	\nicefrac{d}{dt}\, \dist_\omega(x,y) = - \olric(x,y) \dist_\omega(x,y);
\end{align*}
the issue with the latter is that in the setting of weighted graphs the solutions will not be unique.  To remedy this, one can instead consider the more local flow
\[
\dot{\omega} = - \olric \cdot \omega,
\]
as the Ricci flow in the singly weighted setting. Another approach would be to take a fusion of the last two and define Ricci flow such as (but not exclusively)
\[
\dot{\omega} = - \olric \cdot \dist_\omega.
\]
\par In these notes, we have developed a general theory for Olivier-Ricci curvature and have seen many important properties such as the Lipschitz continuity for $\olric$; so it is natural to also explore the Ricci flow in this generalized context. In an upcoming work, we explore the discrete-time Ollivier-Ricci flows and in these notes, we will only consider the continuous-time version. 
\par In below, we consider general curvature flows corresponding to Ollivier-Ricci curvature however we will only consider finite graphs. 
\begin{definition}[Generalized Ollivier-Ricci curvature flows]
Let $\upmu_z^\varepsilon$ be a pleasant local walk that is locally Lipschitz in  $\omega$. A $1$-parameter family 
\[
\G(t) := \left( \G, \m(t), \omega(t), \dist(t)\right),
\]
of quadruples is called a generalized Ollivier-Ricci flow if it is a solution to the ODE system
\begin{align}\label{eq:ORF}
{\sf ORF}_{f,g,h} :
\begin{cases}
	\dot{\omega} = f(t, \omega, \dist, \m, \olric )\\
	\dot{\dist} =  g\left(t, \omega, \dist, \m, \olric \right)\\
	\dot{\m} = h\left(t, \omega, \dist, \m, \oscal \right) 
\end{cases}
\end{align}
\end{definition}
\begin{theorem}[Short time existence and uniqueness]\label{thm:EUR}
Let $\G$ be finite. Suppose $f,g$ and $h$ are locally uniformly Lipschitz in $t$ and locally Lipschitz in other arguments. Starting from an initial condition $\left( G, \m(0), \omega(0), \dist(0)\right)$ with $\dist(0) \in \mathrm{int}\left(  \mathsf{D}_{\sf c}(\G) \right)$ and with $\mathrm{supp}(\m)= \G$.   Then, there exists a unique solution to ${\sf ORF}_{f,g},h$ for a short time. The solution is at least $\mathcal{C}^{1,1}$ in time $t$ and is Lipschitz in the other arguments. 
\end{theorem}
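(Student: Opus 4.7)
The strategy is to recast \eqref{eq:ORF} as a finite-dimensional ODE on an open subset of Euclidean space and apply the classical Picard--Lindelöf theorem, with the work of Theorem~\ref{thm:lip} and Corollary~\ref{cor:lip-multi-weight} carrying the nontrivial analytic content. Because $\G$ is finite, the unknowns $(\omega,\dist,\m)$ lie in $\R^{E}\times \R^{V\times V}\times \R^{V}$. The admissible phase space $\mathcal{X}$ is the open subset defined by $\omega>0$ on each edge, $\m>0$ at each vertex, and $\dist\in \mathrm{int}(\mathsf{D}_{\sf c}(\G))$; in the finite setting this is simply symmetry, strict positivity off the diagonal, and strict triangle inequalities. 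By hypothesis the initial datum $(\omega(0),\dist(0),\m(0))$ lies in $\mathcal{X}$, which is open.

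First I would verify that the right-hand side $F(t,\omega,\dist,\m):=(f,g,h)$ is locally uniformly Lipschitz in $t$ and locally Lipschitz in $(\omega,\dist,\m)$ on $\mathcal{X}$. The $t$-regularity is direct from the hypothesis on $f,g,h$. For the spatial regularity, the pleasant local walk $\upmu_z^\varepsilon$ depends in a locally Lipschitz manner on $(\omega,\m)$ by assumption; invoking the limit-free formulation of Corollary~\ref{cor:lff-pls-walk} and the sensitivity result Theorem~\ref{thm:lip} (equivalently, Corollary~\ref{cor:lip-multi-weight}), the map $(\omega,\dist,\m)\mapsto \olric(x,y)$ is locally Lipschitz for each fixed pair $x,y$. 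Since $\G$ is finite, $\oscal(x)=\sum_{y\sim x}\olric(x,y)$ is a finite sum and is likewise locally Lipschitz. Composing with the locally Lipschitz $f,g,h$ in their final arguments yields that $F$ is locally Lipschitz in the state variables on the open set $\mathcal{X}$.

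Having identified a locally Lipschitz vector field on an open subset of a finite-dimensional Euclidean space, I would invoke the classical Picard--Lindelöf theorem to obtain a unique $\mathcal{C}^1$ solution on a maximal interval $[0,T)$ with $T>0$. Local Lipschitz continuity of $F$ in the state then upgrades $\dot{u}=F(t,u)$ to a Lipschitz function of $t$ on any compact subinterval, giving $\mathcal{C}^{1,1}$ regularity in time; continuous/Lipschitz dependence on the remaining parameters is a standard byproduct of the Picard iteration. The maximal solution persists so long as the trajectory remains in $\mathcal{X}$, i.e.\ until $\omega$ reaches $0$, $\m$ reaches $0$, or $\dist$ reaches the boundary of the set of distances. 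The Ollivier--Ricci flow $\dot{\omega}=-\olric\cdot\omega$, $\dist=\dist_\omega$, is a special case once one verifies that $\omega\mapsto \dist_\omega$ is locally Lipschitz on the positive cone (which is immediate in the finite setting).

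The principal technical obstacle is already out of the way: it is the local Lipschitz continuity of $\olric$ in the parameters, which is delicate precisely because $\olric$ is defined as an infimum (indeed as the optimal value of an LP) whose minimizers may switch. That issue was handled in Theorem~\ref{thm:lip} via the sensitivity analysis of Section~\ref{sec:sens-anal} and Proposition~\ref{prop:lip-lp}. Given this, the theorem is essentially a direct application of a standard ODE existence-uniqueness result; the only nonroutine point to double-check is that $F$ stays in the interior of the well-posed region of the underlying LP as $(\omega,\dist,\m)$ varies in a small neighborhood of the initial data, which is the very content of $\mathrm{supp}(\m)=\G$ and $\dist(0)\in\mathrm{int}(\mathsf{D}_{\sf c}(\G))$.
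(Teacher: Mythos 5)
Your proof is correct and follows essentially the same route as the paper: reduce \eqref{eq:ORF} to a finite-dimensional (non-autonomous) ODE, invoke Picard--Lindel\"of, and supply the nontrivial Lipschitz-in-state hypothesis via the local Lipschitz continuity of $\olric$ from Theorem~\ref{thm:lip}, with the $t$-regularity coming directly from the hypothesis on $f,g,h$. Your write-up is a bit more explicit about the open phase space, the finite-sum argument for $\oscal$, and the $\mathcal{C}^{1,1}$ upgrade from the equation, but the decomposition and key lemma coincide with the paper's.
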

\begin{proof}[\footnotesize \textbf{Proof}]
	Unlike the smooth Ricci flow which is a weakly parabolic pde, the continuous time Ollivier Ricci flow is indeed a (not necessarily autonomous) ODE system with phase space
	\[
	\R^{|E|} \times \mathsf{D}(\G) \times \R^{|V|} \times \R^{|V|(|V|-1)}\subset \R^{|E| + 2|V|^2 - |V|}.
	\]
where $\mathsf{D}(\G)$ is the space of distance functions which is a subspace of $\R^{|V|(|V|-1)}$. 
\par In the interior of the phase space -- by the standard ode theory (Picard - Lindel\"of theorem) -- the existence of solutions follow from the continuity of  the RHS while the uniqueness needs locally uniformly Lipschitz continuity of the RHS in $t$ and locally Lipschitz continuity in other variables; e.g. see~\cite[Theorem 2.2]{Tes}.
	\par Local Lipschitz continuity of $\olric$ was established in Theorem~\ref{thm:lip} (recall $\G$ is finite here so all walks are local) so this readily implies the RHS of \eqref{eq:ORF} is locally Lipschitz in $\dist$ and $\omega$. Also from the hypothesis, RHS is uniformly locally Lispchitz in $t$ so the conclusion follows. 
\end{proof}
\begin{theorem}[long-time solution]
	Starting from $\G(0)$ in the interior of the phase space, the generalized Ollivier-Ricci curvature flow ${\sf ORF}_{f,g,h}$ admits a unique maximal solution. Indeed the flow can be continued as long as $\omega, \m >0$ and $\dist$ is in the interior of $\mathsf{D}(\G)$. 
\end{theorem}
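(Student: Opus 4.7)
The plan is to bootstrap from the short-time existence and uniqueness result, Theorem~\ref{thm:EUR}, using standard continuation arguments for ODE systems. First, I would apply Theorem~\ref{thm:EUR} at the initial datum $\G(0)$ to obtain a solution on a maximal interval $[0, \delta_0)$. Given any solution on $[0, T)$ valued in $\mathrm{int}\left(\R_{>0}^{|E|} \times \mathsf{D}(\G) \times \R_{>0}^{|V|}\right)$, I would take the supremum $T_{\max}$ over all such times $T$ and let $\G(t)$ for $t \in [0, T_{\max})$ denote the glued solution; uniqueness of this concatenated solution follows directly from the uniqueness clause in Theorem~\ref{thm:EUR} on overlapping subintervals.

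Next, to prove maximality is exactly characterized by exiting the interior of the phase space, I would argue by contradiction. Suppose $T_{\max} < \infty$ and yet there exists a compact set $K$ contained in the interior of the phase space with $\G(t) \in K$ for all $t \in [0, T_{\max})$. On $K$ the local Lipschitz estimates from Theorem~\ref{thm:lip} (applied through Corollary~\ref{cor:lip-multi-weight}) together with the assumed local Lipschitz regularity of $f,g,h$ in their arguments give a uniform Lipschitz constant for the RHS of~\eqref{eq:ORF} on $K$, and the uniform local Lipschitz regularity in $t$ gives an integrable bound for the time dependence. This uniformity produces uniform bounds on $\dot\omega, \dot\dist, \dot\m$ along the trajectory, so $\G(t)$ is uniformly continuous on $[0, T_{\max})$ and admits a limit $\G(T_{\max}) \in K$ which still lies in the interior of the phase space.

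Then I would apply Theorem~\ref{thm:EUR} a second time, starting from the initial datum $\G(T_{\max})$, to obtain a short-time solution defined on $[T_{\max}, T_{\max} + \delta_1)$. Concatenating this with the original maximal solution (again invoking uniqueness on the overlap at $T_{\max}$) produces a strictly longer solution, contradicting the definition of $T_{\max}$. This forces the only obstruction to continuation to be the failure of $\G(t)$ to remain in a compact subset of the interior of the phase space, i.e. $\omega(t)$ or $\m(t)$ approaching $0$ in some coordinate, or $\dist(t)$ reaching the boundary of $\mathsf{D}(\G)$.

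The main obstacle will be the compact containment step, in particular verifying that $\G(t)$ actually converges in $K$ as $t \to T_{\max}$ and that the limit lies in the \emph{interior} rather than on $\partial K$. Since $K$ is chosen in the interior of the phase space and the trajectory is constrained to $K$ by hypothesis, a limit point inherits the interior property from $K$; the uniform continuity upgrade from the Lipschitz estimate guarantees the limit exists. Once this is in place, the remainder is a routine application of the restart mechanism afforded by Theorem~\ref{thm:EUR}, and no new curvature-flow machinery beyond the Lipschitz continuity of $\olric$ established in Theorem~\ref{thm:lip} is needed.
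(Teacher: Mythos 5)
The paper states this theorem without a proof, treating it as an immediate consequence of standard ODE continuation theory built on Theorem~\ref{thm:EUR} and \cite[Theorem 2.2]{Tes}; your argument follows exactly that intended route: gluing local solutions, extracting a maximal interval, uniqueness on overlaps, and the restart-from-the-limit contradiction.

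There is, however, a genuine gap at the very last step. You conclude that the only obstruction to continuation is ``the failure of $\G(t)$ to remain in a compact subset of the interior of the phase space, i.e.\ $\omega(t)$ or $\m(t)$ approaching $0$ in some coordinate, or $\dist(t)$ reaching the boundary of $\mathsf{D}(\G)$.'' The ``i.e.'' is not an equivalence: escaping every compact subset of the open set $\R_{>0}^{|E|} \times \mathrm{int}\left(\mathsf{D}(\G)\right) \times \R_{>0}^{|V|}$ also happens if some coordinate of $\omega$, $\m$, or $\dist$ tends to $+\infty$ in finite time. Nothing in the hypotheses on $f,g,h$ (locally Lipschitz only) rules out finite-time blow-up --- indeed, even for the classic flow $\dot\omega = -\olric\cdot\omega$, the scaling $\olric \mapsto \nicefrac{a}{b}\,\olric$ under $\omega \mapsto a\omega$, $\m$ fixed, shows that when $\olric<0$ the right-hand side grows quadratically in $\omega$, which is the prototype of finite-time escape to infinity. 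To conclude as stated, you need either to append ``or some component of $\omega$, $\m$, $\dist$ becomes unbounded'' to the characterization, or to impose a growth/sublinearity hypothesis on $f,g,h$, or to supply a Gronwall-type a priori bound specific to the flow at hand. (The theorem as printed in the paper shares this omission, so your proof faithfully reproduces the paper's implicit argument together with its gap.) Separately, and less seriously, the uniform bound on $\dot\omega, \dot\dist, \dot\m$ on $K$ comes from boundedness (continuity) of the right-hand side on the compact set $K \times [0,T_{\max}]$, not from its Lipschitz constant; the Lipschitz property is what you need for uniqueness, but mere continuity already gives the derivative bound that yields the limit $\G(T_{\max})$.
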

\subsubsection{\small \bf  \textsf{Classic Ollivier-Ricci flows}}
Set $\dist=\dist_\omega$. Consider the well-posed Ollivier-Ricci flow given by
\begin{align}\label{eq:corf}
\begin{cases}
	\dot{\omega} = -\olric \cdot \omega\\
	\dot{\m} = -h\left(\oscal\right) \cdot \m
\end{cases},
\end{align}
so when $h\equiv 0$, we just get a flow of $\omega$ and when $h = \mathrm{id}$, we get an evolution on vertex weights modeled on the smooth case. 
\begin{corollary}
Set $\dist = \dist_\omega$ and suppose $\upmu_z^\varepsilon$ is a random walk that is locally Lipschitz in $\omega$, then the Ollivier-Ricci flow \eqref{eq:corf}  starting with $\omega(0)>0$, has a unique solution. The maximal solution exists as long as $\omega, \m >0$. 
\end{corollary}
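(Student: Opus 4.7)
\emph{Proof proposal.} The plan is to reduce the flow \eqref{eq:corf} to an ODE system on the open, finite-dimensional phase space
\[
\mathcal{P} := \{\,(\omega,\m)\in \R^{|E|}\times \R^{|V|} : \omega>0,\ \m>0\,\}
\]
by substituting the constitutive relation $\dist=\dist_\omega$ everywhere, and then to apply the Picard--Lindel\"of theorem directly. The reduction gives the autonomous system
\begin{align*}
\dot\omega &= F(\omega,\m) := -\olric(\omega,\m)\cdot \omega, \\
\dot\m &= H(\omega,\m) := -h\!\bigl(\oscal(\omega,\m)\bigr)\cdot \m,
\end{align*}
in which $\olric$ and $\oscal$ are evaluated for the walk $\upmu_z^\varepsilon$ driven by the current $(\omega,\m)$ and with respect to $\dist=\dist_\omega$. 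Short-time existence and uniqueness thus reduce to showing that $F$ and $H$ are locally Lipschitz on $\mathcal{P}$.

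First I would check that $\omega\mapsto\dist_\omega$ is locally Lipschitz on $\{\omega>0\}$. Since $\G$ is finite, for every vertex pair $(x,y)$ the quantity $\dist_\omega(x,y)$ is the minimum over a finite family of paths of $\sum_e \omega_e$; so it is a minimum of finitely many linear functionals of $\omega$, hence piecewise linear in $\omega$, and Lipschitz with constant no larger than the maximum combinatorial path length that can realize the minimum on a neighborhood. In particular $\dist_\omega$ lies in $\mathrm{int}(\mathsf{D}_{\sf c}(\G))$ whenever $\omega>0$, so the constraint $\dist=\dist_\omega$ keeps the trajectory in the interior of the phase space of Theorem~\ref{thm:EUR} for free.

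Second I would invoke the hypothesis that $\upmu_z^\varepsilon$ is locally Lipschitz in $\omega$ together with Corollary~\ref{cor:lip-multi-weight}: taking the secondary edge weight $\eta:=\omega$, so that $\dist_\eta=\dist_\omega$, that corollary yields local Lipschitz dependence of each $\olric(x,y)$ on $(\omega,\m)$ through both the direct dependence of the walk on $\omega$ and the composed dependence $\omega\mapsto\dist_\omega$ established in the previous step. Finiteness of $\G$ makes $\oscal(x)=\sum_{y\sim x}\olric(x,y)$ a finite sum of such functions, hence also locally Lipschitz. Pointwise multiplication by the smooth factors $\omega$ or $\m$ on compact subsets of $\mathcal{P}$, and composition with the (implicitly locally Lipschitz) function $h$, preserve this regularity; so both $F$ and $H$ are locally Lipschitz on $\mathcal{P}$.

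Picard--Lindel\"of now delivers a unique $\mathcal{C}^{1,1}$ solution on a short time interval, and the standard maximal-solution argument produces a unique maximal solution that persists as long as the trajectory remains in $\mathcal{P}$, i.e.\ as long as $\omega,\m>0$; this gives the claimed maximal existence statement. The main technical subtlety is the merely piecewise-linear (rather than smooth) dependence of $\dist_\omega$ on $\omega$---the reduced vector field $F$ is only piecewise smooth, and one must verify that composing the Lipschitz bound of Corollary~\ref{cor:lip-multi-weight} with this piecewise-linear substitution does not introduce a singularity. Fortunately piecewise-linearity already delivers exactly the Lipschitz regularity Picard--Lindel\"of demands, and Lipschitz maps compose, so the lack of differentiability poses no obstacle to uniqueness.
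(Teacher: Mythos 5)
Your proof is correct and takes essentially the same route as the paper's one-line argument (``direct consequence of Corollary~\ref{cor:lip-multi-weight} and Theorem~\ref{thm:EUR}''), but you add a clarification that the paper glosses over. The flow \eqref{eq:corf} is not literally of the form \eqref{eq:ORF}: there is no $\dot\dist$ equation, since $\dist$ is enslaved to $\omega$ through $\dist=\dist_\omega$. Your explicit reduction of the phase space to $\mathcal{P}=\{(\omega,\m):\omega>0,\m>0\}$, followed by a direct Picard--Lindel\"of argument, handles this honestly, whereas simply ``invoking Theorem~\ref{thm:EUR}'' leaves this substitution implicit. The other key ingredient --- Corollary~\ref{cor:lip-multi-weight} with $\eta=\omega$, plus the piecewise-linearity (hence local Lipschitzness) of $\omega\mapsto\dist_\omega$ --- matches the paper's intent, and your observation that piecewise-linearity is already enough for Picard--Lindel\"of is a sensible remark.

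One small caveat: the claim that ``$\dist_\omega\in\mathrm{int}(\mathsf{D}_{\sf c}(\G))$ whenever $\omega>0$'' is questionable as stated, since a weighted path metric typically realizes some triangle inequalities with equality and so may not lie in the \emph{interior} of the full cone of distance functions. Fortunately this side remark is not load-bearing in your argument: once you have reduced to the $(\omega,\m)$ phase space, the only interior condition you actually need is $\omega>0$ and $\m>0$, and the composed vector field is locally Lipschitz there. You might simply drop that sentence, or replace it with the observation that the reduction makes the $\dist$-interior hypothesis of Theorem~\ref{thm:EUR} moot.
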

		\begin{proof}[\footnotesize \textbf{Proof}]
		This is a direct consequence of Corollary~\ref{cor:lip-multi-weight} and Theorem~\ref{thm:EUR}. 
	\end{proof}
	\renewcommand{\baselinestretch}{1.2}
\begingroup

\endgroup
\vspace{10pt}
\end{document}